\newcommand{\letters}{\renewcommand{\theenumi}{\alph{enumi}}}
\newcommand{\leaveout}[1]{#1}
\DeclareMathOperator{\vertex}{\mathrm{vert}}
\DeclareMathOperator{\Hom}{\mathrm{Hom}}
\DeclareMathOperator{\Cone}{\mathrm{Cone}}
\DeclareMathOperator{\id}{\mathrm{id}}
\DeclareMathOperator{\vertices}{\mathrm{vert}}
\DeclareMathOperator{\im}{\mathrm{im}}
\DeclareMathOperator{\face}{\mathrm{face}}
\newcommand{\Cn}{C}
\DeclareMathOperator{\sgn}{\mathrm{sgn}}
\newcommand{\Z}{{\mathbb Z}}
\newcommand{\N}{{\mathbb N}}
\newcommand{\R}{{\mathbb R}}
\newcommand{\Q}{{\mathbb Q}}
\DeclareMathOperator{\rk}{\mathrm{rk}}
\DeclareMathOperator{\QSym}{\mathit{QSym}}
\DeclareMathOperator{\NSym}{\mathit{NSym}}
\DeclareMathOperator{\linhull}{\mathrm{lhull}}
\newtheorem{theorem}{Theorem}[section]
\newtheorem{proposition}[theorem]{Proposition}
\newtheorem{lemma}[theorem]{Lemma}
\newtheorem{corollary}[theorem]{Corollary}
\newdefinition{definition}[theorem]{Definition}
\newdefinition{remark}[theorem]{Remark}
\newdefinition{example}[theorem]{Example}
\newproof{proof}{Proof}
\newproof{proof14d}{Proof of Theorem~\ref{theo1.4}(d)}
\newproof{proof14c}{Proof of Theorem~\ref{theo1.4}(c)}
\newproof{proof11}{Proof of Theorem~\ref{theo:1.1}}
\newproof{proof14b}{Proof of Theorem~\ref{theo1.4}(b)}
\newproof{proof14a}{Proof of Theorem~\ref{theo1.4}(a)}
\newproof{proof15d}{Proof of Theorem~\ref{theo1.5}(d)}
\newproof{proof15c}{Proof of Theorem~\ref{theo1.5}(c)}
\newproof{proof15ab}{Proof of Theorem~\ref{theo1.5}(a),(b)}
\newproof{prooffreeab}{Proof of Theorem~\ref{theo:free}(a),(b)}
\newproof{prooffreecd}{Proof of Theorem~\ref{theo:free}(c),(d)}
\newproof{proofws}{Proof of Theorem~\ref{theoweakstrong}}
\begin{document}

\title{Valuative invariants for polymatroids}
\author[um]{Harm Derksen\fnref{fn1}} 
\ead{hderksen@umich.edu}
\author[ucb]{Alex Fink\corref{cr}} 
\ead{finka@math.berkeley.edu}
\fntext[fn1]{Supported by NSF grant DMS 0349019 and DMS 0901298.}
\address[um]{Department of Mathematics, Univeristy of Michigan} 
\address[ucb]{Department of Mathematics, University of California, Berkeley}
\cortext[cr]{Corresponding author} 

\begin{abstract}
Many important invariants for matroids and polymatroids,
such as the Tutte polynomial, the Billera-Jia-Reiner quasi-symmetric function, and
the invariant $\mathcal G$ introduced by the first author, are valuative.
In this paper we construct the $\Z$-modules of all $\Z$-valued 
valuative functions for labeled matroids and polymatroids on a fixed ground set,
and their unlabeled counterparts, the $\Z$-modules of valuative invariants.
We give explicit bases for these modules
and for their dual modules generated by indicator functions of polytopes,
and explicit formulas for their ranks.
Our results confirm a conjecture of the first author that $\mathcal G$
is universal for valuative invariants.
\end{abstract}

\begin{keyword}
matroids, polymatroids, polymatroid polytopes, decompositions, valuations, Hopf algebras
\MSC[2010] 52B40, 52B45
\end{keyword}

\maketitle

\section{Introduction}
Matroids were introduced by Whitney in 1935 (see~\cite{Whitney}) as a combinatorial abstraction of linear dependence of vectors in a vector space. 
Some standard references are \cite{Welch} and~\cite{Oxley}. 
Polymatroids are  multiset analogs of matroids and appeared in the late 1960s 
(see~\cite{Edmonds,HH}). 
There are many distinct but equivalent definitions
of matroids and polymatroids, for example in terms of bases, independent sets, flats, 
polytopes or rank functions. 
For polymatroids, the equivalence
between the various definitions is given in \cite{HH}.  
We will stick to the definition in terms of rank functions: 
\begin{definition}
Suppose that $X$ is a finite set (the {\em ground set}) 
and $\rk:2^X\to \N=\{0,1,2,\dots\}$, where $2^X$ is the set of subsets of $X$.
Then $(X,\rk)$ is called a {\em polymatroid} if:
\begin{enumerate}
\item $\rk(\emptyset)=0$;
\item $\rk$ is {\em weakly increasing}: if $A\subseteq B$ then $\rk(A)\leq \rk(B)$;
\item $\rk$ is {\em submodular}: $\rk(A\cup B)+\rk(A\cap B)\leq \rk(A)+\rk(B)$ for all $A,B\subseteq X$.
\end{enumerate}
If moreover, $\rk(\{x\})\leq 1$ for every $x\in X$, then $(X,\rk)$ is called a {\em matroid}.
\end{definition}
An {\em isomorphism} $\varphi:(X,\rk_X)\to (Y,\rk_Y)$
is a bijection $\varphi:X\to Y$ such that $\rk_Y\circ \varphi=\rk_X$. Every polymatroid is isomorphic
to a polymatroid with ground set $\underline{d}=\{1,2,\dots,d\}$ for some nonnegative integer $d$.
The rank of a polymatroid $(X,\rk)$ is $\rk(X)$.

Let $S_{\rm PM}(d,r)$ be the set of all polymatroids with ground set $\underline{d}$ of rank $r$, 
and $S_{\rm M}(d,r)$ be the set of all matroids with ground set $\underline{d}$ of rank $r$.
We will write $S_{\rm (P)M}(d,r)$\label{not:SPM}
when we want to refer to $S_{\rm PM}(d,r)$ or $S_{\rm M}(d,r)$ in parallel. 
A function $f$ on $S_{\rm (P)M}(d,r)$ is a (poly)matroid {\em invariant} if
$f\big((\underline{d},\rk)\big)=f\big((\underline{d},\rk')\big)$ whenever
$(\underline{d},\rk)$ and $(\underline{d},\rk')$ are isomorphic. Let $S_{\rm (P)M}^{\rm sym}(d,r)$\label{not:SPMsym}
be the set of isomorphism classes in $S_{\rm (P)M}(d,r)$. Invariant functions on $S_{\rm (P)M}(d,r)$
correspond to functions on $S_{\rm (P)M}^{\rm sym}(d,r)$.
Let $Z_{\rm (P)M}(d,r)$\label{not:ZPM} and $Z_{\rm (P)M}^{\rm sym}(d,r)$\label{not:ZPMsym} be the $\Z$-modules
freely generated by $S_{\rm (P)M}(d,r)$ and $S_{\rm (P)M}^{\rm sym}(d,r)$ respectively.
For an abelian group $A$, every function $f:S_{\rm (P)M}^{\rm (sym)}(d,r)\to A$ extends
uniquely to a group homomorphism $Z^{\rm (sym)}_{\rm (P)M}(d,r)\to A$.

One of the most important matroid invariants is the Tutte polynomial. It was first defined for graphs in \cite{Tutte} and
generalized to matroids in \cite{Brilawski,Crapo2}.
This bivariate polynomial is defined by\footnote{Regarded as a polynomial in $x-1$
and $y-1$, $\mathcal T$ is known as the {\em rank generating function}.}\label{not:T}
$$
{\mathcal T}\big((X,\rk)\big)=\sum_{A\subseteq X} (x-1)^{\rk(X)-\rk(A)}(y-1)^{|A|-\rk(A)}.
$$
The Tutte polynomial is universal for all matroid invariants satisfying a deletion-contraction formula.
Speyer defined a matroid invariant in \cite{Speyer} using $K$-theory. 
Billera, Jia and Reiner introduced a quasi-symmetric function ${\mathcal F}$\label{not:F} for matroids in \cite{BJR},
which is a matroid invariant. This quasi-symmetric function is a powerful invariant in the sense
that it can distinguish many pairs of non-isomorphic matroids. However, it does not specialize
to the Tutte polynomial. The first author introduced in \cite{Derksen} another quasi-symmetric function ${\mathcal G}$.
For some choice of  basis $\{U_{\alpha}\}$\label{not:Ualpha} of the ring of quasi-symmetric functions, 
${\mathcal G}$\label{not:G} is defined by
$$
{\mathcal G}\big((X,\rk)\big)=\sum_{\underline{X}}U_{r(\underline{X})},
$$
where
$$
\underline{X}:\emptyset=X_0\subset X_1\subset \cdots \subset X_d=X
$$
runs over all $d!$ maximal chains of subsets in $X$,
and
$$
r(\underline{X})=(\rk(X_1)-\rk(X_0),\rk(X_2)-\rk(X_1),\dots,\rk(X_d)-\rk(X_{d-1})).
$$
It was already shown in \cite{Derksen} that ${\mathcal G}$ specializes to
${\mathcal T}$ and ${\mathcal F}$.

To a (poly)matroid $(\underline{d},\rk)$ one can associate its 
{\em base polytope} $Q(\rk)$ in $\R^d$ (see Definition~\ref{def:basepolytope}).
For $d\geq 1$, the dimension of this polytope is $\leq d-1$.
The indicator function of a polytope $\Pi\subseteq \R^d$ is denoted by $[\Pi]:\R^d\to \Z$\label{not:[]}.
Let $P_{\rm (P)M}(d,r)$\label{not:PPM} be the $\Z$-module generated by all $[Q(\rk)]$
with $(\underline{d},\rk)\in S_{\rm (P)M}(d,r)$.
\begin{definition} Suppose that $A$ is an abelian group. A function $f:S_{\rm (P)M}(d,r)\to A$
is  {\em strongly valuative} if there exists a group homomorphism $\widehat{f}:P_{\rm (P)M}(d,r)\to A$
such that 
$$f\big((\underline{d},\rk)\big)=\widehat{f}([Q(\rk)])
$$
for all $(\underline{d},\rk)\in S_{\rm (P)M}(d,r)$.
\end{definition}
In Section~\ref{sec:valuative} we also define a {\em weak valuative property} in terms of base polytope decompositions. 
Although seemingly weaker, we will show that the weak valuative property is equivalent to the strong valuative property.
\begin{definition}
Suppose that $d>0$.
A valuative function $f:S_{\rm (P)M}(d,r)\to A$ is said to be {\em additive}, if
$f\big((\underline{d},\rk)\big)=0$ whenever the dimension of $Q(\rk)$ is $<d-1$.
\end{definition}

Most of the known (poly)matroid invariants are {\em valuative}. For example, ${\mathcal T}$, ${\mathcal F}$ and ${\mathcal G}$ all have this property in common.
Speyer's invariant is not valuative, but does have a similar
property, which we will call the {\em covaluative}  property. 
Valuative invariants and additive invariants can be useful for deciding
whether a given matroid polytope has a decomposition into smaller matroid polytopes
(see the discussion in \cite[Section 7]{BJR}). 
Decompositions of polytopes and their valuations are fundamental objects 
of interest in discrete geometry in their own right (see for instance
the survey~\cite{MS}).  
Matroid polytope decompositions
appeared in the work of Lafforgue (\cite{Laf1,Laf2}) on compactifications
of a fine Schubert cell in the Grassmannian associated to a matroid.
The work of Lafforgue implies that if the base polytope of a matroid
does not have a proper decomposition, then the matroid is rigid, i.e.,
it has only finitely many nonisomorphic realizations over a given field.

\subsection*{Main results}
The following
theorem proves a conjecture of the first author in \cite{Derksen}:
\begin{theorem}\label{theo:1.1}
The ${\mathcal G}$-invariant is  universal for all valuative (poly)ma\-troid invariants, i.e.,
the coefficients of ${\mathcal G}$ span the vector space of all valuative (poly)matroid invariants
with values in $\Q$.
\end{theorem}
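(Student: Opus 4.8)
The plan is to derive Theorem~\ref{theo:1.1} as a consequence of the explicit determination of the module of valuative invariants carried out below. First I recast the statement dually. A $\Q$-valued valuative invariant on $S_{\rm (P)M}(d,r)$ is the same datum as a $\Q$-linear functional on $P_{\rm (P)M}(d,r)\otimes\Q$ that is fixed by the action of $S_d$ permuting the ground set $\underline d$; equivalently, it is a functional on the space $W:=\big(P_{\rm (P)M}(d,r)\otimes\Q\big)_{S_d}$ of base-polytope classes taken up to valuative equivalence and relabelling. Thus the space of $\Q$-valued valuative invariants has dimension $\dim_\Q W$, and this number is computed by the rank formulas established in the sequel. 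On the other hand, because $\mathcal G$ is both valuative and an invariant, the assignment $[Q(\rk)]\mapsto\mathcal G\big((\underline d,\rk)\big)$ descends to a well-defined linear map $\overline{\mathcal G}\colon W\to\QSym\otimes\Q$, and the coefficients of $\mathcal G$ span the space of valuative invariants if and only if $\overline{\mathcal G}$ is injective. Finally, every value of $\mathcal G$ on a rank-$r$, $d$-element (poly)matroid is a $\Q$-combination of the $U_\alpha$ with $\alpha$ a composition of $r$ into $d$ parts (each part at most $1$ in the matroid case), and the number of such compositions turns out to equal $\dim_\Q W$; so $\overline{\mathcal G}$ is a map between two $\Q$-vector spaces of the same dimension, and it suffices to prove it is injective.

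To prove injectivity I would produce, for each admissible composition $\alpha$, an explicit (poly)matroid $M_\alpha$ — the Schubert (or shifted) (poly)matroid attached to $\alpha$ relative to the standard linear order on $\underline d$, whose rank function on initial segments $\{1,\dots,k\}$ is read off directly from the partial sums of $\alpha$ — and show that the elements $\overline{\mathcal G}\big([Q(M_\alpha)]\big)$ are linearly independent in $\QSym\otimes\Q$. Since their number already equals $\dim_\Q W$, linear independence forces $\overline{\mathcal G}$ to be injective and finishes the proof. Linear independence in turn follows from a unitriangularity statement: with respect to a suitable partial order on compositions refining the comparison of partial sums $\beta_1+\cdots+\beta_k$, I would show that, expanding $\mathcal G(M_\alpha)=\sum_\beta c_{\alpha\beta}U_\beta$ in the $U$-basis, one has $c_{\alpha\alpha}\neq 0$ while $c_{\alpha\beta}=0$ unless $\beta$ precedes $\alpha$. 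The diagonal contribution comes from the standard maximal chain $\emptyset\subset\{1\}\subset\cdots\subset\underline d$, which by the construction of $M_\alpha$ produces exactly the rank-jump vector $\alpha$.

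The step I expect to be the main obstacle is precisely this triangularity: one must analyse, for the Schubert (poly)matroid $M_\alpha$, all $d!$ rank-jump vectors $r(\underline X)$ coming from maximal chains $\underline X$, prove that $\alpha$ is the extreme one in the chosen order, and check that the terms it shares with the smaller $\beta$ do not conspire to make $\det(c_{\alpha\beta})$ vanish. This is a combinatorial analysis of how the rank function of a (poly)matroid built from a flag behaves along an arbitrary ordering of the ground set — morally the statement that reordering the ground set can only push a rank-jump vector one way in the partial-sum order. The remaining ingredient, that $\dim_\Q W$ equals the number of admissible compositions, is supplied by the rank computations in the structural theorems below and is not reproved here; granting it, the universality of $\mathcal G$ for $\Q$-valued valuative invariants follows at once, and the same argument applied to both $S_{\rm M}(d,r)$ and $S_{\rm PM}(d,r)$ disposes of the matroid and polymatroid cases in parallel.
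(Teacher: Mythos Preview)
Your approach is correct, but it takes a more laborious route than the paper's.  The paper's proof is a one-liner once the structural results are in place: by definition of $\mathcal G$, the coefficient of $U_\alpha$ in $\mathcal G(\langle\rk\rangle)$ is exactly the function $u_\alpha=s_{\underline X,\underline r}$ (with $\underline X$ the standard maximal chain $\underline 1\subset\underline 2\subset\cdots\subset\underline d$ and $\underline r$ the partial sums of $\alpha$).  Corollaries~\ref{cor:PPMsymdual} and~\ref{cor:PMsymdual}, which follow by symmetrizing Corollary~\ref{cor s univ}, already assert that these $u_\alpha$ form a $\Q$-basis of $P_{\rm (P)M}^{\rm sym}(d,r)^\vee\otimes\Q$.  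That is the whole proof: no evaluation of $\mathcal G$ on any specific (poly)matroid is needed.

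By contrast, you grant only the dimension count and then reprove injectivity of $\overline{\mathcal G}$ by computing the matrix $c_{\alpha\beta}$ of $\mathcal G$ on Schubert (poly)matroids and arguing triangularity in the dominance order on partial sums.  This is valid, and in fact the step you flag as the ``main obstacle'' is easier than you suggest: for any $k$-subset $A=\{a_1<\cdots<a_k\}$ one has $\rk_{M_\alpha}(A)\geq r_{a_k}\geq r_k$, witnessed in the polymatroid case by the point $y$ with $y_{a_i}=r_{a_i}-r_{a_{i-1}}$ and the remaining mass placed at $d$, and in the matroid case by the formula $\rk(A)=\min_j(r_j+|A\setminus\underline j|)$ together with $r_j+(k-j)\geq r_k$.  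So every rank-jump vector $\beta$ arising from $M_\alpha$ has partial sums dominating those of $\alpha$, giving the triangularity with nonzero diagonal.  What your route buys is a self-contained combinatorial verification of injectivity; what the paper's route buys is that the injectivity is already encoded in the basis statement for the $u_\alpha$, so Theorem~\ref{theo:1.1} falls out for free.
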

 From ${\mathcal G}$ one
can also construct a universal invariant for the covaluative property
which specializes to Speyer's invariant.

It follows from the definitions that the dual $P_{\rm (P)M}(d,r)^\vee=\Hom_\Z(P_{\rm (P)M}(d,r),\Z)$\label{not:Vvee}
is the space of all $\Z$-valued valuative functions on $S_{\rm (P)M}(d,r)$.
If $P_{\rm (P)M}^{\rm sym}(d,r)$\label{not:PPMsym} is the push-out of the diagram
\begin{equation}\label{eq:pushout}
\xymatrix{
Z_{\rm (P)M}(d,r)\ar[r]^{\pi_{\rm (P)M}}\ar[d]_{\Psi_{\rm (P)M}} & Z_{\rm (P)M}^{\rm sym}(d,r)\ar@{.>}^{\Psi_{\rm (P)M}^{\rm sym}}[d]\\
P_{\rm (P)M}(d,r)\ar@{.>}[r]_{\rho_{\rm (P)M}} & P_{\rm (P)M}^{\rm sym}(d,r)}
\end{equation}
then the dual space $P_{\rm (P)M}^{\rm sym}(d,r)^\vee$
is exactly the set of all $\Z$-valued valuative (poly)ma\-troid {\em invariants}.
Let $p_{\rm (P)M}^{\rm sym}(d,r)$\label{not:ppmsym} be the rank of $P_{\rm (P)M}^{\rm sym}(d,r)$,
and $p_{\rm (P)M}(d,r)$\label{not:ppm} be the rank of $P_{\rm (P)M}(d,r)$. Then $p_{\rm (P)M}^{\rm sym}(d,r)$
is the number of independent $\Z$-valued valuative (poly)matroid invariants,
and $p_{\rm (P)M}(d,r)$ is the number of independent $\Z$-valued valuative functions
on (poly)matroids.
We will prove the following formulas:
\begin{theorem}\label{theo1.4}\ 

\begin{enumerate}\letters
\item $\displaystyle p_{\rm M}^{\rm sym}(d,r)=\textstyle {d\choose r}$ and $\displaystyle \sum_{0\leq r\leq d}p_{\rm M}^{\rm sym}(d,r)x^{d-r}y^r=\frac{1}{1-x-y}$,
\item
$\displaystyle p_{\rm PM}^{\rm sym}(d,r)=\left\{
\begin{array}{ll}
\textstyle {r+d-1\choose r} & \mbox{if $d\geq 1$ or $r\geq 1$;}\\
1 & \mbox{if $d=r=0$}\end{array}\right.$
and\newline
 $\displaystyle \sum_{r=0}^\infty\sum_{d=0}^{\infty}p_{\rm PM}^{\rm sym}(d,r)x^dy^r=\frac{1-x}{1-x-y}$,
\item $\displaystyle\sum_{0\leq r\leq d} \frac{p_{\rm M}(d,r)}{d!}x^{d-r}y^r=
\frac{x-y}{xe^{-x}-ye^{-y}}$,
\item 
$\displaystyle p_{\rm PM}(d,r)=\left\{
\begin{array}{ll}
(r+1)^d-r^d & \mbox{if $d\geq 1$ or $r\geq 1$;}\\
1 & \mbox{if $d=r=0$},\end{array}\right.$ and\newline
$\displaystyle \sum_{d=0}^\infty\sum_{r=0}^\infty \frac{p_{\rm PM}(d,r)x^dy^r}{d!}=
\frac{e^x(1-y)}{1-ye^x}.$
\end{enumerate}
\end{theorem}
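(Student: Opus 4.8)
The plan is to deduce the four formulas from bases of the $\Z$-modules $P^{\rm sym}_{\rm M}(d,r)$, $P^{\rm sym}_{\rm PM}(d,r)$, $P_{\rm M}(d,r)$ and $P_{\rm PM}(d,r)$ for fixed $\underline d$ and $r$. For the two symmetric modules Theorem~\ref{theo:1.1} already controls the dual space, so that only a lower bound on the rank is needed; for the two labeled modules I would instead construct an explicit basis, the hard part being a \emph{spanning} result. In both settings the relevant polytopes come from ``Schubert''-type (poly)matroids: fix $w\in S_d$, form the flag $\emptyset\subset\{w(1)\}\subset\{w(1),w(2)\}\subset\cdots\subset\underline d$, fix a sequence $\alpha=(\alpha_1,\dots,\alpha_d)$ of nonnegative integers summing to $r$ (to be the rank increments along the flag), and let $R_{w,\alpha}$ be the (poly)matroid whose rank function is pointwise largest among those with $\rk(\{w(1),\dots,w(i)\})=\alpha_1+\cdots+\alpha_i$ for all $i$. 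To prove spanning I would use the equivalence of the weak and strong valuative properties, so that relations arising from base-polytope subdivisions are available, and induct on a complexity statistic, rewriting an arbitrary base polytope as a signed sum of the cells of a subdivision whose pieces are strictly closer to the chosen Schubert form. This straightening is the core of the argument; everything else is enumeration and manipulation of generating series.

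For the symmetric cases (a) and (b), $\Hom_\Z(P^{\rm sym}_{\rm (P)M}(d,r),\Q)$ is the space of $\Q$-valued valuative (poly)matroid invariants, which has dimension $p^{\rm sym}_{\rm (P)M}(d,r)$ and, by Theorem~\ref{theo:1.1}, is spanned by the coefficient functions $M\mapsto c_\alpha(\mathcal G(M))$ of $\mathcal G$. For a (poly)matroid on $\underline d$ of rank $r$ the vector $r(\underline X)$ is a weak composition of $r$ of length $d$ --- with all parts in $\{0,1\}$ in the matroid case, since $\rk(X_i)-\rk(X_{i-1})\le 1$ --- so $c_\alpha(\mathcal G(M))$ vanishes unless $\alpha$ is such a composition. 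There are $\binom{r+d-1}{r}$ (resp.\ $\binom dr$) of these, giving the upper bounds $p^{\rm sym}_{\rm PM}(d,r)\le\binom{r+d-1}{r}$ and $p^{\rm sym}_{\rm M}(d,r)\le\binom dr$. For the matching lower bounds I would fix a linear order $\prec$ on the admissible compositions and verify that $\mathcal G(R_{\mathrm{id},\alpha})=c_\alpha U_\alpha+\sum_{\beta\prec\alpha}(\ast)U_\beta$ with $c_\alpha\ne 0$; this triangularity makes the corresponding coefficient functions linearly independent. Hence $p^{\rm sym}_{\rm M}(d,r)=\binom dr$ and $p^{\rm sym}_{\rm PM}(d,r)=\binom{r+d-1}{r}$ (with the standard conventions when $d=r=0$), and the generating-function formulas in (a) and (b) follow from the identities $\sum_{d\ge r\ge 0}\binom dr x^{d-r}y^r=(1-x-y)^{-1}$ and $\sum_{r\ge 0}\binom{r+d-1}{r}y^r=(1-y)^{-d}$, the latter summed over $d\ge 1$ together with the $d=r=0$ term.

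For the labeled cases (c) and (d), Theorem~\ref{theo:1.1} no longer applies, as it concerns invariants, not all valuative functions; so I would produce a basis of $P_{\rm (P)M}(d,r)$ itself, namely the indicator functions $[Q(R_{w,\alpha})]$ of the labeled Schubert (poly)matroids --- one $(w,\alpha)$ for each such polytope, since distinct pairs may coincide. Linear independence of these indicator functions I would obtain by evaluating at relative-interior points of the polytopes (which have pairwise distinct affine spans and dimensions), or, more robustly, from a dual family of valuative functions obtained by inclusion--exclusion corrections of ``the rank increments along the flag through $w$ equal $\alpha$''. It then remains to count the distinct polytopes $Q(R_{w,\alpha})$. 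For polymatroids of rank $r$ on $\underline d$ a bijective argument gives the count $(r+1)^d-r^d$ when $(d,r)\ne(0,0)$; since $\sum_{d\ge 0}\bigl((r+1)^d-r^d\bigr)x^d/d!=e^{(r+1)x}-e^{rx}$, summing over $r$ as a geometric series in $y$ (the $d=r=0$ term contributing the extra $1$ in the $r=0$ row) produces $\frac{e^x(1-y)}{1-ye^x}$, as in (d). For matroids, writing $q(d,r)$ for the number of distinct labeled Schubert matroid polytopes of rank $r$ on $\underline d$, splitting off the initial segment of the flag up to the first positive increment yields an inclusion--exclusion recursion for $q(d,r)$ which is equivalent to saying that $\sum q(d,r)\,x^{d-r}y^r/d!$ is the reciprocal of $\dfrac{xe^{-x}-ye^{-y}}{x-y}=\sum_{i,j\ge 0}\dfrac{(-1)^{i+j}}{(i+j)!}x^iy^j$ --- which is exactly the identity in (c).

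The main obstacle is the spanning step for the labeled modules: showing that \emph{every} base polytope can be straightened into labeled Schubert polytopes using only subdivision relations, together with the inseparable task of pinning down precisely which pairs $(w,\alpha)$ yield the same polytope, so that the counts come out to $(r+1)^d-r^d$ and to $q(d,r)$ rather than to the naive $d!\binom{r+d-1}{r}$. Once the straightening is established, the upper bounds, the independence arguments and the generating-function computations are all routine.
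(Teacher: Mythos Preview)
Your proposal has two genuine gaps. First, for parts (a) and (b) you invoke Theorem~\ref{theo:1.1} to get the upper bound $p^{\rm sym}_{\rm (P)M}(d,r)\le\#\{\text{admissible }\alpha\}$. In the paper this is circular: Theorem~\ref{theo:1.1} is deduced from Corollary~\ref{cor:PMsymdual}, which in turn rests on the explicit bases of $P_{\rm (P)M}(d,r)$ and $P^{\rm sym}_{\rm (P)M}(d,r)$ (Theorems~\ref{free generation} and~\ref{theo:Psymgens}) whose ranks are exactly what (a) and (b) assert. You cannot use the universality of $\mathcal G$ as an input here; the paper instead counts the explicit index sets $\mathfrak p^{\rm sym}_{\rm (P)M}(d,r)$ directly.

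Second, and more seriously, for (c) and (d) you correctly identify the spanning step as ``the main obstacle'' and then do not carry it out. Your proposed mechanism (straighten an arbitrary base polytope into Schubert polytopes using only matroid-polytope subdivisions, inducting on a complexity statistic) is not how the paper proceeds, and it is not clear it can be made to work: matroid base polytopes are not closed under the intersections that arise in generic subdivisions, and there is no evident well-founded statistic that drops on every piece. The paper bypasses straightening entirely. Theorem~\ref{theo1} applies the Brianchon--Gram identity to a perturbation $Q_\varepsilon(\rk)$ and takes $\varepsilon\to 0$ to obtain, for every megamatroid $\rk$, the closed formula
\[
[Q(\rk)]=\sum_{\underline X}(-1)^{d-\ell(\underline X)}[R_{\rm MM}(\underline X,\rk)],
\]
which immediately shows that the cone indicator functions $[R_{\rm MM}(\underline X,\underline r)]$ span $P_{\rm MM}(d,r)$; intersecting with $\Delta_{\rm (P)M}(d,r)$ and discarding redundancies yields spanning for $P_{\rm (P)M}(d,r)$. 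Linear independence then comes from the valuative functionals $s_{\underline X,\underline r}$ of Proposition~\ref{s is valuative} via a containment-triangular pairing, not from evaluating indicator functions at interior points (your first suggestion fails since these polytopes nest). With the basis $\mathfrak p_{\rm (P)M}(d,r)$ in hand, the enumerations in (c) and (d) are the routine bijection and generating-function manipulations you describe.
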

We also will give {\bf explicit bases} for each of the spaces $P_{\rm (P)M}(d,r)$ and $P_{\rm (P)M}^{\rm sym}(d,r)$
and their duals (see Theorems~\ref{free generation}, \ref{theo:Psymgens}, Corollaries~\ref{cor s univ}, \ref{cor maximal chains}, \ref{cor:PMsymdual}, \ref{cor:PPMsymdual}).

The bigraded module
$$Z_{\rm (P)M}=\bigoplus_{d,r}Z_{\rm (P)M}(d,r)$$
has the structure of a Hopf algebra. Similarly, each of the bigraded modules
$Z_{\rm (P)M}^{\rm sym}$, $P_{\rm (P)M}$ and $P_{\rm (P)M}^{\rm sym}$ has a Hopf algebra structure.
The module $Z_{\rm (P)M}^{\rm sym}$ is the usual Hopf algebra of (poly)matroids, where multiplication is
given by the direct sum of matroids. 

In Sections~\ref{sec:T} and~\ref{sec:Tsym} we construct bigraded modules $T_{\rm (P)M}$\label{not:TPM} and $T_{\rm (P)M}^{\rm sym}$\label{not:TPMsym} such
that $T_{\rm (P)M}(d,r)^\vee$ is the space of all additive functions on $S_{\rm (P)M}(d,r)$
and $T_{\rm (P)M}^{\rm sym}(d,r)^\vee$ is the space of all additive invariants.
Let $t_{\rm (P)M}(d,r)$\label{not:tPM} be the rank of $T_{(P)M}(d,r)$ and $t_{(P)M}^{\rm sym}(d,r)$\label{not:tPMsym}
be the rank of $T_{(P)M}^{\rm sym}(d,r)$. Then $t_{\rm (P)M}(d,r)$ is the number of independent additive functions
on (poly)matroids, and $t_{\rm (P)M}^{\rm sym}(d,r)$ is the number of independent additive invariants
for (poly)matroids. 
We will prove the following formulas:
\begin{theorem}\label{theo1.5}\ 

\begin{enumerate}
\letters
\item
$\displaystyle
\prod_{0\leq r\leq d} (1-x^{d-r}y^r)^{t_{\rm M}^{\rm sym}(d,r)}=1-x-y$,
\item
$\displaystyle
\prod_{r,d} (1-x^dy^r)^{t_{\rm PM}^{\rm sym}(d,r)}=\frac{1-x-y}{1-y}$,
\item
$\displaystyle
\sum_{r,d}\frac{t_{\rm M}(d,r)}{d!}x^{d-r}y^r=\log\left(\frac{x-y}{xe^{-x}-ye^{-y}}\right)$,
\item 
$\displaystyle
t_{\rm PM}(d,r)=\left\{
\begin{array}{ll}
r^{d-1} & \mbox{if $d\geq 1$}\\
0 & \mbox{if $d=0$,}
\end{array}\right.$
and \newline
$\displaystyle\sum_{r,d}\frac{t_{\rm PM}(d,r)}{d!}x^dy^r=\log\big(\frac{e^x(1-y)}{1-ye^x}\big)$.
\end{enumerate}
\end{theorem}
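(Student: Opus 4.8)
The plan is to derive Theorem~\ref{theo1.5} from Theorem~\ref{theo1.4} by using the Hopf-algebraic structure on the modules $P_{\rm (P)M}^{\rm sym}$ and $P_{\rm (P)M}$. The first step is to identify $T_{\rm (P)M}^{\rm sym}(d,r)$, resp.\ $T_{\rm (P)M}(d,r)$, with the module of indecomposables of the corresponding Hopf algebra in bidegree $(d,r)$. Indeed $T_{\rm (P)M}(d,r)$ is by construction the quotient of $P_{\rm (P)M}(d,r)$ by the span of the classes $[Q(\rk)]$ with $\dim Q(\rk)<d-1$; a base polytope $Q(\rk)$ fails to be full-dimensional precisely when $(\underline d,\rk)$ is a direct sum of two (poly)matroids on nonempty ground sets, in which case $Q(\rk)$ is the product of their base polytopes and hence $[Q(\rk)]\in(P^+)^2$, and conversely. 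So this span is exactly the degree-$(d,r)$ part of $(P^+)^2$, and $T$ is the module of indecomposables $Q(P)=P^+/(P^+)^2$. (For matroids ``full-dimensional $\Leftrightarrow$ connected'' is classical; for polymatroids this dichotomy needs a separate check, and that is the only genuinely combinatorial point in the argument.)

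Next I would apply the structure theory of connected commutative Hopf algebras over $\Q$ --- base-changing to $\Q$ is harmless since only ranks are at issue. The unlabelled modules $P_{\rm M}^{\rm sym},P_{\rm PM}^{\rm sym}$ are ordinary bigraded commutative connected Hopf algebras (multiplication being direct sum of (poly)matroids), so by Leray's theorem they are polynomial rings on their indecomposables; therefore
$$\mathrm{Hilb}(P_{\rm M}^{\rm sym})=\prod_{0\le r\le d}(1-x^{d-r}y^{r})^{-t_{\rm M}^{\rm sym}(d,r)},\qquad\mathrm{Hilb}(P_{\rm PM}^{\rm sym})=\prod_{d,r}(1-x^{d}y^{r})^{-t_{\rm PM}^{\rm sym}(d,r)}.$$
Substituting the Hilbert series of Theorem~\ref{theo1.4}(a),(b) and taking reciprocals gives (a),(b). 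For the labelled modules $P_{\rm M},P_{\rm PM}$ the multiplication (direct sum after shifting ground sets) is commutative only up to relabelling, so I would instead treat them as connected commutative Hopf monoids in the category of $\Q$-vector species; the species analogue of Leray's theorem presents them as free commutative monoids $E\circ T_{\rm (P)M}$ on their species of primitives, and the exponential formula $\mathrm{EGF}(E\circ T)=\exp(\mathrm{EGF}(T))$ gives
$$\sum_{d,r}\frac{t_{\rm M}(d,r)}{d!}x^{d-r}y^{r}=\log\mathrm{EGF}(P_{\rm M}),\qquad\sum_{d,r}\frac{t_{\rm PM}(d,r)}{d!}x^{d}y^{r}=\log\mathrm{EGF}(P_{\rm PM}).$$
Both generating functions on the right in Theorem~\ref{theo1.4}(c),(d) have constant term $1$, so the logarithms are well-defined, and substituting them gives the logarithmic expressions in (c),(d).

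For the closed form $t_{\rm PM}(d,r)=r^{d-1}$ ($d\ge1$) in (d) it remains to expand the answer: write $\log\big(\tfrac{e^{x}(1-y)}{1-ye^{x}}\big)=x+\log(1-y)-\log(1-ye^{x})$ and use
$$-\log(1-ye^{x})=\sum_{k\ge1}\frac{y^{k}e^{kx}}{k}=\sum_{k\ge1}\sum_{d\ge0}\frac{k^{d-1}}{d!}x^{d}y^{k}.$$
The $d=0$ part of this last double sum is $\sum_{k\ge1}\tfrac{y^{k}}{k}$, cancelling $\log(1-y)$; what remains is $x+\sum_{d\ge1}\sum_{k\ge1}\tfrac{k^{d-1}}{d!}x^{d}y^{k}$, so the coefficient of $x^{d}y^{r}$ is $r^{d-1}/d!$ for $d\ge1$ (the isolated $x$ covering $(d,r)=(1,0)$ under $0^{0}=1$) and $0$ for $d=0$, as claimed.

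The main obstacle is the structural input rather than the generating-function bookkeeping: one must establish that $P_{\rm (P)M}^{\rm sym}$ and $P_{\rm (P)M}$ are genuinely free --- polynomial, resp.\ free commutative in species --- with indecomposables in the expected bidegrees. This rests on two things: the combinatorial geometry (the polymatroid full-dimensionality/connectivity dichotomy, which is what makes ``additive'' mean ``annihilates decomposables''), and the algebra (verifying that these modules really are connected commutative Hopf monoids and invoking the characteristic-zero structure theorems). With both in place, Theorem~\ref{theo1.5} follows formally from Theorem~\ref{theo1.4}.
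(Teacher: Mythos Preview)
Your approach is correct and genuinely different from the paper's. You reduce everything to the identification $T_{\rm (P)M}^{(\rm sym)}\cong P^+/(P^+)^2$ and then invoke the Leray/Milnor--Moore structure theorem (and its species analogue) for connected commutative Hopf algebras over~$\Q$. The paper does not appeal to these structure theorems; instead it proves the needed decomposition directly (Theorem~\ref{theo:iso}) by constructing explicit idempotent-like operators $\gamma_{\underline X}$ on~$P_{\rm *M}$, one for each set partition~$\underline X$ of~$\underline d$, and showing that these split the codimension filtration. Theorem~\ref{theo:symalg} (that $\overline P^{\rm sym}$ is the symmetric algebra on~$T^{\rm sym}$) is then a corollary, and the generating-function identities follow. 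So the paper establishes freeness by hand, over~$\Z$, whereas you outsource it to general Hopf-theoretic machinery after base change to~$\Q$. Your route is shorter and more conceptual once the machinery is granted; the paper's route is self-contained, yields the result integrally, and gives explicit splittings rather than abstract existence. One small slip: in the species paragraph you write ``species of primitives,'' but you mean indecomposables --- $P_{\rm (P)M}$ is commutative but not cocommutative, so it is free on its indecomposables, and the primitives are a different (though, over~$\Q$, isomorphic as a species) object.
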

We will also give {\bf explicit bases} for the the spaces $T_{\rm M}(d,r)$ and $T_{\rm PM}(d,r)$ in Theorem~\ref{theo:TPMgens},
and of the dual spaces $T_{\rm M}^{\rm sym}(d,r)^\vee\otimes_\Z\Q$, $T_{\rm PM}^{\rm sym}(d,r)^\vee\otimes_\Z\Q$
 in Theorem~\ref{theo:Tdual}.

For $\Q$-valued functions we will prove the following isomorphisms in Section~\ref{sec:free algebras}.
\begin{theorem}\label{theo:free}
Let $u_0,u_1,u_2,\dots$ be indeterminates, where $u_i$ has bidgree $(1,i)$.
We have the following isomorphisms of bigraded associative algebras over $\Q$:
\begin{enumerate}
\letters
\item
The space $(P^{\rm sym}_{M})^\vee\otimes_\Z \Q$ of $\Q$-valued valuative invariants
on matroids is isomorphic to $\Q\langle\langle u_0,u_1\rangle\rangle$, the completion (in power series) 
of the free associative algebra generated by $u_0,u_1$.
\item The space $(P^{\rm sym}_{\rm PM})^\vee \otimes_\Z \Q$ of $\Q$-valued valuative invariants
on polymatroids is isomorphic to $\Q\langle\langle u_0,u_1,u_2,\dots\rangle\rangle$.
\item The space $(T^{\rm sym}_{\rm M})^\vee
\otimes_\Z\Q$ of $\Q$-valued additive invariants
on matroids is isomorphic to $\Q\{\{u_0,u_1\}\}$, the completion of the free
Lie algebra generated by $u_0,u_1$.
\item The space $(T^{\rm sym}_{\rm PM})^\vee \otimes_\Z\Q$ of $\Q$-valued additive invariants
on polymatroids is isomorphic to $\Q\{\{u_0,u_1,u_2,\dots\}\}$.
\end{enumerate}
\end{theorem}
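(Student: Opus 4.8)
The plan is to exhibit, for each of the four algebras, a map from the appropriate free object and to check that it is an isomorphism by a bidegree-by-bidegree rank count, using the Hopf-algebra structures and Theorems~\ref{theo:1.1} and~\ref{theo1.4}. Throughout, write $H_{\rm (P)M}$ for the space $(P^{\rm sym}_{\rm (P)M})^\vee\otimes_\Z\Q$ of $\Q$-valued valuative invariants; by the remarks preceding Theorem~\ref{theo1.4} it equals $\prod_{(d,r)}P^{\rm sym}_{\rm (P)M}(d,r)^\vee\otimes\Q$, a bigraded vector space with finite-dimensional components (Theorem~\ref{theo1.4}), and it carries the complete Hopf-algebra structure dual to that on $P^{\rm sym}_{\rm (P)M}\otimes\Q$: its product is dual to the restriction--contraction coproduct, and its coproduct is dual to the direct-sum product. (Here and below, ``degree'' refers to the grading by ground-set size $d$.) For each admissible $i$ ($i\in\{0,1\}$ for matroids, $i\in\N$ for polymatroids) the bidegree-$(1,i)$ component $P^{\rm sym}_{\rm (P)M}(1,i)^\vee\otimes\Q$ is one-dimensional; let $u_i$ be the functional that is $1$ on the unique rank-$i$ polymatroid with a one-element ground set. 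A one-element ground set admits no nontrivial direct-sum splitting, so $u_i$ annihilates every product of two elements of positive degree, hence is primitive in $H_{\rm (P)M}$; moreover the $u_i$ span the whole degree-$1$ part of $H_{\rm (P)M}$.

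I would first establish (a) and (b). By the universal property of the completed free associative algebra, $u_i\mapsto u_i$ extends to a continuous homomorphism of bigraded algebras
\[
\Phi\colon\ \Q\langle\langle u_0,u_1\rangle\rangle\longrightarrow H_{\rm M}\qquad\Bigl(\text{resp. }\Q\langle\langle u_0,u_1,u_2,\dots\rangle\rangle\longrightarrow H_{\rm PM}\Bigr),
\]
and since the generators $u_i$ are primitive on both sides, $\Phi$ is in fact a homomorphism of Hopf algebras. The dimensions match bidegree-by-bidegree: the bidegree-$(d,r)$ subspace of the free associative algebra has as basis the words $u_{i_1}\cdots u_{i_d}$ with $i_1+\dots+i_d=r$, of which there are $\binom{d}{r}$ in the matroid case and $\binom{d+r-1}{r}$ in the polymatroid case, that is $p^{\rm sym}_{\rm (P)M}(d,r)=\dim_\Q(H_{\rm (P)M})_{(d,r)}$ by Theorem~\ref{theo1.4}(a),(b). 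Hence it suffices to show that $\Phi$ is surjective, equivalently that $H_{\rm (P)M}$ is generated as an algebra by its degree-$1$ part.

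This surjectivity is the crux, and it is where Theorem~\ref{theo:1.1} enters. Dualizing, ``$H_{\rm (P)M}$ is generated in degree $1$'' is equivalent to ``$P^{\rm sym}_{\rm (P)M}$ is cogenerated in degree $1$'', i.e.\ to the injectivity, for every $(d,r)$, of the map $P^{\rm sym}_{\rm (P)M}(d,r)\otimes\Q\to\Bigl(\bigoplus_i P^{\rm sym}_{\rm (P)M}(1,i)\otimes\Q\Bigr)^{\otimes d}$ obtained by iterating the coproduct and projecting onto the part in which each of the $d$ tensor factors has ground-set size $1$. Unwinding the restriction--contraction coproduct over the $d!$ maximal flags $\emptyset=X_0\subset\cdots\subset X_d=\underline d$ of the ground set, this map sends $[Q(\rk)]$ to
\[
\sum_{\underline X}\ \bigotimes_{j=1}^{d}\bigl[Q(\text{the rank-}(\rk(X_j)-\rk(X_{j-1}))\text{ polymatroid on a point})\bigr],
\]
which under the identification of these pure tensors with the basis vectors $U_{r(\underline X)}$ of the codomain of $\mathcal G$ is exactly $\mathcal G\bigl((\underline d,\rk)\bigr)$. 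Theorem~\ref{theo:1.1} says the coefficient functionals of $\mathcal G$ span the space of $\Q$-valued valuative invariants, so $\mathcal G$ is injective; hence $\Phi$ is surjective, and by the dimension count $\Phi$ is an isomorphism of bigraded Hopf algebras. Passing to completions gives (a) and (b).

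Finally (c) and (d) follow formally. As an isomorphism of complete Hopf algebras, $\Phi$ restricts to an isomorphism from the Lie algebra of primitives of the free associative algebra onto $\mathrm{Prim}(H_{\rm (P)M})$. On the one hand, the primitives of a free associative algebra on primitive generators form the completed free Lie algebra on those generators (a standard fact about graded connected Hopf algebras over a field of characteristic $0$), so the primitives of the source of $\Phi$ are $\Q\{\{u_0,u_1\}\}$, resp.\ $\Q\{\{u_0,u_1,u_2,\dots\}\}$. On the other hand, a homogeneous element of $H_{\rm (P)M}$ of positive degree is primitive if and only if it annihilates every product $[Q(\rk_1)]\cdot[Q(\rk_2)]=[Q(\rk_1\oplus\rk_2)]$ with $\rk_1,\rk_2$ of positive degree; since $\dim Q(\rk_1\oplus\rk_2)<d-1$ while, conversely, every (poly)matroid whose base polytope has dimension $<d-1$ is a nontrivial direct sum, these are exactly the additive invariants, i.e.\ $\mathrm{Prim}(H_{\rm (P)M})=(T^{\rm sym}_{\rm (P)M})^\vee\otimes_\Z\Q$ (as identified before Theorem~\ref{theo1.5}). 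Combining the two descriptions yields (c) and (d). The only step requiring genuine input, as opposed to the Hopf-algebraic dictionary and bookkeeping with the bigrading and the completions, is the surjectivity of $\Phi$, which rests on Theorem~\ref{theo:1.1}; Theorem~\ref{theo1.4} then supplies the rank count that closes the argument.
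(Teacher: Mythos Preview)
Your argument for (a) and (b) is essentially the paper's, repackaged: the paper uses Corollaries~\ref{cor:PMsymdual} and~\ref{cor:PPMsymdual} (which are equivalent to Theorem~\ref{theo:1.1}) to exhibit the $u_\alpha$ as a basis, then checks the multiplication formula $u_\alpha\cdot u_\beta=\binom{d+e}{d}u_{\alpha\beta}$ directly, whereas you feed the same input through the dual formulation ``$\mathcal G$ injective $\Rightarrow$ $H$ generated in degree~$1$'' and close with the rank count of Theorem~\ref{theo1.4}.

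For (c) and (d) your route is genuinely different.  The paper proves Lemma~\ref{lem:Lie} by an explicit computation showing that $[f,g]$ annihilates $\mathfrak m_{\rm (P)M}^2$ whenever $f,g$ do, so that the free Lie algebra sits inside $(T^{\rm sym}_{\rm (P)M})^\#\otimes\Q$, and then matches Hilbert series via Poincar\'e--Birkhoff--Witt on one side and Theorem~\ref{theo:symalg} (the symmetric-algebra structure of $\overline P^{\rm sym}_{\rm *M}$) on the other.  You instead transport the problem through the Hopf isomorphism $\Phi$ established in (a),(b): since $\Phi$ sends primitive generators to primitive elements it is a Hopf map, hence restricts to an isomorphism on primitives; Friedrichs' theorem identifies the primitives of the source with the completed free Lie algebra, and the identity $\mathfrak m_{\rm (P)M}^2=\bigoplus_{d,r}P^{\rm sym}_{\rm (P)M}(d,r,2)$ (stated just before Lemma~\ref{lem:Lie}, and resting on the fact that a base polytope of codimension $\geq 2$ is a nontrivial product) identifies the primitives of the target with $(T^{\rm sym}_{\rm (P)M})^\vee\otimes\Q$.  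Your approach is cleaner in that it avoids the bracket computation of Lemma~\ref{lem:Lie} and the separate appeal to Theorem~\ref{theo:symalg}; the paper's approach has the virtue of making the Lie-algebra containment explicit and independent of the full Hopf isomorphism.
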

Tables for $p_{\rm (P)M}$, $p_{\rm (P)M}^{\rm sym}$, $t_{\rm (P)M}$, $t_{\rm (P)M}^{\rm sym}$
are given in~\ref{apB}.

An index of notations used in this paper appears on page~\pageref{notindex}.
To aid the reader in keeping them in mind we present an abridged table here.
In a notation of the schematic form 
$\mbox{Letter}_{\scriptsize\mbox{sub}}^{\scriptsize\mbox{super}}(d,r)$ :

\vspace{1ex}\noindent\begin{tabular}{llll}
The letter & $S$ & refers to & the set of *-matroids \\
& $Z$ & & the $\Z$-module with basis all *-matroids \\
& $P$ & & the $\Z$-module of indicator functions of *-matroids \\
& $T$ & & the $\Z$-module of indicator functions of *-matroids, \\
& & & modulo changes on subspaces of dimension $<d-1$ \\
\end{tabular}\\
with ground set $\underline d$ of rank $r$.  If the letter is lowercase,
we refer not to the $\Z$-module but to its rank.

\vspace{1ex}\noindent\begin{tabular}{llll}
The subscript & M & means the *-matroids are & matroids \\
& PM & & polymatroids \\
& MM & & megamatroids (Def.~\ref{def:megamatroid}); \\
\end{tabular}\\
additionally, when we want to refer to multiple cases in parallel,

\noindent\begin{tabular}{llll}
the subscript & (P)M & covers & matroids and polymatroids \\
& *M & & matroids and poly- and mega-matroids. \\
\end{tabular}\\

The superscript sym means that we are only considering *-matroids up to isomorphism.

\section{Polymatroids and their polytopes}
For technical reasons it will be convenient to have an
``unbounded'' analogue of polymatroids, especially when we work with their polyhedra.
So we make the following definition.
\begin{definition}\label{def:megamatroid}
A function $2^X\to \Z\cup \{\infty\}$ is called a {\em megamatroid}\footnote{A more appropiate terminology
would be {\em apeiromatroid}, but apeiromatroid simply does not sound as good as megamatroid.} 
if it has the following properties:
\begin{enumerate}
\item $\rk(\emptyset)=0$;
\item $\rk(X)\in \Z$;
\item $\rk$ is {\em submodular}: if $\rk(A),\rk(B)\in \Z$, then $\rk(A\cup B),\rk(A\cap B)\in \Z$ and
$\rk(A\cup B)+\rk(A\cap B)\leq \rk(A)+\rk(B).$
\end{enumerate}
\end{definition} 
Obviously, every matroid is a polymatroid, and every polymatroid is a megamatroid.
The {\em rank} of a megamatroid $(X,\rk)$ is the integer $\rk(X)$. 

By a {\em polyhedron} we will mean a finite intersection of closed half-spaces. 
A {\em polytope} is a bounded polyhedron.
\label{not:Q}
\begin{definition}\label{def:basepolytope}
For a megamatroid $(\underline{d},\rk)$, we define its {\em base polyhedron} $Q(\rk)$ as the set of all $(y_1,\dots,y_d)\in \R^d$
such that $y_1+y_2+\cdots+y_d=\rk(X)$ and $\sum_{i\in A}y_i\leq \rk(A)$ for all $A\subseteq X$.
\end{definition}
If $\rk$ is a polymatroid then $Q(\rk)$ is a polytope, called the {\em base polytope} of $\rk$. 
In \cite{Edmonds}, Edmonds studies a similar polytope for a polymatroid $(\underline{d},\rk)$ which contains $Q(\rk)$ as a facet. 

\begin{lemma}\label{lemnonempty}
If $(\underline{d},\rk)$ is a megamatroid, then $Q(\rk)$ is nonempty.
\end{lemma}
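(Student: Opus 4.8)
The plan is to run the greedy (flag) construction familiar from polymatroid theory, modified to cope with the infinite ranks a megamatroid may carry. First I would pass to the collection $\mathcal{L}=\{A\subseteq\underline{d}:\rk(A)\in\Z\}$ of finite-rank sets. By parts (1) and (2) of Definition~\ref{def:megamatroid} we have $\emptyset,\underline{d}\in\mathcal{L}$, and by part (3) the collection $\mathcal{L}$ is closed under union and intersection; hence $\mathcal{L}$ is a finite lattice with least element $\emptyset$ and greatest element $\underline{d}$, on which $\rk$ is a (possibly non-monotone) $\Z$-valued submodular function. Since a constraint $\sum_{i\in A}y_i\le\rk(A)$ coming from a set $A\notin\mathcal{L}$ is vacuous, it suffices to produce $y\in\R^d$ with $\sum_{i\in\underline{d}}y_i=\rk(\underline{d})$ and $\sum_{i\in A}y_i\le\rk(A)$ for every $A\in\mathcal{L}$.

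Next I would fix a maximal chain in the lattice $\mathcal{L}$; since $\emptyset$ and $\underline{d}$ are its least and greatest elements this chain has the form $\emptyset=A_0\subsetneq A_1\subsetneq\cdots\subsetneq A_m=\underline{d}$. Put $B_j=A_j\setminus A_{j-1}$, so $\underline{d}$ is the disjoint union of the blocks $B_1,\dots,B_m$. The combinatorial heart of the argument is the claim that every $C\in\mathcal{L}$ is a union of blocks: for any index $j$, the set $(C\cap A_j)\cup A_{j-1}$ lies in $\mathcal{L}$ and satisfies $A_{j-1}\subseteq(C\cap A_j)\cup A_{j-1}\subseteq A_j$, so by maximality of the chain it equals $A_{j-1}$ or $A_j$; in the first case $C\cap B_j=\emptyset$ and in the second $B_j\subseteq C$, and since the $B_j$ partition $\underline{d}$ this forces $C$ to be exactly the union of the blocks it contains. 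Granting the claim, I define $y$ by spreading each rank jump over its block arbitrarily, say $y_i=(\rk(A_j)-\rk(A_{j-1}))/|B_j|$ for $i\in B_j$, so that $\sum_{i\in B_j}y_i=\rk(A_j)-\rk(A_{j-1})$; summing over $j$, the equality $\sum_{i\in\underline{d}}y_i=\rk(\underline{d})$ telescopes.

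It then remains to verify the inequalities. Given $C\in\mathcal{L}$, write $C=B_{j_1}\cup\cdots\cup B_{j_k}$ with $j_1<\cdots<j_k$ and set $C_\ell=B_{j_1}\cup\cdots\cup B_{j_\ell}$; one checks $C_\ell=C\cap A_{j_\ell}\in\mathcal{L}$, and also $A_{j_\ell-1}\cup C_\ell=A_{j_\ell}$ and $A_{j_\ell-1}\cap C_\ell=C_{\ell-1}$. Applying submodularity to the finite-rank sets $A_{j_\ell-1}$ and $C_\ell$ gives $\rk(C_\ell)-\rk(C_{\ell-1})\ge\rk(A_{j_\ell})-\rk(A_{j_\ell-1})$; summing over $\ell$, the left side telescopes to $\rk(C)$ while the right side equals $\sum_\ell\sum_{i\in B_{j_\ell}}y_i=\sum_{i\in C}y_i$, so $\sum_{i\in C}y_i\le\rk(C)$ and $y\in Q(\rk)$. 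The step needing the most care is precisely the interaction with infinite ranks: confirming that $\mathcal{L}$ is a sublattice containing $\emptyset$ and $\underline{d}$, that the block decomposition refines every finite-rank set, and that each set entering a submodularity application indeed has finite rank so that the inequality is available. For a genuine (poly)matroid all of this is automatic — one has $\mathcal{L}=2^{\underline{d}}$ and may take a complete flag, so the blocks are singletons — and the argument collapses to the classical greedy algorithm.
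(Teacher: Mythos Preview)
Your proof is correct and takes a genuinely different route from the paper's. The paper proceeds in two stages: first it handles the special case where $\rk(\underline i)<\infty$ for every $i$ by exhibiting the greedy point $y=(r_1-r_0,\dots,r_d-r_{d-1})$ along the full flag $\underline 0\subset\underline 1\subset\cdots\subset\underline d$; then it reduces the general case to this one by introducing the truncation $\rk^N(A)=\min_{X\subseteq A}\big(\rk(X)+N(|A|-|X|)\big)$, checking that $\rk^N$ is again a megamatroid with all ranks finite and $Q(\rk^N)\subseteq Q(\rk)$. You instead isolate the lattice $\mathcal L$ of finite-rank sets, take a maximal chain there (possibly much shorter than a full flag), show that every $C\in\mathcal L$ is a union of the chain's blocks, and run the greedy telescoping argument directly in $\mathcal L$. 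Your approach is more self-contained and conceptually clean, dispensing with the auxiliary $\rk^N$; the paper's approach has the side benefit that the truncation $\rk^N$ is a reusable tool (it reappears in the proof of Theorem~\ref{theo1}), and its greedy point is automatically integral. Either argument establishes the lemma.
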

\begin{proof}
First, assume that $\rk$ is a megamatroid such that
$r_i:=\rk(\underline i)$ is finite for $i=0,1,\dots,d$.
We claim that
$$
y=(r_1-r_0,r_2-r_1,\dots,r_d-r_{d-1})\in Q(\rk).
$$
Indeed, if $A=\{i_1,\dots,i_k\}$ with $1\leq i_1<\cdots<i_k\leq d$
 then, by the submodular property, we have
\begin{multline*}
\sum_{i\in A}y_i=\sum_{j=1}^k \rk(\underline{i_j})-\rk(\underline{i_{j-1}})\leq\\
\leq 
\sum_{j=1}^k\rk(\{i_1,\dots,i_j\})-\rk(\{i_1,\dots,i_{j-1}\})=\rk(\{i_1,\dots,i_k\})=\rk(A).
\end{multline*}
where the inequality holds even if the right hand side is infinite.

Now, assume that $\rk$ is any megamatroid.
Define $\rk^N$ by
\begin{equation}\label{eq rk^N}
\rk^N(A)=\min_{X\subseteq A}\rk(X)+N(|A|-|X|).
\end{equation}
 Let $N$ be large enough such that $\rk^N(\underline{d})=\rk(\underline{d})$.
 If $A,B\subseteq \underline{d}$, then we have
 $$
 \rk^N(A)=\rk(X)+N(|A|-|X|),\quad \rk^N(B)=\rk(Y)+N(|A|-|Y|)
 $$
 for some $X\subseteq A$ and some $Y\subseteq B$.
 It follows that
 \begin{multline*}
 \rk^N(A\cap B)+\rk^N(A\cup B) \\
 \leq\rk(X\cap Y)+N(|A\cap B|-|X\cap Y|)+\rk(X\cup Y)+N(|A\cup B|-|X\cup Y|)\\
 =\rk(X\cap Y)+\rk(X\cup Y)+N(|A|+|B|-|X|-|Y|) \\
 \leq \rk(X)+\rk(Y)+N(|A|+|B|-|X|-|Y|)=\rk^N(A)+\rk^N(B).
 \end{multline*}
 This shows that $\rk^N$ is a megamatroid.
 Since $\rk^N(A)\leq \rk(A)$ for all $A\subseteq \underline{d}$,
 we have $Q(\rk^N)\subseteq Q(\rk)$. 
 Since $\rk^N(A)<\infty$ for all $A\subseteq \underline{d}$, we have that $Q(\rk^N)\neq \emptyset$.
 We conclude that $Q(\rk)\neq \emptyset$.
 \end{proof}

A megamatroid $(\underline{d},\rk)$ of rank $r$ is a polymatroid 
if and only if its base polytope is contained in the simplex
$$
\Delta_{\rm PM}(d,r)=\{(y_1,\dots,y_d)\in \R^d\mid y_1,\dots,y_d\geq 0,\ y_1+y_2+\cdots+y_d=r\}
$$ \label{not:delta}
and it is a matroid if and only if its base polytope is contained in the hypersimplex
$$
\Delta_{\rm M}(d,r)=\{(y_1,\dots,y_d)\in \R^d\mid 0\leq y_1,\dots,y_d\leq 1,\ y_1+y_2+\cdots+y_d=r\}.
$$

If $(\underline{d},\rk)$ is a matroid, then a subset $A\subseteq \underline{d}$ is
a {\em basis} when $\rk(A)=|A|=\rk(\underline d)$. 
In this case, the base polytope of $(\underline{d},\rk)$ is
the convex hull of all $\sum_{i\in A}e_i$ where $A\subseteq \underline{d}$ is a basis (see~\cite{GGMS}).
The base polytope of a matroid was characterized in \cite{GGMS}:
\begin{theorem}
A polytope $\Pi$ contained in $\Delta_{\rm M}(d,r)$ is the base polytope of a matroid if and only if it
has the following properties:
\begin{enumerate}
\item The vertices of $\Pi$ have integral coordinates;
\item every edge of $\Pi$ is parallel to $e_i-e_j$ for some $i,j$ with $i\neq j$.
\end{enumerate}
\end{theorem}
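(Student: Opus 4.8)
This is the characterization of Gelfand--Goresky--MacPherson--Serganova \cite{GGMS}. The plan is to prove the two implications separately; the forcing direction (``if'') is the substantive one, while the other follows quickly from the description of matroid base polytopes recalled above. Throughout write $e_B:=\sum_{i\in B}e_i$.

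\emph{Only if.} Suppose $\Pi=Q(\rk)$ for a matroid $(\underline d,\rk)$ of rank $r$. By the cited description \cite{GGMS}, $\Pi=\mathrm{conv}\{e_B:B\text{ a basis}\}$, so every vertex of $\Pi$ is some $e_B$ and has $0/1$ coordinates; this gives (1). For (2), let $[e_B,e_{B'}]$ be an edge of $\Pi$ and put $k:=|B\setminus B'|=|B'\setminus B|$. If $k\ge 2$, fix $i\in B\setminus B'$; by the symmetric basis-exchange property of matroids (see \cite{Oxley}) there is $j\in B'\setminus B$ such that $B_1:=(B\setminus i)\cup j$ and $B_2:=(B'\setminus j)\cup i$ are both bases. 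Then $e_{B_1}+e_{B_2}=e_B+e_{B'}$, so the midpoint of $[e_B,e_{B'}]$ is also the midpoint of the segment $[e_{B_1},e_{B_2}]\subseteq\Pi$; as this point lies in the relative interior of the edge $[e_B,e_{B'}]$, and an edge is a face, we conclude $e_{B_1},e_{B_2}\in[e_B,e_{B'}]$. Since $e_{B_1}$ is a $0/1$ vector, hence a vertex of $\Pi$, it equals $e_B$ or $e_{B'}$; because $j\notin B$ we must have $B_1=B'$, which forces $k=1$, a contradiction. Hence $k=1$ and $e_{B'}-e_B=e_j-e_i$, which is (2).

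\emph{If.} Suppose $\Pi\subseteq\Delta_{\rm M}(d,r)$ satisfies (1) and (2). Since $\Pi\subseteq\Delta_{\rm M}(d,r)$, each integral point of $\Pi$ equals $e_B$ for a unique $r$-subset $B\subseteq\underline d$; let $\mathcal B:=\{B:e_B\in\vertex\Pi\}$, a nonempty set of $r$-subsets. I will verify the basis-exchange axiom for $\mathcal B$. Fix $B,B'\in\mathcal B$ and $i\in B\setminus B'$, and let $H\subseteq\R^d$ be the affine subspace on which $y_k=1$ for $k\in B\cap B'$ and $y_k=0$ for $k\notin B\cup B'$. Because $0\le y_k\le 1$ on $\Pi$ and $e_B$ attains the relevant extreme value in each of these coordinates, each such equality cuts out a face of $\Pi$, so $\Pi':=\Pi\cap H$ is a face of $\Pi$; moreover $e_B,e_{B'}\in\Pi'$, the vertices of $\Pi'$ are vertices of $\Pi$, and the edges of $\Pi'$ are edges of $\Pi$, hence parallel to differences $e_a-e_b$. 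Now $y_i\ge 0$ on $\Pi'$ while $(e_{B'})_i=0$ and $(e_B)_i=1$, so $y_i$ attains its minimum $0$ over $\Pi'$, but not at $e_B$; therefore, by the standard fact that a vertex not minimizing a linear functional has an incident edge along which the functional decreases, $\Pi'$ has an edge at $e_B$ along which $y_i$ strictly decreases. Its other endpoint is a vertex $e_C$ of $\Pi'$; since this edge is parallel to some $e_a-e_b$ and both endpoints are $0/1$ vectors with $i$-th coordinate dropping from $1$ to $0$, we get $C=(B\setminus i)\cup b$ for some $b\notin B$. But $e_C\in\Pi'\subseteq H$: $b\in B\cap B'$ is impossible as $b\notin B$, and if $b\notin B\cup B'$ then $(e_C)_b=0$ is forced, contradicting $b\in C$; hence $b\in B'\setminus B$. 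Thus $(B\setminus i)\cup b\in\mathcal B$ with $b\in B'\setminus B$, the desired exchange.

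Consequently $\mathcal B$ is the set of bases of a matroid $(\underline d,\rk_{\mathcal B})$, whose base polytope is $\mathrm{conv}\{e_B:B\in\mathcal B\}$ by the cited description \cite{GGMS}. Since a polytope equals the convex hull of its vertex set, $\Pi=\mathrm{conv}(\vertex\Pi)=\mathrm{conv}\{e_B:B\in\mathcal B\}=Q(\rk_{\mathcal B})$, so $\Pi$ is a matroid base polytope. The step I expect to be the crux is the exchange verification in the ``if'' direction: the key point is that passing to the face $\Pi'$, which freezes the coordinates indexed by $B\cap B'$ and by $\underline d\setminus(B\cup B')$, upgrades the weak conclusion ``some improving edge introduces an element outside $B$'' to the exact statement ``it introduces an element of $B'\setminus B$'', precisely because in $\Pi'$ there are no other free coordinates to absorb the change.
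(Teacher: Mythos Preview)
The paper does not give its own proof of this theorem; it is quoted as a result of Gelfand--Goresky--MacPherson--Serganova~\cite{GGMS} and then generalized to megamatroids in the subsequent lemmas and Proposition~2.8. So there is no in-paper proof to compare your argument against directly.

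Your proof is correct. The ``only if'' direction is the standard argument via symmetric basis exchange, and your ``if'' direction is a clean combinatorial verification of the basis-exchange axiom: passing to the face $\Pi'=\Pi\cap H$ is exactly the right move, since it pins down the coordinates outside $B\mathbin{\triangle}B'$ and forces the improving edge at $e_B$ to trade $i$ for an element of $B'\setminus B$. The final identification $\Pi=\mathrm{conv}\{e_B:B\in\mathcal B\}=Q(\rk_{\mathcal B})$ uses only the (separate, more elementary) fact that a matroid base polytope is the convex hull of its basis vectors, so there is no circularity.

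For contrast, the paper's proof of the megamatroid generalization proceeds quite differently: it defines $\rk_\Pi(A)=\sup\{\sum_{i\in A}y_i:y\in\Pi\}$, shows via Lemma~\ref{lemsupequal} that $\rk_\Pi$ is submodular, and then argues $Q(\rk_\Pi)=\Pi$ by comparing supporting half-spaces. That route avoids any basis-exchange combinatorics and works uniformly for unbounded polyhedra, at the cost of the chain-of-faces linear-programming argument in Lemmas~\ref{lemrankequal}--\ref{lemsupequal}. Your approach is more elementary and matroid-specific; the paper's is more structural and is what makes the extension to mega- and polymatroids go through.
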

We will generalize this characterization to megamatroids.
\begin{definition}
A convex polyhedron contained in $y_1+\cdots+y_d=r$ is called a {\em megamatroid polyhedron} if
for every face $F$ of $\Pi$,  the linear hull $\linhull(F)$ is of the form $z+W$\label{not:linhull}
where $z\in \Z^d$ and $W$ is spanned by vectors of the form $e_i-e_j$.
\end{definition}

The bounded megamatroid polyhedra are exactly the lattice polytopes
among the {\em generalized permutohedra} of~\cite{Postnikov}
or the {\em submodular rank tests} of~\cite{MPSSW}.
General megamatroid polyhedra are the natural unbounded generalizations.

Faces of megamatroid polyhedra are again megamatroid polyhedra.
If we intersect a megamatroid polyhedron $\Pi$ with
the hyperplane $y_d=s$, we get again a megamatroid polyhedron.
For a megamatroid polyhedron $\Pi$, define $\rk_\Pi:2^{\underline{d}}\to\Z\cup \{\infty\}$ by
$$
\rk_{\Pi}(A):=\sup\{\textstyle\sum_{i\in A}y_i\mid y\in \Pi\}.
$$ \label{not:rkPi}
\begin{lemma}\label{lemrankequal}
Suppose that $\Pi$ is a megamatroid polyhedron, $A\subseteq B$ and $\rk_{\Pi}(A)<\infty$.
Let $F$ be the face of $\Pi$ on which $\sum_{i\in A}y_i$ is maximal. Then
$$
\rk_{\Pi}(B)=\rk_{F}(B).
$$
\end{lemma}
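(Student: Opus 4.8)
The plan is to reduce the statement to an ``exchange'' property of the edges of $\Pi$. Write $\ell_S(y)=\sum_{i\in S}y_i$ for $S\subseteq\underline d$, so that $\rk_\Pi(S)=\sup_{y\in\Pi}\ell_S(y)$. Since $\rk_\Pi(A)<\infty$ the functional $\ell_A$ is bounded above on the polyhedron $\Pi$, hence attains its maximum, and $F=\{y\in\Pi:\ell_A(y)=\rk_\Pi(A)\}$ is a nonempty face. As $F\subseteq\Pi$ the inequality $\rk_F(B)\le\rk_\Pi(B)$ is immediate, so the whole content is the reverse one; I would obtain it from the claim that \emph{for every $y\in\Pi$ there is a point $y'\in F$ with $\ell_B(y')\ge\ell_B(y)$}, since taking suprema then gives $\rk_\Pi(B)=\sup_\Pi\ell_B\le\sup_F\ell_B=\rk_F(B)$.

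First I would reduce to the case that $\Pi$ is pointed. Its lineality space $L$ is the linear span of a minimal face, and minimal faces, being megamatroid polyhedra, have linear spans spanned by vectors $e_i-e_j$; hence $L$ is spanned by such vectors, each of which (with its negative) lies in the recession cone, so $\ell_A$, being bounded above on $\Pi$, vanishes on $L$, and therefore $y'+L\subseteq F$ for every $y'\in F$. If $\ell_B$ does not vanish on $L$ then $\ell_B$ is unbounded on $F$ and on $\Pi$, so both ranks are $\infty$ and we are done; otherwise $\ell_A,\ell_B$ descend to $\R^d/L$, where $\Pi$ is pointed, and there we may write $\Pi=\mathrm{conv}\{v_1,\dots,v_n\}+C$ with the $v_k$ its vertices and $C$ its recession cone.

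The crux is the exchange step. If $w$ is a vertex of $\Pi$ with $\ell_A(w)<\rk_\Pi(A)$, then $\ell_A$ is not maximal at $w$, so some edge of $\Pi$ issuing from $w$ strictly increases $\ell_A$; since every edge of a megamatroid polyhedron is parallel to some $e_i-e_j$, this edge has direction $e_i-e_j$ with $i\in A$ and $j\notin A$. Because $A\subseteq B$ and $i\in A$ we have $i\in B$, so $\ell_B(e_i-e_j)=1-\ell_B(e_j)\ge 0$: moving along the edge does not decrease $\ell_B$. The edge cannot be an unbounded ray (else $\ell_A$ would be unbounded on $\Pi$), so it leads to another vertex; iterating --- the process stops because $\ell_A$ strictly increases and there are finitely many vertices --- produces from any starting vertex $v$ a vertex $v^*\in F$ with $\ell_B(v^*)\ge\ell_B(v)$. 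For the recession directions: the extreme rays of $C$ are directions of unbounded edges, hence parallel to vectors $e_i-e_j$; if such a generator has $\ell_B(e_i-e_j)=1$, i.e.\ $i\in B$ and $j\notin B$, then $j\notin B\supseteq A$ gives $j\notin A$, so $\ell_A(e_i-e_j)=\ell_A(e_i)\ge 0$, while boundedness of $\ell_A$ gives $\ell_A(e_i-e_j)\le 0$; hence $\ell_A(e_i-e_j)=0$ and the generator lies in $C\cap\ker\ell_A=\mathrm{rec}(F)$. Now, given $y\in\Pi$, write $y=\sum_k\mu_k v_k+w$ with $\mu_k\ge 0$, $\sum_k\mu_k=1$, $w\in C$; replace each $v_k$ by $v_k^*\in F$ as above, and replace $w$ by the part $w^*\in\mathrm{rec}(F)$ of its extreme-ray expansion carried by the generators with $\ell_B=1$ (every generator has $\ell_B\in\{-1,0,1\}$, and the discarded ones contribute $\le 0$ to $\ell_B(w)$). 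Then $y':=\sum_k\mu_k v_k^*+w^*$ lies in $F$ and satisfies $\ell_B(y')\ge\sum_k\mu_k\ell_B(v_k)+\ell_B(w)=\ell_B(y)$, which proves the claim.

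The only place the megamatroid hypothesis enters --- and the step I expect to be the real work --- is the exchange step: that an $\ell_A$-improving edge direction $e_i-e_j$ is always available, and that, $i$ being forced into $B$, it costs nothing in $\ell_B$. Everything else (existence of the maximizing face, Minkowski--Weyl, the reduction to the pointed case, and the identification of the extreme rays of $C$ with edge directions of $\Pi$) is standard polyhedral bookkeeping.
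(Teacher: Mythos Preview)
Your argument is correct, and the core exchange observation---an edge direction $e_i-e_j$ that increases $\ell_A$ must have $i\in A\subseteq B$, hence cannot decrease $\ell_B$---is exactly the one the paper uses. The routes diverge in how this observation is packaged.

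The paper's proof is a short face-lattice contradiction: assume $\rk_F(B)<\rk_\Pi(B)$, let $F'$ be the face of $F$ maximizing $\ell_B$, and find a face $F''\supset F'$ of $\Pi$ with $\dim F''=\dim F'+1$ on which $\ell_B$ exceeds $\rk_F(B)$. The extra direction of $F''$ is some $e_k-e_j$ with $k\in B$, $j\notin B$; then $j\notin A$, so $\ell_A$ does not decrease along $F''$, forcing $F''\subseteq F$ and contradicting the choice of $F'$. This works uniformly without reducing to the pointed case, without Minkowski--Weyl, and without treating recession directions separately: one adjacency in the face lattice suffices.

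Your proof is instead constructive and global: you exhibit, for each $y\in\Pi$, a point $y'\in F$ with $\ell_B(y')\geq\ell_B(y)$, by walking vertices along $\ell_A$-improving edges (a simplex-method move) and filtering the recession part. This buys an explicit witness, at the cost of the extra scaffolding (passing to the quotient by the lineality space, invoking the fact that extreme rays of the recession cone are unbounded-edge directions, and splitting the cone contribution). Both approaches rest on the same exchange step; the paper's is simply the one-move local version of your iterated global one.
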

\begin{proof}
If $\rk_F(B)=\infty$ then $\rk_{\Pi}(B)=\infty$ and we are done.
Otherwise, there exists a face $F'$ of $F$ on which $\sum_{i\in B}y_i$ is maximal.
Suppose that $\rk_{F}(B)<\rk_\Pi(B)$. Define $g(y):=\sum_{i\in B}y_i-\rk_F(B)$.
Then $g$ is constant $0$ on $F'$, and $g(y)>0$ for some $y\in \Pi$.
Therefore, there exists a face $F''$ of $\Pi$ containing $F'$, such that
$\dim F''=\dim F'+1$ and $g(z)>0$ for some $z\in F''$. Clearly, $z\not\in F$ and $F$
does not contain $F''$.
 We have $\linhull(F'')=\linhull(F')+\R(e_k-e_j)$ for some $k\neq j$.
By possibly exchanging $j$ and $k$, we may assume that
$F''$ is contained in $\linhull(F')+\R_+(e_k-e_j)$, where $\R_+$ denotes
the nonnegative real numbers. Since $z\in \linhull(F')+\R_+(e_k-e_j)$ and $g(z)>0$
we have $k\in B$ and $j\not \in B$.
In particular $j\not \in A$, which means that $\sum_{i\in A}y_i\geq \rk_\Pi(A)$
for all $y\in F''$, so $F''\subseteq F$. This is a contradiction.
We conclude that $\rk_F(B)=\rk_\Pi(B)$.
\end{proof}

\begin{lemma}\label{lemsupequal}
Suppose that $f(y)=\sum_{j=1}^d \alpha_j\sum_{i\in X_j}y_i$ where
$$
\underline{X}:\emptyset\subset X_1\subset X_2\subset \cdots \subset X_d=\underline{d}
$$
is a maximal chain, and $\alpha_1.\dots,\alpha_{d-1}\geq 0$. 
For a megamatroid polyhedron $\Pi$ we have
$$
\sup_{y\in \Pi}f(y)= \sum_{j=1}^d \alpha_j\rk_{\Pi}(X_j).
$$
\end{lemma}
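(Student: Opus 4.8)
The plan is to prove the claimed identity as two inequalities. The bound $\sup_{y\in\Pi}f(y)\le\sum_{j=1}^d\alpha_j\rk_\Pi(X_j)$ is immediate: since $\Pi$ lies in $y_1+\cdots+y_d=r$ with $r=\rk_\Pi(\underline d)$, the $j=d$ term of $f$ is the constant $\alpha_d r=\alpha_d\rk_\Pi(X_d)$, while for $j<d$ we have $\alpha_j\ge0$ and $\sum_{i\in X_j}y_i\le\rk_\Pi(X_j)$ for every $y\in\Pi$; summing gives the bound (reading the right-hand side as $+\infty$ when some $\rk_\Pi(X_j)=\infty$ with $\alpha_j>0$, and using $0\cdot\infty=0$). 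After discarding the indices $j<d$ with $\alpha_j=0$ and absorbing the $j=d$ term into an additive constant, what remains is: for every chain $\emptyset\ne Y_1\subsetneq\cdots\subsetneq Y_m\subseteq\underline d$, all $\beta_1,\dots,\beta_m>0$, and every megamatroid polyhedron $\Pi$, one has $\sup_{y\in\Pi}g(y)\ge\sum_{l=1}^m\beta_l\rk_\Pi(Y_l)$, where $g(y)=\sum_{l=1}^m\beta_l\sum_{i\in Y_l}y_i$.

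I would prove this by induction on $m$, the case $m=0$ being trivial. For the inductive step, suppose first that $\rk_\Pi(Y_1)<\infty$. Let $G$ be the face of $\Pi$ on which $\sum_{i\in Y_1}y_i$ attains its maximum $\rk_\Pi(Y_1)$; it is a nonempty megamatroid polyhedron. By Lemma~\ref{lemrankequal} applied with $A=Y_1\subseteq B=Y_l$ we get $\rk_G(Y_l)=\rk_\Pi(Y_l)$ for every $l$. On $G$ the $l=1$ summand of $g$ is the constant $\beta_1\rk_\Pi(Y_1)$, so by the inductive hypothesis applied to the shorter chain $Y_2\subsetneq\cdots\subsetneq Y_m$ inside $G$,
\[
\sup_{y\in\Pi}g(y)\ \ge\ \sup_{y\in G}g(y)\ =\ \beta_1\rk_\Pi(Y_1)+\sup_{y\in G}\ \sum_{l=2}^m\beta_l\sum_{i\in Y_l}y_i\ \ge\ \beta_1\rk_\Pi(Y_1)+\sum_{l=2}^m\beta_l\rk_G(Y_l)\ =\ \sum_{l=1}^m\beta_l\rk_\Pi(Y_l).
\]
(If some $\rk_\Pi(Y_l)$ with $l\ge2$ is infinite, both sides are $+\infty$ and the inductive hypothesis still delivers this.) This iterated passage to faces is the natural way to exploit Lemma~\ref{lemrankequal}, and it must be carried out starting from the \emph{smallest} set $Y_1$, since Lemma~\ref{lemrankequal} controls the rank of the larger set in a containment.

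The remaining case, $\rk_\Pi(Y_1)=\infty$, forces the right-hand side to be $+\infty$, so here the goal is only to show that $g$ is unbounded above on $\Pi$. For this I would use the following fact, which I expect to be the main obstacle: \emph{if $\Pi$ is a nonempty megamatroid polyhedron with $\rk_\Pi(Y)=\infty$, then $e_k-e_j$ lies in the recession cone of $\Pi$ for some $k\in Y$ and $j\in\underline d\setminus Y$.} Granting it with $Y=Y_1$, we have $k\in Y_1\subseteq Y_l$ for all $l$, hence $\sum_{i\in Y_l}(e_k-e_j)_i=1-[\,j\in Y_l\,]\ge0$, with value $1$ when $l=1$; therefore $g(e_k-e_j)\ge\beta_1>0$, and letting $t\to\infty$ along $y_0+t(e_k-e_j)\in\Pi$ (for any fixed $y_0\in\Pi$) shows $\sup_{y\in\Pi}g=+\infty$.

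To prove the italicized fact I would appeal to the structure of megamatroid polyhedra. Since $\rk_\Pi(Y)=\infty$, the recession cone of $\Pi$ contains some $v$ with $\sum_{i\in Y}v_i>0$. The recession cone of a megamatroid polyhedron is generated by vectors of the form $e_k-e_j$ — concretely, $e_k-e_j$ is a recession direction precisely when every $A$ with $k\in A$, $j\notin A$ has $\rk_\Pi(A)=\infty$; this is the unbounded shadow of the statement that base polyhedra of (mega)matroids are generalized permutohedra, and can be derived from the identification $\Pi=Q(\rk_\Pi)$ together with submodularity of $\rk_\Pi$. Writing $v$ as a nonnegative combination of such $e_k-e_j$, at least one summand must satisfy $\sum_{i\in Y}(e_k-e_j)_i>0$, i.e.\ $k\in Y$ and $j\notin Y$, as needed. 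The delicate point is exactly this description of the recession cone; if the identification $\Pi=Q(\rk_\Pi)$ is not yet available, one can instead prove the fact by an ``edge walk'' analogous to the proof of Lemma~\ref{lemrankequal}, repeatedly enlarging a face in a direction $e_k-e_j$ along which $\sum_{i\in Y}y_i$ strictly increases until an unbounded edge is reached. Finally, I note that if the lemma is only needed for polytopes (all $\rk_\Pi$ finite), the last two paragraphs are vacuous and only the first induction is required.
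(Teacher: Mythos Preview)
Your inductive face-peeling argument via Lemma~\ref{lemrankequal} is exactly the paper's approach in the bounded case: the paper sets $F_0=\Pi$ and successively defines $F_j$ as the face of $F_{j-1}$ where $\sum_{i\in X_j}y_i$ is maximal, using Lemma~\ref{lemrankequal} to keep $\rk_{F_j}(X_i)=\rk_\Pi(X_i)$, and lands on a single point $z$ with $f(z)=\sum_j\alpha_j\rk_\Pi(X_j)$. Your version simply drops the zero-coefficient indices first and exhibits both inequalities separately; this is equivalent.

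The genuine difference is in the unbounded case. The paper avoids your recession-cone analysis entirely: it sets $\Pi_N=\Pi\cap\{y:y_i\le N\mbox{ for all }i\}$, observes this is a bounded megamatroid polyhedron for large $N$, applies the bounded case to get $\sup_{\Pi_N}f=\sum_j\alpha_j\rk_{\Pi_N}(X_j)$, and then lets $N\to\infty$ using $\bigcup_N\Pi_N=\Pi$ (so each $\rk_{\Pi_N}(X_j)\uparrow\rk_\Pi(X_j)$, and the $\alpha_j\ge0$ for $j<d$ lets the limit pass through the sum). This is three lines and uses nothing beyond the definition.

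Your route is correct but heavier, and the point you flag as delicate is indeed the crux: at this position in the paper neither $\Pi=Q(\rk_\Pi)$ nor submodularity of $\rk_\Pi$ is available (both are derived \emph{from} this lemma), so your first justification of the recession-cone structure is circular. The claim can be rescued directly from the definition of a megamatroid polyhedron --- every face of the recession cone $C$ is the recession cone of some face of $\Pi$ (pick a linear functional maximized exactly on the given face of $C$; the face of $\Pi$ it selects has that recession cone), hence has linear hull spanned by $e_i-e_j$'s, and a polyhedral cone all of whose faces have this property is generated by such vectors (argue via the lineality space and the $(\dim L+1)$-dimensional faces). But this detour is precisely what the paper's truncation sidesteps.
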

\begin{proof}
First, assume that $\Pi$ is bounded. Define $F_0=\Pi$, and for $j=1,2,\dots,d$, let
$F_j$ be the face of $F_{j-1}$ for which $\sum_{i\in X_j}y_i$ is maximal.
By induction on $j$ and Lemma~\ref{lemrankequal}, we have that $\rk_{F_j}(X_i)=\rk_\Pi(X_i)$
for all $j<i$. Also, $F_j$ is contained in the hyperplane
defined by the equation $\sum_{i\in X_j}y_i=\rk_{F_{j-1}}(X_j)=\rk_{\Pi}(X_j)$.
We have $F_d=\{z\}$ where $z=(z_1,\dots,z_d)$ is defined by the equations
$$
\sum_{i\in X_j}z_i=\rk_{\Pi}(X_j),\quad j=1,2,\dots,d.
$$
It follows that
$$
f(z)=\sum_{j=1}^d \alpha_j\sum_{i\in X_j}z_j=\sum_{j=1}^d\alpha_j\rk_{\Pi}(X_j).
$$

Suppose that  $\Pi$ is unbounded. Let $\Pi_N$ be the intersection
of $\Pi$ with the set $\{y\in \R^d\mid y_i\leq N,\ i=1,2,\dots,d\}$. Now $\Pi_N$ is a bounded megamatroid polyhedron
for large positive integers $N$. (For small $N$, $\Pi_N$ might be empty.)
We have
$$
\sup_{y\in \Pi}f(y)=\sup_N \sup_{y\in \Pi_N}f(y)=\sup_N\sum_{j=1}^d \alpha_j\rk_{\Pi_N}(X_j)=
\sum_{j=1}^d\alpha_j\rk_{\Pi}(X_j).
$$
\end{proof}
\begin{corollary}\label{cormegamatroid}
If $\Pi$ is a megamatroid polyhedron, then $\rk_{\Pi}$ is a megamatroid.
\end{corollary}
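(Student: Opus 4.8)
The plan is to verify the three defining conditions of Definition~\ref{def:megamatroid} for $\rk_\Pi$, assuming throughout that $\Pi$ is nonempty (the empty polyhedron does not yield a megamatroid, so this is the only case of interest). Conditions (1) and (2) I would dispose of at once: $\sum_{i\in\emptyset}y_i\equiv 0$ gives $\rk_\Pi(\emptyset)=0$, and $\Pi\subseteq\{y_1+\cdots+y_d=r\}$ gives $\rk_\Pi(\underline d)=r\in\Z$. For the requirement that $\rk_\Pi$ be $\Z\cup\{\infty\}$-valued, I would note that $\rk_\Pi(A)$ is the supremum of a linear functional over the nonempty polyhedron $\Pi$, hence lies in $\R\cup\{\infty\}$; and when it is finite, this supremum is attained on the face $F$ of $\Pi$ on which $\sum_{i\in A}y_i$ is maximal, where $\sum_{i\in A}y_i$ is then constant, equal to $\rk_\Pi(A)$. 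An affine function constant on $F$ is constant on the affine hull $\linhull(F)$, which by definition of a megamatroid polyhedron equals $z+W$ with $z\in\Z^d$; hence $\rk_\Pi(A)=\sum_{i\in A}z_i\in\Z$.

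The substance of the corollary is condition (3), submodularity, and here the plan rests on the pointwise identity of indicator functions $\mathbf 1_A+\mathbf 1_B=\mathbf 1_{A\cap B}+\mathbf 1_{A\cup B}$, which yields, for all $y\in\R^d$,
$$\textstyle\sum_{i\in A}y_i+\sum_{i\in B}y_i=\sum_{i\in A\cap B}y_i+\sum_{i\in A\cup B}y_i=:f(y).$$
If $A=B$ there is nothing to prove; otherwise $A\cap B\subsetneq A\cup B$, and I would choose a maximal chain $\emptyset=X_0\subset X_1\subset\cdots\subset X_d=\underline d$ in which both $A\cap B=X_a$ and $A\cup B=X_b$ occur (with $a<b$). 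Then $f(y)=\sum_{j=1}^d\alpha_j\sum_{i\in X_j}y_i$ with $\alpha_b=1$, with $\alpha_a=1$ if $a\geq 1$, and all other $\alpha_j=0$; these coefficients satisfy the hypotheses of Lemma~\ref{lemsupequal}, which therefore gives
$$\sup_{y\in\Pi}f(y)=\sum_{j=1}^d\alpha_j\rk_\Pi(X_j)=\rk_\Pi(A\cap B)+\rk_\Pi(A\cup B)$$
(using $\rk_\Pi(\emptyset)=0$ to cover the case $a=0$). Since a supremum of a sum is at most the sum of the suprema, $\sup_{y\in\Pi}f(y)\leq\rk_\Pi(A)+\rk_\Pi(B)$, and comparing the two estimates yields $\rk_\Pi(A\cap B)+\rk_\Pi(A\cup B)\leq\rk_\Pi(A)+\rk_\Pi(B)$. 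Finally, if $\rk_\Pi(A)$ and $\rk_\Pi(B)$ are finite then the right-hand side is finite; as $\rk_\Pi(A\cap B)$ and $\rk_\Pi(A\cup B)$ each lie in $\R\cup\{\infty\}$, both must be finite, which is the remaining clause of condition (3).

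The one genuine obstacle is that the suprema defining $\rk_\Pi(A\cap B)$ and $\rk_\Pi(A\cup B)$ could a priori be attained at different points of $\Pi$, so the naive comparison of suprema does not by itself give submodularity. Lemma~\ref{lemsupequal} is precisely the tool that resolves this, by evaluating the supremum of the single functional $f$ along a decreasing flag of faces; the indicator-function identity is what lets one recognize $f$ simultaneously as $\sum_A+\sum_B$ and as a chain-supported functional to which that lemma applies. Everything else—the bookkeeping at the value $\infty$, and integrality via the lattice structure of faces of megamatroid polyhedra—is routine.
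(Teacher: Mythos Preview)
Your proof is correct and follows essentially the same route as the paper: the key submodularity step is obtained by writing $\sum_{i\in A}y_i+\sum_{i\in B}y_i=\sum_{i\in A\cap B}y_i+\sum_{i\in A\cup B}y_i$, embedding $A\cap B$ and $A\cup B$ in a maximal chain, and invoking Lemma~\ref{lemsupequal}. You are in fact more thorough than the paper's own proof, which checks only submodularity; your additional verification that $\rk_\Pi$ takes values in $\Z\cup\{\infty\}$ (via the lattice-point structure of $\linhull(F)$) and your explicit treatment of the finiteness clause in condition~(3) are points the paper leaves implicit.
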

\begin{proof}
For subsets $A,B\subseteq\underline d$, 
choose a maximal chain $\underline{X}$ such that $X_j=A\cap B$ and $X_k=A\cup B$ for some $j$ and $k$, and let
$$
f_A(y) = \sum_{i\in A}y_i,\quad f_B(y) = \sum_{i\in B} y_i,\quad
f(y)=\sum_{i\in A\cap B} y_i+\sum_{i\in A\cup B}y_i = f_A(y) + f_B(y).
$$
By Lemma~\ref{lemsupequal}, 
\begin{multline*}
\rk_{\Pi}(A)+\rk_{\Pi}(B)=\sup_{y\in\Pi}f_A(y)+\sup_{y\in\Pi}f_B(y)
\\\geq \sup_{y\in \Pi}f(y)= \rk_{\Pi}(A\cap B)+\rk_{\Pi}(A\cup B).
\end{multline*}
\end{proof}
\begin{proposition}
A convex polyhedron $\Pi$ in the hypersurface $y_1+y_2+\cdots+y_d=r$
is a megamatroid polyhedron if and only if $\Pi=Q(\rk)$ for some megamatroid $\rk$.
\end{proposition}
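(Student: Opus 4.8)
The plan is to establish the two implications separately, using Corollary~\ref{cormegamatroid} and Lemma~\ref{lemsupequal} as the main tools. We may assume throughout that the polyhedron in question is nonempty (the empty polyhedron is not a megamatroid polyhedron, as its linear hull is not of the form $z+W$).

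\textbf{A megamatroid polyhedron is some $Q(\rk)$.} Suppose $\Pi$ is a megamatroid polyhedron contained in $\{y_1+\cdots+y_d=r\}$. By Corollary~\ref{cormegamatroid} $\rk:=\rk_\Pi$ is a megamatroid, and directly from the definitions $\Pi\subseteq Q(\rk)$ (here $\rk(\underline d)=\sup_\Pi\sum_i y_i=r$). For the reverse inclusion, write $\Pi=\linhull(\Pi)\cap\bigcap_G\{\ell_G\le c_G\}$, the intersection of its affine hull with the halfspaces bounded by its facet-defining inequalities. Since $\linhull(G)=z_G+W_G$ with $W_G$ spanned by vectors $e_i-e_j$, any facet functional $\ell_G=\sum_i a_iy_i$ satisfies $a_i=a_j$ whenever $e_i-e_j\in W_G$. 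Sorting the distinct values $b_1>\cdots>b_p$ of $(a_i)$ and putting $A_k=\{i:a_i\ge b_k\}$, a telescoping identity gives $\ell_G=b_p\sum_i y_i+\sum_{k<p}(b_k-b_{k+1})\sum_{i\in A_k}y_i$ with all $b_k-b_{k+1}>0$. Applying Lemma~\ref{lemsupequal} to a maximal chain refining $A_1\subset\cdots\subset A_{p-1}\subset\underline d$ (the ranks $\rk(A_k)$ being finite, since otherwise $c_G=\sup_\Pi\ell_G$ would be infinite) yields $c_G=b_pr+\sum_{k<p}(b_k-b_{k+1})\rk(A_k)$. Hence any $y\in Q(\rk)$, which satisfies $\sum_i y_i=r$ and $\sum_{i\in A_k}y_i\le\rk(A_k)$, also satisfies $\ell_G(y)\le c_G$. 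A parallel argument with complementary sets, using $\rk(C)+\rk(C^c)=r$ whenever $C$ is a union of blocks of the partition of $\underline d$ cut out by the edge directions of $\linhull(\Pi)$, shows $Q(\rk)\subseteq\linhull(\Pi)$. Therefore $Q(\rk)\subseteq\Pi$, and $\Pi=Q(\rk)$.

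\textbf{Every $Q(\rk)$ is a megamatroid polyhedron.} Let $\rk$ be a megamatroid of rank $r$ and let $F$ be a nonempty face of $Q:=Q(\rk)$ (nonempty by Lemma~\ref{lemnonempty}). Put $\mathcal T_F=\{A\subseteq\underline d:\sum_{i\in A}y_i=\rk(A)\text{ for all }y\in F\}$; it contains $\emptyset$ and $\underline d$. We first observe that $\mathcal T_F$ is a sublattice of $2^{\underline d}$: for $A,B\in\mathcal T_F$ and $y\in F$ the chain $\rk(A)+\rk(B)=\sum_{i\in A}y_i+\sum_{i\in B}y_i=\sum_{i\in A\cup B}y_i+\sum_{i\in A\cap B}y_i\le\rk(A\cup B)+\rk(A\cap B)\le\rk(A)+\rk(B)$ is forced to consist of equalities, so $A\cup B,A\cap B\in\mathcal T_F$. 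By the standard fact that the affine hull of a face of a polyhedron is obtained by turning its tight constraints into equalities, $\linhull(F)=\bigcap_{A\in\mathcal T_F}\{\sum_{i\in A}y_i=\rk(A)\}=:V_F$. Let $P_1,\dots,P_q$ be the blocks of the partition of $\underline d$ in which $i\sim j$ iff every member of $\mathcal T_F$ contains both or neither of $i,j$, and for each $i$ let $A_i^-$ be the smallest element of $\mathcal T_F$ containing $i$ (an intersection, hence in $\mathcal T_F$). A short lattice computation shows that each block has the form $P_s=A_i^-\setminus B$ with $A_i^-,B\in\mathcal T_F$ and $B\subseteq A_i^-$, so $\mathbf 1_{P_s}=\mathbf 1_{A_i^-}-\mathbf 1_B$ and $V_F=\{y:\sum_{i\in P_s}y_i=c_s\text{ for }s=1,\dots,q\}$ with $c_s=\rk(A_i^-)-\rk(B)\in\Z$. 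Thus $\linhull(F)=V_F$ is the translate, by the lattice point taking the value $c_s$ on a chosen element of each $P_s$ and $0$ elsewhere, of the subspace $\{v:\sum_{i\in P_s}v_i=0\text{ for all }s\}$, which is spanned by the vectors $e_i-e_j$ with $i,j$ in a common block. This is exactly the defining condition for a megamatroid polyhedron at $F$, so $Q(\rk)$ is a megamatroid polyhedron.

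\textbf{Main obstacle.} The delicate point is the exact determination of $\linhull(F)$ in the second implication: one must rule out that $F$ secretly lies in an affine subspace strictly smaller than $V_F$, and this is precisely where the sublattice property of $\mathcal T_F$ (hence submodularity of $\rk$) is indispensable, in combination with the standard polyhedral description of the affine hull of a face. A secondary nuisance is that megamatroid polyhedra may be unbounded and of less than full dimension inside $\{\sum_i y_i=r\}$, so in the first implication the defining equations of $\linhull(\Pi)$ must be recovered from the inequalities of $Q(\rk)$ via complementation, and one must verify that the ranks fed into Lemma~\ref{lemsupequal} are finite.
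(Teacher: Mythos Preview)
Your proof is correct and follows essentially the same route as the paper's: in both directions the key ingredients are Lemma~\ref{lemsupequal} with the coefficient-sorting trick, and the observation that the tight-constraint sets at a face form a sublattice of $2^{\underline d}$ by submodularity. The only noteworthy difference is that in the forward direction you split $\Pi=\linhull(\Pi)\cap\bigcap_G\{\ell_G\le c_G\}$ and then need a separate complement argument to place $Q(\rk_\Pi)$ inside $\linhull(\Pi)$; the paper avoids this by applying the sorting argument uniformly to \emph{every} linear functional $f$ (not just facet functionals), so that equalities defining $\linhull(\Pi)$ are automatically captured as pairs $f\le c$, $-f\le -c$. In the converse direction your equivalence-relation description of the blocks (via the minimal sets $A_i^-$) is equivalent to the paper's construction via ``prime'' elements of $S_F$.
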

\begin{proof}
Suppose that $\Pi$ is a megamatroid polyhedron. Then $\rk_\Pi$ is a megamatroid by Corollary~\ref{cormegamatroid}.
Clearly we have $\Pi\subseteq Q(\rk_{\Pi})$. Suppose that $f(y)=\sum_{i=1}^d\alpha_i y_i$ is a linear
function on the hypersurface $y_1+\cdots+y_d=r$.
Let $\sigma$ be a permutation of $\underline{d}$ such that
$\alpha_{\sigma(i)}\geq \alpha_{\sigma(j)}$ for $i<j$.
Define $X_k=\{\sigma(1),\dots,\sigma(k)\}$ for $k=1,2,\dots,d$.
We can write
$$
f(y)=\sum_{j=1}^d \beta_j \sum_{i\in X_j} y_i,
$$
where $\beta_j:=\alpha_{\sigma(j)}-\alpha_{\sigma(j+1)}\geq 0$ for $j=1,2,\dots,d-1$
and $\beta_d=\alpha_{\sigma(d)}$.

By Lemma~\ref{lemsupequal} we have
$$
\sup_{y\in \Pi}f(y)=\sum_{j=1}^d\beta_j\rk_{\Pi}(X_j)\geq
\sup_{z\in Q(\rk_\Pi)}\sum_{j=1}^d\beta_j\sum_{i\in X_j}z_i=
\sup_{z\in Q(\rk_\Pi)}f(z).
$$
Since $\Pi$ is defined by inequalities of the form $f(y)\leq c$,  where $f$ is
a linear function and $c=\sup_{y\in \Pi}f(y)$, we see that $Q(\rk_{\Pi})\subseteq \Pi$.
We conclude that $Q(\rk_\Pi)=\Pi$.

Conversely, suppose that $\rk$ is a megamatroid, and that
$F$ is a face of $Q(\rk)$. Choose $y$ in the relative interior of $F$.
Let $S_F$ denote the set of all subsets $A$ of $\underline{d}$ for which $\sum_{i\in A} y_i=\rk(A)$.
Note that $\emptyset,\underline{d}\in S_F$. The linear hull of $F$ is given by the equations
$$
\sum_{i\in A}y_i=\rk(A),\quad A\in S_F.
$$

We claim that $S_F$ is closed under intersections and unions.
If $A,B\in S_F$, then we have
\begin{multline*}
\big(\sum_{i\in A\cap B}y_i-\rk(A\cap B)\big)+\big(\sum_{i\in A\cup B}y_i-\rk(A\cup B)\big)=\\
=\sum_{i\in A} y_i+\sum_{i\in B} y_i-\rk(A\cap B)-\rk(A\cup B)=\\
\rk(A)+\rk(B)-\rk(A\cap B)-\rk(A\cup B)\geq 0
\end{multline*}
by the submodular property. Since $\sum_{i\in A\cap B}y_i-\rk(A\cap B)$ 
and $\sum_{i\in A\cup B}y_i-\rk(A\cup B)$ are nonpositive, we conclude that
$A\cap B,A\cup B\in S_F$ and 
$$\rk(A)+\rk(B)=\rk(A\cap B)+\rk(A\cup B).$$

Let us call $A\in S_F$ {\em prime} if $A$ is nonempty and not the union of two proper subsets in $S_F$.
Let $P_F$ be the set of primes in $S_F$. If $C=A\cup B$, then
$$
\sum_{i\in C} y_i=\rk(C)
$$
follows from the equations
$$
\sum_{i\in A} y_i=\rk(A),\quad \sum_{i\in B}y_i=\rk(B),\quad \sum_{i\in A\cap B} y_i=\rk(A\cap B).
$$
Let $C_1,C_2,\dots,C_k$ be all prime sets in $S_F$.
It follows that the linear hull of $F$ is defined by all the equations
$$
\sum_{i\in C_j}y_i=\rk(C_j),\quad j=1,2,\dots,k.
$$
Every element of $S_F$ is a union of some of the $C_j$'s.
For every $j$, let $B_j$ be the largest proper subset of $C_j$ which lies in $S_F$.
Define $A_j=C_j\setminus B_j$ and $r_j=\rk(C_j)-\rk(B_j)$.
Then $A_1\cup \cdots \cup A_k=\underline{d}$ is a partition of $\underline{d}$, and every element of $S_F$
is a union of some of the $A_j$'s. The linear hull of $F$ is defined by the equations
$$
\sum_{i\in A_j}y_i=r_j,\quad j=1,2,\dots,k.
$$
Clearly, $\linhull(F)$ contains some integral vector $z\in \Z^d$
and $\linhull(F)$ is equal to $z+W$ where $W$ is the space spanned
by all $e_i-e_j$ where $i,j$ are such that $i,j\in A_k$ for some $k$.
\end{proof}

\section{The valuative property}\label{sec:valuative}
There are essentially two definitions of the valuative property in the literature,
which we will refer to as the {\em strong valuative} and the {\em weak valuative} properties.  The equivalence of these definitions is shown in 
\cite{Groemer} and~\cite{Volland} when valuations are defined on
sets of polyhedra closed under intersection.
In this section we will show the two definitions equivalent 
for valuations defined on megamatroid polytopes, which are not
closed under intersection.

\begin{definition}
A {\em megamatroid polyhedron decomposition} is a decomposition
$$
\Pi=\Pi_1\cup \Pi_2\cup \cdots\cup \Pi_k
$$
such that $\Pi,\Pi_1,\dots,\Pi_k$ are megamatroid polyhedra,
and $\Pi_i\cap \Pi_j$ is empty or contained in a proper face of $\Pi_i$ and of $\Pi_j$ for all $i\neq j$.
\end{definition}

\label{not:S}
Let $S_{\rm MM}(d,r)$ 
be the set of megamatroids 
on $\underline{d}$ of rank $r$.
\label{not:Z}
Let $Z_{\rm MM}(d,r)$ 
be the $\Z$-module whose basis is given by all $\langle \rk\rangle$
where $\rk\in S_{\rm MM}(d,r)$. 

For a
megamatroid polyhedron decomposition
$$
\Pi=\Pi_1\cup \Pi_2\cup \cdots\cup \Pi_k
$$
we define $\Pi_I=\bigcap_{i\in I}\Pi_i$ if $I\subseteq \{1,2,\dots,k\}$. We will use the convention
that $\Pi_{\emptyset}=\Pi$.
Define 
$$m_{\rm val}(\Pi;\Pi_1,\dots,\Pi_k)=\sum_{I\subseteq \{1,2,\dots,k\}}(-1)^{|I|}m_I\in Z_{\rm MM}(d,r),$$
where 
$m_I=\langle \rk^I\rangle$ if  $\rk^I$ is the megamatroid with $Q(\rk^I)=\Pi_I$,
and $m_I=0$ if $\Pi_I=\emptyset$.  We also define
$$
m_{\rm coval}(\Pi;\Pi_1,\dots,\Pi_k)=\langle \rk_\Pi\rangle-\sum_{F}\langle\rk_F\rangle.
$$
where $F$ runs over all interior faces of the decomposition.

\begin{definition}\label{def:weak}
A homomorphism of abelian groups $f:Z_{\rm MM}(d,r)\to
A$ is called {\em weakly valuative}, if for every
megamatroid polyhedron decomposition
$$
\Pi=\Pi_1\cup \Pi_2\cup \cdots\cup \Pi_k
$$
we have $f(m_{\rm val}(\Pi;\Pi_1,\dots,\Pi_k))=0$.
We say it is {\em weakly covaluative}, if for every
megamatroid polyhedron decomposition
$$
\Pi=\Pi_1\cup \Pi_2\cup \cdots\cup \Pi_k
$$
we have $f(m_{\rm coval}(\Pi;\Pi_1,\dots,\Pi_k))=0$.
\end{definition}

\label{not:E}
We define a group homomorphism
$$
E:Z_{\rm MM}(d,r)\to Z_{\rm MM}(d,r)
$$
by
$$
E(\langle \rk\rangle)=\sum_{F}\langle \rk_F\rangle
$$
where $F$ runs over all faces of $Q(\rk)$ and $\rk_F$ is the megamatroid
with $Q(\rk_F)=F$.
For a polytope $\Pi$, we denote the set of faces of $\Pi$ by $\face(\Pi)$.\label{not:face}
\begin{lemma}\label{lemweakstrong}
The homomorphism  $f:Z_{\rm MM}(d,r)\to A$ of abelian groups
is weakly valuative if and only if $f\circ E$ is weakly covaluative.
\end{lemma}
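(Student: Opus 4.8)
The plan is to pass from functions to the relation submodules they must kill, and to show $E$ is an automorphism of $Z_{\mathrm{MM}}(d,r)$ that carries the ``covaluative relations'' onto the ``valuative relations''. Let $R_{\mathrm{val}}$ be the subgroup of $Z_{\mathrm{MM}}(d,r)$ generated by all $m_{\mathrm{val}}(\Pi;\Pi_1,\dots,\Pi_k)$ and $R_{\mathrm{coval}}$ the subgroup generated by all $m_{\mathrm{coval}}(\Pi;\Pi_1,\dots,\Pi_k)$. By Definition~\ref{def:weak}, $f$ is weakly valuative iff $R_{\mathrm{val}}\subseteq\ker f$, and $f\circ E$ is weakly covaluative iff $R_{\mathrm{coval}}\subseteq\ker(f\circ E)$, i.e.\ iff $E(R_{\mathrm{coval}})\subseteq\ker f$; so it is enough to see that $E$ is an isomorphism taking $R_{\mathrm{coval}}$ to $R_{\mathrm{val}}$ (equivalently $R_{\mathrm{val}}$ to $R_{\mathrm{coval}}$). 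Invertibility of $E$ is easy: order megamatroids by $\dim Q(\rk)$, so that $E(\langle\rk\rangle)=\langle\rk\rangle+(\text{strictly lower-dimensional terms})$; then $E$ is unipotent and triangular, and since the face lattice of a polyhedron is Eulerian one has $E^{-1}(\langle\rk\rangle)=\sum_{F}(-1)^{\dim Q(\rk)-\dim F}\langle\rk_F\rangle$ over the faces $F$ of $Q(\rk)$.

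The heart of the proof is the following identity. Given a megamatroid polyhedron decomposition $\Pi=\Pi_1\cup\cdots\cup\Pi_k$ and a face $F$ of $\Pi$, the nonempty sets among $\Pi_1\cap F,\dots,\Pi_k\cap F$ form a megamatroid polyhedron decomposition $D|_F$ of $F$ (a hyperplane supporting $\Pi$ supports every $\Pi_i$, so the decomposition restricts compatibly to each face of $\Pi$); I claim
\[
E\bigl(m_{\mathrm{val}}(\Pi;\Pi_1,\dots,\Pi_k)\bigr)=\sum_{F\in\face(\Pi)}m_{\mathrm{coval}}(D|_F).
\]
I would prove it by comparing the coefficient of a fixed $\langle\rk_G\rangle$ on both sides. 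On the left, $E(m_I)=\sum_{G\le\Pi_I}\langle\rk_G\rangle$, and because the $\Pi_I=\bigcap_{i\in I}\Pi_i$ together with their faces form a polyhedral complex, $G\le\Pi_I$ with $I\ne\emptyset$ holds exactly when $G\subseteq\Pi_i$ for all $i\in I$; hence the coefficient is $\sum_{I:\,G\le\Pi_I}(-1)^{|I|}=[\,G\le\Pi\,]-[\,G\subseteq\Pi_i\text{ for some }i\,]$. On the right, each cell $G$ of the subdivision is an interior face of $D|_F$ for exactly one face $F$ of $\Pi$, namely the unique face of $\Pi$ whose relative interior meets $G$, so the coefficient of $\langle\rk_G\rangle$ is $[\,G\le\Pi\,]-[\,G\text{ is a cell of the subdivision}\,]$. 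Finally, a polyhedron $G$ that is a face of $\Pi$ and is contained in some $\Pi_i$ is automatically a face of that $\Pi_i$, hence a cell of the subdivision; so the two bracketed conditions agree on every $G$ that actually occurs, and the identity follows.

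Granting the identity, its right side lies in $R_{\mathrm{coval}}$, so $E(R_{\mathrm{val}})\subseteq R_{\mathrm{coval}}$. Applying the same identity to each induced decomposition $D|_G$ gives $E(m_{\mathrm{val}}(D|_G))=\sum_{F\le G}m_{\mathrm{coval}}(D|_F)$, and Möbius inversion over the Eulerian face poset of $\Pi$ yields
\[
m_{\mathrm{coval}}(D)=\sum_{F\le\Pi}(-1)^{\dim\Pi-\dim F}\,E\bigl(m_{\mathrm{val}}(D|_F)\bigr)=E\Bigl(\sum_{F\le\Pi}(-1)^{\dim\Pi-\dim F}\,m_{\mathrm{val}}(D|_F)\Bigr),
\]
which writes each generator of $R_{\mathrm{coval}}$ as $E$ applied to an element of $R_{\mathrm{val}}$. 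Hence $E$ restricts to an isomorphism between $R_{\mathrm{val}}$ and $R_{\mathrm{coval}}$, and, as explained in the first paragraph, this gives both implications of the Lemma: $f$ kills $R_{\mathrm{val}}$ precisely when the composite of $f$ with $E$ kills $R_{\mathrm{coval}}$.

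The step I expect to be the main obstacle is the displayed identity, and within it the bookkeeping of faces: one must keep carefully apart the two ways a polyhedron $G$ can occur (as a face of $\Pi$ versus as a cell of the subdivision), verify that the ``carrier'' of a cell is a well-defined face of $\Pi$ (this is exactly where compatibility of the decomposition with the face structure of $\Pi$ enters), and treat the unbounded case, where $\face(\Pi)$ also contains unbounded faces — there one should check that all the sums involved stay finite and that the polyhedral-complex facts used above remain valid.
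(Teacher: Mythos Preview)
Your approach is correct and rests on the same core computation as the paper's, but you package the result differently and do more work than needed. Both arguments expand
\[
E\bigl(m_{\rm val}(\Pi;\Pi_1,\dots,\Pi_k)\bigr)
=\sum_{F}\langle\rk_F\rangle\sum_{I:\,F\in\face(\Pi_I)}(-1)^{|I|}
\]
and evaluate the inner sum using $J(F)=\{i:F\in\face(\Pi_i)\}$. The paper simply observes that this coefficient is $1$ when $F=\Pi$, $0$ when $F$ is a proper face of $\Pi$, and $-1$ when $F$ is an interior face of the decomposition, so that $E(m_{\rm val}(D))=m_{\rm coval}(D)$ for the \emph{same} decomposition $D$; this single identity already puts the generators of $R_{\rm val}$ in bijection (via $E$) with the generators of $R_{\rm coval}$, and the lemma follows at once without restricted decompositions or M\"obius inversion. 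Your refinement $E(m_{\rm val}(D))=\sum_{F\in\face(\Pi)}m_{\rm coval}(D|_F)$ is more careful about boundary cells that are not themselves faces of $\Pi$ --- it is exactly what one would write down if one wanted to justify the paper's claim ``$J(F)=\emptyset\Leftrightarrow F=\Pi$'' in full generality --- but the face-by-face bookkeeping and the inversion over the face lattice can be bypassed once one has the paper's pointwise identity. One small caution: in your first paragraph you assert that ``$E$ takes $R_{\rm coval}$ to $R_{\rm val}$'' and ``$E$ takes $R_{\rm val}$ to $R_{\rm coval}$'' are equivalent; what you actually prove is the second, which matches what the paper proves and what is used downstream.
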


\begin{proof}
We have
\begin{multline}
E\big(m_{\rm val}(\Pi;\Pi_1,\dots,\Pi_k)\big)=
\sum_{I\subseteq \{1,2,\dots,k\}}(-1)^{|I|}E(m_I)=\\
=\sum_{I\subseteq \{1,2,\dots,k\}}(-1)^{|I|}\sum_{F\in\face(\Pi_I)}\langle\rk_F\rangle=
\sum_{F}\langle\rk_F\rangle\sum_{\scriptstyle I\subseteq\{1,2,\dots,k\};\atop \scriptstyle F\in\face(\Pi_I)}
(-1)^{|I|}.
\end{multline}
Let $J(F)$ be the set of all indices $i$ such that $F$ is a face of $\Pi_i$.
Suppose that $F$ is a face of $\Pi$. Then $J(F)=\emptyset$ if and only if $F=\Pi$.
We have
$$
\sum_{\scriptstyle I\subseteq\{1,2,\dots,k\};\atop \scriptstyle F\in\face(\Pi_I)}(-1)^{|I|}=
\sum_{I\subseteq J(F)}(-1)^{|I|}=\left\{\begin{array}{ll}
1 & \mbox{if $F=\Pi$;}\\
0 & \mbox{if $F\neq \Pi$.}\end{array}\right.
$$
If $F$ is an interior face, then $J(F)\neq \emptyset$ and
$$
\sum_{\scriptstyle I\subseteq\{1,2,\dots,k\};\atop \scriptstyle F\in\face(\Pi_I)}(-1)^{|I|}=
\sum_{I\subseteq J(F);I\neq\emptyset}(-1)^{|I|}=-1.
$$
We conclude that
$$
E(m_{\rm val}(\Pi;\Pi_1,\dots,\Pi_k))=\langle \rk_\Pi\rangle-\sum_{F}\langle \rk_F\rangle=m_{\rm coval}(\Pi;\Pi_1,\dots,\Pi_k)
$$
where the sum is over all interior faces $F$. The lemma follows.
\end{proof}

For a polyhedron $\Pi$ in $\R^d$, let $[\Pi]$ denote its indicator function.\label{not:[Pi]}
\label{not:P}
Define $P_{\rm MM}(d,r)$ 
as the $\Z$-module generated by all $[Q(\rk)]$, where $\rk$
lies in $S_{\rm MM}(d,r)$.

\label{not:Psi_MM}
There is a natural $\Z$-module homomorphism
$$
\Psi_{\rm MM}:Z_{\rm MM}(d,r)\to P_{\rm MM}(d,r)
$$
such that
$$
\Psi_{\rm MM}(\langle \rk\rangle)=[Q(\rk)]
$$
for all $\rk\in S_{\rm MM}(d,r)$.
\begin{definition}
A homomorphism of groups $f:Z_{\rm MM}(d,r)\to A$ is {\em strongly valuative} if
there exists a group homomorphism $\widehat{f}:P_{\rm MM}(d,r)\to A$
such that $f=\widehat{f}\circ \psi_{\rm MM}$.
\end{definition}
Suppose that $\Pi=\Pi_1\cup \cdots \cup \Pi_k$ is a megamatroid decomposition. Then by the
inclusion-exclusion principle, we have
\begin{multline*}
\Psi_{\rm MM}(m_{\rm val}(\Pi;\Pi_1,\dots,\Pi_k))=
\Psi_{\rm MM}\Big(\sum_{I\subseteq \{1,2,\dots,k\}}(-1)^{|I|}m_I\Big)=\\
=\sum_{I\subseteq \{1,2,\dots,k\}}(-1)^{|I|}\prod_{i\in I}[\Pi_i]
=
\sum_{I\subseteq \{1,2,\dots,k\}}(-1)^{|I|}[\Pi_I]=\prod_{i=1}^k ([\Pi]-[\Pi_i])=0.
\end{multline*}
This shows that every homomorphism $f:Z_{\rm MM}(d,r)\to A$ of abelian groups with the strong valuative property has the weak valuative property.  
In fact the two valuative properties are equivalent by the following theorem:

\begin{theorem}\label{theoweakstrong}
A homomorphism $f:Z_{\rm MM}(d,r)\to A$ of abelian groups is weakly valuative if and only if
it is strongly valuative.
\end{theorem}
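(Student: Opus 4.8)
The plan is to recast the theorem as a statement about $\ker\Psi_{\rm MM}$. The inclusion--exclusion computation displayed just above the theorem shows that the subgroup $W\subseteq Z_{\rm MM}(d,r)$ generated by all the elements $m_{\rm val}(\Pi;\Pi_1,\dots,\Pi_k)$ is contained in $\ker\Psi_{\rm MM}$; and, unwinding the definitions, a homomorphism $f$ is weakly valuative exactly when $f|_W=0$, while it is strongly valuative exactly when $f|_{\ker\Psi_{\rm MM}}=0$ (using that $\Psi_{\rm MM}$ is surjective onto $P_{\rm MM}(d,r)$). Hence the theorem is \emph{equivalent} to the equality $W=\ker\Psi_{\rm MM}$: every $\Z$-linear dependence among the indicator functions $[Q(\rk)]$, $\rk\in S_{\rm MM}(d,r)$, must be a $\Z$-combination of the basic decomposition relations $m_{\rm val}$. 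So the real content is to \emph{resolve} an arbitrary such dependence into decomposition relations; the strong-to-weak implication already recorded in the text is the trivial half.

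To get $W=\ker\Psi_{\rm MM}$ I would exhibit a normal form modulo $W$. Since $Z_{\rm MM}(d,r)/W$ surjects onto $P_{\rm MM}(d,r)$ via $\langle\rk\rangle+W\mapsto[Q(\rk)]$, it suffices to pin down a subfamily $R\subseteq S_{\rm MM}(d,r)$ of ``reference'' megamatroids such that (i) the indicator functions $\{[Q(\rho)]:\rho\in R\}$ are $\Z$-linearly independent in $P_{\rm MM}(d,r)$, and (ii) modulo $W$ every generator $\langle\rk\rangle$ is a $\Z$-combination of the $\langle\rho\rangle$ with $\rho\in R$; then the surjection carries a spanning set onto a basis, hence is an isomorphism, which forces $W=\ker\Psi_{\rm MM}$. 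The only tool for (ii) is that each megamatroid polyhedron decomposition $Q(\rk)=\Pi_1\cup\dots\cup\Pi_k$ yields the congruence $\langle\rk\rangle\equiv-\sum_{\emptyset\neq I}(-1)^{|I|}\langle\rk^I\rangle\pmod{W}$, so one must produce, for each $\rk$, a sequence of such decompositions that drives $\langle\rk\rangle$ down to the reference family. A natural way to organize this is an induction on $d$ through the coordinate $y_d$: the functional $y\mapsto y_d$ is bounded on $Q(\rk)$ (every recession direction has $d$-th coordinate $0$) and sweeps it out with fibers $Q(\rk)\cap\{y_d=s\}$ that are again megamatroid polyhedra on $\underline d\setminus\{d\}$, the combinatorial type of the fiber changing only at the finitely many integers $s$ occurring as $y_d$-coordinates of vertices of $Q(\rk)$; cutting $Q(\rk)$ at these critical integer slices is a genuine megamatroid polyhedron decomposition into slabs, and one then applies the inductive hypothesis fiberwise and reassembles. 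The base case $d\le1$ is trivial, since $Q(\rk)$ is then a point and $\Psi_{\rm MM}$ is injective.

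The main obstacle is exactly the non-closure under intersection emphasized in the text: one cannot invoke the Groemer--Volland extension theorem, nor refine $Q(\rk)$ by a generic hyperplane of its facet arrangement, because the pieces that result need not be megamatroid polyhedra --- this is what forces the cuts to be by the admissible hyperplanes $\{y_d=s\}$ and their $S_d$-conjugates, and it means every step of the reduction must be checked to stay inside the class (pieces, and their pairwise intersections, must still satisfy the edge-direction and integrality conditions). Within the inductive scheme the delicate point is the reassembly: the $(d-1)$-dimensional decomposition relations produced by the inductive hypothesis on the fibers have to be chosen \emph{uniformly in $s$} over each interval of regular values, so that they glue to honest $d$-dimensional megamatroid polyhedron decompositions rather than merely holding slice by slice. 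Finally, since $Z_{\rm MM}(d,r)$ and $P_{\rm MM}(d,r)$ are not finitely generated, the basis/spanning-set count in (i)--(ii) must be made effective by writing down $R$ and the rewriting explicitly; the operator $E=\id+N$, invertible because $N$ strictly lowers $\dim Q$ and hence $N^d=0$, together with Lemma~\ref{lemweakstrong}, is available to trade the valuative bookkeeping for the covaluative one wherever that is more convenient.
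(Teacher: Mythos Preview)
Your reformulation as $W=\ker\Psi_{\rm MM}$ is correct and is exactly how the paper sets things up. But the inductive scheme has a genuine gap: the claim that $y_d$ is bounded on $Q(\rk)$ because ``every recession direction has $d$-th coordinate $0$'' is false for megamatroids. Take $d=2$, $r=0$, $\rk(\{1\})=\rk(\{2\})=\infty$: then $Q(\rk)$ is the whole line $y_1+y_2=0$, with recession directions $\pm(e_1-e_2)$. In general $y_d$ is bounded above on $Q(\rk)$ only when $\rk(\{d\})<\infty$, which a megamatroid need not satisfy. So slicing at integer values of $y_d$ does not produce a finite decomposition into slabs, and the slab-and-fiber induction collapses at step one. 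Unboundedness is the central difficulty here, not a side issue, and your proposal does not address it. (Even in the bounded case, you correctly flag the ``uniformity in $s$'' problem for the reassembly but do not indicate how to solve it; and note that the Brianchon--Gram identity of Theorem~\ref{theo1} is an equality of indicator functions, not a congruence modulo $W$, so it does not by itself deliver your step~(ii).)

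The paper's proof in Appendix~\ref{apA} takes a rather different route. It first cuts each $Q(\rk_j)$ into its orthant pieces $\Pi_j^\gamma=\{y\in Q(\rk_j):\sgn y_i\in\{0,\gamma_i\}\ \forall i\}$ for $\gamma\in\{-1,1\}^d$; these are megamatroid polyhedra and the cut is a genuine megamatroid decomposition, so the given dependence survives modulo $W$ and may be treated one orthant at a time. After a coordinate permutation each orthant piece is \emph{bounded from above}, meaning $\rk(\underline i)<\infty$ for all $i$. For such polyhedra a Lawrence--Varchenko argument (Lemma~\ref{lemcones}) rewrites $\langle\rk\rangle$ modulo $W$ as an integer combination of cones with a \emph{single} vertex, by successively taking Minkowski sums with half-lines $\R_{\geq 0}(e_i-e_j)$, $i>j$, and recording the resulting decompositions. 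Finally (Proposition~\ref{propweakstrong}), a dependence among single-vertex cones is resolved by localizing at the lexicographically largest vertex to isolate the cones based there, translating that vertex to the origin, and passing to the cross-section $\{y_d=1\}$ to descend to dimension $d-1$; decomposition relations obtained there are coned back up. So the paper does not seek a global reference family $R$ at this stage; it resolves each dependence directly by orthant-chopping, then cone-chopping, then a cross-section induction applied only to single-vertex cones. Your fiber-over-$y_d$ idea is close in spirit to that last step, but it only becomes workable after the reduction to bounded-from-above single-vertex cones that your sketch omits.
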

The proof of Theorem~\ref{theoweakstrong} is in~\ref{apA}.

In view of this theorem, we will from now on just refer to the 
valuative property when we mean the weak or the strong valuative property.

\label{not:Psi_MM circ}
For a megamatroid polytope $\Pi$, let $\Pi^\circ$ be the relative interior of $\Pi$. \label{not:Picirc}
Define a homomorphism $\Psi^\circ_{\rm MM}:Z_{\rm MM}(d,r)\to P_{\rm MM}(d,r)$ by $\Psi^\circ_{\rm MM}(\langle\rk\rangle)=[Q^\circ(\rk)]$.
\begin{definition}
Suppose that $f:Z_{\rm MM}(d,r)\to A$ is a homomorphism of abelian groups.
We say that $f$ is {\em strongly covaluative} if $f$ factors through $\Psi^\circ_{\rm MM}$, i.e.,
there exists a group homomorphism $\widehat{f}$ such that $f=\widehat{f}\circ \Psi^\circ_{\rm MM}$.
\end{definition}

\begin{corollary}\label{corcoval}
A homomorphism $f:Z_{\rm MM}(d,r)\to A$ of abelian groups is weakly covaluative
if and only if it is strongly covaluative. 
\end{corollary}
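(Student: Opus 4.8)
The plan is to transport the equivalence of Theorem~\ref{theoweakstrong} along the operator $E$, after first observing that $E$ is invertible. Write $E=\id+N$, where $N(\langle\rk\rangle)=\sum_{F}\langle\rk_{F}\rangle$ with $F$ ranging over the \emph{proper} faces of $Q(\rk)$. The case $d=0$ being trivial, assume $d\geq1$; then every base polyhedron of a megamatroid on $\underline{d}$ is nonempty (Lemma~\ref{lemnonempty}) of dimension at most $d-1$, and since every proper face has strictly smaller dimension, $N$ strictly lowers the dimension of the base polyhedron. Hence $N$ is nilpotent on $Z_{\rm MM}(d,r)$ and $E$ is invertible, with $E^\circ:=E^{-1}=\sum_{i\geq0}(-N)^{i}$ a finite sum on each generator.

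Next I would record two identities. (i) Since every point of a polyhedron lies in the relative interior of exactly one of its faces, with the polyhedron itself counted as a face, we have $[Q(\rk)]=\sum_{F\in\face(Q(\rk))}[Q^{\circ}(\rk_{F})]$, i.e.\ $\Psi^\circ_{\rm MM}\circ E=\Psi_{\rm MM}$; composing on the right with $E^\circ$ gives $\Psi^\circ_{\rm MM}=\Psi_{\rm MM}\circ E^\circ$, and in particular $\im\Psi^\circ_{\rm MM}=\im\Psi_{\rm MM}=P_{\rm MM}(d,r)$. (ii) The computation in the proof of Lemma~\ref{lemweakstrong} shows $E\big(m_{\rm val}(\Pi;\Pi_{1},\dots,\Pi_{k})\big)=m_{\rm coval}(\Pi;\Pi_{1},\dots,\Pi_{k})$ for every megamatroid polyhedron decomposition.

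The corollary then follows from a chain of equivalences for a homomorphism $f\colon Z_{\rm MM}(d,r)\to A$: $f$ is weakly covaluative $\iff$ $f(m_{\rm coval}(D))=0$ for every decomposition $D$ $\iff$ $(f\circ E)(m_{\rm val}(D))=0$ for every $D$ (by (ii)) $\iff$ $f\circ E$ is weakly valuative $\iff$ $f\circ E$ is strongly valuative (Theorem~\ref{theoweakstrong} applied to $f\circ E$) $\iff$ there is a homomorphism $\widehat{g}\colon P_{\rm MM}(d,r)\to A$ with $f\circ E=\widehat{g}\circ\Psi_{\rm MM}$. Finally, this last condition is equivalent to $f$ being strongly covaluative: if $f\circ E=\widehat{g}\circ\Psi_{\rm MM}$ then $f=(f\circ E)\circ E^\circ=\widehat{g}\circ\Psi_{\rm MM}\circ E^\circ=\widehat{g}\circ\Psi^\circ_{\rm MM}$ by (i); conversely, if $f=\widehat{f}\circ\Psi^\circ_{\rm MM}$ then $f\circ E=\widehat{f}\circ\Psi^\circ_{\rm MM}\circ E=\widehat{f}\circ\Psi_{\rm MM}$, again by (i).

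The only non-formal ingredient is Theorem~\ref{theoweakstrong} itself; its easy direction gives the easy half of the corollary (strongly covaluative implies weakly covaluative) and its hard direction gives the converse. The step most deserving of care in writing up is the invertibility of $E$ together with identity~(i) — in particular, making sure the decomposition of a polyhedron into the relative interiors of its faces is handled correctly in the unbounded case — but I do not anticipate a genuine obstacle there.
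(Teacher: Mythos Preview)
Your proof is correct and follows essentially the same route as the paper's: both use the identity $\Psi^\circ_{\rm MM}\circ E=\Psi_{\rm MM}$ together with the invertibility of $E$ to reduce the covaluative equivalence to Theorem~\ref{theoweakstrong}. The only cosmetic differences are that you make the invertibility of $E$ explicit (via nilpotence of $N=E-\id$) and run both directions through the equivalence chain, whereas the paper handles the easy direction by directly checking $\Psi^\circ_{\rm MM}(m_{\rm coval})=0$.
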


\begin{proof}
If $\Pi=\Pi_1\cup \cdots \cup \Pi_k$ is a megamatroid polytope decomposition, then
$$
\Psi^\circ_{\rm MM}(m_{\rm coval}(\Pi;\Pi_1,\dots,\Pi_k))=
\Psi^\circ_{\rm MM}(\langle\rk_\Pi\rangle)-\sum_F \Psi^\circ_{\rm MM}(\langle\rk_F\rangle)=
[\Pi^\circ]-\sum_F [F^\circ]=0,
$$
where $F$ runs over all interior faces. This shows that if $f$ has the strong covaluative property, then
it has the weak covaluative property.

It is easy to verify that $\Psi^\circ_{\rm MM}\circ E=\Psi_{\rm MM}$.
Suppose that $f$ is weakly covaluative. By Lemma~\ref{lemweakstrong}, $f\circ E^{-1}$ is weakly valuative.
By Theorem~\ref{theoweakstrong}, $f\circ E^{-1}$ is strongly valuative, so $f\circ E^{-1}=\widehat{f}\circ \Psi_{\rm MM}$
for some group homomorphism $\widehat{f}$,
and $f=\widehat{f}\circ\Psi_{\rm MM}\circ E=\widehat{f}\circ\Psi^\circ_{\rm MM}$. This implies that $f$ is strongly covaluative.
\end{proof}
\begin{definition}
Suppose that $d\geq 1$.
A valuative group homomorphism $f:Z_{\rm MM}(d,r)\to A$ is {\em additive} if $f(\langle \rk\rangle)=0$
for all megamatroids $(\underline{d},\rk)$ for which $Q(\rk)$ has dimension $<d-1$.
\end{definition}

If $f:Z_{\rm MM}(d,r)\to
A$ is additive, then
megamatroid polyhedron decomposition
$$
\Pi=\Pi_1\cup \Pi_2\cup \cdots\cup \Pi_k
$$
we have
$$
f(\rk_\Pi)=\sum_{i=1}^kf(\langle \rk_{\Pi_i}\rangle).
$$

A megamatroid polyhedron decomposition $\Pi=\Pi_1\cup \cdots\cup \Pi_k$
is a {\em (poly)matroid polytope decomposition} 
if $\Pi,\Pi_1,\dots,\Pi_k$ are (poly)matroid polytopes. 
Let $S_{\rm (P)M}(d,r)$ 
be the set of (poly)matroids,
and let $Z_{\rm (P)M}(d,r)$ 
be the free abelian group generated by $S_{\rm (P)M}(d,r)$.
We say that $f:Z_{\rm (P)M}(d,r)\to A$
has the weak valuative
property if $f(m_{\rm val}(\Pi;\Pi_1,\dots,\Pi_k))=0$ for every 
(poly)matroid polytope decomposition.
We define the weak covaluative property for such homomorphisms $f$
in a similar manner. The group homomorphism $E:Z_{\rm MM}(d,r)\to Z_{\rm MM}(d,r)$
restricts to homomorphisms $Z_{\rm (P)M}(d,r)\to Z_{\rm (P)M}(d,r)$.
A group homomorphism $f:Z_{\rm (P)M}(d,r)\to A$
is weakly valuative
if and only if $f\circ E$ is weak covaluative. 
Let $P_{\rm (P)M}(d,r)=\Psi_{\rm MM}(Z_{\rm (P)M}(d,r))$
and define $\Psi_{\rm (P)M}:Z_{\rm (P)M}(d,r)\to P_{\rm (P)M}(d,r)$
as the restrictions of $\Psi_{\rm MM}$.
A homomorphism $f:Z_{\rm (P)M}(d,r)\to A$ 
is strongly valuative  if and only if it factors through $\Psi_{\rm (P)M}$.

\begin{corollary}
A homomorphism $f:Z_{\rm (P)M}(d,r)\to A$ 
is weakly valuative if and only
if it is strongly valuative.
\end{corollary}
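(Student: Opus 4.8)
The direction ``strongly valuative $\Rightarrow$ weakly valuative'' I would dispose of first: every (poly)matroid polytope decomposition is in particular a megamatroid polyhedron decomposition, and for such decompositions all the sets $\Pi_I$ are again (poly)matroid polytopes, so the computation $\Psi_{\rm MM}(m_{\rm val}(\Pi;\Pi_1,\dots,\Pi_k))=\prod_{i=1}^k([\Pi]-[\Pi_i])=0$ given above restricts to $\Psi_{\rm (P)M}(m_{\rm val}(\Pi;\Pi_1,\dots,\Pi_k))=0$. Hence if $f=\widehat{f}\circ\Psi_{\rm (P)M}$ then $f$ annihilates every $m_{\rm val}$ coming from a (poly)matroid polytope decomposition.

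For the reverse implication my plan is to reduce to Theorem~\ref{theoweakstrong} by \emph{extending} a given weakly valuative homomorphism $f:Z_{\rm (P)M}(d,r)\to A$ to a weakly valuative homomorphism $g:Z_{\rm MM}(d,r)\to A$ with $g|_{Z_{\rm (P)M}(d,r)}=f$. Granting this, Theorem~\ref{theoweakstrong} supplies $\widehat{f}:P_{\rm MM}(d,r)\to A$ with $g=\widehat{f}\circ\Psi_{\rm MM}$; since $P_{\rm (P)M}(d,r)=\Psi_{\rm MM}(Z_{\rm (P)M}(d,r))$, restricting $\widehat{f}$ to $P_{\rm (P)M}(d,r)$ gives $f=g|_{Z_{\rm (P)M}(d,r)}=(\widehat{f}|_{P_{\rm (P)M}(d,r)})\circ\Psi_{\rm (P)M}$, which is exactly the strong valuative property.

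To build $g$ I would use the \emph{truncation} map $\pi:Z_{\rm MM}(d,r)\to Z_{\rm (P)M}(d,r)$ sending $\langle\rk\rangle$ to $\langle\rk'\rangle$, where $\rk'$ is the (poly)matroid with base polytope $Q(\rk)\cap\Delta_{\rm (P)M}(d,r)$ (and $\pi(\langle\rk\rangle)=0$ if this intersection is empty), and set $g=f\circ\pi$. The well-definedness of $\pi$ rests on a lemma I would prove first: intersecting a megamatroid polyhedron with a half-space of the form $\{\sum_{i\in A}y_i\le c\}$ with $c\in\Z$ is again a megamatroid polyhedron. The point is that an edge created on the cutting hyperplane $\{\sum_{i\in A}y_i=c\}$ arises inside a $2$-face of the original polyhedron, whose edge directions are among the $e_i-e_j$, and a short case analysis on which of the three relevant indices lie in $A$ shows the new edge is again parallel to some $e_k-e_l$; similarly new vertices stay in $\Z^d$ because $c$ and the old vertices are integral. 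Since $\Delta_{\rm (P)M}(d,r)$ is an intersection of such half-spaces with $c$ integral, $Q(\rk)\cap\Delta_{\rm (P)M}(d,r)$ is a bounded megamatroid polyhedron contained in $\Delta_{\rm (P)M}(d,r)$, hence a (poly)matroid polytope by the polytopal characterization of (poly)matroids from the previous section. Also $Q(\rk)\subseteq\Delta_{\rm (P)M}(d,r)$ already when $\rk$ is a (poly)matroid, so $\pi$ restricts to the identity on $Z_{\rm (P)M}(d,r)$ and $g$ extends $f$.

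The heart of the matter --- and the step I expect to be the real obstacle --- is to verify that $g=f\circ\pi$ is weakly valuative on $Z_{\rm MM}(d,r)$, i.e. that $\pi(m_{\rm val}(\Pi;\Pi_1,\dots,\Pi_k))\in\ker f$ for every megamatroid polyhedron decomposition $\Pi=\Pi_1\cup\cdots\cup\Pi_k$. Because $[\Pi'\cap\Delta_{\rm (P)M}(d,r)]=[\Delta_{\rm (P)M}(d,r)]\cdot[\Pi']$ for every polytope $\Pi'$, the element $\pi(m_{\rm val}(\Pi;\Pi_1,\dots,\Pi_k))=\sum_{I}(-1)^{|I|}\langle\rk_{\Pi_I\cap\Delta_{\rm (P)M}(d,r)}\rangle$ is the alternating sum attached to the truncated family $\Pi\cap\Delta_{\rm (P)M}(d,r)=\bigcup_i\bigl(\Pi_i\cap\Delta_{\rm (P)M}(d,r)\bigr)$. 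I would argue that after discarding the truncated pieces of dimension $<\dim\bigl(\Pi\cap\Delta_{\rm (P)M}(d,r)\bigr)$ the surviving ones form a genuine (poly)matroid polytope decomposition --- they still cover $\Pi\cap\Delta_{\rm (P)M}(d,r)$ since their union is closed and of full dimension in it, and they still meet pairwise in proper faces --- so the associated $m_{\rm val}$ lies in $\ker f$ by hypothesis, while the discrepancy between it and $\pi(m_{\rm val}(\Pi;\Pi_1,\dots,\Pi_k))$ is supported on the discarded lower-dimensional pieces and is absorbed by an induction on $\dim\Pi$. The careful execution of this pruning-and-induction bookkeeping is where the work concentrates. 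Alternatively --- and this may be the cleaner write-up --- one can bypass $\pi$ and simply re-run the proof of Theorem~\ref{theoweakstrong} from~\ref{apA} word for word with ``megamatroid'' replaced by ``(poly)matroid'' and ``megamatroid polyhedron'' by ``(poly)matroid polytope'': this is legitimate precisely because the only polyhedral operations used there --- passing to a face and slicing by a hyperplane $\sum_{i\in A}y_i=c$ --- carry polytopes lying in $\Delta_{\rm (P)M}(d,r)$ to polytopes lying in $\Delta_{\rm (P)M}(d,r)$, again by the polytopal characterization.
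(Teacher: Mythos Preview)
Your approach is essentially the same as the paper's, just phrased dually. The paper works directly with kernels: it observes $\ker\Psi_{\rm (P)M}(d,r)=\ker\Psi_{\rm MM}(d,r)\cap Z_{\rm (P)M}(d,r)=W_{\rm MM}(d,r)\cap Z_{\rm (P)M}(d,r)$ (by Theorem~\ref{theoweakstrong}), defines the same truncation projection $\pi_{\rm (P)M}$ you call~$\pi$, and argues that $\pi_{\rm (P)M}(W_{\rm MM}(d,r))\subseteq W_{\rm (P)M}(d,r)$, whence $W_{\rm MM}(d,r)\cap Z_{\rm (P)M}(d,r)=\pi_{\rm (P)M}(W_{\rm MM}(d,r)\cap Z_{\rm (P)M}(d,r))\subseteq W_{\rm (P)M}(d,r)$. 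Your extension $g=f\circ\pi$ being weakly valuative is exactly the statement $f(\pi(W_{\rm MM}))=0$, which follows from this same inclusion. So the two arguments are interchangeable reformulations; the paper's kernel version is a line or two shorter since it avoids passing to an arbitrary~$f$.

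On the key step $\pi(m_{\rm val}(\Pi;\Pi_1,\ldots,\Pi_k))\in W_{\rm (P)M}(d,r)$: the paper simply writes $\pi_{\rm (P)M}(m_{\rm val}(\Pi;\Pi_1,\ldots,\Pi_k))=m_{\rm val}(\Pi\cap\Delta;\Pi_1\cap\Delta,\ldots,\Pi_k\cap\Delta)$ and declares the right-hand side to lie in $W_{\rm (P)M}(d,r)$, without discussing the possibility that some $\Pi_i\cap\Delta$ drop in dimension. You are more scrupulous here, proposing a pruning-and-induction argument. This extra care is not wrong, but the paper evidently regards the verification as routine. Your alternative of rerunning the Appendix~A proof verbatim in the (poly)matroid setting would also work but is more effort than the projection argument.
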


\begin{proof}
We need to show that $\ker \Psi_{\rm (P)M}(d,r)=W_{\rm (P)M}(d,r)$.
It is clear that $W_{\rm (P)M}(d,r)\subseteq \ker \Psi_{\rm (P)M}(d,r)$. 
By Theorem~\ref{theoweakstrong}, 
we have that $\ker \Psi_{\rm MM}(d,r)=W_{\rm MM}(d,r)$, so
$\ker \Psi_{\rm (P)M}(d,r)=W_{\rm MM}(d,r)\cap Z_{\rm (P)M}(d,r)$.
Define $\pi_{\rm (P)M}:Z_{\rm MM}(d,r)\to Z_{\rm (P)M}(d,r)$ by
$\pi_{\rm (P)M}(\langle \rk\rangle)=\langle\rk'\rangle$ 
where $Q(\rk')=Q(\rk)\cap \Delta_{\rm (P)M}(d,r)$
if this intersection is nonempty and $\pi_{\rm (P)M}(\langle\rk\rangle)=0$ otherwise.
Note that $\pi_{\rm (P)M}$ is a projection of $Z_{\rm MM}(d,r)$ onto $Z_{\rm (P)M}(d,r)$.
We have
$$
\pi_{\rm (P)M}(m_{\rm val}(\Pi;\Pi_1,\dots,\Pi_k))=
m_{\rm val}(\Pi\cap \Delta;\Pi_1\cap \Delta,\dots,\Pi_k\cap \Delta)\in W_{\rm (P)M}(d,r),
$$
where $\Delta=\Delta_{\rm (P)M}(d,r)$.
This shows that $\pi_{\rm (P)M}(W_{\rm MM}(d,r))\subseteq W_{\rm (P)M}(d,r)$.
We conclude that 
$$\ker \Psi_{\rm (P)M}(d,r)=W_{\rm MM}(d,r)\cap Z_{\rm (P)M}(d,r)\subseteq \pi_{\rm (P)M}(W_{\rm MM}(d,r))\subseteq W_{\rm (P)M}(d,r).
$$
\end{proof}

We can also define the strong covaluative property for a group homomorphism 
$f:Z_{\rm (P)M}(d,r)\to A$.
The proof of Corollary~\ref{corcoval} generalizes to (poly)matroids and
 $f$ is weakly covaluative 
if and only
if $f$ is strongly covaluative.

\section{Decompositions into cones}
A chain of length $k$ in $\underline{d}$ is 
$$
\underline{X}:\emptyset \subset X_1\subset \cdots \subset X_{k-1}\subset X_{k}=\underline{d}
$$
(here $\subset$ denotes proper inclusion).
We will write $\ell(\underline{X})=k$ for the length of such a chain.\label{not:ell}
If $d>0$ then every chain has length $\geq 1$, but for $d=0$ there is exactly 1 chain,
namely
$$
\emptyset=\underline{0}
$$
and this chain has length $0$.
For a chain $\underline{X}$ of length $k$ 
and a $k$-tuple $\underline{r}=(r_1,r_2,\dots,r_k)\in(\Z\cup\{\infty\})^k$, 
we define a megamatroid polyhedron
\label{not:R}
$$
R_{\rm MM}(\underline{X},\underline{r})=\Big\{(y_1,\dots,y_d)\in \R^d \mathrel{\Big|}
\sum_{i=1}^d y_i=r_k,\ \forall j\ \sum_{i\in X_{j}}y_i\leq r_j\Big\}.
$$
We will always use the conventions $r_0=0$, $X_0=\emptyset$.
The megamatroid $\rk_{\underline{X},\underline{r}}$ is defined by $Q(\rk_{\underline{X},\underline{r}})=R_{\rm MM}(\underline{X},\underline{r})$.

For a megamatroid $\rk$ and a chain $\underline{X}$ of length $k$ we define 
$$
R_{\rm MM}(\underline{X},\rk)=R_{\rm MM}(\underline{X},(\rk(X_1),\rk(X_2),\dots,\rk(X_k))).
$$

Suppose that $\Pi$ is a polyhedron in $\R^d$ defined by $g_i(y_1,\dots,y_d)\leq c_i$
for $i=1,2,\dots,n$, 
where $g_i:\R^d\to \R$ is linear and $c_i\in \R$.
For every face $F$ of $\Pi$, the tangent cone $\Cone_F$ of $F$ is
defined by the inequalities
$$
g_i(y_1,\dots,y_d)\leq c_i
$$
for all $i$ for which the restriction of $g_i$ to $F$ is constant and equal to $c_i$.
\begin{theorem}[Brianchon-Gram Theorem~\cite{Brianchon,Gram}]\label{theoBG}
We have the following equality
$$
[\Pi]=\sum_{F}(-1)^{\dim F}[\Cone_F]
$$
where $F$ runs over all the bounded faces of $\Pi$.
\end{theorem}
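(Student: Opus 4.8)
The statement to prove is the Brianchon--Gram theorem: for a polyhedron $\Pi$ defined by linear inequalities $g_i \leq c_i$, one has $[\Pi] = \sum_F (-1)^{\dim F}[\Cone_F]$, the sum over bounded faces $F$ of $\Pi$. Since this is a classical result attributed to Brianchon and Gram, the cleanest route is to cite it directly; but if one wants a self-contained argument, the plan is to reduce to a pointwise check and use a Euler-characteristic/sign-cancellation argument on the poset of faces.

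First I would fix a point $y \in \R^d$ and show that both sides agree when evaluated at $y$. The left side is $1$ if $y \in \Pi$ and $0$ otherwise. For the right side, observe that $y \in \Cone_F$ precisely when $g_i(y) \leq c_i$ for every $i$ whose defining functional is constant on $F$; equivalently, letting $J(y) = \{i : g_i(y) \leq c_i\}$ be the set of constraints satisfied at $y$, we have $y \in \Cone_F$ iff $F$ lies in the intersection of the hyperplanes $\{g_i = c_i\}$ for $i \notin J(y)$ together with being a face at all. So the right side at $y$ equals $\sum_{F} (-1)^{\dim F}$, where $F$ ranges over bounded faces of $\Pi$ contained in the affine subspace $L(y) := \bigcap_{i \notin J(y)} \{g_i = c_i\}$.

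Next I would analyze this sum combinatorially. If $y \in \Pi$, then $J(y)$ is everything, $L(y) = \R^d$, and the sum is $\sum_F (-1)^{\dim F}$ over \emph{all} bounded faces of $\Pi$; this is a known Euler-relation identity for polyhedra (it equals $1$, reflecting that the bounded faces form a contractible complex, or by induction on dimension stripping off a vertex). If $y \notin \Pi$, the bounded faces of $\Pi$ lying in $L(y)$ are exactly the bounded faces of the polyhedron $\Pi \cap L(y)$, which is a nonempty polyhedron of dimension strictly smaller than that of $\Pi$ (nonempty because $\Pi$ has such faces only if the constraints indexed outside $J(y)$ are simultaneously tight somewhere on $\Pi$ --- and if no such face exists the sum is empty, hence $0$, matching). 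Applying the same Euler relation to $\Pi \cap L(y)$ --- or noting the alternating sum over the face lattice of any polyhedron with at least one bounded face, intersected with an affine flag condition, telescopes to $0$ when $y$ is on the wrong side --- gives $0$. A cleaner packaging: pair each face $F$ in the sum with the smallest face containing $F$ on which the ``first violated'' functional $g_{i_0}$ (for $i_0 \notin J(y)$ minimal) is non-constant; this matching is sign-reversing and fixed-point-free, killing the sum.

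The main obstacle is the Euler-relation step: proving $\sum_F (-1)^{\dim F} = 1$ over bounded faces of a nonempty polyhedron, and its vanishing counterpart. For a polytope this is the Euler--Poincar\'e formula; for an unbounded polyhedron one must be careful that only bounded faces enter, and the right framework is that the union of bounded faces deformation-retracts to a point (or one inducts on dimension, choosing a generic linear functional, and splits faces into those containing the minimizing vertex and those not). Everything else --- the identification $y \in \Cone_F \iff F \subseteq L(y)$, and the sign-cancellation matching --- is bookkeeping. Given that the paper merely needs the statement as a black box (it is invoked as a named classical theorem), I would in fact just cite \cite{Brianchon,Gram} and omit the proof entirely, which is consistent with how Theorem~\ref{theoBG} is presented.
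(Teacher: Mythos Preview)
The paper does not give a proof of Theorem~\ref{theoBG} at all: it simply writes ``For a proof, see~\cite{McMullen}.'' Your final recommendation, to cite the classical sources and omit the proof, is therefore exactly what the paper does.

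That said, the sketch you offer contains a genuine error in the key identification. You claim that $y\in\Cone_F$ if and only if $F\subseteq L(y):=\bigcap_{i\notin J(y)}\{g_i=c_i\}$. Write $I(F)=\{i:g_i|_F\equiv c_i\}$ for the set of constraints tight on~$F$. By the definition of the tangent cone, $y\in\Cone_F$ means $g_i(y)\le c_i$ for every $i\in I(F)$, i.e.\ $I(F)\subseteq J(y)$. On the other hand $F\subseteq L(y)$ means every $i\notin J(y)$ lies in~$I(F)$, i.e.\ $J(y)^c\subseteq I(F)$. These are not equivalent: for instance when $F=\Pi$ we have $I(F)=\emptyset$, so $\Cone_\Pi$ is the whole affine hull and contains every~$y$, yet your condition $F\subseteq L(y)$ would hold only when $y\in\Pi$. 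Consequently the description of the sum on the right-hand side as ranging over bounded faces contained in $L(y)$ is incorrect, and the ensuing argument (identifying those faces with the bounded faces of $\Pi\cap L(y)$, then applying an Euler relation there) breaks down. The correct combinatorics runs over bounded faces $F$ with $I(F)\subseteq J(y)$; the Euler/M\"obius argument still works in that setting, but it is a different poset from the one you describe. Since you are in any case only citing the result, this does not affect your conclusion, but the sketch as written would not constitute a proof.
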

For a proof, see~\cite{McMullen}.

\begin{theorem}\label{theo1}
For any  megamatroid  $\rk:2^{\underline{d}}\to \Z\cup \{\infty\}$ we have
$$
[Q(\rk)]= \sum_{\underline{X}}(-1)^{d-\ell(\underline{X})}[R_{\rm MM}(\underline{X},\rk)].
$$
\end{theorem}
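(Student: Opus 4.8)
The plan is to apply the Brianchon–Gram Theorem (Theorem~\ref{theoBG}) to the polyhedron $Q(\rk)$ and then identify the tangent cones appearing in that sum with the polyhedra $R_{\rm MM}(\underline X,\rk)$. Recall that $Q(\rk)$ sits in the hyperplane $\sum_i y_i = \rk(\underline d)$ and is cut out by the inequalities $\sum_{i\in A}y_i\le\rk(A)$ for $A\subseteq\underline d$ (those with $\rk(A)=\infty$ being vacuous). So Brianchon–Gram gives $[Q(\rk)]=\sum_F(-1)^{\dim F}[\Cone_F]$, the sum over bounded faces $F$ of $Q(\rk)$. The task is therefore to reorganize this sum: group the bounded faces $F$ according to a chain $\underline X$ naturally attached to the tangent cone $\Cone_F$, show that within each group the signs and multiplicities collapse to a single term $(-1)^{d-\ell(\underline X)}[R_{\rm MM}(\underline X,\rk)]$, and check that every chain $\underline X$ of length $k\le d$ arises.

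First I would analyze a single bounded face $F$ of $Q(\rk)$. Using the structure established in the proof that megamatroid polyhedra are exactly the $Q(\rk)$'s: the set $S_F$ of tight subsets at a relative interior point of $F$ is closed under union and intersection, and the prime sets $C_1,\dots,C_k$ determine a partition $A_1,\dots,A_k$ of $\underline d$ with $\linhull(F)$ defined by $\sum_{i\in A_j}y_i=r_j$. Totally ordering the $C_j$ so that $C_1\subsetneq C_1\cup C_2\subsetneq\cdots$ would be too optimistic (the prime sets need not form a chain), so instead the relevant combinatorial datum is: a \emph{chain} $\underline X:\emptyset\subsetneq X_1\subsetneq\cdots\subsetneq X_k=\underline d$ contained in $S_F$. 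The tangent cone $\Cone_F$ is defined by those $\sum_{i\in A}y_i\le\rk(A)$ with $A\in S_F$ together with the ambient hyperplane; since $S_F$ is a distributive lattice, each such cone can be rewritten using only a maximal chain's worth of inequalities — precisely, I expect $\Cone_F=R_{\rm MM}(\underline X,\rk)$ for a maximal chain $\underline X$ in $S_F$, and conversely $R_{\rm MM}(\underline X,\rk)$ for a chain $\underline X$ of length $k$ is the tangent cone at the face where exactly the sets in $\underline X$ are tight. This matching uses Lemma~\ref{lemrankequal}/Lemma~\ref{lemsupequal} to guarantee that the redundant inequalities indexed by non-chain tight sets are implied.

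Next I would count: fix a chain $\underline X$ of length $k$; the bounded faces $F$ with $\Cone_F=R_{\rm MM}(\underline X,\rk)$ are those whose affine hull is cut out by a subset of $\{\sum_{i\in X_j}y_i=r_j\}$ containing all the ``essential'' equations, and one checks the dimensions of these faces run over a combinatorial simplex so that $\sum_F(-1)^{\dim F}$ telescopes to $(-1)^{d-k}=(-1)^{d-\ell(\underline X)}$ — this is where a small Euler-characteristic or binomial-sum computation, of the shape $\sum_j(-1)^j\binom{?}{j}$, enters. Summing over all chains $\underline X$ then yields exactly $\sum_{\underline X}(-1)^{d-\ell(\underline X)}[R_{\rm MM}(\underline X,\rk)]$. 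For the unbounded case one should note $R_{\rm MM}(\underline X,\rk)$ is a genuine cone (possibly with lineality) and Brianchon–Gram still applies as stated since it only sums over bounded faces.

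The main obstacle I anticipate is the combinatorial bookkeeping in the middle step: precisely matching tangent cones $\Cone_F$ with polyhedra $R_{\rm MM}(\underline X,\rk)$ when the lattice $S_F$ of tight sets is not itself a chain, and then verifying that the over-counting of faces mapping to a given cone is governed by a sign-alternating sum that collapses to $\pm1$. Handling the $\infty$ entries of $\rk$ (so that some inequalities are absent and some $R_{\rm MM}(\underline X,\rk)$ degenerate) needs a little care but should not change the structure of the argument. I would also need the base case $d=0$, where there is a unique empty chain of length $0$ and both sides equal $[Q(\rk)]$ for the point $Q(\rk)$, which is immediate.
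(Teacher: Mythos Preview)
Your direct application of Brianchon--Gram to $Q(\rk)$ runs into a real obstacle at the identification step, and the specific claim you make there is false. You assert that ``each such cone can be rewritten using only a maximal chain's worth of inequalities --- precisely, I expect $\Cone_F=R_{\rm MM}(\underline X,\rk)$ for a maximal chain $\underline X$ in $S_F$.'' This fails already for $d=2$ with $\rk(\{1\})=\rk(\{2\})=1$, $\rk(\{1,2\})=2$: here $Q(\rk)=\{(1,1)\}$, the tight-set lattice at this vertex is all of $2^{\underline 2}$, and the tangent cone (cut out by $y_1\le 1$, $y_2\le 1$ on the line $y_1+y_2=2$) is again the single point $(1,1)$. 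But each maximal chain gives an $R_{\rm MM}(\underline X,\rk)$ which is a half-line, not a point. The non-chain constraint $y_2\le 1$ is \emph{not} implied by the chain constraints $y_1\le 1$ and $y_1+y_2=2$; Lemmas~\ref{lemrankequal} and~\ref{lemsupequal} concern where suprema are attained, not redundancy of defining inequalities, and do not rescue this. So there is no map from bounded faces of $Q(\rk)$ to chains along which the Brianchon--Gram sum can be regrouped as you describe, and the ``over-counting collapses to $\pm1$'' picture does not get off the ground: the tangent cones simply are not among the $R_{\rm MM}(\underline X,\rk)$ in general.

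The paper's proof sidesteps this entirely by perturbation. One replaces $\rk(A)$ by $\rk(A)+\varepsilon(d^2-|A|^2)$ to obtain a polytope $Q_\varepsilon(\rk)$. The strict concavity of $|A|\mapsto d^2-|A|^2$ turns the submodular inequality into a strict one, so two incomparable sets can never be simultaneously tight on $Q_\varepsilon(\rk)$. Hence the faces of $Q_\varepsilon(\rk)$ are in \emph{bijection} with chains $\underline X$, the face $F_\varepsilon(\underline X)$ has dimension $d-\ell(\underline X)$, and its tangent cone is exactly the $\varepsilon$-perturbed $R_{\rm MM}(\underline X,\rk)$. Brianchon--Gram now gives the identity for $Q_\varepsilon(\rk)$ term by term, with no grouping or sign collapse needed. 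Letting $\varepsilon\downarrow 0$ pointwise yields the identity for finite-valued $\rk$, and a further limit over the truncations $\rk^N$ of~(\ref{eq rk^N}) handles infinite ranks.
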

\begin{proof}
Assume first that $\rk(X)$ is finite for all $X\subseteq\underline d$.  
We define a convex polyhedron $Q_{\varepsilon}(\rk)$ by the
inequalities
$$
\sum_{i\in A} y_i\leq \rk(A)+\varepsilon (d^2-|A|^2)
$$
for all $A\subseteq \underline{d}$
and the equality $y_1+\cdots+y_d=r$, where $r=\rk(\underline{d})$.

Faces of $Q_\varepsilon(\rk)$ are given by intersecting $Q_{\varepsilon}(\rk)$
with hyperplanes of the form 
$$
H_A=\Big\{(y_1,\dots,y_d)\in \R^d \mid \sum_{i\in A} y_i=\rk(A)+\varepsilon(d^2-|A|^2)\Big\}.
$$
If $A,B\subseteq\underline{d}$, and $A$ and $B$ are incomparable, and $(y_1,\dots,y_d)\in H_A\cap H_B\cap Q_{\varepsilon}(\rk)$, then
\begin{multline*}
\sum_{i\in A}y_i+\sum_{i\in B}y_i
=\rk(A)+\rk(B)+\varepsilon((d^2-|A|^2)+(d^2-|B|^2))>\\
>
\rk(A)+\rk(B)+\varepsilon((d^2-|A\cup B|^2)+(d^2-|A\cap B|^2))=\\
\geq\rk(A\cup B)+\rk(A\cap B)+\varepsilon((d^2-|A\cup B|^2)+(d^2-|A\cap B|^2))\geq\\
\geq
\sum_{i\in A\cup B} y_i+\sum_{i\in A\cap B}y_i=\sum_{i\in A}y_i+\sum_{i\in B}y_i
\end{multline*}
This contradiction shows that $H_A\cap H_B\cap Q_{\varepsilon}(\rk)=\emptyset$.
It follows that all faces are of the form 
$$
F_\varepsilon(\underline{X})=Q_\varepsilon(\rk)\cap H_{X_1}\cap \cdots\cap H_{X_{k-1}}
$$
where $k\geq 1$ and 
$$
X_0=\emptyset\subset X_1\subset \cdots \subset X_{k-1}\subset X_k=\underline{d}.
$$
Also, all these faces are distinct.

Let us view $Q_\varepsilon(\rk)$ as a bounded 
polytope in the hyperplane $y_1+y_2+\cdots+y_d=r$.
For a face $F_{\varepsilon}(\underline{X})$, its tangent cone $\Cone_{F_{\varepsilon}(\underline{X})}$
is defined by the inequalities
$$
\sum_{i\in X_j} y_i\leq \rk(X_j)+\varepsilon(d^2-|X_j|^2)
$$
(and the equality $\sum_{i=1}^d y_i=r$).
If $\underline{X}$ has length $k$, then the dimension of $F_{\varepsilon}(\underline{X})$ is $d-k$.
Theorem~\ref{theoBG} implies that
$$
[Q_\varepsilon(\rk)]= \sum_{\underline{X}}(-1)^{d-\ell(\underline{X})}[\Cone_{F_{\varepsilon}(\underline{X})}].
$$
When we take the limit $\varepsilon\downarrow 0$, then 
$[Q_\varepsilon(\rk)]$ converges pointwise to $[Q(\rk)]$,
and $[\Cone_{F_{\varepsilon}(\underline{X})}]$
converges pointwise to $[R_{\rm MM}(\underline{X},\rk)]$.

Finally, for a general polymatroid $\rk$, we have
$\rk = \lim_{N\to\infty} \rk^N$, where $\rk^N$ is as in the proof of Lemma~\ref{lemnonempty},
and $\rk^N$ has all ranks finite, and likewise
$$\lim_{N\to\infty} [R_{MM}(\underline X,\rk^N)] = [R_{MM}(\underline X,\rk)].$$
So the result follows by taking limits. 
\end{proof}


\begin{example}
\begin{figure}[t]
\leaveout{
\centerline{ 
\includegraphics[width=4in,height=4in]{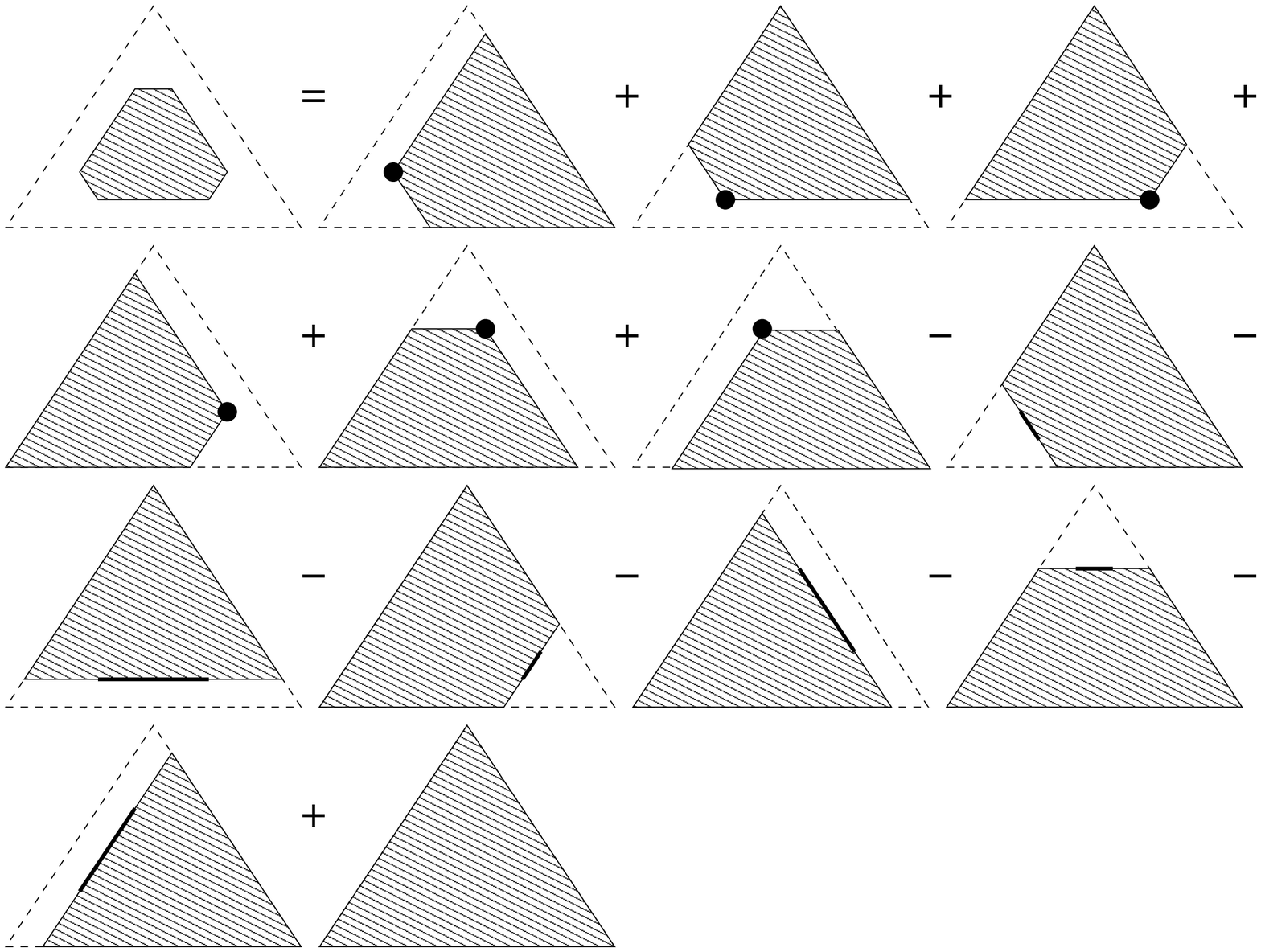}
}}
\caption{A decomposition of~$Q_\epsilon(\rk)$, as in Theorem~\ref{theo1}.}
\label{fig:BGtheorem}
\end{figure}

\begin{figure}[t]
\leaveout{
\centerline{ 
\includegraphics[width=4in,height=4in]{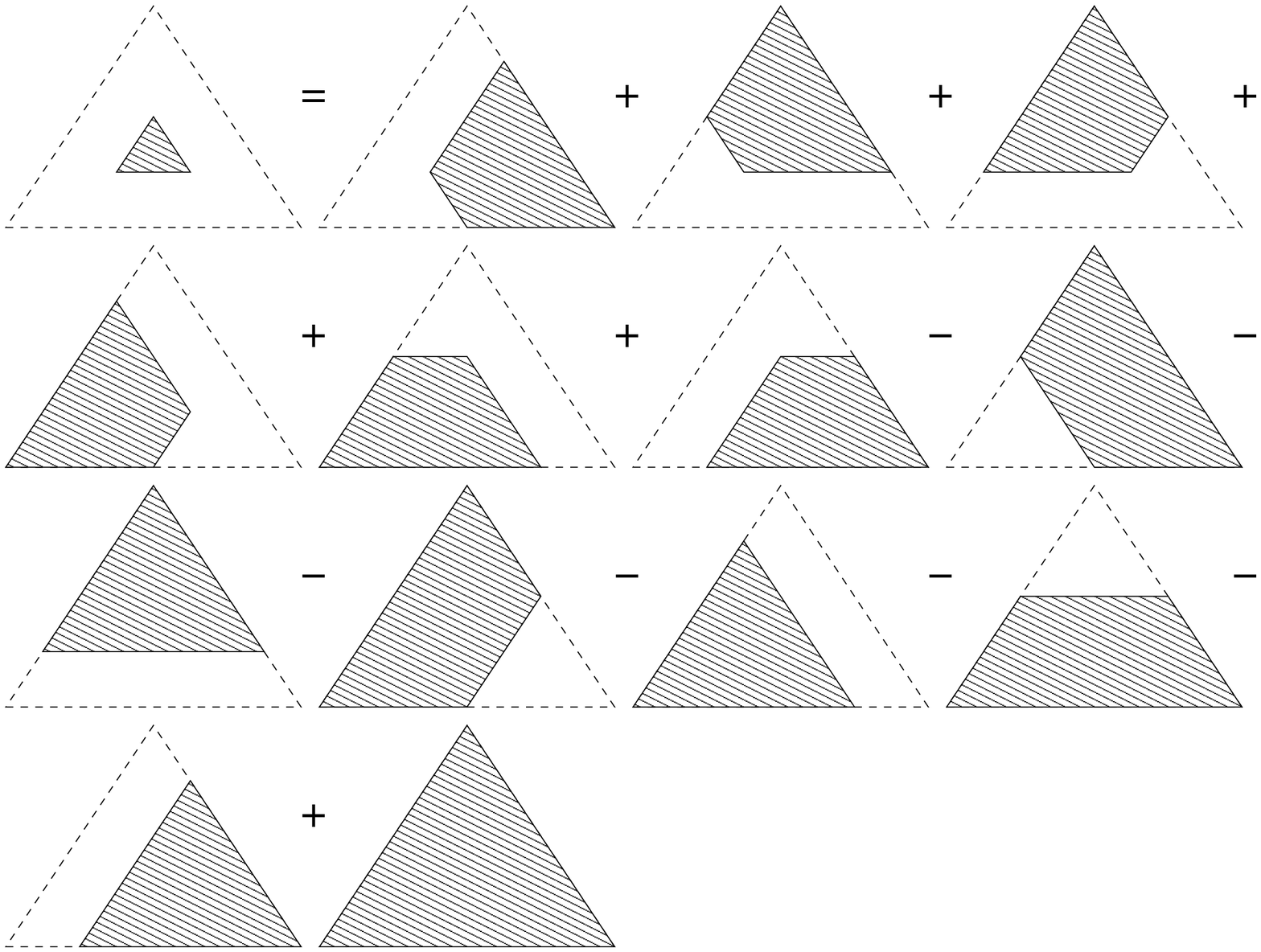}
}}
\caption{The limiting decomposition of~$Q(\rk)$ 
corresponding to Figure~\ref{fig:BGtheorem}.}
\label{fig:BGtheorem2}
\end{figure}

To illustrate the proof of Theorem~\ref{theo1}, consider the case where $d=3$ and $r=3$, and
$\rk$ is defined by $\rk(\{1\})=\rk(\{2\})=\rk(\{3\})=2$, 
$\rk(\{1,2\})=\rk(\{2,3\})=\rk\{(1,3\})=3$, $\rk(\{1,2,3\})=4$. 
The decomposition of $Q_{\varepsilon}(\rk)$ 
using the Brianchon-Gram theorem is depicted in Figure~\ref{fig:BGtheorem}.
Note how the summands in the decomposition correspond to the faces of $Q_{\varepsilon}(\rk)$.
The dashed triangle is the triangle defined by $y_1,y_2,y_3\geq 0$, $y_1+y_2+y_3=4$.
Instead of getting cones in the decomposition, we get polygons because we intersect
with this triangle. 

In the limit where $\varepsilon$ approaches $0$ we obtain Figure~\ref{fig:BGtheorem2}. 
This is exactly the decomposition in Theorem~\ref{theo1}. 
In this decomposition, the summands do not correspond to the faces of $Q(\rk)$.
\end{example}

\section{Valuative functions: the groups $P_{\rm M},P_{\rm PM},P_{\rm MM}$}
\begin{lemma}
The function~$\mathbf 1:Z_{MM}(d,r)\to \Z$ 
such that $\mathbf 1(\langle\rk\rangle)=1$ for every megamatroid~$\rk$ has the valuative property.
\end{lemma}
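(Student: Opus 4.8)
The plan is to establish the ostensibly stronger \emph{strong} valuative property for $\mathbf 1$ — that it factors through $\Psi_{\rm MM}$ — from which weak valuativity follows at once from the identity $\Psi_{\rm MM}(m_{\rm val}(\Pi;\Pi_1,\dots,\Pi_k))=0$ established above (so one need not even invoke Theorem~\ref{theoweakstrong}). The single classical ingredient is the \emph{Euler characteristic valuation}: there is a group homomorphism $\chi$ on the $\Z$-module of $\Z$-linear combinations of indicator functions of closed convex polyhedra in $\R^d$ with $\chi([P])=1$ for every nonempty such $P$ and $\chi([\emptyset])=0$; this is standard in the theory of valuations on polyhedra (see e.g.\ \cite{Groemer}, \cite{McMullen}). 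Granting it, I would restrict $\chi$ to the submodule $P_{\rm MM}(d,r)$ to obtain a homomorphism $\widehat{\mathbf 1}\colon P_{\rm MM}(d,r)\to\Z$. Since $Q(\rk)$ is always nonempty by Lemma~\ref{lemnonempty}, we get $\widehat{\mathbf 1}([Q(\rk)])=1=\mathbf 1(\langle\rk\rangle)$ for every megamatroid $\rk$, hence $\mathbf 1=\widehat{\mathbf 1}\circ\Psi_{\rm MM}$, which is precisely the strong valuative property.

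To see concretely why the relevant alternating sum vanishes, I would unwind the definitions: for a megamatroid polyhedron decomposition $\Pi=\Pi_1\cup\cdots\cup\Pi_k$ one has
$$
\mathbf 1\big(m_{\rm val}(\Pi;\Pi_1,\dots,\Pi_k)\big)=\sum_{\substack{I\subseteq\{1,\dots,k\}\\ \Pi_I\neq\emptyset}}(-1)^{|I|}.
$$
Applying $\chi$ to the function identity $\sum_{I}(-1)^{|I|}[\Pi_I]=0$ (this is the content of $\Psi_{\rm MM}(m_{\rm val})=0$, with the convention $\Pi_\emptyset=\Pi$), and using that $\chi([\Pi_I])$ is $1$ when $\Pi_I\neq\emptyset$ and $0$ when $\Pi_I=\emptyset$, shows the sum is $0$. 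Equivalently, the sum equals $1-\chi(N)$, where $N$ is the nerve of the closed convex cover $\{\Pi_1,\dots,\Pi_k\}$ of $\Pi$ — the $j$-simplices of $N$ are indexed by the $(j{+}1)$-element $I$ with $\Pi_I\neq\emptyset$, and $I=\emptyset$ supplies the isolated $+1$ — and by the nerve lemma $N$ is homotopy equivalent to the convex, hence contractible, set $\Pi$, so $\chi(N)=1$.

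I expect the only genuine obstacle to be bookkeeping around \emph{unbounded} megamatroid polyhedra, since $Q(\rk)$ need not be a polytope. Both tools remain valid in that generality (the Euler characteristic extends to a valuation on the algebra of all, possibly unbounded, polyhedra, and the nerve lemma applies to any finite closed cover of a polyhedron by subpolyhedra via a common polyhedral subdivision), so strictly nothing more is required; but if one wants to stay in the compact world, one first intersects $\Pi$ and every $\Pi_i$ with a cube $\{y\in\R^d:|y_i|\le N\}$. For $N$ larger than all coordinates of a chosen point in each of the finitely many nonempty $\Pi_I$, the resulting bounded megamatroid polytopes still cover $\Pi\cap\{|y_i|\le N\}$ and the collection of $I$ with $\Pi_I\neq\emptyset$ is unchanged, so $\mathbf 1(m_{\rm val})$ is unchanged and the argument above applies verbatim to the now-compact polytope $\Pi\cap\{|y_i|\le N\}$. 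In every case $\mathbf 1(m_{\rm val}(\Pi;\Pi_1,\dots,\Pi_k))=0$, so $\mathbf 1$ has the valuative property.
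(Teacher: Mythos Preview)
Your argument is correct. You establish the strong valuative property directly by exhibiting $\widehat{\mathbf 1}$ as the restriction of the Euler characteristic valuation on polyhedra; since $Q(\rk)\neq\emptyset$ by Lemma~\ref{lemnonempty}, this gives $\mathbf 1=\widehat{\mathbf 1}\circ\Psi_{\rm MM}$ at once, and your handling of the unbounded case (either invoking the extension of $\chi$ to all polyhedra, or truncating by a large cube) is sound.

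The paper takes a different route: it verifies the \emph{weak} valuative property combinatorially, invoking Rota's crosscut theorem to show that the alternating sum $\sum_{I:\Pi_I\neq\emptyset}(-1)^{|I|}$ equals $\sum_F\mu(\Pi,F)=0$ in the face poset of the decomposition. Your nerve-lemma paragraph is essentially the topological translation of that same argument---the crosscut theorem and the contractibility of the nerve are two faces of the same coin---but your primary argument via the Euler characteristic valuation is genuinely more direct: it bypasses any analysis of the decomposition's combinatorics and simply reads off the factorization through $P_{\rm MM}(d,r)$. The paper's approach has the minor advantage of being self-contained modulo the crosscut theorem, whereas yours imports $\chi$ as a black box (albeit a very standard one, and one whose existence is exactly what ``strongly valuative'' demands).
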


\begin{proof}
Let 
$$\Pi=\Pi_1\cup \Pi_2\cup \cdots\cup \Pi_k$$ 
be a megamatroid polyhedron decomposition.
By Rota's crosscut theorem~\cite{Rota}, 
$$\mathbf 1(m_{\rm val}(\Pi;\Pi_1,\dots,\Pi_k))=\sum_F\mu(\Pi,F)=0,$$
where $F$ runs over the faces of the decomposition,
and $\mu$ is the M\"obius function.  
\end{proof}

\begin{lemma}\label{lemj_H}
Let $H\subseteq\R^d$ be 
a closed halfspace.  
Define $j_H:Z_{MM}(d,r)\to\Z$ by 
$$j_H(\langle\rk\rangle)=\left\{
\begin{array}{ll}
1 & \mbox{if $Q(\rk)\subseteq H$,}\\
0 & \mbox{otherwise.}
\end{array}\right.$$
Then $j_H$ is valuative.
\end{lemma}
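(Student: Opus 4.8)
The plan is to verify the weak valuative property directly: given a megamatroid polyhedron decomposition $\Pi = \Pi_1 \cup \cdots \cup \Pi_k$, I must show $j_H(m_{\rm val}(\Pi;\Pi_1,\dots,\Pi_k)) = 0$, i.e.\ $\sum_{I \subseteq \{1,\dots,k\}} (-1)^{|I|} j_H(m_I) = 0$, where $m_I = \langle \rk^I \rangle$ when $\Pi_I = \bigcap_{i\in I}\Pi_i$ is nonempty (and $m_I = 0$ otherwise), and $j_H(m_I)$ is $1$ exactly when $\Pi_I$ is nonempty and contained in $H$. So the sum reduces to $\sum_{I : \emptyset \neq \Pi_I \subseteq H} (-1)^{|I|}$, and the goal is to see this alternating sum vanishes.

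The key case split is on whether $\Pi \subseteq H$ or not. If $\Pi \subseteq H$, then every nonempty $\Pi_I \subseteq \Pi \subseteq H$, so the sum becomes $\sum_{I : \Pi_I \neq \emptyset} (-1)^{|I|}$; but this is exactly $\mathbf{1}(m_{\rm val}(\Pi;\Pi_1,\dots,\Pi_k))$, which vanishes by the preceding lemma (Rota's crosscut / the M\"obius-function argument). If $\Pi \not\subseteq H$, the idea is that the condition ``$\Pi_I \subseteq H$'' is a downward-closed, nontrivial condition on the poset of nonempty intersection faces, and an alternating sum over such a set telescopes to zero. Concretely, I would introduce $H'$, the supporting hyperplane parallel to the boundary of $H$ chosen so that $\Pi \cap H'$ is the face of $\Pi$ (possibly empty) maximizing the linear functional defining $H$; then $\Pi_I \subseteq H$ iff $\Pi_I$ lies in the part of $\Pi$ on the $H$-side, and I can compare the decomposition $\Pi = \bigcup \Pi_i$ with the induced decomposition of the face $\Pi \cap H'$. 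More cleanly: let $g$ be the linear functional and $c$ the constant with $H = \{g \le c\}$; set $c_0 = \max_{y\in\Pi} g(y)$. If $c_0 \le c$ we are in the first case. Otherwise let $\Pi' = \{y \in \Pi : g(y) \ge c\}$-side considerations show that $\{I : \emptyset\neq\Pi_I, \Pi_I \subseteq H\}$ is precisely the set of $I$ with $\Pi_I$ contained in the closed region $g \le c$, and I can pair up such $I$ via a fixed index $i_0$ (one that is ``active'' somewhere on the $H$-side) to get a sign-reversing involution, or invoke the crosscut theorem on the subcomplex of faces lying in $H$.

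I expect the main obstacle to be handling the bookkeeping when $\Pi \not\subseteq H$: one must be careful that $\{\Pi_i \cap H : \Pi_i \subseteq H\}$ is not literally a megamatroid polyhedron decomposition of a single polyhedron (the pieces on the $H$-side need not cover a megamatroid polyhedron), so a naive reduction to the $\mathbf 1$ lemma applied to a smaller decomposition does not immediately work. The cleanest route is probably to observe that $j_H = \mathbf 1 - j_{H^c\text{-stuff}}$ is not available, and instead argue combinatorially: the faces $F$ of the decomposition with $F \subseteq H$ form an order ideal in the face poset, and $\sum_{F \subseteq H}\mu(\Pi, F)$ — which after the $E$-unfolding via Lemma~\ref{lemweakstrong}-style reasoning equals our alternating sum — vanishes by the crosscut theorem applied to this subposet together with the contractibility of the corresponding subcomplex (the intersection of the polyhedral complex with the halfspace $H$ deformation retracts onto a face or is empty). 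I would phrase the final step as: the nerve argument of the preceding lemma applies verbatim to the sub-decomposition of $\Pi \cap H$ (or shows the relevant sum is a M\"obius sum over a poset with a $\hat 0$ or $\hat 1$ that is not attained among the $\Pi_I$), giving $0$ in both cases.
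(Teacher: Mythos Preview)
Your Case~1 ($\Pi\subseteq H$) reduction to the previous lemma is correct. In Case~2, however, none of the routes you sketch goes through as stated. The ``sub-decomposition of $\Pi\cap H$'' does not exist: the pieces $\Pi_i$ with $\Pi_i\subseteq H$ generally fail to cover $\Pi\cap H$, since a $\Pi_i$ straddling $\partial H$ contributes to $\Pi\cap H$ but is excluded from your collection, so you cannot feed this into the $\mathbf 1$ lemma. The order-ideal of faces $F$ with $F\subseteq H$ is indeed contractible or empty, but your alternating sum is indexed by subsets $I$ with $\Pi_I\subseteq H$, and $\Pi_I\subseteq H$ can hold even when no individual $\Pi_i$, $i\in I$, lies in $H$ (take two pieces straddling $\partial H$ whose common face lies inside $H$). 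So that ideal is not the nerve relevant to your sum, and the Euler-characteristic interpretation breaks down on this side. The sign-reversing involution is left entirely unspecified.

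The paper's proof supplies the missing twist: prove instead that $\mathbf 1-j_H$ is valuative. One has $(\mathbf 1-j_H)(\langle\rk_F\rangle)=1$ exactly when $F$ meets the \emph{open} set $\R^d\setminus H$, and this \emph{is} a genuine intersection condition, since $\Pi_I\cap(\R^d\setminus H)=\bigcap_{i\in I}\bigl(\Pi_i\cap(\R^d\setminus H)\bigr)$. Now the sets $\Pi_i\cap(\R^d\setminus H)$ do cover $\Pi\setminus H$, their intersections are convex, and $\Pi\setminus H$ is convex; the faces of the decomposition meeting $\R^d\setminus H$ give a regular cell complex on $\Pi\setminus H$, and the crosscut/M\"obius argument of the previous lemma applies verbatim to it. Since $\mathbf 1$ is already valuative, so is $j_H=\mathbf 1-(\mathbf 1-j_H)$. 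Passing to the open complement is exactly what dissolves the bookkeeping problems you anticipated; working on the closed-halfspace side, as you attempt, does not.
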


\begin{proof}
Let 
$$\Pi=\Pi_1\cup \Pi_2\cup \cdots\cup \Pi_k$$
be a megamatroid polyhedron decomposition.  
The intersections of the faces of this decomposition with~$\R^d\setminus H$ 
establish a regular cell complex structure on~$\Pi\setminus H$,
and a face $F$ of the decomposition meets $\R^d\setminus H$ if and only if
$(\mathbf 1-j_H)(\rk_F) = 1$.  It follows that $\mathbf 1-j_H$ 
is valuative, by the argument of the previous proof
applied to this complex.  

\end{proof}

Lemma~\ref{lemj_H} can also be deduced from the fact that 
the indicator function of the polar dual has the 
valuative property (see~\cite{Lawrence}).

\label{not:s}
Suppose that $\underline{X}$ is a chain of length $k$ and 
$\underline{r}=(r_1,\dots,r_k)$ is an integer vector with $r_k=r$.  
Define a homomorphism $s_{\underline X,\underline r}:Z_{\rm MM}(d,r)\to\Z$
by
$$s_{\underline{X},\underline{r}}(\rk)=\left\{
\begin{array}{ll}
1 & \mbox{if $\rk(X_j)=r_j$ for  $j=1,2,\dots,k$,}\\
0 & \mbox{otherwise.}
\end{array}\right.$$

\begin{proposition}\label{s is valuative}
The homomorphism $s_{\underline X,\underline r}$ is valuative. 
\end{proposition}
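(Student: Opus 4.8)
The plan is to express $s_{\underline X,\underline r}$ as an integer combination of the valuative functions $j_H$ of Lemma~\ref{lemj_H}, together with the constant function $\mathbf 1$, so that valuativity follows from the lemma. First I would observe that for a fixed set $A\subseteq\underline d$ and a fixed integer $c$, the condition $\rk(A)\le c$ is exactly the condition $Q(\rk)\subseteq H_{A,c}$, where $H_{A,c}=\{y\in\R^d\mid \sum_{i\in A}y_i\le c\}$ is a closed halfspace; this is immediate from Definition~\ref{def:basepolytope} together with the fact (Corollary~\ref{cormegamatroid}, or directly $\rk_{Q(\rk)}=\rk$) that $\rk(A)=\sup\{\sum_{i\in A}y_i\mid y\in Q(\rk)\}$. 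Hence the indicator of the event ``$\rk(A)\le c$'' is $j_{H_{A,c}}$, which is valuative. The indicator of ``$\rk(A)=c$'' is then $j_{H_{A,c}}-j_{H_{A,c-1}}$, again valuative, since $\rk(A)$ is always an integer (here using $r_k=r$ finite and the megamatroid axioms, so that each $\rk(X_j)$ with $j<k$ is either a finite integer or $\infty$; if $\rk(X_j)=\infty$ the corresponding indicator of ``$=r_j$'' is identically $0$, still valuative).

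Next, because $s_{\underline X,\underline r}$ is the indicator of the simultaneous occurrence of the events $\rk(X_j)=r_j$ for $j=1,\dots,k$, and the $\Z$-module of valuative functions is not a priori closed under products, I would instead argue as follows. The event $\rk(X_j)\le r_j$ for all $j$ has indicator $\prod_j j_{H_{X_j,r_j}}$; this is not obviously valuative. So instead I would use that on megamatroids the individual rank values are determined in a monotone, nested way: since $X_1\subset X_2\subset\cdots\subset X_k$ and $\rk$ is weakly increasing in the polymatroid case (but not in general for megamatroids), monotonicity alone does not suffice. The cleanest route is: let $N$ be the set of $\Z$-valued valuative functions $Z_{\rm MM}(d,r)\to\Z$; by the displayed inclusion--exclusion computation preceding Theorem~\ref{theoweakstrong}, $N$ is closed under the operations used there. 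I would show directly that $s_{\underline X,\underline r}(m_{\rm val}(\Pi;\Pi_1,\dots,\Pi_k))=0$ for every megamatroid polyhedron decomposition. For this, fix a decomposition and note that for each face $F$ of the decomposition, $s_{\underline X,\underline r}(\langle\rk_F\rangle)=1$ iff $\sum_{i\in X_j}y_i\le r_j$ with equality attained on $F$, for every $j$; equivalently iff $F\subseteq H_{X_1,r_1}\cap\cdots\cap H_{X_k,r_k}=:G$ and $F$ meets the relative interior (within $\Pi$) of $G$ relative to each defining hyperplane. I would then recognize $s_{\underline X,\underline r}$ as the indicator of ``$Q(\rk)\subseteq G$ and $Q(\rk)\not\subseteq H_{X_j,r_j-1}$ for all $j$'', and expand via inclusion--exclusion over which of the strict inequalities fail into a signed sum of functions of the form $j_{H}$ with $H$ an intersection of halfspaces --- but intersections of halfspaces are not halfspaces, so this still needs care.

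The honest version of the argument I expect to work: write $s_{\underline X,\underline r}=\sum_{T}(-1)^{|T|}\,\mathbf 1[\,\rk(X_j)\le r_j\ \forall j,\ \rk(X_j)\le r_j-1\ \forall j\in T\,]$, the sum over $T\subseteq\{1,\dots,k\}$. Each summand is the indicator of $Q(\rk)$ lying in a fixed intersection of halfspaces $\bigcap_j H_{X_j,r_j-[j\in T]}$. So it suffices to prove the analogue of Lemma~\ref{lemj_H} for an arbitrary polyhedron (or even just a finite intersection of halfspaces) $K$ in place of the halfspace $H$: the function $j_K(\langle\rk\rangle)=\mathbf 1[Q(\rk)\subseteq K]$ is valuative. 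But this is exactly the same proof as Lemma~\ref{lemj_H}: the faces of a megamatroid polyhedron decomposition $\Pi=\Pi_1\cup\cdots\cup\Pi_k$ that are \emph{not} contained in $K$ form a regular cell complex on $\Pi\setminus K$ (since $K$ is convex, $\R^d\setminus K$ intersected with each face is either empty or an open cell, and these glue into a regular CW complex homeomorphic to a ball or empty), and $(\mathbf 1-j_K)(\langle\rk_F\rangle)=1$ precisely for those faces; Rota's crosscut theorem then gives $(\mathbf 1-j_K)(m_{\rm val})=0$, hence $j_K$ is valuative, and so is the signed sum defining $s_{\underline X,\underline r}$.

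The main obstacle is the last topological point: verifying that for a convex region $K$, the faces of the decomposition not contained in $K$ genuinely assemble into a regular cell complex whose reduced Euler characteristic vanishes (or is that of a point, matching $\mathbf 1(m_{\rm val})=1$), so that Rota's crosscut theorem applies verbatim as in the proof of Lemma~\ref{lemj_H}. Once that is granted --- and it is essentially the content of the Lemma~\ref{lemj_H} argument, which already handles a halfspace and uses only convexity of the complement's complement --- the reduction of $s_{\underline X,\underline r}$ to a finite signed sum of such $j_K$'s is pure bookkeeping with inclusion--exclusion over the index set $T$, and integrality of the rank values guarantees each event ``$\rk(X_j)=r_j$'' decomposes as ``$\le r_j$'' minus ``$\le r_j-1$''.
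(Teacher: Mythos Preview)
Your inclusion--exclusion reduction $s_{\underline X,\underline r}=\sum_{T\subseteq\{1,\dots,k\}}(-1)^{|T|}j_{K_T}$ with $K_T=\bigcap_j H_{X_j,\,r_j-[j\in T]}$ is correct, but the step where you extend Lemma~\ref{lemj_H} from a halfspace to an arbitrary convex polyhedron $K$ has a real gap, and your justification for it is wrong. The proof of Lemma~\ref{lemj_H} does \emph{not} ``use only convexity of the complement's complement'': what it uses is that the complement of a closed halfspace is an open halfspace, hence itself convex, so that for every face $F$ of the decomposition the set $F\cap(\R^d\setminus H)$ is a convex relatively open set and therefore a cell. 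For a general convex $K$ the complement $\R^d\setminus K$ is not convex, and $F\setminus K$ need not be a cell or even contractible (take $K$ a small convex body sitting in the relative interior of a top-dimensional face $F$; then $F\setminus K$ is not simply connected). So your blanket topological claim fails.

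For the particular $K_T$ you actually need --- intersections of halfspaces $\{\sum_{i\in X_j}y_i\le c_j\}$ along a \emph{nested} chain --- the argument can be repaired, but only by invoking the special structure of megamatroid polyhedra: Lemma~\ref{lemrankequal} (iterated) or the proof of Lemma~\ref{lemsupequal} shows that on any megamatroid polyhedron $\Pi$ the suprema $\rk_\Pi(X_j)$ are all attained at a single point, and from this one deduces that each $\Pi_I\setminus K_T$ is contractible, so the nerve argument goes through. That repair is precisely the ingredient you are missing, and it is the same lemma the paper exploits. The paper's own proof sidesteps the whole issue: by Lemma~\ref{lemsupequal}, the single linear functional $\sum_j\varepsilon^{j-1}\sum_{i\in X_j}y_i$ has supremum $\sum_j\varepsilon^{j-1}\rk(X_j)$ on any megamatroid polyhedron, so the $k$ simultaneous rank conditions are captured (for small $\varepsilon$, using integrality of ranks) by containment in one halfspace $H_1(\varepsilon)$ but not in a slightly smaller one $H_2(\varepsilon)$. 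Then $s_{\underline X,\underline r}=\lim_{\varepsilon\to 0}\big(j_{H_1(\varepsilon)}-j_{H_2(\varepsilon)}\big)$, and Lemma~\ref{lemj_H} is only ever invoked for genuine halfspaces.
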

\begin{proof}
For $\varepsilon>0$, define the halfplane $H_1(\varepsilon)$ by the inequality
$$
\sum_{j=1}^k\varepsilon^{j-1}\sum_{i\in X_j}y_i\leq \sum_{j=1}^k \varepsilon^{j-1}r_j
$$
and define $H_2(\varepsilon)$ by
$$
\sum_{j=1}^k\varepsilon^{j-1}\sum_{i\in X_j}y_i\leq \sum_{j=1}^k \varepsilon^{j-1}r_j-\varepsilon^k.
$$
By Lemma~\ref{lemsupequal} and Lemma~\ref{lemj_H},  $(j_{H_{1}(\varepsilon)}-j_{H_2(\varepsilon)})(\rk)=1$ if and only if
\begin{equation}\label{eq:epsineq}
\sum_{j=1}^k \varepsilon^{j-1}r_j-\varepsilon^k < \sum_{j=1}^k\varepsilon^{j-1}\rk(X_j)=\max_{y\in Q(\rk)} \sum_{j=1}^k\varepsilon^{j-1}\sum_{i\in X_j}y_i\leq \sum_{j=1}^k \varepsilon^{j-1}r_j 
\end{equation}
If (\ref{eq:epsineq}) holds for arbitrary small $\varepsilon$, then it is easy to see
(by induction on $j$) that $\rk(X_j)=r_j$ for $j=1,2,\dots,k$.
From this follows that $\lim_{\varepsilon\to 0} j_{H_{1}(\varepsilon)}-j_{H_{2}(\varepsilon)}=s_{\underline{X},\underline{r}}$.
So $s_{\underline{X},\underline{r}}$ is valuative.
\end{proof}

Suppose that $d\geq 1$.
Let ${\mathfrak p}_{\rm MM}(d,r)$
 be the set of all pairs 
$(\underline{X},\underline{r})$ such that
$\underline{X}$ is a chain of length $k$ ($1\leq k\leq d$) and $\underline{r}=(r_1,r_2,\dots,r_k)$ is
an integer vector with $r_k=r$.
We define $R_{\rm (P)M}(\underline{X},\underline{r})=R_{\rm MM}(\underline{X},\underline{r})\cap \Delta_{\rm (P)M}(d,r)$.
If $R_{\rm (P)M}(\underline{X},\underline{r})$
is nonempty, then it is a (poly)matroid base polytope.
\label{not:mf p}
Define ${\mathfrak p}_{\rm PM}(d,r)\subseteq {\mathfrak p}_{\rm MM}(d,r)$
as the set of all pairs $(\underline{X},\underline{r})$ with
$0\leq r_1< \cdots < r_k=r$.
Let ${\mathfrak p}_{\rm M}(d,r)$ denote
the set of all pairs $(\underline{X},\underline{r})\in {\mathfrak p}_{\rm MM}(d,r)$
such that $\underline{r}=(r_1,\dots,r_k)$ for some $k$ ($1\leq k\leq d$),
$$
0\leq r_1<r_2<\cdots <r_k=r
$$
and
$$
0<|X_1|-r_1<|X_2|-r_2<\cdots <|X_{k-1}|-r_{k-1}\leq |X_k|-r_k=d-r.
$$
For $d=0$, we define ${\mathfrak p}_{\rm MM}(0,r)={\mathfrak p}_{\rm PM}(0,r)={\mathfrak p}_{\rm M}(0,r)=\emptyset$ for $r\neq 0$
and ${\mathfrak p}_{\rm MM}(0,0)={\mathfrak p}_{\rm PM}(0,0)={\mathfrak p}_{\rm M}(0,0)=\{(\emptyset \subseteq\underline{0},())\}$.

\begin{theorem}\label{free generation} 
The group $P_{\rm *M}(d,r)$ is freely generated by the basis
$$\big\{[R_{\rm *M}(\underline{X},\underline{r})] \mathrel{\big|}
(\underline{X},\underline{r})\in {\mathfrak p}_{\rm *M}(d,r)\big\}.$$
\end{theorem}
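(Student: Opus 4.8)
The plan is to establish the two halves---spanning and linear independence---separately, and to treat all three cases $\mathrm{*M}\in\{\mathrm{MM},\mathrm{PM},\mathrm{M}\}$ in parallel as much as possible, using Theorem~\ref{theo1} for spanning and the valuative functions $s_{\underline X,\underline r}$ of Proposition~\ref{s is valuative} for independence. For \emph{spanning}: given any $(\underline d,\rk)\in S_{\mathrm{*M}}(d,r)$, Theorem~\ref{theo1} writes $[Q(\rk)]$ as an integer combination of the $[R_{\mathrm{MM}}(\underline X,\rk)]$ over all chains $\underline X$, and intersecting with $\Delta_{\mathrm{*M}}(d,r)$ (which is harmless since $Q(\rk)\subseteq\Delta_{\mathrm{*M}}(d,r)$ in the $\mathrm{PM}$ and $\mathrm{M}$ cases, and vacuous in the $\mathrm{MM}$ case) gives $[Q(\rk)]$ as a combination of the $[R_{\mathrm{*M}}(\underline X,(\rk(X_1),\dots,\rk(X_k)))]$; so the $[R_{\mathrm{*M}}(\underline X,\underline r)]$ over \emph{all} pairs $(\underline X,\underline r)$ with $R_{\mathrm{*M}}(\underline X,\underline r)\ne\emptyset$ span $P_{\mathrm{*M}}(d,r)$. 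It then remains to discard the redundant generators, i.e.\ to show that every such $[R_{\mathrm{*M}}(\underline X,\underline r)]$ lies in the span of those indexed by $\mathfrak p_{\mathrm{*M}}(d,r)$. Here one argues combinatorially: if $(\underline X,\underline r)\notin\mathfrak p_{\mathrm{*M}}(d,r)$ then either some $r_j$ violates the required (strict) monotonicity/bound conditions, in which case $R_{\mathrm{*M}}(\underline X,\underline r)$ is empty or the chain can be shortened (dropping a redundant set $X_j$ on which the defining inequality is automatically tight or automatically slack), or---in the matroid case---the extra inequalities $0<|X_1|-r_1<\cdots$ fail, again forcing emptiness or a shorter chain. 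Applying Theorem~\ref{theo1} again to the shorter data lets one induct on chain length.

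For \emph{linear independence}: I would exhibit, for each $(\underline X,\underline r)\in\mathfrak p_{\mathrm{*M}}(d,r)$, a valuative function that detects $[R_{\mathrm{*M}}(\underline X,\underline r)]$ and vanishes on the others, so as to triangulate the pairing. The natural candidate is $s_{\underline X,\underline r}$ (valuative by Proposition~\ref{s is valuative}, and it factors through $P_{\mathrm{*M}}(d,r)$ because it is valuative, hence strongly valuative by Theorem~\ref{theoweakstrong}), evaluated via the induced functional $\widehat{s_{\underline X,\underline r}}$ on $[R_{\mathrm{*M}}(\underline X',\underline r')]=\Psi_{\mathrm{*M}}(\langle\rk_{\underline X',\underline r'}\rangle)$. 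One computes $\widehat{s_{\underline X,\underline r}}([R_{\mathrm{*M}}(\underline X',\underline r')])=s_{\underline X,\underline r}(\rk_{R_{\mathrm{*M}}(\underline X',\underline r')})$, which is $1$ iff $\rk_{R_{\mathrm{*M}}(\underline X',\underline r')}(X_j)=r_j$ for all $j$. Using Lemma~\ref{lemsupequal} to read off the rank function of $R_{\mathrm{*M}}(\underline X',\underline r')$, one orders the index set $\mathfrak p_{\mathrm{*M}}(d,r)$ (say by refining chains, or by $(\ell(\underline X),\underline X,\underline r)$ lexicographically) so that the matrix $\big(\widehat{s_{\underline X,\underline r}}([R_{\mathrm{*M}}(\underline X',\underline r')])\big)$ is unitriangular; independence of the $[R_{\mathrm{*M}}(\underline X,\underline r)]$ follows immediately. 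One must be slightly careful: in the $\mathrm{PM}$ and $\mathrm{M}$ cases $s_{\underline X,\underline r}$ is a priori defined on $Z_{\mathrm{MM}}$, so one restricts it to $Z_{\mathrm{*M}}(d,r)$, and the factoring through $P_{\mathrm{*M}}$ uses the $\mathrm{*M}$ version of the weak-equals-strong corollary rather than Theorem~\ref{theoweakstrong} directly.

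The main obstacle I anticipate is the bookkeeping in the spanning half---precisely identifying, for each of the three flavors, which pairs $(\underline X,\underline r)$ are redundant and proving that the reduction to a shorter chain is exact at the level of indicator functions (not just "morally" so). In particular, for matroids the conditions $0<|X_1|-r_1<\cdots<|X_{k-1}|-r_{k-1}\le d-r$ encode both the nonemptiness of $R_{\mathrm{M}}(\underline X,\underline r)$ inside the hypersimplex and the non-redundancy of each facet, and verifying that a violation of either the $r_j$-chain or the $(|X_j|-r_j)$-chain produces a genuine identity among indicator functions via a second application of Theorem~\ref{theo1} (or a direct inclusion–exclusion on $R_{\mathrm{M}}$) is the delicate point. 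Once that reduction is in hand, comparing the resulting spanning set with the unitriangular system from the $s_{\underline X,\underline r}$ forces the spanning set to be exactly $\mathfrak p_{\mathrm{*M}}(d,r)$-indexed and to be a basis, completing the proof.
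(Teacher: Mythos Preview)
Your proposal is correct and follows essentially the same approach as the paper: spanning via Theorem~\ref{theo1} plus chain-shortening reductions, and independence via the valuative functionals $s_{\underline X,\underline r}$ paired against the $[R_{\rm *M}(\underline X',\underline r')]$ in a triangular fashion (the paper uses containment of polytopes as the partial order). One simplification the paper exploits that you should adopt: the reduction step requires no second use of Theorem~\ref{theo1} or inclusion--exclusion, because when a monotonicity condition on $\underline r$ (or on $|X_j|-r_j$ in the matroid case) is violated the offending inequality in the definition of $R_{\rm *M}(\underline X,\underline r)$ is literally redundant, so $R_{\rm *M}(\underline X,\underline r)=R_{\rm *M}(\underline X',\underline r')$ as sets for the shortened chain $(\underline X',\underline r')$.
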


\begin{proof}
The case $d=0$ is easy, so assume that $d\geq 1$.

\noindent {\em For megamatroids.}
If $\rk$ is a megamatroid, then $[Q(\rk)]$ is an integral combination of functions 
$[R_{\rm MM}(\underline{X},\underline{r})]$,
$(\underline{X},\underline{r})\in {\mathfrak p}_{\rm MM}(d,r)$ by Theorem~\ref{theo1}.
This shows that $[R_{\rm MM}(\underline{X},\underline{r})]$,
$(\underline{X},\underline{r})\in {\mathfrak p}_{\rm MM}(d,r)$ generate $P_{\rm MM}(d,r)$.
If $s_{\underline{X},\underline{r}}
    (R_{\rm MM}(\underline{X}',\underline{r}'))\neq 0$
then $\rk_{\underline{X}',\underline{r}'}(X_j)=r_j$ for all $j$, and
$R_{\rm MM}(\underline{X}',\underline{r}')\subseteq R_{\rm MM}(\underline{X},\underline{r})$.
Suppose that
$$
\sum_{i=1}^k a_i[R_{\rm MM}(\underline{X}^{(i)},\underline{r}^{(i)})]=0
$$
with $k\geq 1$, $a_1,\dots,a_k$ nonzero integers, and $(\underline{X}^{(i)},\underline{r}^{(i)})$, $i=1,2,\dots,k$
distinct.
Without loss of generality we may assume that 
$R_{\rm MM}(\underline{X}^{(1)},\underline{r}^{(1)})$ does
not contain $R_{\rm MM}(\underline{X}^{(i)},\underline{r}^{(i)})$ for any $i>1$.
We have
$$
0=s_{\underline{X}^{(1)},\underline{r}^{(1)}}
\Big(\sum_{i=1}^k a_i R_{\rm MM}(\underline{X}^{(i)},\underline{r}^{(i)})\Big)=a_1.
$$
Contradiction. 

\noindent {\em For polymatroids.}
It is clear that  $P_{\rm PM}(d,r)$ is generated by all 
$[R_{\rm PM}(\underline{X},\underline{r})]$,
with $(\underline{X},\underline{r})\in {\mathfrak p}_{\rm MM}(d,r)$.
If $r_1<0$ then $R_{\rm PM}(\underline{X},\underline{r})$ is empty.
Suppose that $r_{i+1}\leq r_{i}$. It is obvious that
$$
R_{\rm PM}(\underline{X},\underline{r})=R_{\rm PM}(\underline{X}',\underline{r}')
$$
where 
$$
\underline{X}':\emptyset=X_0\subset X_1\subset \cdots \subset X_{i-1}\subset X_{i+1}\subset \cdots \subset X_k=\underline{d}
$$
and
$$
\underline{r}'=(r_1,r_2,\dots,r_{i-1},r_{i+1},\dots,r_k).
$$
Therefore, $P_{\rm PM}(d,r)$ is generated by all $[R_{\rm PM}(\underline{X},\underline{r})]$
where $(\underline{X},\underline{r})\in {\mathfrak p}_{\rm PM}(d,r)$.
If $\Pi=R_{\rm PM}(\underline{X},\underline{r})$ with $(\underline{X},\underline{r})\in {\mathfrak p}_{\rm PM}(d,r)$,
then $(\underline{X},\underline{r})$ is completely determined by the polytope $\Pi$.
For $1\leq i\leq d$, define $a_i=\max\{y_i\mid y\in \Pi\}$.
Then $\underline{r}$ is determined by $0\leq r_1<\cdots<r_k$
and 
$$
\{r_1,\dots,r_k\}=\{a_1,\dots,a_d\}.
$$
The sets $X_j$, $j=1,2,\dots,k$ are determined by $X_j=\{i\mid a_i\leq r_j\}$.
This shows that the polytopes $R_{\rm PM}(\underline{X},\underline{r})$, $(\underline{X},\underline{r})\in {\mathfrak p}_{\rm PM}(d,r)$,
are distinct.
A similar argument as in the megamatroid case 
shows that $[R_{\rm PM}(\underline{X},\underline{r})]$, $(\underline{X},\underline{r})\in {\mathfrak p}_{\rm PM}(d,r)$, are linearly independent.

\noindent {\em For matroids.}
From the polymatroid case it follows  that $P_{\rm M}(d,r)$ is generated
by all $[R_{\rm M}(\underline{X},\underline{r})]$, where $(\underline{X},\underline{r})\in {\mathfrak p}_{\rm PM}(d,r)$.
  Suppose that $|X_{i-1}|-r_{i-1}\geq |X_{i}|-r_{i}$
for some $i$ with $1\leq i\leq k$ (with the convention that $r_0=0$).
Then we have
$$
[R_{\rm M}(\underline{X},\underline{r})]=[R_{\rm M}(\underline{X}',\underline{r}')]
$$
where
$$
\underline{X}':\emptyset=X_0\subset X_1\subset \cdots \subset X_{i-1}\subset X_{i+1}\subset \cdots \subset X_k=\underline{d}.
$$
and
$$
\underline{r}=(r_1,\dots,r_{i-1},r_{i+1},\dots,r_k).
$$
This shows that $P_{\rm M}(d,r)$ is generated by all $[R_{\rm M(\underline{X},\underline{r})}]$
where $(\underline{X},\underline{r})\in {\mathfrak p}_{\rm M}(d,r)$.
If $\Pi=R_{\rm M}(\underline{X},\underline{r})$ with $(\underline{X},\underline{r})\in {\mathfrak p}_{\rm M}(d,r)$,
then $(\underline{X},\underline{r})$ is completely determined by the polytope $\Pi$.
Note that $\rk_{\Pi}(A)=\min_j\{\rk_{\Pi}(X_j)+|A|-|A\cap X_j|\}$.
If $\emptyset\subset A\subset \underline{d}$ then $A=X_j$ for some $j$ if and only if $\rk_{\Pi}(A)<\rk_{\Pi}(B)$ for all $B$ with $A\subset B\subseteq \underline{d}$ and $|A|-\rk_{\Pi}(A)>|B|-\rk_{\Pi}(B)$ for all $B$ with $\emptyset\subseteq B\subset A$.
So $X_1,\dots,X_{k}$ are determined by $\Pi$, and $r_i=\rk_{\Pi}(X_j)$, $j=1,2,\dots,k$ are
determined as well.
This shows that the polytopes $R_{\rm M}(\underline{X},\underline{r})$, $(\underline{X},\underline{r})\in {\mathfrak p}_{\rm M}(d,r)$,
are distinct.
A similar argument as in the megamatroid case 
shows that $[R_{\rm M}(\underline{X},\underline{r})]$, $(\underline{X},\underline{r})\in {\mathfrak p}_{\rm M}(d,r)$, are linearly independent.
\end{proof}

\newcommand{\sleq}{{\mbox{\tiny$\leq$}}}
Let $(\underline X,\underline r)\in\mathfrak p_{\rm MM}(d,r)$.
Consider the homomorphism $s^{\leq}_{\underline X,\underline r}:Z_{MM}(d,r)\to\Z$
defined by
$$s^\sleq_{\underline{X},\underline{r}}(\rk)=\left\{
\begin{array}{ll}
1 & \mbox{if $\rk(X_j)\leq r_j$ for  $j=1,2,\dots,k$,}\\
0 & \mbox{otherwise.}
\end{array}\right.$$
This homomorphism $s^\sleq_{\underline X,\underline r}$
is a (convergent infinite) sum of several homomorphisms
of the form $s_{\underline X',\underline r'}$, so by
Proposition~\ref{s is valuative} it is valuative. 

In view of Theorem~\ref{free generation},
if $f:Z_{\rm (P)M}(d,s)\to\Z$ is valuative,
$f$ is determined by its values on the (poly)matroids $R_{\rm (P)M}$,
since the spaces $P_{\rm (P)M}(d,r)$ are finite-dimensional.
For a (poly)matroid $\rk$, 
$s^\sleq_{\underline{X},\underline{r}}(\rk)=1$
if and only if $Q(\rk)$ is contained in $Q(R_{\rm (P)M}(\underline X,\underline R))$.
Therefore, the matrix specifying the pairing 
$P_{\rm (P)M}(r,d)\otimes P_{\rm (P)M}(r,d)^\vee\to\Z$
whose rows correspond to the polytopes 
$Q(R_{\rm (P)M}(\underline X,\underline R))$, in some linear extension
of the order of these polytopes by containment, and whose columns
correspond in the same order to~$s^\sleq_{\underline X,\underline r}$,
is triangular.  The next corollary follows.

\begin{corollary}\label{cor s univ}
The group $P_{\rm (P)M}(d,r)^\vee$ of valuations $Z_{\rm (P)M}(d,r)\to\Z$
has the two bases 
$$\big\{s_{\underline X,\underline r}
: (\underline X,\underline r)\in\mathfrak p_{\rm (P)M}(d,r)\big\}$$
and 
$$\big\{s^\sleq_{\underline X,\underline r}
: (\underline X,\underline r)\in\mathfrak p_{\rm (P)M}(d,r)\big\}.$$
\end{corollary}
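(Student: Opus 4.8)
The plan is to deduce Corollary~\ref{cor s univ} directly from Theorem~\ref{free generation} together with the valuativity statements already in hand, via a triangularity argument on the natural pairing. Since Theorem~\ref{free generation} tells us that $P_{\rm (P)M}(d,r)$ is free of finite rank $|{\mathfrak p}_{\rm (P)M}(d,r)|$ with basis the classes $[R_{\rm (P)M}(\underline X,\underline r)]$, its dual $P_{\rm (P)M}(d,r)^\vee$ — which by the discussion before Definition of strong valuativity is exactly the group of $\Z$-valued valuative functions on $S_{\rm (P)M}(d,r)$ — is also free of the same rank. So it suffices to exhibit two families, each of the right cardinality and each valuative, and to show each family is a $\Z$-basis; for that it is enough to show that the square integer matrix pairing the family against the polytope basis $\{[R_{\rm (P)M}(\underline X,\underline r)]\}$ is unimodular. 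I will do this by ordering the pairs so that the matrix is triangular with $\pm 1$ (in fact $1$) on the diagonal.

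First I would record that $s_{\underline X,\underline r}$ is valuative by Proposition~\ref{s is valuative}, and that $s^\sleq_{\underline X,\underline r}$ is valuative because, as noted in the paragraph preceding the corollary, it is a convergent sum $\sum s_{\underline X',\underline r'}$ over the (finitely many, after restricting to a fixed finite-dimensional $P_{\rm (P)M}(d,r)$) pairs with $\underline r'\le \underline r$ componentwise along the chain $\underline X$ (thinning the chain where equalities occur, as in the proof of Theorem~\ref{free generation}). Next, for the $s^\sleq$ family, I would use the key observation already stated: for a (poly)matroid $\rk$, $s^\sleq_{\underline X,\underline r}(\rk)=1$ iff $Q(\rk)\subseteq R_{\rm (P)M}(\underline X,\underline r)$. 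Hence $s^\sleq_{\underline X,\underline r}([R_{\rm (P)M}(\underline X',\underline r')]) = 1$ iff $R_{\rm (P)M}(\underline X',\underline r')\subseteq R_{\rm (P)M}(\underline X,\underline r)$, and this containment relation is a partial order on ${\mathfrak p}_{\rm (P)M}(d,r)$ (using that in the matroid and polymatroid cases the pair is determined by the polytope, as established in Theorem~\ref{free generation}). Choosing any linear extension of this order, the matrix with rows indexed by $R_{\rm (P)M}(\underline X',\underline r')$ and columns by $s^\sleq_{\underline X,\underline r}$ is lower triangular with $1$'s on the diagonal, hence invertible over $\Z$; so the $s^\sleq$ family is a basis of $P_{\rm (P)M}(d,r)^\vee$. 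For the $s$ family, I would run the analogous argument using the fact extracted in the megamatroid part of the proof of Theorem~\ref{free generation}: $s_{\underline X,\underline r}([R_{\rm (P)M}(\underline X',\underline r')]) \ne 0$ forces $R_{\rm (P)M}(\underline X',\underline r')\subseteq R_{\rm (P)M}(\underline X,\underline r)$, with value $1$ when $(\underline X',\underline r')=(\underline X,\underline r)$; again triangularity in a linear extension of the containment order gives a unimodular matrix, so the $s$ family is also a basis.

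The main obstacle is making the triangularity bookkeeping fully rigorous in the matroid case, where a given polytope $R_{\rm M}(\underline X,\underline r)$ can arise from pairs outside ${\mathfrak p}_{\rm M}(d,r)$ and one must pin down the correspondence: one needs that restricting attention to pairs in ${\mathfrak p}_{\rm (P)M}(d,r)$ both (i) gives distinct polytopes, and (ii) is "closed enough" that $s^\sleq_{\underline X,\underline r}$ for $(\underline X,\underline r)\in{\mathfrak p}_{\rm (P)M}(d,r)$, when expanded as a sum of $s$'s, only picks up polytopes $R_{\rm (P)M}(\underline X',\underline r')$ that themselves correspond (possibly after thinning) to pairs in ${\mathfrak p}_{\rm (P)M}(d,r)$ — this is exactly the content of the polytope-equalities $R_{\rm M}(\underline X,\underline r)=R_{\rm M}(\underline X',\underline r')$ proved in Theorem~\ref{free generation}. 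Once that is granted, the containment order on $\{R_{\rm (P)M}(\underline X,\underline r) : (\underline X,\underline r)\in{\mathfrak p}_{\rm (P)M}(d,r)\}$ is well-defined and finite, and the two triangularity arguments finish the proof. I would also remark that since both matrices are unimodular, each family is simultaneously a basis of $P_{\rm (P)M}(d,r)^\vee$ over $\Z$, not merely over $\Q$.
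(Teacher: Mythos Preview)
Your argument is correct and is essentially the same as the paper's: both exploit Theorem~\ref{free generation} to reduce to a finite-rank free module, then use the containment order on the polytopes $R_{\rm (P)M}(\underline X,\underline r)$ to show the pairing matrix is unitriangular. The paper's justification is terser and emphasizes only the $s^\sleq$ family explicitly, whereas you spell out the parallel triangularity argument for the $s$ family as well (which is implicit in the paper via the independence argument in the proof of Theorem~\ref{free generation}); the ``obstacle'' you flag about pairs outside $\mathfrak p_{\rm (P)M}(d,r)$ is not really an issue, since the triangularity for $s^\sleq$ uses only the polytope-containment characterization $s^\sleq_{\underline X,\underline r}(\rk)=1 \iff Q(\rk)\subseteq R_{\rm (P)M}(\underline X,\underline r)$ and never needs the expansion into $s$'s.
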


If $\underline{X}$ is not a maximal chain, then $s_{\underline{X},\underline{r}}$ is
a linear combination of functions of the form $s_{\underline{X}',\underline{r}'}$ where $\underline{X}'$ is a
maximal chain. The following corollary follows from Corollary~\ref{cor s univ}.
\begin{corollary}\label{cor maximal chains}
The group $P_{\rm PM}(d,r)^\vee$ of valuations $Z_{\rm PM}(d,r)\to\Z$
is generated by the functions~$s_{\underline X,\underline r}$
where $\underline X$ is a chain of subsets of $[d]$ of length~$d$
and $r=(r_1,\ldots,r_d)$ is an integer vector with
$0\leq r_1\leq\cdots\leq r_d=r$.
\end{corollary}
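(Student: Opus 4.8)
The plan is to combine Corollary~\ref{cor s univ}, which says that the functions $s_{\underline X,\underline r}$ with $(\underline X,\underline r)\in\mathfrak p_{\rm PM}(d,r)$ form a $\Z$-basis of $P_{\rm PM}(d,r)^\vee$, with a refinement identity that rewrites $s_{\underline X,\underline r}$ for a non-maximal chain as an integer combination of functions $s_{\underline X',\underline r'}$ attached to strictly longer chains $\underline X'$. Iterating the refinement until every chain has length $d$ then exhibits each basis element as an integer combination of maximal-chain functions of the type named in the corollary, which yields the asserted generation.

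The core step is the following identity. Let $\underline X:\emptyset=X_0\subset X_1\subset\cdots\subset X_k=\underline d$ be a chain with $k<d$; then $|X_{j+1}\setminus X_j|\ge 2$ for some $j$, and we may pick a set $Y$ with $X_j\subsetneq Y\subsetneq X_{j+1}$. Write $\underline X^{+Y}$ for the chain obtained by inserting $Y$, and, for an integer $v$, write $\underline r^{+v}$ for the vector obtained from $\underline r$ by inserting $v$ in the new slot. Then, with the convention $r_0=0$,
\[
s_{\underline X,\underline r}=\sum_{v=r_j}^{r_{j+1}} s_{\underline X^{+Y},\underline r^{+v}} .
\]
To see this, evaluate both sides at an arbitrary polymatroid $\rk$. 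If $\rk(X_m)\neq r_m$ for some $m$, every term on both sides vanishes. Otherwise $\rk(X_j)=r_j$ and $\rk(X_{j+1})=r_{j+1}$, and since $X_j\subseteq Y\subseteq X_{j+1}$ and $\rk$ is weakly increasing and integer valued, $\rk(Y)$ is an integer with $r_j\le\rk(Y)\le r_{j+1}$; hence exactly one index $v\in\{r_j,\dots,r_{j+1}\}$, namely $v=\rk(Y)$, gives $s_{\underline X^{+Y},\underline r^{+v}}(\rk)=1$, matching $s_{\underline X,\underline r}(\rk)=1$.

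Starting from $(\underline X,\underline r)\in\mathfrak p_{\rm PM}(d,r)$, so that $0\le r_1<\cdots<r_k=r$, I would apply this identity repeatedly. Each summand $s_{\underline X^{+Y},\underline r^{+v}}$ has a chain of length one greater, and its vector $\underline r^{+v}$ is still weakly increasing with nonnegative entries and last entry $r$, because $r_j\le v\le r_{j+1}$ and $r_0=0$. Since chain length is bounded by $d$, after finitely many steps one obtains an integer combination of functions $s_{\underline X',\underline r'}$ in which $\underline X'$ is a length-$d$ chain and $0\le r'_1\le\cdots\le r'_d=r$; each such $s_{\underline X',\underline r'}$ is valuative by Proposition~\ref{s is valuative}, hence lies in $P_{\rm PM}(d,r)^\vee$. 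As the functions $s_{\underline X,\underline r}$ with $(\underline X,\underline r)\in\mathfrak p_{\rm PM}(d,r)$ span $P_{\rm PM}(d,r)^\vee$ by Corollary~\ref{cor s univ}, so do the maximal-chain functions, which is the claim.

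The only place that needs care is the refinement identity itself: one must check that the weakly increasing property of the polymatroid rank function is exactly what confines $\rk(Y)$ to the finite window $[r_j,r_{j+1}]$, so that the right-hand side is a finite sum with precisely one nonzero term whenever the left-hand side equals $1$ (and all terms zero otherwise). Everything else — tracking chain lengths, and checking that the inserted rank values keep the vectors weakly increasing, nonnegative, and ending at $r$ — is routine bookkeeping.
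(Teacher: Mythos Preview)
Your proof is correct and follows essentially the same approach as the paper, which simply remarks before the corollary that any $s_{\underline X,\underline r}$ with non-maximal $\underline X$ is a linear combination of $s_{\underline X',\underline r'}$ with $\underline X'$ maximal, and then invokes Corollary~\ref{cor s univ}. You have made that remark precise via the refinement identity $s_{\underline X,\underline r}=\sum_{v=r_j}^{r_{j+1}} s_{\underline X^{+Y},\underline r^{+v}}$, which is exactly the detail the paper leaves to the reader.
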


The generating set of this corollary appeared as the coordinates of
the function $H$ defined in~\cite[\S6]{AFR}, which was introduced
there as a labeled analogue of the first author's $\mathcal G$.  



\begin{proof14d}
Let ${\mathfrak a}(d,r)$ be the set of all sequences $(a_1,\dots,a_d)$
with $0\leq a_i\leq r$ for all $i$ and $a_i=r$ for some $i$.
Clearly $|{\mathfrak a}(d,r)|=(r+1)^d-r^d$. We define a bijection $f:{\mathfrak p}_{\rm PM}(d,r)\to {\mathfrak a}(d,r)$
as follows. If $(\underline{X},\underline{r})\in {\mathfrak p}_{\rm PM}(d,r)$, then we
define
$$
f(\underline{X},\underline{r})=(a_1,a_2,\dots,a_d)
$$
where $a_i=r_j$ and $j$ is minimal such that $i\in X_{j}$.

Suppose that $(a_1,\dots,a_d)\in {\mathfrak a}(d,r)$.
Let $k$ be the cardinality of $\{a_1,\dots,a_d\}$. 
Now $r_1<r_2<\cdots<r_k$ are defined by
$$
\{r_1,r_2,\dots,r_k\}=\{a_1,\dots,a_d\}
$$
and 
for every $j$, we define
$$
X_{j}=\{i\in \underline{d}\mid a_i\leq r_j\}.
$$
Then we have
$$
f^{-1}(a_1,\dots,a_d)=(\underline{X},\underline{r}).
$$

A generating function for $p_{\rm PM}(d,r)$ is
\begin{multline*}
\sum_{d=0}^\infty\sum_{r=0}^\infty \frac{p_{\rm PM}(d,r)x^dy^r}{d!}=
1+\sum_{d=1}^\infty\sum_{r=0}^\infty\frac{(r+1)^d-r^d}{d!}x^dy^r=\\
=1+\sum_{r=0}^\infty\sum_{d=0}^\infty\frac{(r+1)^d-r^d}{d!}x^dy^r=1+\sum_{r=0}^\infty (e^{(r+1)x}-e^{rx})y^r=1+\frac{e^x-1}{1-ye^x}=\frac{e^x(1-y)}{1-ye^x}.
\end{multline*}
\end{proof14d}
\begin{proof14c}
Suppose that $(\underline{X},\underline{r})\in {\mathfrak p}_{\rm M}(d,r)$ has length $k$.
Define $u_1,u_2,\dots,u_k$ by 
$$u_1=r_1,\quad u_i=r_i-r_{i-1}-1\  (2\leq i\leq k).$$
Define $v_1,v_2,\dots,v_k$ by 
\begin{eqnarray*}v_i &=& (|X_i|-r_i)-(|X_{i-1}|-r_{i-1})-1\ (1\leq i\leq k-1),\\
 v_k &=& (|X_k|-r_k)-(|X_{k-1}|-r_{k-1})=d-r-|X_{k-1}|+r_{k-1}.
 \end{eqnarray*}
 
If $(\underline{X},\underline{r})\in {\mathfrak p}_{\rm M}(d,r)$, then we have that
$u_1,\dots,u_k,v_1,\dots,v_k$ are nonnegative, and
$$
u_1+\cdots+u_k=r-k+1,\quad v_1+\cdots+v_k=d-r-k+1.
$$
Let $Y_i=X_i\setminus X_{i-1}$ for $i=1,2,\dots,k$.
If $k\geq 2$, then we have
$u_1+v_1+1=|Y_1|$, $u_k+v_k+1=|Y_k|$ and $u_i+v_i+2=|Y_i|$ for $i=2,3,\dots,k-1$.
There are 
$$
\frac{d!}{(u_1+v_1+1)!(u_2+v_2+2)!(u_3+v_3+2)!\cdots (u_{k-1}+v_{k-1}+2)!(u_k+v_k+1)!}
$$
partitions of $\underline{d}$ into the subsets $Y_1,Y_2,\dots,Y_k$, such that
$(\underline{X},\underline{r})$ has the given $u$ and $v$ values.
If $k=1$, then $u_1+v_1=d$ and there is 
$$
1=\frac{d!}{(u_1+v_1)!}
$$
 pair $(\underline{X},\underline{r})$ with given $u$ and $v$ values.

This yields the generating function
\begin{multline}\label{eqgenfun}
\sum_{d=0}^\infty \sum_{r=0}^d \frac{p_{\rm M}(d,r)}{d!}x^{d-r}y^r=
\sum_{u_1,v_1\geq 0}\frac{t^{u_1}s^{v_1}}{(u_1+v_1)!}+\\
+
\sum_{\scriptstyle u_1,\dots,u_k\geq 0 \atop \scriptstyle v_1,\dots,v_k\geq 0}
\frac{x^{u_1+u_2+\cdots+u_k+k-1}y^{v_1+v_2+\cdots+v_k+k-1}}{
(u_1+v_1+1)! (u_2+v_2+2)!\cdots (u_{k-1}+v_{k-1}+2)!(u_k+v_k+1)!}
\end{multline}
We have that
\begin{equation}\label{equv1}
\sum_{u,v\geq 0}\frac{x^uy^v}{(u+v)!}=\sum_{d=0}^\infty \sum_{u+v=d}
\frac{t^us^v}{d!}=\sum_{d=0}^\infty \frac{x^{d+1}-y^{d+1}}{(x-y)d!}=\frac{xe^x-ye^y}{x-y},
\end{equation}
\begin{multline}\label{equv2}
\sum_{u,v\geq 0}\frac{t^us^v}{(u+v+1)!}=\sum_{d=0}^\infty \sum_{u+v=d}
\frac{x^uy^v}{(d+1)!}=\sum_{d=0}^\infty \frac{x^{d+1}-y^{d+1}}{(x-y)(d+1)!}=
\\
=
\sum_{d=1}^\infty \frac{x^d-y^d}{(x-y)d!}=\sum_{d=0}^\infty \frac{x^d-y^d}{(x-y)d!}=
\frac{e^x-e^y}{x-y},
\end{multline}
and
\begin{multline}\label{equv3}
\sum_{u,v\geq 0}\frac{x^uy^v}{(u+v+2)!}=\sum_{d=0}^\infty \sum_{u+v=d}
\frac{x^uy^v}{(d+2)!}=\sum_{d=0}^\infty \frac{x^{d+1}-y^{d+1}}{(x-y)(d+2)!}=\\
=
\sum_{d=1}^\infty \frac{x^{d}-y^{d}}{(x-y)(d+1)!}=
\frac{(e^x-1)/x-(e^y-1)/y}{x-y}=\frac{ye^x-y-xe^y+x}{(x-y)xy}.
\end{multline}
Using (\ref{equv1}), (\ref{equv2}) and (\ref{equv3}) with (\ref{eqgenfun}) yields
\begin{multline}
\sum_{d=0}^\infty \sum_{r=0}^d \frac{p_{\rm M}(d,r)}{d!}x^{d-r}y^r=\\
=
\frac{xe^x-ye^y}{x-y}+xy\left(\frac{e^x-e^y}{x-y}\right)^2\sum_{k=2}^\infty
\left(\frac{ye^x-y-xe^y+x}{x-y}\right)^{k-2}=\\
=\frac{xe^x-ye^y}{x-y}+\left(\frac{e^x-e^y}{x-y}\right)^2\frac{xy}{\displaystyle 1-
\frac{ye^x-y-xe^y+x}{x-y}}=\\
=
\frac{xe^x-ye^y}{x-y}+\frac{xy(e^x-e^y)^2}{(x-y)(xe^y-ye^x)}=\frac{x-y}{xe^{-x}-ye^{-y}}.
\end{multline}
\end{proof14c}

The values of $p_{\rm (P)M}(d,r)$ can be found in~\ref{apB}.


\section{Valuative invariants: the groups $P_{\rm M}^{\rm sym},P_{\rm PM}^{\rm sym},P_{\rm MM}^{\rm sym}$}

\label{not:Y}
Let $Y_{\rm MM}(d,r)$ be the group generated by all $\langle\rk\rangle-\langle\rk\circ\sigma\rangle$ where
$\rk:2^{\underline{d}}\to \Z\cup \{\infty\}$ is a megamatroid of rank $r$ and $\sigma$ is a permutation of $\underline{d}$.
\label{not:Z^sym}
We define $Z_{\rm MM}^{\rm sym}(d,r)=Z_{\rm MM}(d,r)/Y_{\rm MM}(d,r)$.
\label{not:pi}
Let $\pi_{\rm MM}:Z_{\rm MM}(d,r)\to Z_{\rm MM}^{\rm sym}(d,r)$ be the quotient homomorphism.
If $\rk_X:2^X\to \Z\cup \{\infty\}$ is any megamatroid, then we can choose a bijection $\varphi:\underline{d}\to X$, where
$d$ is the cardinality of $X$. 
Let $r=\rk_X(X)$. 
The image of $\langle \rk_X\circ \varphi\rangle$ in $Z_{\rm MM}^{\rm sym}(d,r)$ does not
depend on $\varphi$, and will be denoted by $[\rk_X]$.
The megamatroids $(X,\rk_X)$ and $(Y,\rk_Y)$ are isomorphic if and only if $[\rk_X]=[\rk_Y]$.
So we may think of $Z_{\rm MM}^{\rm sym}(d,r)$ as the free group generated by all isomorphism classes of rank $r$ megamatroids
on sets with $d$ elements.

\label{not:B}
Let $B_{\rm MM}(d,r)$ be the group generated by all $[Q(\rk)]-[Q(\rk\circ\sigma)]$ 
where $\rk:2^{\underline{d}}\to \Z\cup \{\infty\}$ is a megamatroid of rank $r$ 
and $\sigma$ is a permutation of $\underline{d}$.
\label{not:P^sym}
Define $P_{\rm MM}^{\rm sym}(d,r)=P_{\rm MM}(d,r)/B_{\rm MM}(d,r)$ 
\label{not:rho}
and let $\rho_{\rm MM}:P_{\rm MM}(d,r)\to P_{\rm MM}^{\rm sym}(d,r)$
be the quotient homomorphism.
From the definitions it is clear that $\Psi_{\rm MM}(Y_{\rm MM}(d,r))=B_{\rm MM}(d,r)$.
\label{not:mc G}
Therefore, there exists a unique group homomorphism 
$$\Psi^{\rm sym}_{\rm MM}:Z_{\rm MM}^{\rm sym}(d,r)\to P_{\rm MM}^{\rm sym}(d,r)$$
such that the following diagram commutes:
\begin{equation}\label{eqcomsquare}
\xymatrix{
Z_{\rm MM}(d,r)\ar[r]^{\Psi_{\rm MM}}\ar[d]_{\pi_{\rm MM}} & P_{\rm MM}(d,r)\ar[d]^{\rho_{\rm MM}} \\
Z_{\rm MM}^{\rm sym}(d,r)\ar[r]_{\Psi^{\rm sym}_{\rm MM}} & P_{\rm MM}^{\rm sym}(d,r).
}
\end{equation}
This diagram is a push-out. Define $Y_{\rm (P)M}(d,r)=Y_{\rm MM}(d,r)\cap Z_{\rm (P)M}(d,r)$.
The group $Y_{\rm (P)M}(d,r)$ is the
group generated by all $\langle \rk\rangle-\langle\rk\circ\sigma\rangle$
where $\rk:2^{\underline{d}}\to \N$ is a (poly)matroid
of rank $r$ and $\sigma$ is a permutation of $\underline{d}$. 
Define $Z_{\rm (P)M}^{\rm sym}(d,r)=Z_{\rm (P)M}(d,r)/Y_{\rm (P)M}(d,r)$.
The group $Z_{\rm (P)M}^{\rm sym}(d,r)$ is freely generated by all
$[\rk]$ where $\rk:X\to \N$ is a $*$matroid of rank $r$ and $d=|X|$.

Define $B_{\rm *M}(d,r)$ as the
group generated by all $[Q(\rk)]-[Q(\rk\circ \sigma)]$
where $\rk:2^{\underline{d}}\to \N$ is a $*$matroid
of rank $r$ and $\sigma$ is a permutation of $\underline{d}$.
Let $P_{\rm *M}^{\rm sym}(d,r)=P_{\rm *M}(d,r)/B_{\rm *M}(d,r)$.

\begin{lemma}
We have 
$$B_{\rm (P)M}(d,r)=B_{\rm MM}(d,r)\cap P_{\rm (P)M}(d,r).$$
\end{lemma}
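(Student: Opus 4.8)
Throughout write $\Delta=\Delta_{\rm (P)M}(d,r)$. One inclusion is immediate: every (poly)matroid is a megamatroid, and $[Q(\rk)]\in P_{\rm (P)M}(d,r)$ whenever $\rk$ is a (poly)matroid of rank $r$, so each generator $[Q(\rk)]-[Q(\rk\circ\sigma)]$ of $B_{\rm (P)M}(d,r)$ already lies in $B_{\rm MM}(d,r)\cap P_{\rm (P)M}(d,r)$. For the reverse inclusion the plan is to produce a retraction $\Lambda$ of $P_{\rm MM}(d,r)$ onto $P_{\rm (P)M}(d,r)$ that carries $B_{\rm MM}(d,r)$ into $B_{\rm (P)M}(d,r)$; granting such a $\Lambda$, an element $x\in B_{\rm MM}(d,r)\cap P_{\rm (P)M}(d,r)$ satisfies $x=\Lambda(x)\in\Lambda(B_{\rm MM}(d,r))\subseteq B_{\rm (P)M}(d,r)$, which finishes the proof.

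The map $\Lambda$ I would use is ``intersect with $\Delta$''. On the group of all $\Z$-valued functions on $\R^d$ the operation $g\mapsto g\cdot[\Delta]$ (pointwise product) is a group endomorphism, and since $[\Pi]\cdot[\Delta]=[\Pi\cap\Delta]$ for every polyhedron $\Pi$, its restriction to $P_{\rm MM}(d,r)$ is a homomorphism $\Lambda$ with $\Lambda([Q(\rk)])=[Q(\rk)\cap\Delta]$; in particular $\Lambda$ is automatically well defined, no relations need checking. Two facts make it a retraction onto $P_{\rm (P)M}(d,r)$. First, $Q(\rk)\cap\Delta_{\rm (P)M}(d,r)$ is either empty or the base polytope of a (poly)matroid of rank $r$ — this is exactly what is used when $\pi_{\rm (P)M}$ and the polytopes $R_{\rm (P)M}(\underline X,\underline r)$ are introduced earlier; explicitly, for a megamatroid $\rk$ one takes the monotone hull $A\mapsto\min_{B\supseteq A}\rk(B)$ for the polymatroid case and then the construction $\rk^{N}$ of Lemma~\ref{lemnonempty} with $N=1$ for the matroid case, and checks that their base polytopes are precisely the stated intersections. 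Hence $\Lambda$ maps $P_{\rm MM}(d,r)$ into $P_{\rm (P)M}(d,r)$. Second, if $\rk$ is a (poly)matroid then $Q(\rk)\subseteq\Delta$, so $\Lambda([Q(\rk)])=[Q(\rk)]$, and therefore $\Lambda$ restricts to the identity on $P_{\rm (P)M}(d,r)$.

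It remains to verify $\Lambda(B_{\rm MM}(d,r))\subseteq B_{\rm (P)M}(d,r)$. A permutation $\sigma$ of $\underline d$ acts on $\R^d$ by permuting coordinates, $Q(\rk\circ\sigma)=\sigma^{-1}\cdot Q(\rk)$, and $\Delta$ is invariant under this action; thus for a generator of $B_{\rm MM}(d,r)$ we get $\Lambda\big([Q(\rk)]-[Q(\rk\circ\sigma)]\big)=[\Pi']-[\sigma^{-1}\cdot\Pi']$ with $\Pi'=Q(\rk)\cap\Delta$. If $\Pi'=\emptyset$ this is $0$; otherwise $\Pi'=Q(\rk')$ for a (poly)matroid $\rk'$ of rank $r$, so $\sigma^{-1}\cdot\Pi'=Q(\rk'\circ\sigma)$ with $\rk'\circ\sigma$ again a (poly)matroid of rank $r$, and the expression is a generator of $B_{\rm (P)M}(d,r)$. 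Combining this with the two properties of $\Lambda$ established above yields the claim. The only genuinely non-formal ingredient is the identification of $Q(\rk)\cap\Delta_{\rm (P)M}(d,r)$ with a (poly)matroid base polytope (or the empty set), and this is essentially already in hand from the preceding sections; everything else is bookkeeping.
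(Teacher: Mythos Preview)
Your argument is correct and is essentially the paper's own proof: your map $\Lambda$, $g\mapsto g\cdot[\Delta]$, is exactly the projection $q_{\rm (P)M}$ the paper introduces, and the paper uses the same three observations (it lands in $P_{\rm (P)M}$, fixes $P_{\rm (P)M}$, and sends $B_{\rm MM}$ into $B_{\rm (P)M}$) to conclude. One small caveat: your offhand recipe for producing a (poly)matroid with base polytope $Q(\rk)\cap\Delta$ via the monotone hull and then $\rk^{1}$ does not literally work as stated, but the fact itself is used freely in the paper (e.g.\ in defining $\pi_{\rm (P)M}$ and $R_{\rm (P)M}$), so this does not affect the argument.
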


\begin{proof}
Define $q_{\rm (P)M}:P_{\rm MM}(d,r)\to P_{\rm (P)M}(d,r)$ by
$q_{\rm (P)M}(f)=f\cdot [\Delta_{\rm (P)M}(d,r)]$. This is well defined because
for any megamatroid $\rk:2^{\underline{d}}\to \Z\cup \{\infty\}$ of rank $r$, we have
$q_{\rm (P)M}([Q(\rk)])=[Q(\rk)]\cdot [\Delta_{\rm (P)M}(d,r)]=[Q(\rk)\cap \Delta_{\rm (P)M}(d,r)]$
and $Q(\rk)\cap \Delta_{\rm (P)M}(d,r)$ is either empty or a polymatroid polyhedron.
Clearly, $q_{\rm (P)M}$ is a projection of $P_{\rm MM}(d,r)$ onto $P_{\rm (P)M}(d,r)$.
Since $q_{\rm (P)M}(B_{\rm MM}(d,r))\subseteq  B_{\rm (P)M}(d,r)$, it follows that
$$
B_{\rm MM}(d,r)\cap P_{\rm (P)M}(d,r)=q_{\rm PM}(B_{\rm MM}(d,r)\cap P_{\rm (P)M}(d,r))\subseteq B_{\rm (P)M}(d,r).
$$
It follows that $B_{\rm MM}(d,r)\cap P_{\rm (P)M}(d,r)=B_{\rm (P)M}(d,r)$. 
\end{proof}

By restriction, we get also the commutative push-out diagrams (\ref{eq:pushout}) from the introduction.
\label{not:mf a}
Define ${\mathfrak p}^{\rm sym}_{\rm *M}(d,r)$\label{not:mf psym} as the set of all pairs 
$(\underline{X},\underline{r})\in {\mathfrak p}_{\rm *M}(d,r)$ 
such that for every $j$, there exists an $i$ such that
$$
X_j=\underline{i}=\{1,2,\dots,i\}.
$$

\label{not:A}
We define $A_{\rm *M}(d,r)$ 
 as the $\Z$ module generated by all $[R_{\rm *M}(\underline{X},\underline{r})]$ 
with $(\underline{X},\underline{r})\in {\mathfrak p}^{\rm sym}_{\rm *M}(d,r)$.

\begin{lemma}\label{lemPAB}
We have
$$P_{\rm *M}(d,r)=A_{\rm *M}(d,r)\oplus B_{\rm *M}(d,r).$$
\end{lemma}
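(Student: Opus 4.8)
The plan is to establish the direct sum decomposition by showing two things: that $P_{\rm *M}(d,r) = A_{\rm *M}(d,r) + B_{\rm *M}(d,r)$ (the spanning statement) and that $A_{\rm *M}(d,r) \cap B_{\rm *M}(d,r) = 0$ (the independence statement). The key tool throughout is the averaging operator over the symmetric group $S_d$, which I will call $\mathrm{Sym}$: it sends $[Q(\rk)]$ to $\frac{1}{d!}\sum_{\sigma\in S_d}[Q(\rk\circ\sigma)]$. Its kernel is precisely $B_{\rm *M}(d,r)$ after tensoring with $\Q$, and it is a projection, but I will need to be careful to work integrally — so rather than literally averaging I will instead argue combinatorially using the basis from Theorem~\ref{free generation}.

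First, for the spanning statement: by Theorem~\ref{free generation}, $P_{\rm *M}(d,r)$ is freely generated by the $[R_{\rm *M}(\underline X,\underline r)]$ for $(\underline X,\underline r)\in\mathfrak p_{\rm *M}(d,r)$. Given any such pair, pick a permutation $\sigma$ of $\underline d$ that carries each $X_j$ to an initial segment $\underline{i_j}$ (this is possible precisely because the $X_j$ form a chain). Then $[R_{\rm *M}(\sigma\underline X,\underline r)] \in A_{\rm *M}(d,r)$ since $(\sigma\underline X,\underline r)\in\mathfrak p^{\rm sym}_{\rm *M}(d,r)$, and the difference $[R_{\rm *M}(\underline X,\underline r)] - [R_{\rm *M}(\sigma\underline X,\underline r)]$ lies in $B_{\rm *M}(d,r)$, since $R_{\rm *M}(\sigma\underline X,\underline r) = \tau(R_{\rm *M}(\underline X,\underline r))$ for a suitable relabeling $\tau$, i.e.\ it is $[Q(\rk_{\underline X,\underline r}')] - [Q(\rk_{\underline X,\underline r}'\circ\tau^{-1})]$ up to the intersection with $\Delta_{\rm *M}(d,r)$, which is a generator of $B_{\rm *M}(d,r)$. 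Hence every basis element of $P_{\rm *M}(d,r)$ lies in $A_{\rm *M}(d,r)+B_{\rm *M}(d,r)$.

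Second, for the independence statement: I want $A_{\rm *M}(d,r)\cap B_{\rm *M}(d,r) = 0$. Here I will use the valuative functions $s_{\underline X,\underline r}$ (or their triangular variants $s^\sleq_{\underline X,\underline r}$) from Proposition~\ref{s is valuative} and Corollary~\ref{cor s univ}. The crucial observation is that a symmetrized valuation — one that factors through $P_{\rm *M}^{\rm sym}$, equivalently one that annihilates $B_{\rm *M}(d,r)$ — can be obtained by averaging $s_{\underline X,\underline r}$ over the $S_d$-orbit of $(\underline X,\underline r)$; restricting attention to pairs with $X_j$ initial segments picks out one representative per orbit, and the matrix pairing the $[R_{\rm *M}(\underline X,\underline r)]$ for $(\underline X,\underline r)\in\mathfrak p^{\rm sym}_{\rm *M}(d,r)$ against these orbit-sums is (block-)triangular with $\pm$ diagonal by the same containment-order argument as in Corollary~\ref{cor s univ}. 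Thus the images of the generators of $A_{\rm *M}(d,r)$ in $P_{\rm *M}^{\rm sym}(d,r) = P_{\rm *M}(d,r)/B_{\rm *M}(d,r)$ are linearly independent, which forces $A_{\rm *M}(d,r)\cap B_{\rm *M}(d,r) = 0$ and simultaneously shows $A_{\rm *M}(d,r)$ maps isomorphically onto $P_{\rm *M}^{\rm sym}(d,r)$.

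The main obstacle I anticipate is the \emph{integrality} of the argument. Averaging over $S_d$ naturally introduces a factor of $1/d!$, so I cannot directly conclude that $\mathrm{Sym}$ is an integral projection onto a complement of $B_{\rm *M}(d,r)$. The way around this is to avoid averaging entirely in the structural argument and instead work with the explicit bases: use Theorem~\ref{free generation} to get that $B_{\rm *M}(d,r)$ is spanned over $\Z$ by the differences $[R_{\rm *M}(\underline X,\underline r)] - [R_{\rm *M}(\sigma\underline X,\underline r)]$, which together with the $[R_{\rm *M}(\underline X,\underline r)]$ for $(\underline X,\underline r)\in\mathfrak p^{\rm sym}_{\rm *M}(d,r)$ form a $\Z$-basis of $P_{\rm *M}(d,r)$ adapted to the decomposition. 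Verifying that this adapted set really is a $\Z$-basis — i.e.\ that the change of basis from the Theorem~\ref{free generation} basis is unimodular — is the technical heart, and it follows because the transition matrix is unitriangular once one orders pairs by refining the $S_d$-orbit structure.
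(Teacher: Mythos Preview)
Your proposal is correct and close in spirit to the paper's proof, but the paper avoids your integrality worry by a simpler device. For the spanning statement the paper does exactly what you do. For $A_{\rm *M}(d,r)\cap B_{\rm *M}(d,r)=\{0\}$, instead of working on the dual side with symmetrized valuations or building a unimodular change of basis, the paper defines the \emph{integer} symmetrization $\tau:P_{\rm *M}(d,r)\to P_{\rm *M}(d,r)$ by $\tau(f)=\sum_{\sigma\in\Sigma_d} f\circ\sigma$ (a sum, not an average, so no $1/d!$ appears). Clearly $B_{\rm *M}(d,r)\subseteq\ker\tau$. On the other hand, $\tau$ sends each generator $[R_{\rm *M}(\underline X,\underline r)]$ with $(\underline X,\underline r)\in\mathfrak p^{\rm sym}_{\rm *M}(d,r)$ to a multiple of the sum over its $\Sigma_d$-orbit; since the $[R_{\rm *M}(\underline X,\underline r)]$ with $(\underline X,\underline r)\in\mathfrak p_{\rm *M}(d,r)$ form a basis of $P_{\rm *M}(d,r)$ by Theorem~\ref{free generation} and distinct elements of $\mathfrak p^{\rm sym}_{\rm *M}(d,r)$ lie in distinct orbits, these images are independent (over $\Q$, which suffices since $A$ is torsion-free). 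Hence $\tau|_{A_{\rm *M}(d,r)}$ is injective and the intersection is trivial.

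So your dual approach via orbit-summed $s_{\underline X,\underline r}$ works, and your third paragraph's unimodular-basis argument would also go through, but both are more laborious than necessary: the obstacle you flag disappears once you sum rather than average, and then one only needs $\Q$-injectivity of $\tau$ on $A$, which is immediate from the basis of Theorem~\ref{free generation}.
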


\begin{proof}
By the definitions of $A_{\rm *M}(d,r)$ and $B_{\rm *M}(d,r)$ it is clear that
$P_{\rm *M}(d,r)=A_{\rm *M}(d,r)+B_{\rm *M}(d,r)$.
Consider the homomorphism $\tau:P_{\rm *M}(d,r)\to P_{\rm *M}(d,r)$ defined by $\tau(f)=\sum_{\sigma} f\circ \sigma$
where $\sigma$ runs over all permutations of $\underline{d}$.
Clearly, $B_{\rm *M}(d,r)$ is contained in the kernel of $\tau$.
Recall that $[R_{\rm *M}(\underline{X},\underline{r})]$,
$(\underline{X},\underline{r})\in {\mathfrak p}_{\rm *M}(d,r)$ is a basis of $P_{\rm *M}(d,r)$.
From this it easily follows that the set\linebreak
$\tau([R_{\rm *M}(d,r)])$, $(\underline{X},\underline{r})\in {\mathfrak p}^{\rm sym}_{\rm *M}(d,r)$
is independent over $\Q$.
Therefore the restriction of $\tau$ to $A_{\rm *M}(d,r)$ is injective and $A_{\rm *M}(d,r)\cap B_{\rm *M}(d,r)=\{0\}$.
\end{proof}
\begin{theorem}\label{theo:Psymgens}
The $\Z$-module $P_{\rm \star M}^{\rm sym}(d,r)$ is freely generated by all $\rho_{\rm \star M}([R_{\star\rm M}(\underline{X},\underline r)])$
with $(\underline{X},\underline{r})\in {\mathfrak p}^{\rm sym}_{\star \rm M}(d,r)$.
\end{theorem}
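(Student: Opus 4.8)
The plan is to deduce the statement directly from Theorem~\ref{free generation} and Lemma~\ref{lemPAB}, together with the elementary fact that a subfamily of a $\Z$-basis of a free $\Z$-module is a $\Z$-basis of the submodule it generates.

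First I would observe that, by definition, ${\mathfrak p}^{\rm sym}_{\star\rm M}(d,r)$ is a subset of ${\mathfrak p}_{\star\rm M}(d,r)$. Hence, by Theorem~\ref{free generation}, the family $\{[R_{\star\rm M}(\underline X,\underline r)] : (\underline X,\underline r)\in{\mathfrak p}^{\rm sym}_{\star\rm M}(d,r)\}$ is a subfamily of a $\Z$-basis of $P_{\star\rm M}(d,r)$, and therefore it is a $\Z$-basis of the submodule it spans. By definition this submodule is precisely $A_{\star\rm M}(d,r)$, so $A_{\star\rm M}(d,r)$ is free with the indicated basis.

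Next I would invoke Lemma~\ref{lemPAB}, which gives the internal direct sum decomposition $P_{\star\rm M}(d,r) = A_{\star\rm M}(d,r)\oplus B_{\star\rm M}(d,r)$. Since by definition $P^{\rm sym}_{\star\rm M}(d,r) = P_{\star\rm M}(d,r)/B_{\star\rm M}(d,r)$ with quotient map $\rho_{\star\rm M}$, the restriction of $\rho_{\star\rm M}$ to the complementary summand $A_{\star\rm M}(d,r)$ is an isomorphism onto $P^{\rm sym}_{\star\rm M}(d,r)$. Transporting the basis of $A_{\star\rm M}(d,r)$ found above through this isomorphism shows that $P^{\rm sym}_{\star\rm M}(d,r)$ is freely generated by the elements $\rho_{\star\rm M}([R_{\star\rm M}(\underline X,\underline r)])$ with $(\underline X,\underline r)\in{\mathfrak p}^{\rm sym}_{\star\rm M}(d,r)$, which is the assertion of the theorem.

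I expect no genuine obstacle: all the substantive work is already contained in Theorem~\ref{free generation} (freeness of the full $R$-basis of $P_{\star\rm M}(d,r)$) and Lemma~\ref{lemPAB} (the splitting off of $B_{\star\rm M}(d,r)$). The only point warranting a word of justification is that passing to a subfamily of a free basis preserves freeness and produces a direct summand; but since Lemma~\ref{lemPAB} already names the explicit complement $B_{\star\rm M}(d,r)$, even this is handed to us, and the argument reduces to chasing these two displayed facts through the defining quotient.
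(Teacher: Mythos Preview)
Your proof is correct and follows essentially the same approach as the paper: both use Lemma~\ref{lemPAB} to see that $\rho_{\star\rm M}$ restricts to an isomorphism $A_{\star\rm M}(d,r)\to P^{\rm sym}_{\star\rm M}(d,r)$, and then invoke Theorem~\ref{free generation} to identify the basis of~$A_{\star\rm M}(d,r)$. Your write-up is a bit more explicit about why a subfamily of a free basis freely generates its span, but the substance is identical.
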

\begin{proof}
It is clear that $\rho_{\rm *M}(A_{\rm *M}(d,r))=P_{\rm *M}^{\rm sym}(d,r)$. So the restriction is surjective.  It is also injective by Lemma~\ref{lemPAB}. So
the restriction of $\rho_{\rm *M}:P_{\rm *M}(d,r)\to P_{\rm *M}^{\rm sym}(d,r)$
to $A_{\rm *M}(d,r)$ is an isomorphism. From the definition of $A_{\star M}(d,r)$ it follows that the given set generates
$P_{\rm \star M}^{\rm sym}(d,r)$, and the set is independent because of Theorem~\ref{free generation}. 
\end{proof}

The matroid polytopes $R_{\rm M}(\underline X,\underline r)$
are the polytopes of {\em Schubert matroids} and their images
under relabeling the ground set.  
Schubert matroids were first described by Crapo~\cite{Crapo}, and have since 
arisen in several contexts.  
So Theorem~\ref{theo:Psymgens} says that the indicator functions 
of Schubert matroids form a basis for $P^{\rm sym}_{\rm M}(d,r)$.  

Recall that $Z^{\rm sym}_{\rm *M}$ can be viewed as the free $\Z$-module
generated by all isomorphism classes of $*$matroids
on a set with $d$ elements of rank $r$.
We say that a group homomorphism $f:Z^{\rm sym}_{\rm MM}(d,r)\to A$ 
is valuative if and only if
$f\circ \pi_{\rm MM}$ is valuative.
%
%
For any $(\underline X,\underline r)\in\mathfrak p_{\rm MM}(d,r)$ 
and $\sigma$ a permutation of~$\underline d$, we have 
$s_{\underline X,\underline r}(\rk\circ\sigma)=
 s_{\sigma\underline X,\underline r}(\rk)$, where
$\sigma$ acts on~$\underline X$ by permuting each set in the chain.
So the symmetric group $\Sigma_d$ acts naturally on $P_{\rm *M}(d,r)$.
It is easy to see that
$$
P_{\star M}^{\rm sym}(d,r)^\vee\cong \big(P_{\star M}(d,r)^\vee\big)^{\Sigma_d},
$$
where the right-hand side is the set of $\Sigma_d$-invariant elements of $P_{\star\rm M}(d,r)^\vee$.

For $(\underline X,\underline r)\in\mathfrak p^{\rm sym}_{\rm MM}(d,r)$,
define a homomorphism \label{not:s^sym}
$s^{\rm sym}_{\underline X,\underline r}:Z_{\rm MM}(d,r)\to\Z$ by
$$s^{\rm sym}_{\underline X,\underline r} = 
\sum_{\sigma\underline X} s_{\sigma\underline X,\underline r}$$
where the sum is over all chains $\sigma\underline X$
in the orbit of~$\underline X$ under the action of the symmetric group.
Then Corollary~\ref{cor s univ} implies the following.

\begin{corollary}\label{cor ssym}
The $\Q$-vector space $P^{\rm sym}_{\rm (P)M}(d,r)^\vee \otimes_{\Z}\Q$ of valuations $Z^{\rm sym}_{\rm (P)M}(d,r)\to\Q$
has a basis given by the functions 
$s^{\rm sym}_{\underline X,\underline r}$
for $(\underline X,\underline r)\in\mathfrak p^{\rm sym}_{\rm (P)M}(d,r)$.
\end{corollary}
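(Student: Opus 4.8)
The plan is to deduce the corollary from Corollary~\ref{cor s univ} by the standard description of the invariants of a permutation module. The group $\Sigma_d$ acts on $P_{\rm (P)M}(d,r)$ (a permutation $\sigma$ sends $[R_{\rm (P)M}(\underline X,\underline r)]$ to $[R_{\rm (P)M}(\sigma\underline X,\underline r)]$) and hence on its dual $P_{\rm (P)M}(d,r)^\vee$; by the identity $s_{\underline X,\underline r}(\rk\circ\sigma)=s_{\sigma\underline X,\underline r}(\rk)$ recorded above, this dual action simply permutes the basis $\{s_{\underline X,\underline r}:(\underline X,\underline r)\in\mathfrak p_{\rm (P)M}(d,r)\}$ furnished by Corollary~\ref{cor s univ}. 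One first checks that $\mathfrak p_{\rm (P)M}(d,r)$ is stable under $(\underline X,\underline r)\mapsto(\sigma\underline X,\underline r)$: the inequalities defining $\mathfrak p_{\rm PM}(d,r)$ involve only $\underline r$, and those defining $\mathfrak p_{\rm M}(d,r)$ involve only $\underline r$ and the cardinalities $|X_j|$, none of which change when a chain is permuted.

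Since $P_{\rm (P)M}(d,r)^\vee$ is free of finite rank and $\Sigma_d$ permutes the above basis, the $\Q$-vector space $(P_{\rm (P)M}(d,r)^\vee\otimes_\Z\Q)^{\Sigma_d}$ has a basis consisting of the orbit sums $\sum_{\sigma\underline X}s_{\sigma\underline X,\underline r}$, one for each $\Sigma_d$-orbit on $\mathfrak p_{\rm (P)M}(d,r)$. Each such orbit sum is by definition $s^{\rm sym}_{\underline X,\underline r}$. It then remains to see that $\mathfrak p^{\rm sym}_{\rm (P)M}(d,r)$ is a transversal for this action: because the sets in a chain are nested, each $\Sigma_d$-orbit of chains contains precisely one chain all of whose members are of the form $\underline i=\{1,\dots,i\}$, so exactly one element of each orbit lies in $\mathfrak p^{\rm sym}_{\rm (P)M}(d,r)$.

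Finally, $P^{\rm sym}_{\rm (P)M}(d,r)^\vee\otimes_\Z\Q$ is the space of $\Q$-valued valuative invariants, i.e.\ the $\Sigma_d$-fixed subspace of the space $P_{\rm (P)M}(d,r)^\vee\otimes_\Z\Q$ of all $\Q$-valued valuations; concretely $P^{\rm sym}_{\rm (P)M}(d,r)^\vee\otimes_\Z\Q\cong(P_{\rm (P)M}(d,r)^\vee\otimes_\Z\Q)^{\Sigma_d}$, which follows from the isomorphism $P^{\rm sym}_{\rm (P)M}(d,r)^\vee\cong(P_{\rm (P)M}(d,r)^\vee)^{\Sigma_d}$ noted above upon tensoring with $\Q$ (taking invariants under a finite group, being a kernel, commutes with the flat base change $-\otimes_\Z\Q$). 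Combining the previous two paragraphs gives the asserted basis.

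There is no real obstacle here: the statement is a formal consequence of Corollary~\ref{cor s univ} together with the permutation-module lemma. The only points requiring care are the two pieces of bookkeeping above — the $\Sigma_d$-stability of $\mathfrak p_{\rm (P)M}(d,r)$ and the fact that $\mathfrak p^{\rm sym}_{\rm (P)M}(d,r)$ meets each orbit exactly once — and the mild remark that invariants commute with $-\otimes_\Z\Q$. The restriction to (poly)matroids and to $\Q$-coefficients is inherited from Corollary~\ref{cor s univ}, where the finiteness of the rank of $P_{\rm (P)M}(d,r)$ is what makes the permutation-module argument applicable.
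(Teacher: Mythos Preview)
Your proof is correct and follows essentially the same route as the paper: the paper simply states that Corollary~\ref{cor s univ} implies the result, having already recorded the identity $s_{\underline X,\underline r}(\rk\circ\sigma)=s_{\sigma\underline X,\underline r}(\rk)$ and the isomorphism $P_{\rm (P)M}^{\rm sym}(d,r)^\vee\cong\big(P_{\rm (P)M}(d,r)^\vee\big)^{\Sigma_d}$. You have merely made explicit the two bookkeeping points---the $\Sigma_d$-stability of $\mathfrak p_{\rm (P)M}(d,r)$ and the fact that $\mathfrak p^{\rm sym}_{\rm (P)M}(d,r)$ is a transversal for the orbits---that the paper leaves to the reader.
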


For a sequence $\alpha=(\alpha_1,\dots,\alpha_d)$ of nonnegative integers with $|\alpha|=\sum_i \alpha_i=r$, we define
$$
u_{\alpha}=s_{\underline{X},\underline{r}} :Z^{\rm sym}_{\rm (P)M}(d,r)\to \Z,
$$\label{not:ualpha}
where $\underline{X}_i=\underline{i}$ for $i=1,2,\dots,r$ and $\underline{r}=(\alpha_1,\alpha_1+\alpha_2,\dots,\alpha_1+\cdots+\alpha_d)$. 
Parallel to Corollary~\ref{cor maximal chains} we also have the following.
\begin{corollary}\label{cor:PPMsymdual}
The $\Q$-vector space  $P^{\rm sym}_{\rm PM}(d,r)^\vee\otimes_{\Z}\Q$ of valuations $Z^{\rm sym}_{\rm PM}(d,r)\to\Q$ 
has a $\Q$-basis given by the functions~$u_{\alpha}$, where $\alpha$ runs over all sequences $(\alpha_1,\dots,\alpha_d)$ of
nonnegative integers with $|\alpha|=r$.
\end{corollary}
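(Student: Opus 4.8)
The plan is to derive this in close parallel with the passage from Corollary~\ref{cor s univ} to Corollary~\ref{cor maximal chains}, taking Corollary~\ref{cor ssym} as the starting point and using the rank formula of Theorem~\ref{theo:Psymgens} to finish. We may assume $d\geq 1$, the case $d=0$ being immediate. Recall that $u_\alpha=s^{\rm sym}_{\underline X^{\rm std},\underline r}$, where $\underline X^{\rm std}:\emptyset\subset\underline 1\subset\cdots\subset\underline d$ is the standard maximal chain and $\underline r=(\alpha_1,\alpha_1+\alpha_2,\dots,\alpha_1+\cdots+\alpha_d)$; this is well defined on $Z^{\rm sym}_{\rm PM}(d,r)$ because the $\Sigma_d$-orbit of $\underline X^{\rm std}$ is the set of all $d!$ maximal chains (its stabilizer being trivial), so $\sum_{\underline Y\text{ maximal}}s_{\underline Y,\underline r}$ is $\Sigma_d$-invariant. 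By Corollary~\ref{cor ssym} the functions $s^{\rm sym}_{\underline X,\underline r}$ with $(\underline X,\underline r)\in{\mathfrak p}^{\rm sym}_{\rm PM}(d,r)$ form a $\Q$-basis of $P^{\rm sym}_{\rm PM}(d,r)^\vee\otimes_\Z\Q$, so it suffices to prove (i) the $u_\alpha$ span this space and (ii) their number equals its dimension.

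For (i), fix $(\underline X,\underline r)\in{\mathfrak p}^{\rm sym}_{\rm PM}(d,r)$ of length $k$. By definition of ${\mathfrak p}^{\rm sym}$ we have $X_j=\underline{i_j}$ for some $i_1<\cdots<i_k=d$, so $\underline X^{\rm std}$ refines $\underline X$. For a rank-$r$ polymatroid $\rk$ on $\underline d$ the vector $(\rk(\underline 1),\dots,\rk(\underline d))$ is weakly increasing with last entry $r$, and $s_{\underline X^{\rm std},\vec w}(\rk)=1$ exactly when it equals $\vec w$; comparing defining conditions gives, as homomorphisms $Z_{\rm PM}(d,r)\to\Z$,
$$
s_{\underline X,\underline r}=\sum_{\vec w}s_{\underline X^{\rm std},\vec w},
$$
the finite sum running over weakly increasing integer vectors $\vec w=(w_1,\dots,w_d)$ with $w_d=r$ and $w_{i_j}=r_j$ for all $j$. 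Applying $f\mapsto\sum_{\sigma\in\Sigma_d}f\circ\sigma$ and using $s_{\underline Y,\underline r}(\rk\circ\sigma)=s_{\sigma\underline Y,\underline r}(\rk)$, the left side becomes a positive integer multiple of $s^{\rm sym}_{\underline X,\underline r}$, while on the right each summand becomes $\sum_{\underline Y\text{ maximal}}s_{\underline Y,\vec w}=u_{\alpha(\vec w)}$, where $\alpha(\vec w)=(w_1,w_2-w_1,\dots,w_d-w_{d-1})$ is a composition of $r$ into $d$ nonnegative parts. Dividing out the multiple exhibits each $s^{\rm sym}_{\underline X,\underline r}$ as a $\Q$-combination of the $u_\alpha$, so the $u_\alpha$ span.

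For (ii), the sequences $(\alpha_1,\dots,\alpha_d)$ of nonnegative integers with $|\alpha|=r$ number $\binom{r+d-1}{d-1}$. By Theorem~\ref{theo:Psymgens}, $\dim_\Q P^{\rm sym}_{\rm PM}(d,r)^\vee\otimes_\Z\Q=|{\mathfrak p}^{\rm sym}_{\rm PM}(d,r)|$, and a pair of length $k$ in ${\mathfrak p}^{\rm sym}_{\rm PM}(d,r)$ is precisely a $(k-1)$-subset $\{i_1<\cdots<i_{k-1}\}$ of $\{1,\dots,d-1\}$ together with a $(k-1)$-subset $\{r_1<\cdots<r_{k-1}\}$ of $\{0,1,\dots,r-1\}$, whence
$$
|{\mathfrak p}^{\rm sym}_{\rm PM}(d,r)|=\sum_{k\geq 1}\binom{d-1}{k-1}\binom{r}{k-1}=\binom{r+d-1}{r}=\binom{r+d-1}{d-1}
$$
by Vandermonde's identity. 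A spanning set of size equal to the dimension is a basis, which completes the proof.

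The only delicate point is the bookkeeping in the symmetrization step: the $\Sigma_d$-orbit of $\underline X$ is in general smaller than $d!$, so the left-hand side acquires a scalar depending on $\underline X$. This is invisible over $\Q$, which is exactly why the statement is restricted to $\Q$-valued invariants; every other step is an unwinding of definitions, so I do not expect a serious obstacle.
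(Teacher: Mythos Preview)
Your argument is correct and is essentially the approach the paper intends; the paper itself gives no explicit proof, only the remark that the corollary is ``parallel to Corollary~\ref{cor maximal chains}'', and you have spelled out exactly that parallelism (write each $s_{\underline X,\underline r}$ as a sum of $s_{\underline X^{\rm std},\vec w}$'s and then symmetrize), together with the dimension count from Theorem~\ref{theo:Psymgens}/Theorem~\ref{theo1.4}(b) needed to upgrade ``spanning'' to ``basis''. Your identification $u_\alpha=s^{\rm sym}_{\underline X^{\rm std},\underline r}$ is the correct reading of the paper's slightly loose notation, as confirmed by the proof of Theorem~\ref{theo:1.1}.
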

\begin{corollary}\label{cor:PMsymdual}
The $\Q$-vector space $P^{\rm sym}_{\rm M}(d,r)^\vee\otimes_\Z\Q$ of valuations $Z^{\rm sym}_{\rm M}(d,r)\to \Q$
has a $\Q$-basis given by all functions $u_{\alpha}$ where $\alpha$ runs over all sequences $(\alpha_1,\dots,\alpha_d)\in \{0,1\}^d$
with $|\alpha|=r$.
\end{corollary}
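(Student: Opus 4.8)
The plan is to derive this from the polymatroid case, Corollary~\ref{cor:PPMsymdual}, by restricting along the inclusion of matroids into polymatroids and then matching cardinalities against the dimension.

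First I would record that $P^{\rm sym}_{\rm M}(d,r)$ embeds into $P^{\rm sym}_{\rm PM}(d,r)$. Since every matroid is a polymatroid, $P_{\rm M}(d,r)\subseteq P_{\rm PM}(d,r)$; intersecting the two instances of the identity $B_{\rm (P)M}(d,r)=B_{\rm MM}(d,r)\cap P_{\rm (P)M}(d,r)$ proved above with $P_{\rm M}(d,r)$ gives $B_{\rm M}(d,r)=B_{\rm PM}(d,r)\cap P_{\rm M}(d,r)$, so the induced map $P^{\rm sym}_{\rm M}(d,r)=P_{\rm M}(d,r)/B_{\rm M}(d,r)\to P_{\rm PM}(d,r)/B_{\rm PM}(d,r)=P^{\rm sym}_{\rm PM}(d,r)$ is injective. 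Dualizing and tensoring with $\Q$, the restriction map $P^{\rm sym}_{\rm PM}(d,r)^\vee\otimes_\Z\Q\to P^{\rm sym}_{\rm M}(d,r)^\vee\otimes_\Z\Q$ is surjective. By Corollary~\ref{cor:PPMsymdual} the functions $u_\alpha$, as $\alpha=(\alpha_1,\dots,\alpha_d)$ ranges over all nonnegative integer sequences with $|\alpha|=r$, span the source, so their restrictions span $P^{\rm sym}_{\rm M}(d,r)^\vee\otimes_\Z\Q$.

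Next I would prune this spanning set: if some $\alpha_i\geq 2$ then $u_\alpha$ vanishes on every matroid. Indeed $u_\alpha$ is the valuative invariant $s^{\rm sym}_{\underline X,\underline r}$ attached to the standard maximal chain $X_i=\underline i$ and $\underline r=(\alpha_1,\alpha_1+\alpha_2,\dots,\alpha_1+\cdots+\alpha_d)$, and $s^{\rm sym}_{\underline X,\underline r}(\rk)$ counts the maximal chains $\underline Y$ with $\rk(Y_i)=r_i$ for all $i$; such a chain would have $\rk(Y_i)-\rk(Y_{i-1})=r_i-r_{i-1}=\alpha_i$, impossible in a matroid once $\alpha_i\geq 2$. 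Hence such $u_\alpha$ restricts to $0$ on $P^{\rm sym}_{\rm M}(d,r)$, and $P^{\rm sym}_{\rm M}(d,r)^\vee\otimes_\Z\Q$ is already spanned by the $\binom dr$ functions $u_\alpha$ with $\alpha\in\{0,1\}^d$ and $|\alpha|=r$. To conclude I would match this count with the dimension: by Theorem~\ref{theo:Psymgens}, $\dim_\Q P^{\rm sym}_{\rm M}(d,r)^\vee\otimes_\Z\Q=|{\mathfrak p}^{\rm sym}_{\rm M}(d,r)|$, and parametrizing a length-$k$ pair $(\underline X,\underline r)\in{\mathfrak p}^{\rm sym}_{\rm M}(d,r)$ by the two strictly increasing sequences $0\leq r_1<\cdots<r_{k-1}\leq r-1$ and $1\leq|X_1|-r_1<\cdots<|X_{k-1}|-r_{k-1}\leq d-r$ gives $|{\mathfrak p}^{\rm sym}_{\rm M}(d,r)|=\sum_{m\geq 0}\binom rm\binom{d-r}m=\binom dr$ by Vandermonde (this is Theorem~\ref{theo1.4}(a)). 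A spanning set of size $\binom dr$ in a $\binom dr$-dimensional space is a basis, which is the claim.

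The one point that needs genuine care is the vanishing statement of the third paragraph: one must pin down precisely what $u_\alpha$ is as a functional on $Z^{\rm sym}_{\rm M}(d,r)$ and then use the elementary fact that a maximal chain of subsets in a matroid never has a rank jump exceeding $1$. Everything else is bookkeeping. An alternative, self-contained route that avoids Corollary~\ref{cor:PPMsymdual} is to apply the averaging idempotent $\tfrac1{d!}\sum_{\sigma\in\Sigma_d}\sigma$ to the spanning set of $P_{\rm M}(d,r)^\vee\otimes_\Z\Q$ consisting of the maximal-chain functions $s_{\underline Y,\underline\rho}$ with increments in $\{0,1\}$ (the matroid analogue of Corollary~\ref{cor maximal chains}, immediate from Corollary~\ref{cor s univ}); since $P^{\rm sym}_{\rm M}(d,r)^\vee\cong(P_{\rm M}(d,r)^\vee)^{\Sigma_d}$, the images of these functions are scalar multiples of the $u_\alpha$ and span $P^{\rm sym}_{\rm M}(d,r)^\vee\otimes_\Z\Q$, and one finishes with the same count.
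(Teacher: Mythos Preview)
Your argument is correct and matches what the paper has in mind. The paper leaves this corollary without an explicit proof, treating it as the matroid analogue of Corollary~\ref{cor:PPMsymdual}; the intended justification is exactly your combination of ``the $u_\alpha$ with some $\alpha_i\geq 2$ vanish on matroids'' together with the dimension count $|\mathfrak p^{\rm sym}_{\rm M}(d,r)|=\binom{d}{r}$ from Theorem~\ref{theo1.4}(a). Your detour through the embedding $P^{\rm sym}_{\rm M}(d,r)\hookrightarrow P^{\rm sym}_{\rm PM}(d,r)$ is a slightly more roundabout way to obtain the spanning statement than deriving it directly from Corollary~\ref{cor ssym} (your ``alternative route''), but it is equally valid and the extra step is negligible.
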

\begin{proof11}
From the definitions of the $U_\alpha$ 
and the $u_{\alpha}$, it follows
that $u_\alpha(\langle \rk\rangle)$ is the coefficient of $U_\alpha$ in
${\mathcal G}(\langle \rk\rangle)$. In other words, $\{u_{\alpha}\}$ is a dual
basis to $\{U_{\alpha}\}$. The universality follows from Corollary~\ref{cor:PMsymdual}.
\end{proof11}

The rank of $P_{\rm (P)M}^{\rm sym}(d,r)$ is equal to the cardinality 
of ${\mathfrak p}^{\rm sym}_{\rm (P)M}(d,r)$.
If $(\underline{X},\underline{r})$ and $\ell(\underline{X})=k$ lies in ${\mathfrak p}^{\rm sym}_{\rm (P)M}(d,r)$ 
then $\underline{X}$ is completely determined by the numbers $s_i:=|X_i|$, 
$1\leq i\leq k$.
\begin{proof14b}
Given $k$, there are ${r\choose k-1}$ ways of choosing $\underline{r}=(r_1,\dots,r_k)$ 
with $0<r_1<r_2<\cdots<r_k=r$ and ${d-1\choose k-1}$ ways of choosing $(s_1,\dots,s_k)$ with
$0<s_1<s_2<\cdots<s_k=d$. So the cardinality of ${\mathfrak p}_{\rm PM}^{\rm sym}(d,r)$
is
$$
\sum_{k\geq 1}{r\choose k-1}{d-1\choose k-1}=
\sum_{k\geq 0}{r\choose k}{d-1\choose k}={r+d-1\choose r}.
$$
$$
\sum_{r,d}p_{\rm PM}^{\rm sym}(d,r)x^dy^r=\sum_{r,d}{\textstyle {r+d-1\choose r}}x^dy^r=\sum_{d}(1-x)^{-d}y^d=\frac{1}{1-\frac{y}{1-x}}=\frac{1-x}{1-x-y}.
$$

\end{proof14b}
\begin{proof14a}
Let $t_i=s_i-r_i$. Then we have $0<t_1<t_2<\cdots<t_{k-1}\leq t_k=d-r$. Given $k$, there are ${r\choose k-1}$ ways
of choosing $\underline{r}$ such that $0\leq r_1<\cdots<r_k=r$ and ${d-r\choose k-1}$ ways of choosing $(t_1,\dots,t_k)$ with
$0<t_1<\cdots<t_{k-1}\leq t_k=d-r$. So the cardinality of ${\mathfrak p}^{\rm sym}_{\rm M}(d,r)$
is 
$$
\sum_{k\geq 1}{r\choose k-1}{d-r\choose k-1}=
\sum_{k\geq 0}{r\choose k}{d-r\choose k}={d\choose r}.
$$
So we have
$$
\sum_{r,d}p_{\rm M}^{\rm sym}(d,r)x^{d-r}y^r=\sum_{d}(x+y)^d=\frac{1}{1-x-y}.
$$
\end{proof14a}

\begin{example}\label{expolymatroid}
Consider polymatroids for $r=2$ and $d=3$. All polymatroid base polytopes
are contained in the triangle
$$
\{(y_1,y_2,y_3)\in \R^3\mid y_1+y_2+y_3=2,\ y_1,y_2,y_3\geq 0\}.
$$

\leaveout{
\centerline{ 
\includegraphics[width=1.5in]{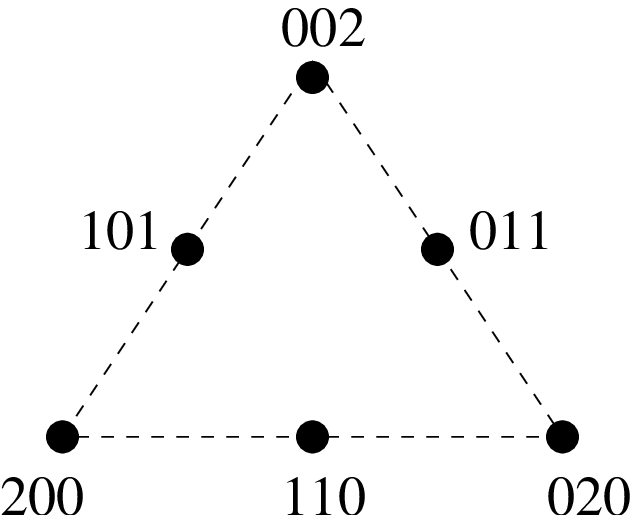}
}}

There are ${d-1+r\choose r}={4\choose 2}$ elements in ${\mathfrak p}_{\rm PM}^{\rm sym}(3,2)$ and the polytopes
$R(\underline{X},\underline{r})$, $(\underline{X},\underline{r})\in {\mathfrak p}^{\rm sym}_{\rm PM}(3,2)$ are given by:

\leaveout{
\centerline{ 
\includegraphics[width=3in]{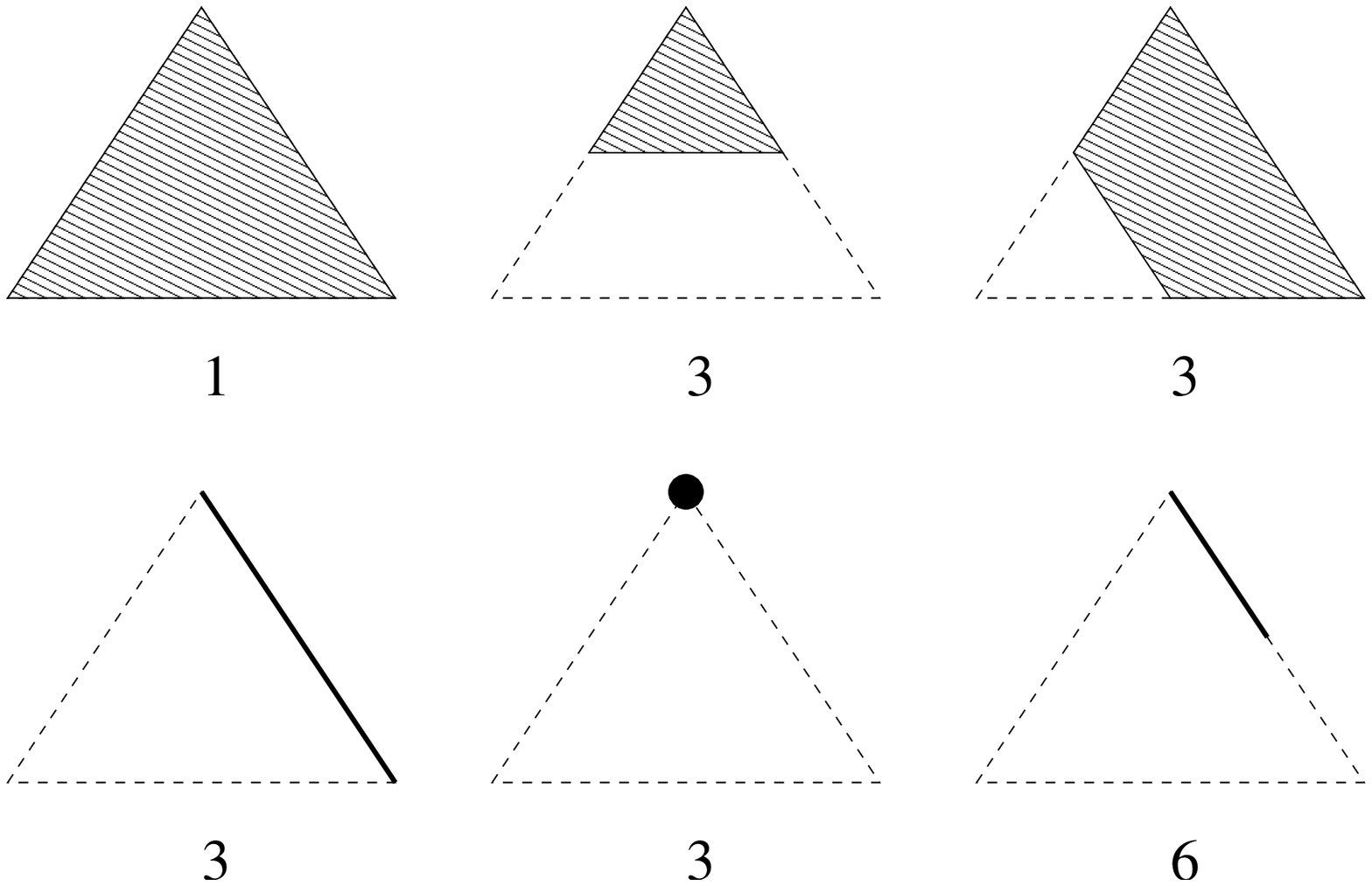}
}}

These $6$ polytopes correspond to the following pairs $(\underline{X},\underline{r})\in {\mathfrak p}_{\rm PM}(3,2)$.
$$
\small
\begin{array}{|rl|rl|rl|} \hline
\underline{X}:& \{1,2,3\} & \underline{X}: & \{1,2\}\subset \{1,2,3\}& \underline{X}:  & \{1\}\subset \{1,2,3\} \\\
\underline{r}=& (2) & \underline{r}= & (1,2) & \underline{r}= & (1,2) \\ \hline
\underline{X}:& \{1\} \subset \{1,2,3\} & \underline{X}: & \{1,2\} \subset \{1,2,3\}& \underline{X}: &  \{1\}\subset \{1,2\}\subset \{1,2,3\} \\
\underline{r}=& (0,2) & \underline{r}= & (0,2) & \underline{r}= & (0,1,2) \\ \hline
\end{array}
$$

The symmetric group $\Sigma_3$ acts on the triangle by permuting the coordinates $y_1,y_2,y_3$.

If $\Sigma_3$ acts on the generators $R(\underline{X},\underline{r})$
with $(\underline{X},\underline{r})\in {\mathfrak p}_{\rm PM}(3,2)$, then we get
all $R(\underline{X},\underline{r})$ with $(\underline{X},\underline{r})\in {\mathfrak p}_{\rm PM}(3,2)$.
In the figure, we wrote for each polytope the cardinality of the orbit under $\Sigma_3$.
The cardinality of ${\mathfrak p}_{\rm PM}(3,2)$ is $1+3+3+3+3+6=19$. This is consistent with Theorem~\ref{theo1.4}, because
the cardinality is $(r+1)^d-r^d=3^3-2^3=19$.

\end{example}
\begin{example}\label{exmatroid}
Consider matroids for $r=2$ and $d=4$. All matroid base polytopes are contained
in the set 
$$
\{(y_1,y_2,y_3,y_4)\in \R^4\mid y_1+y_2+y_3+y_4=2,\ \forall i\ 0\leq y_i\leq 1\}.
$$
This set is an octahedron:

\leaveout{
\centerline{ 
\includegraphics[width=1.5in]{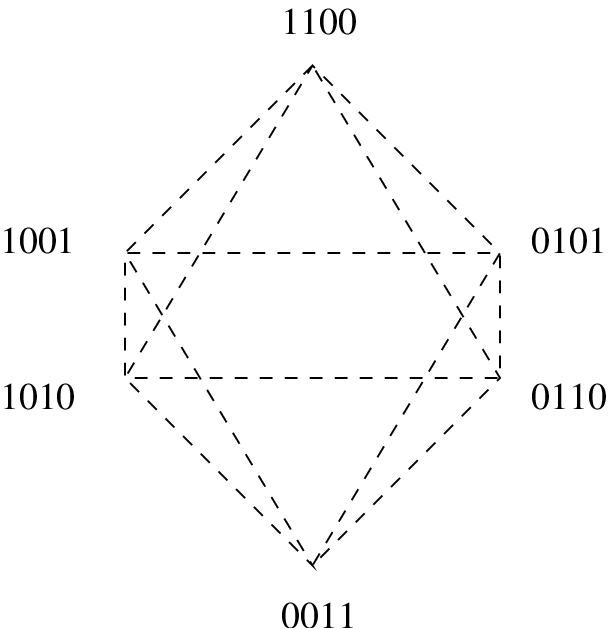}
}}

There are ${d\choose r}={4\choose 2}$ elements in ${\mathfrak p}_{\rm M}(4,2)$, and the polytopes
$R_{\rm M}(\underline{X},\underline{r})$, $(\underline{X},\underline{r})\in {\mathfrak p}_{\rm M}(4,2)$ are given by:

\leaveout{
\centerline{ 
\includegraphics[width=3in]{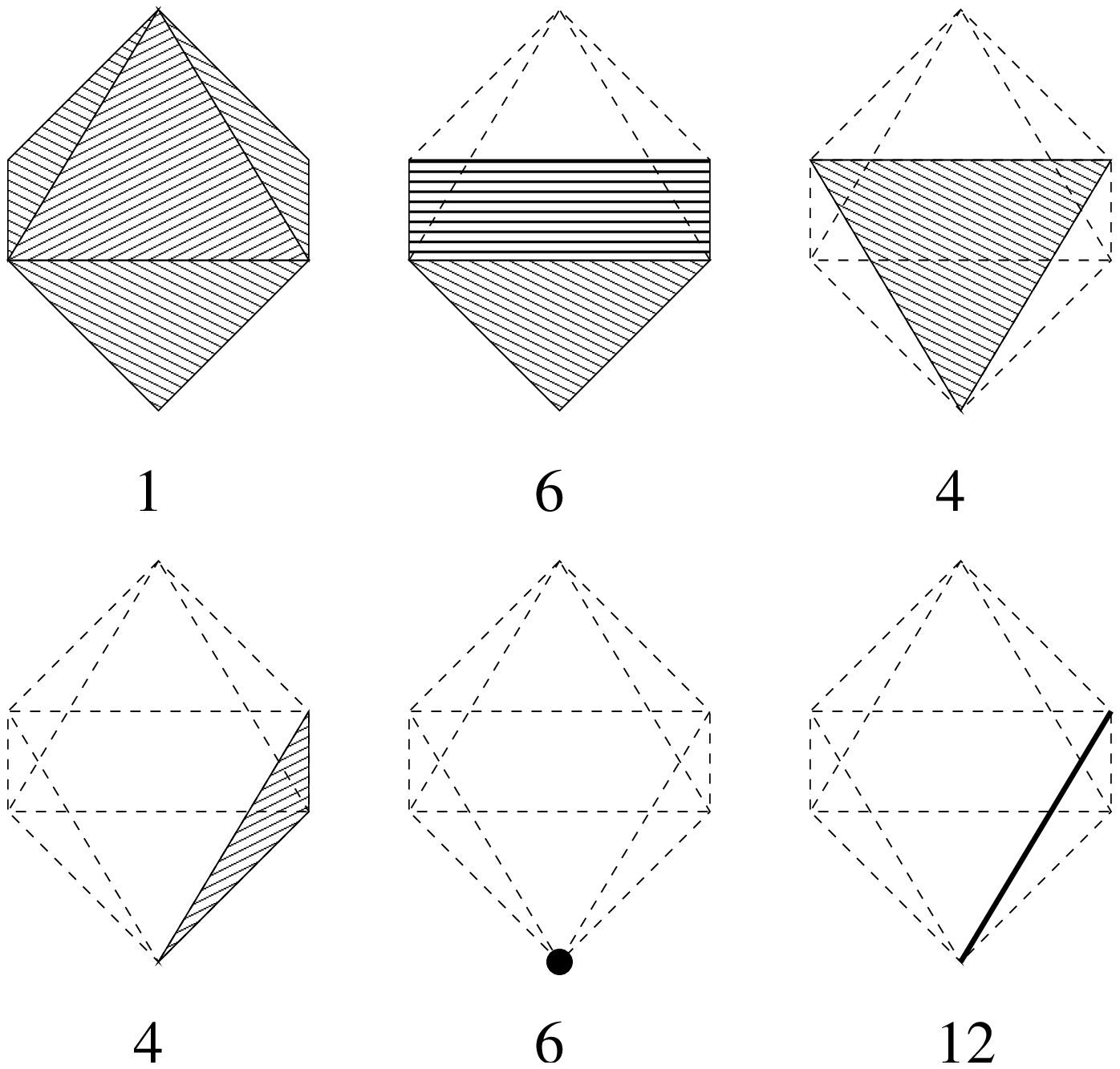}
}}
These $6$ polytopes correspond to the following pairs $(\underline{X},\underline{r})\in {\mathfrak p}_{\rm M}(4,2)$.
$$
\small\begin{array}{|rl|rl|rl|} \hline
\underline{X}:& \{1,2,3,4\} & \underline{X}: & \{1,2\}\subset \{1,2,3,4\}& \underline{X}:  & \{1,2,3\}\subset \{1,2,3,4\} \\\
\underline{r}=& (2) & \underline{r}= & (1,2) & \underline{r}= & (1,2) \\ \hline
\underline{X}:& \{1\} \subset \{1,2,3,4\} & \underline{X}: & \{1,2\} \subset \{1,2,3,4\}& \underline{X}: &  \{1\}\subset \{1,2,3\}\subset \{1,2,3,4\} \\
\underline{r}=& (0,2) & \underline{r}= & (0,2) & \underline{r}= & (0,1,2) \\ \hline
\end{array}
$$

The symmetric group $\Sigma_4$ acts by permuting the coordinates $y_1,y_2,y_3,y_4$.
This group acts on the octahedron, but it is not the full automorphism group of the
octahedron. Also note that not all elements of $\Sigma_4$ preserve the orientation.
If $\Sigma_4$ acts on the generators $R_{\rm M}(\underline{X},\underline{r})$
with $(\underline{X},\underline{r})\in {\mathfrak p}_{\rm M}(4,2)$, then we get
all $R(\underline{X},\underline{r})$ with $(\underline{X},\underline{r})\in {\mathfrak p}_{\rm M}(4,2)$.
In the figure, we write for each polytope the cardinality of the orbit under $\Sigma_4$.
The cardinality of ${\mathfrak p}_{\rm M}(4,2)$ is $1+6+4+4+6+12=33$, which is
compatible with Theorem~\ref{theo1.4} and the table in~\ref{apB}.
Besides the polytopes $R(\underline{X},\underline{r})$, $(\underline{X},\underline{r})\in {\mathfrak p}_{\rm M}(4,2)$, 
there are 3 more matroid base polytopes (belonging to
isomorphic matroids), but these decompose as follows.

\centerline{ 
\includegraphics[width=4in]{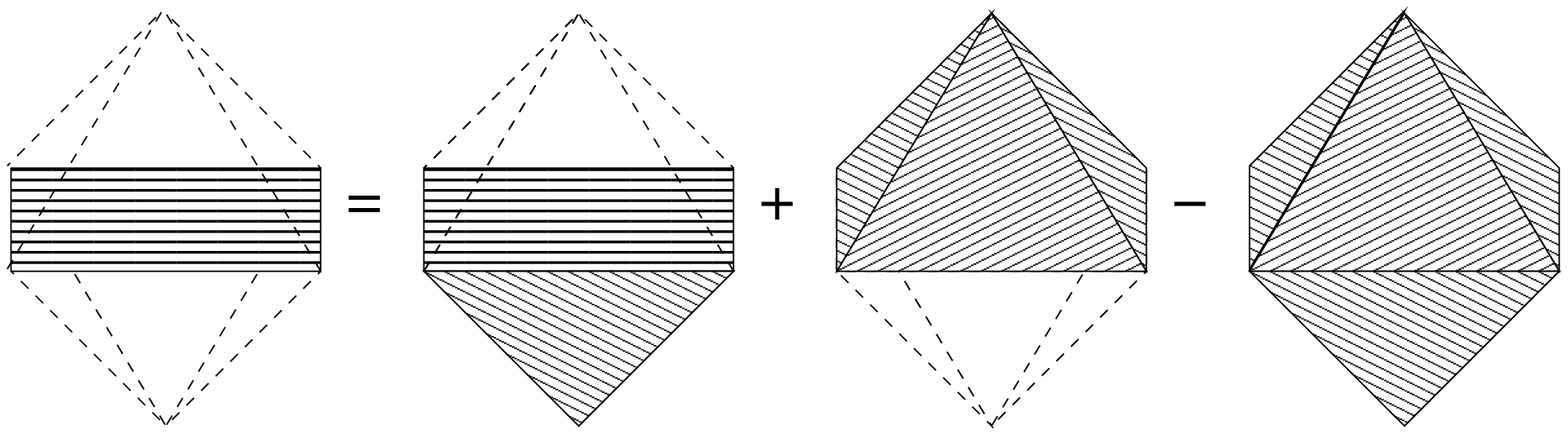}
}
\end{example}

\section{Hopf algebra structures}\label{sec:Hopf}
\label{not:no (d,r)}
Define $Z_{\rm *M}=\bigoplus_{d,r}Z_{\rm *M}(d,r)$, and in a similar way define 
$Z_{\rm *M}^{\rm sym}$, $P_{\rm *M}$, and $P_{\rm *M}^{\rm sym}$.
We can view $Z_{\rm *M}$
as the $\Z$-module freely generated by all isomorphism classes of $*$matroids.

If $\rk_1:2^{\underline{d}}\to \Z\cup \{\infty\}$
and $\rk_2:2^{\underline{e}}\to \Z\cup \{\infty\}$ then we define
$$
\rk_1\boxplus \rk_2:2^{\underline{d+e}}\to \Z\cup \{\infty\}
$$
by
$$
(\rk_1\boxplus \rk_2)(A)=\rk_1(A\cap \underline{d})+\rk_2(\{i\in \underline{e}\mid d+i\in A\})
$$
for any set $A\subseteq \underline{d+e}$. Note that $\boxplus$ is not commutative.
We have a homomorphism
$$
\nabla:Z_{\rm MM}(d,r)\otimes_{\Z} Z_{\rm MM}(e,s)\to Z_{\rm MM}(d+e,r+s).
$$
defined by
$$
\nabla(\langle \rk_1\rangle\otimes \langle \rk_2\rangle)=\langle \rk_1\boxplus \rk_2\rangle.
$$
The multiplication\label{not:mult} $\nabla:Z_{\rm MM}\otimes_{\Z} Z_{\rm MM}\to Z_{\rm MM}$ 
makes $Z_{\rm MM}(d,r)$ into an associative (noncommutative) ring with 1.
The unit $\eta:\Z\to Z_{\rm MM}(d,r)$ \label{not:unit} is given by $1\mapsto \langle \rk_0\rangle$ where $\rk_0:2^{\underline{0}}\to \Z\cup \{\infty\}$
is the unique megamatroid defined by $\rk(\emptyset)=0$.
With this multiplication, $Z_{\rm M}(d,r)$ and $Z_{\rm PM}(d,r)$ are subrings of $Z_{\rm MM}(d,r)$.
The multiplication also respects the bigrading of $Z_{\rm MM}(d,r)$.

Next, we define a comultiplication for $Z_{\rm MM}$. Suppose
that $X=\{i_1,i_2,\dots,i_d\}$ is a set of integers with $i_1<\cdots<i_d$ and
$\rk:2^X\to \Z\cup \{\infty\}$ is a megamatroid. We define a megamatroid $\widehat{\rk}:2^{\underline{d}}\to \Z\cup \{\infty\}$ 
by $\widehat{\rk}(A)=\rk(\{i_j\mid j\in A\})$. 
If $\rk:2^{X}\to \Z\cup \{\infty\}$ is a megamatroid and $B\subseteq A\subseteq X$ then we define
$\rk_{A/B}:2^{A\setminus B}\to \Z\cup \{\infty\}$ by $\rk_{A/B}(C)=\rk(B\cup C)-\rk(B)$ 
for all $C\subseteq A\setminus B$. We also define $\rk_A:=\rk_{A/\emptyset}$ and $\rk_{/B}=\rk_{X/B}$.

We now define\label{not:comult}
$$
\Delta:Z_{\rm MM}\to Z_{\rm MM}\otimes_\Z Z_{\rm MM} 
$$
by 
$$
\Delta(\langle\rk\rangle)=\sum_{A\subseteq \underline{d};\ \rk(A)<\infty} \langle \widehat{\rk_A}\rangle\otimes
\langle\widehat{\rk_{/A}}\rangle.
$$
where $A$ runs over all subsets of $\underline{d}$ for which $\rk(A)$ is finite.
This comultiplication is coassociative, but not cocommutative. If $\rk:2^{\underline{d}}\to \Z\cup \{\infty\}$
is a megamatroid, then  the counit is defined by
$$
\epsilon(\langle\rk\rangle)=\left\{\begin{array}{ll}
1 & \mbox{if $d=0$;}\\
0 & \mbox{otherwise.}
\end{array}\right.
$$
The reader may verify that the multiplicative and comultiplicative structure are compatible, making
$Z_{\rm MM}$ into an bialgebra. Note that $\Delta$ also restricts to comultiplications
for $Z_{\rm PM}$ and $Z_{\rm M}$, and $Z_{\rm PM}$ and $Z_{\rm M}$ are sub-bialgebras of $Z_{\rm MM}$.

We define a group homomorphism $S:Z_{\rm MM}\to Z_{\rm MM}$ by
$$ 
S(\langle\rk\rangle)=\sum_{r=1}^d(-1)^{r}\sum_{\scriptstyle\underline{X};\ \ell(\underline{X})=r,\atop \scriptstyle\rk(X_1)<\infty,\dots,\rk(X_r)<\infty}
\prod_{i=1}^r\;\langle\widehat{\rk_{X_i/X_{i-1}}}\rangle.
$$
Here we use the convention $X_0=\emptyset$. One can check that  $S$ makes $Z_{\rm MM}$ into a Hopf algebra.
Restriction of $S$ makes $Z_{\rm M}$ and $Z_{\rm PM}$ into sub-Hopf algebras of $Z_{\rm MM}$.
We conclude that $Z_{\rm *M}$ has the structure of bigraded Hopf algebras over $\Z$.

It is well-known that $Z_{\rm M}^{\rm sym}$ has the structure of a Hopf algebra over $\Z$. Similarly
we have that $Z_{\rm MM}^{\rm sym}$ and $Z_{\rm PM}^{\rm sym}$ have a Hopf algebra structure.
The multiplication
$$
\nabla:Z_{\rm MM}^{\rm sym}\otimes_{\Z} Z_{\rm MM}^{\rm sym}\to Z_{\rm MM}^{\rm sym}
$$
is defined by
$$
\nabla([\rk_1]\otimes [\rk_2])=[\rk_1\oplus \rk_2].
$$
The comultiplication is defined by
$$
\Delta([\rk])=\sum_{A\subseteq X;\rk(A)<\infty} [(A,\rk_A)]\otimes [(X\setminus A,\rk_{/A})]
$$
for any megamatroid $\rk:2^X\to \Z\cup \{\infty\}$.
The unit $\eta:\Z\to Z_{\rm MM}^{\rm sym}$ is given by $1\mapsto [(\emptyset,\rk_0)]$ and
the counit $\epsilon: Z_{\rm MM}^{\rm sym}\to\Z$ \label{not:counit} is defined by 
$$
\epsilon([(X,\rk)])=\left\{
\begin{array}{ll}
1 & \mbox{if $X=\emptyset$;}\\
0 & \mbox{otherwise.}
\end{array}
\right.
$$

Finally, we define the antipode $S:Z_{\rm MM}^{\rm sym}\to Z_{\rm MM}^{\rm sym}$ \label{not:antipode} by
$$ 
S([\rk])=\sum_{r=1}^d(-1)^{r}\sum_{\scriptstyle\underline{X};\ \ell(\underline{X})=r,\atop\scriptstyle\rk(X_1)<\infty,\dots,\rk(X_r)<\infty} 
\prod_{i=1}^r\;[(X_i\setminus X_{i-1},\rk{X_i/X_{i-1}})].
$$
From the definitions, it is clear that the $\pi_{\rm *M}$ are Hopf algebra morphisms. 

The space $P_{\rm MM}$ inherits a Hopf algebra structure from $Z_{\rm MM}$. We define the multiplication $\nabla:P_{\rm MM}\otimes P_{\rm MM}\to P_{\rm MM}$ by
\begin{equation}\label{eq:nabla}
\nabla([\Pi_1]\otimes [\Pi_2])=[\Pi_1\times \Pi_2].
\end{equation}
It is easy to verify that $\nabla\circ(\Psi_{\rm MM}\otimes \Psi_{\rm MM})=\Psi_{\rm MM}\circ \nabla$.

To define the comultiplication $\Delta:P_{\rm MM}\to P_{\rm MM}\otimes P_{\rm MM}$, we would like to have that
$(\psi_{\rm MM}\otimes \psi_{\rm MM})\otimes \Delta=\Delta\circ \psi_{\rm MM}$.
So for a megamatroid polytope $\rk:2^{\underline{d}}\to \Z\cup\{\infty\}$ we would like to have
\begin{multline*}
\Delta([Q(\rk)])=\Delta(\psi_{\rm MM}(\langle \rk\rangle))=\sum_{A\subseteq \underline{d};\rk(A)<\infty}
\psi_{\rm MM}(\widehat{\rk_{A}})\otimes \psi_{\rm MM}(\widehat{\rk_{/A}})=\\
=
\sum_{A\subseteq \underline{d};\rk(A)<\infty}
[Q(\rk_A)]\otimes [Q(\rk_{/A})].
\end{multline*}

A basis of $P_{\rm MM}$ is given by all $R_{\rm MM}(\underline{X},\underline{r})$, with $(\underline{X},\underline{r})\in {\mathfrak p}_{\rm MM}=\bigcup_{d,r}{\mathfrak p}_{\rm MM}(d,r)$. Recall that the rank function $\rk_{\underline{X},\underline{r}}$ is
defined such that $Q(\rk_{\underline{X},\underline{r}})=R_{\rm MM}(\underline{X},\underline{r})$.
We have that $\rk_{\underline{X},\underline{r}}(A)<\infty$ if and only if $A=X_i$ for some $i$.
In this case we have
$$
\Delta(\langle\rk_{\underline{X},\underline{r}}\rangle)=\sum_{i=0}^k \langle\widehat{\rk_{\underline{X}_{i},\underline{r}_i}}\rangle\otimes
\langle\widehat{\rk_{\underline{X}^i,\underline{r}^i}}\rangle,
$$
where 
$$
\underline{X}_i:\emptyset\subset X_1\subset \cdots\subset X_i,\quad \underline{r}_i=(r_1,r_2,\dots,r_i)
$$
$$
\underline{X}^i:\emptyset\subset X_{i+1}\setminus X_i\subset \cdots \subset X_k\setminus X_{i},\quad
\underline{r}^i=(r_{i+1}-r_i,\dots,r_k-r_i).
$$
We define $\Delta$ by
$$
\Delta([R_{\rm MM}(\underline{X},\underline{r})])=\sum_{i=0}^k [R_{\rm MM}\widehat{(\underline{X}_i,\underline{r}_i)}]\otimes
[R_{\rm MM}\widehat{(\underline{X}^i,\underline{r}^i)}].
$$
From this definition and Theorem~\ref{theo1} follows that
\begin{multline}\label{eq:Delta}
\Delta([Q(\rk)])=\sum_{\underline{X}}(-1)^{d-\ell(\underline{X})}\Delta[R_{\rm MM}(\underline{X},\rk)]=\\
=
\sum_{\underline{X}}\sum_{i=0}^{\ell(\underline{X})}(-1)^{|X_i|-i}[R_{\rm MM}\widehat{(\underline{X}_i,\rk_{X_{i}})}]\otimes (-1)^{d-|X_i|-\ell(\underline{X})+i}
[R_{\rm MM}\widehat{(\underline{X}^i,\rk_{/X_{i}})}]=\\
=
\sum_{A\subseteq \underline{d};\rk(A)<\infty}
[Q(\widehat{\rk}_A)]\otimes [Q(\widehat{\rk}_{/A})].
\end{multline}
In a similar fashion we can define the antipode $S:P_{\rm MM}\to P_{\rm MM}$.

The Hopf algebra structure on $P_{\rm MM}$ naturally induces a Hopf algebra structure on $P_{\rm MM}^{\rm sym}$
such that $\rho_{\rm MM}$ and $\Psi^{\rm sym}_{\rm MM}$ are Hopf algebra homomorphisms. 
Also $P_{\rm PM}$ is a Hopf subalgebra of $P_{\rm MM}$ and $P_{\rm M}$ is a Hopf subalgebra of $P_{\rm PM}$.
Similarly $P_{\rm PM}^{\rm sym}$ is a Hopf subalgebra of $P_{\rm MM}^{\rm sym}$,
and $P_{\rm M}^{\rm sym}$ is a Hopf subalgebra of $P_{\rm PM}^{\rm sym}$.

As a first observation to motivate the consideration of these Hopf algebra structures,
we consider multiplicative invariants.  
\begin{definition}
A multiplicative invariant for $\ast$matroids with values in a commutative ring $A$ (with 1) is
a ring homomorphism $f:Z^{\rm sym}_{\rm *M}\to A$.
\end{definition}
That is to say, $f$ is multiplicative if $f(\rk_1\oplus\rk_2)=f(\rk_1)f(\rk_2)$.
This is exactly the condition that $f$ be a group-like element of the graded dual algebra
$P^{\rm sym}_{\rm M}(d,r)^\#$.  
Many (poly)matroid invariants of note have this property, 
for instance the Tutte polynomial.  

\begin{proposition}\label{prop:Tutte}
The Tutte polynomial $\mathcal T\in P^{\rm sym}_{\rm M}(d,r)^\#$ is given by
\begin{equation}\label{eq:tutte}
\mathcal T = e^{(y-1)u_0+u_1}e^{u_0+(x-1)u_1}.
\end{equation}
\end{proposition}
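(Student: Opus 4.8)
The plan is to expand both sides in the basis $\{u_\alpha:\alpha\in\{0,1\}^{d},\ |\alpha|=r\}$ of $P_{\rm M}^{\rm sym}(d,r)^\vee\otimes_\Z\Q$ furnished by Corollary~\ref{cor:PMsymdual} and to match coefficients by evaluating on an arbitrary matroid. Recall from the proof of Theorem~\ref{theo:1.1} that $u_\alpha(\langle\rk\rangle)$ equals the coefficient of $U_\alpha$ in $\mathcal G(\langle\rk\rangle)$; so, encoding each maximal chain $\emptyset\subset X_1\subset\cdots\subset X_d=\underline d$ as $X_i=\{\pi(1),\dots,\pi(i)\}$ for a permutation $\pi$ of $\underline d$ and writing $r(\pi)=(\rk(X_1)-\rk(X_0),\dots,\rk(X_d)-\rk(X_{d-1}))$ for its rank-jump vector, the value $u_\alpha\big((\underline d,\rk)\big)$ is the number of permutations $\pi$ with $r(\pi)=\alpha$. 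Since $\mathcal T$ is a valuative invariant, its restriction to rank-$r$ matroids on $d$ elements lies in $P_{\rm M}^{\rm sym}(d,r)^\vee\otimes_\Z\Q[x,y]$, so it is enough to prove, for every matroid $M=(\underline d,\rk)$ of rank $r$, the scalar identity $\mathcal T(M)=\sum_{\pi}c_{r(\pi)}$, where $c_\alpha\in\Q[x,y]$ denotes the coefficient of the word $u_{\alpha_1}u_{\alpha_2}\cdots u_{\alpha_d}$ in the right-hand side of~(\ref{eq:tutte}).

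The first step is to compute $c_\alpha$ explicitly. The multiplication of $P_{\rm M}^{\rm sym}(d,r)^\#$ is concatenation of words (Theorem~\ref{theo:free}(a)), so expanding $e^{(y-1)u_0+u_1}e^{u_0+(x-1)u_1}$ by multilinearity, the word $u_{\alpha_1}\cdots u_{\alpha_d}$ is produced by picking $0\le j\le d$, taking the prefix $u_{\alpha_1}\cdots u_{\alpha_j}$ from $e^{(y-1)u_0+u_1}$ and the suffix $u_{\alpha_{j+1}}\cdots u_{\alpha_d}$ from $e^{u_0+(x-1)u_1}$. Reading off the scalar contributions ($u_0$ carries $y-1$ and $u_1$ carries $1$ in the first factor, while $u_0$ carries $1$ and $u_1$ carries $x-1$ in the second) gives
\[
c_\alpha=\sum_{j=0}^{d}\frac{1}{j!\,(d-j)!}\,(y-1)^{\,z_j(\alpha)}(x-1)^{\,o_j(\alpha)},\qquad
z_j(\alpha)=\big|\{i\le j:\alpha_i=0\}\big|,\quad o_j(\alpha)=\big|\{i>j:\alpha_i=1\}\big|.
\]

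The second step is the sum over permutations. For a permutation $\pi$ and $0\le j\le d$ put $S=\{\pi(1),\dots,\pi(j)\}$; the number of the first $j$ chain-steps with rank jump $1$ is exactly $\rk(S)$, hence $z_j(r(\pi))=j-\rk(S)$ and $o_j(r(\pi))=r-\rk(S)$, and these quantities depend only on the set $S$, not on the order of its elements. Summing $c_{r(\pi)}$ over all $d!$ permutations and grouping by the $j$-element subset $S$ — there being precisely $j!\,(d-j)!$ permutations with $\{\pi(1),\dots,\pi(j)\}=S$ — the factors $j!\,(d-j)!$ cancel and one gets
\[
\sum_{\pi}c_{r(\pi)}=\sum_{j=0}^{d}\ \sum_{S\subseteq\underline d,\ |S|=j}(y-1)^{\,j-\rk(S)}(x-1)^{\,r-\rk(S)}=\sum_{S\subseteq\underline d}(x-1)^{\,r-\rk(S)}(y-1)^{\,|S|-\rk(S)}=\mathcal T(M),
\]
the last equality being the defining formula of the Tutte polynomial (using $\rk(\underline d)=r$). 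This yields the desired identity for every $M$, hence the equality~(\ref{eq:tutte}) of elements of the graded dual.

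This expansion and re-summation is essentially all there is; as a consistency check, the right-hand side of~(\ref{eq:tutte}) is automatically group-like, being a product of exponentials of the primitive elements $(y-1)u_0+u_1$ and $u_0+(x-1)u_1$ (that $u_0,u_1$ are primitive is part of Theorem~\ref{theo:free}(c)), in agreement with the multiplicativity $\mathcal T(M_1\oplus M_2)=\mathcal T(M_1)\mathcal T(M_2)$. The only point that needs care is the elementary identity ``the number of the first $j$ chain-steps having jump $1$ equals $\rk(\{\pi(1),\dots,\pi(j)\})$'' together with the ensuing order-independence that makes the factors $j!\,(d-j)!$ cancel; everything else is a formal manipulation.
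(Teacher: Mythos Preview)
Your proof is correct and follows essentially the same route as the paper's: expand the product of exponentials in the basis $\{u_\alpha\}$, evaluate on a matroid by summing over all maximal chains (permutations), and regroup by the set $S=\{\pi(1),\ldots,\pi(j)\}$ so that the factorials cancel and the corank--nullity expansion of $\mathcal T$ appears. The only cosmetic difference is a normalization: the paper records $u_\alpha\cdot u_\beta=\binom{d+e}{d}u_{\alpha\beta}$ and symmetrizes by averaging $\rk$ over $\Sigma_d$, whereas with your convention that $u_\alpha$ counts maximal chains the product is pure concatenation---both are internally consistent and yield the same identity.
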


\begin{proof} 
Recall the definition of~$u_\alpha$ in terms of rank conditions on a chain of sets.
In view of~\eqref{eq:Delta}, we have that 
the multiplication in $(P_{\rm *M}^{\rm sym})^\#$ is given by
$u_\alpha\cdot u_\beta=\binom{d+e}du_{\alpha\beta}$,
where $\alpha$ has length~$d$ and $\beta$ has length~$e$.
Denote the right side of~\eqref{eq:tutte} by~$f$.  We have
\begin{align*}
f &= \sum_{i=0}^\infty\sum_{j=0}^\infty \frac{(i+j)!}{i!j!}((y-1)u_0+u_1)^i(u_0+(x-1)u_1)^j
\\&= \sum_i\sum_j \frac{(i+j)!}{i!j!} \sum_{\alpha\in\{0,1\}^{i+j}}
(x-1)^{r_{i+j}-r_i}(y-1)^{i-r_i}\ (i+j)!u_\alpha
\end{align*}
where $r_i = \sum_{k=1}^i\alpha_k$, so that 
$i-r_i$ is the number of indices $1\leq k\leq i$ such that $\alpha_k=0$, and 
$r_{i+j}-r_i$ is the number of indices $i+1\leq k\leq j$ such that $\alpha_k=1$.

Let $d=i+j$.  For a matroid $\rk$ on~$\underline d$ of rank~$r$, the elements $\rk$ and 
$1/d!\,\sum_{\sigma\in\Sigma_d} \rk\circ\sigma$ of~$Z_{\rm M}(d,r)$
have equal image under~$\pi_{\rm M}$.  Therefore
\begin{align*}
f(\rk) &= \frac1{d!}\sum_{\sigma\in\Sigma_d} f(\rk\circ\sigma)
\\&= \frac1{d!}\sum_{\sigma\in\Sigma_d}\sum_{i+j=d}\frac{d!}{i!j!}\sum_{\alpha\in\{0,1\}^d}
(x-1)^{r_d-r_i}(y-1)^{i-r_i}\ u_\alpha(\rk\circ\sigma)
\\&= \sum_{\sigma\in\Sigma_d}\sum_{i+j=d}\frac1{i!j!}
(x-1)^{r-\rk(\sigma(\underline i))}(y-1)^{i-\rk(\sigma(\underline i))}.
\end{align*}
The set $\sigma(\underline i)$ takes each value $A\subseteq\underline d$
in $|A|!(d-|A|)!$ ways, so
$$f(\rk) = \sum_{A\subseteq\underline d}(x-1)^{r-\rk(A)}(y-1)^{|A|-\rk(A)}
= \mathcal T(\rk).$$
\end{proof}

\section{Additive functions: the groups $T_{\rm M},T_{\rm PM},T_{\rm MM}$}\label{sec:T}
For $0\leq e\leq d$ we define $P_{\rm *M}(d,r,e)\subseteq P_{\rm *M}(d,r)$\label{not:Pdre} as 
the span of all $[\Pi]$ where $\Pi\subseteq \R^d$ is
a $*$matroid polytope of dimension $\leq d-e$.
We have $P_{\rm *M}(0,r,0)=P_{\rm *M}(0,r)$ and $P_{\rm *M}(d,r,1)=P_{\rm *M}(d,r)$ for $d\geq 1$.
These subgroups form a filtration
$$
\cdots\subseteq P_{\rm *M}(d,r,2)\subseteq P_{\rm *M}(d,r,1)\subseteq P_{\rm *M}(d,r,0)=P_{\rm *M}(d,r).
$$
Define $\overline{P}_{\rm *M}(d,r,e):=P_{\rm *M}(d,r,e)/P_{\rm *M}(d,r,e+1)$\label{not:Pbar}.
If $\Pi_1$ and $\Pi_2$ are polytopes of codimension $e_1$ and $e_2$
respectively, then $\Pi_1\times \Pi_2$ has codimension $e_1+e_2$.
It follows from (\ref{eq:nabla}) that  the multiplication $\nabla$ respects the filtration.
Since $Q(\rk_A)\times Q(\rk_{/A})$ is contained in $Q(\rk)$, it follows from (\ref{eq:Delta})
that the comultiplication $\Delta$ also respects the filtration:
$$
\Delta(P_{\rm *M}(d,r,e))\subseteq \sum_{i,j,k} P_{\rm *M}(i,j,k)\otimes P_{\rm *M}(d-i,r-j,e-k)
$$
Similarly, the antipode $S$ respects the grading.
The associated graded algebra
$$
\overline{P}_{\rm *M}=\bigoplus_{d,r,e} \overline{P}_{\rm *M}(d,r,e)
$$
has an induced Hopf algebra structure.

We define $T_{\star \rm M}(d,r)=\overline{P}_{\star \rm M}(d,r,1)$\label{not:TstarM}.

For every partition $\underline X:\underline{d}=\coprod_{i=1}^e X_i$ 
into nonempty subsets there exists a natural map
$$
\Phi_{\underline{X}}:\prod_i \R^{X_i}\to \R^d
$$
Define
$$
P_{\rm *M}(\underline{X})=\bigoplus_{r_1,r_2,\dots,r_e\in \Z}
P_{\rm *M}(|X_1|,r_1)\otimes \cdots \otimes P_{\rm *M}(|X_e|,r_e)
$$
and
$$
\overline{P}_{\rm *M}(\underline{X})=\bigoplus_{r_1,r_2,\dots,r_e\in \Z}
T_{\rm *M}(|X_1|,r_1)\otimes \cdots \otimes T_{\rm *M}(|X_e|,r_e).
$$
The map $\Phi_{\underline{X}}$ induces a group homomorphism
$$
\phi_{\underline{X}}:P_{\rm *M}(\underline{X},e)
\to P_{\rm *M}(d,r,e)
$$
defined by
$$
\phi_{\underline{X}}([\Pi_1]\otimes [\Pi_2]\otimes \cdots \otimes [\Pi_e])=
[\Phi_{\underline{X}}(\Pi_1\times \Pi_2\times \cdots \times \Pi_e)].
$$
The map $\phi_{\underline{X}}$ induces a group homomorphism
$$
\overline{\phi}_{\underline{X}}:\overline{P}_{\rm *M}(\underline{X},e)
\to \overline{P}_{\rm *M}(d,r,e).
$$

 A vector $y=(y_1,\dots,y_d)\in \R^d$ is called
$\underline{X}$-integral if $\sum_{i\in X_j}y_i\in \Z$ for $j=1,2,\dots,e$.
An $\underline{X}$-integral vector $y$ is called $\underline{X}$-regular, if for every
$j$ and every $Y\subseteq X_j$ we have: if $\sum_{i\in Y}y_i\in \Z$, then $Y=\emptyset$ or
$Y=X_j$. In other words, an $\underline{X}$-integral vector $y$ is called $\underline{X}$-regular
if it is not integral for any refinement of $\underline{X}$.
We call $y$ $\underline{X}$-balanced if $\sum_{i\in S}y_i=0$ holds 
if and only if $S$ is a union of some of the $X_j$'s.

Choose an $\underline{X}$-balanced vector $y_{\underline{X}}$ for every $\underline{X}$.
For $f\in P_{\rm *M}(d,r)$ we define 
$$
\gamma_{\underline{X}}(f)(x):=\lim_{\varepsilon\downarrow 0}f(x+\varepsilon y_{\underline X}).
$$
If $\Pi$ is a $*$matroid base polytope, then $\gamma_{\underline{X}}([\Pi])(x)$ is constant on faces of $\Pi$.
This shows that $\gamma_{\underline{X}}([\Pi])\in P_{\rm *M}(d,r)$. So $\gamma_{\underline{X}}$ is a 
endomorphism of $P_{\rm *M}(d,r)$.  Now $\gamma_{\underline{X}}$ also induces an endomorphism
$\overline{\gamma}_{\underline{X}}$ of $\overline{P}_{\rm *M}(d,r)$.

\begin{lemma}
We have that $\gamma_{\underline{X}}\circ\gamma_{\underline{X}}=\gamma_{\underline{X}}$.
\end{lemma}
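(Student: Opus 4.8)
The plan is to deduce idempotency from the fact that every $f\in P_{\rm *M}(d,r)$ is piecewise constant: by definition it is a finite $\Z$-linear combination of indicator functions of $*$matroid base polyhedra, so it is constant on an initial segment of the ray $\{x+ty_{\underline X}\mid t>0\}$ starting at any point $x$. (That $\gamma_{\underline X}(f)$ again lies in $P_{\rm *M}(d,r)$, so that applying $\gamma_{\underline X}$ a second time makes sense, has already been noted above.) The key claim I would isolate is: for every $f\in P_{\rm *M}(d,r)$ and every $x\in\R^d$ there exists $\varepsilon_0>0$ such that $f(x+ty_{\underline X})=\gamma_{\underline X}(f)(x)$ for all $t\in(0,\varepsilon_0)$.

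To prove the claim, by linearity it suffices to treat $f=[\Pi]$ for a single base polyhedron $\Pi$. Intersecting $\Pi$ with the affine line $t\mapsto x+ty_{\underline X}$, the set $\{t\in\R\mid x+ty_{\underline X}\in\Pi\}$ is a closed interval (possibly empty, degenerate, unbounded, or all of $\R$; note the argument does not care whether $\Pi$ is bounded). Hence the one-variable function $t\mapsto[\Pi](x+ty_{\underline X})$ is constant on some interval $(0,\varepsilon_0)$, and that constant value equals $\lim_{\varepsilon\downarrow0}[\Pi](x+\varepsilon y_{\underline X})=\gamma_{\underline X}([\Pi])(x)$ by definition. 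Taking $\varepsilon_0$ to be the minimum of the finitely many such thresholds arising from the polyhedra that occur in $f$ establishes the claim.

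Finally I would conclude by unwinding the definition twice. Writing $g=\gamma_{\underline X}(f)$, we have for each $\delta>0$ that $g(x+\delta y_{\underline X})=\lim_{\varepsilon\downarrow0}f\big(x+(\delta+\varepsilon)y_{\underline X}\big)$; by the claim, for $0<\delta<\varepsilon_0$ and $\varepsilon$ small enough that $\delta+\varepsilon<\varepsilon_0$ this inner expression equals $\gamma_{\underline X}(f)(x)$, so $g(x+\delta y_{\underline X})=\gamma_{\underline X}(f)(x)$ for all $\delta\in(0,\varepsilon_0)$. Therefore
$$
(\gamma_{\underline X}\circ\gamma_{\underline X})(f)(x)=\gamma_{\underline X}(g)(x)=\lim_{\delta\downarrow0}g(x+\delta y_{\underline X})=\gamma_{\underline X}(f)(x),
$$
as desired. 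There is no serious obstacle; the only point requiring care is making the "eventually constant along the ray" property uniform via the finiteness of the combination, and observing that it is insensitive to whether the base polyhedra are bounded. The $\underline X$-balancedness of $y_{\underline X}$ plays no role in this lemma — it is used only elsewhere, to control which face $\gamma_{\underline X}$ selects.
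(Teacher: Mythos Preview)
Your proof is correct and structurally identical to the paper's: both establish that $f$ is constant on some initial segment $(0,\varepsilon_0)$ of the ray $t\mapsto x+ty_{\underline X}$, with value $\gamma_{\underline X}(f)(x)$, and then conclude idempotency by unwinding the definition twice.

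The only difference is in how the key constancy claim is justified. The paper argues that near $x$ every point on the ray is $\underline Y$-regular for a single partition $\underline Y$, and deduces that each base polytope either contains or avoids the entire segment. You instead argue directly from convexity: the intersection of the line with any convex polyhedron $\Pi$ is an interval, so $t\mapsto[\Pi](x+ty_{\underline X})$ is constant for small $t>0$. Your route is cleaner and, as you correctly note, shows that the $\underline X$-balancedness of $y_{\underline X}$ is irrelevant here; the paper's use of the regularity notion is really setting up language for later lemmas rather than being essential to this one.
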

\begin{proof}
Suppose that $x\in \R^d$. Consider the set $S$ of all $x+\varepsilon y_{\underline{X}}$ with $\varepsilon\in \R$.
There exists a partition $\underline{Y}$ of $\underline{d}$ and a dense open subset $U$ of $S$ such
that all points in $U$ are $\underline{Y}$-regular. Then there exists a $\delta>0$ such that
$T=\{x+\varepsilon y\mid 0<\varepsilon<\delta\}$ has only $\underline{Y}$-regular points.
For every $*$matroid base polytope $\Pi$, we have that $T\cap \Pi=\emptyset$ or  $T\subseteq \Pi$. It follows that for every $f\in P_{\rm M*}(d,r)$ there exists a constant $c$ such that
$f$ is equal to $c$ on $T$. Therefore $\gamma_{\underline{X}}(f)(x)=c$ and $\gamma_{\underline{X}}(f)$ is constant and equal to $c$ on $T$.
We conclude that  $\gamma_{\underline{X}}(\gamma_{\underline{X}}(f))(x)=c=\gamma_{\underline{X}}(f)(x)$.
\end{proof}

\begin{lemma}\label{lem:gamma_phi}
Suppose that $\underline{X},\underline{Y}$ are partitions of $\underline{d}$
into $e$ nonempty subsets, and $\underline{X}\neq \underline{Y}$.
Then we have
$$
\gamma_{\underline{X}}\circ \phi_{\underline{Y}}=0.
$$
\end{lemma}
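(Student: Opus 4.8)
The plan is to reduce the statement to showing that $\gamma_{\underline X}$ annihilates the indicator function of each polytope in the image of $\phi_{\underline Y}$, and then to play off the hyperplanes cutting out that polytope (dictated by $\underline Y$) against the perturbation direction $y_{\underline X}$ (dictated by $\underline X$).

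First I would note that, by $\Z$-linearity of $\gamma_{\underline X}$ and $\phi_{\underline Y}$, it suffices to prove $\gamma_{\underline X}\big([\Pi]\big)=0$ for every polytope of the form $\Pi=\Phi_{\underline Y}(\Pi_1\times\cdots\times\Pi_e)$, where each $\Pi_i$ is a $*$matroid polytope in $\R^{Y_i}$ of some rank $r_i$ (the case where some $\Pi_i$ is empty is trivial, since then $[\Pi]=0$). Since each factor lies in the hyperplane $\{z\in\R^{Y_i}\mid\sum_k z_k=r_i\}$ of $\R^{Y_i}$, the polytope $\Pi$ is contained in the affine subspace $L:=\{y\in\R^d\mid \sum_{i\in Y_j}y_i=r_j,\ j=1,\dots,e\}$; in particular $[\Pi]$ vanishes off $L$.

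The key elementary observation is combinatorial: $\underline X$ and $\underline Y$ are both partitions of $\underline d$ into exactly $e$ nonempty blocks and $\underline X\neq\underline Y$, so $\underline X$ is not a refinement of $\underline Y$ — a partition that refines $\underline Y$ and has the same number of blocks must equal $\underline Y$. Hence there is an index $j_0$ for which $Y_{j_0}$ is not a union of blocks of $\underline X$. By the defining property of the $\underline X$-balanced vector $y_{\underline X}$, namely that $\sum_{i\in S}(y_{\underline X})_i=0$ precisely when $S$ is a union of blocks of $\underline X$, it follows that $c:=\sum_{i\in Y_{j_0}}(y_{\underline X})_i\neq0$.

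Finally I would fix an arbitrary $x\in\R^d$ and consider the affine function $\varepsilon\mapsto\sum_{i\in Y_{j_0}}(x+\varepsilon y_{\underline X})_i=\big(\sum_{i\in Y_{j_0}}x_i\big)+\varepsilon c$. As $c\neq0$, this takes the value $r_{j_0}$ for at most one $\varepsilon$, so for all sufficiently small $\varepsilon>0$ the point $x+\varepsilon y_{\underline X}$ fails the equation $\sum_{i\in Y_{j_0}}y_i=r_{j_0}$ defining $L$ and therefore does not lie in $\Pi\subseteq L$. Thus $[\Pi](x+\varepsilon y_{\underline X})=0$ for small $\varepsilon>0$, whence $\gamma_{\underline X}([\Pi])(x)=\lim_{\varepsilon\downarrow0}[\Pi](x+\varepsilon y_{\underline X})=0$. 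Since $x$ was arbitrary, $\gamma_{\underline X}([\Pi])=0$, and since such functions $[\Pi]$ span the image of $\phi_{\underline Y}$ we conclude $\gamma_{\underline X}\circ\phi_{\underline Y}=0$. There is no serious obstacle here: once the two elementary facts — the ``refinement with equally many blocks'' remark and the balancedness of $y_{\underline X}$ — are in place, everything else is immediate from the definitions, the only point needing a moment's care being the reduction to the spanning indicator functions.
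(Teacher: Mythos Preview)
Your proof is correct and follows essentially the same approach as the paper's: both identify a block $Y_{j_0}$ that is not a union of $\underline X$-blocks, use the $\underline X$-balancedness of $y_{\underline X}$ to conclude $\sum_{i\in Y_{j_0}}(y_{\underline X})_i\neq 0$, and deduce that the perturbed point $x+\varepsilon y_{\underline X}$ falls outside the image polytope for small $\varepsilon>0$. The only cosmetic difference is that the paper phrases the last step via $\underline Y$-integrality, whereas you use the specific affine subspace $L$; your version is if anything a touch more direct.
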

\begin{proof}
For some $k$, $Y_k$ is not the
union of $X_j$'s. The image
$$
\Phi_{\underline{Y}}(\Pi_1\times \cdots \times\Pi_e)
$$
consists of $\underline{Y}$-integral points. For any $x\in \R^{\underline{d}}$,
 $x+\varepsilon y_{\underline{X}}$ is not $\underline{Y}$-integral for
small $\varepsilon>0$. In follows that
$$
\gamma_{\underline{X}}(\phi_{\underline{Y}}([\Pi_1\times\cdots\times \Pi_e]))(x)=
\gamma_{\underline{X}}([\Phi_{\underline{Y}}(\Pi_1\times \cdots \times \Pi_e)])(x)=0 
$$
for all $x$.
\end{proof}

\begin{theorem}\label{theo:iso}
We have the following isomorphism
\begin{equation}\label{eq:iso}
\overline{\phi} : \bigoplus_{\scriptstyle \underline{X}=(X_1,X_2,\dots,X_e)
\atop \scriptstyle \underline{d}=X_1\sqcup X_2\sqcup \cdots \sqcup X_e;X_1,\dots,X_e\neq\emptyset}
\overline{P}_{\rm *M}(\underline{X})\to\bigoplus_{r\in \Z}\overline{P}_{\rm *M}(d,r,e)\end{equation}
where $\overline{\phi}=\sum_{\underline{X}}\overline{\phi}_{\underline{X}}$.
\end{theorem}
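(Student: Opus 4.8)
The plan is to show that $\overline\phi$ is both surjective and injective, with the operators $\gamma_{\underline X}$ (and their induced versions $\overline\gamma_{\underline X}$ on the associated graded) as the principal tool, and then to observe that they organize into a system of orthogonal idempotents that block-diagonalizes $\overline\phi$.

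\emph{Surjectivity.} The module $P_{\rm *M}(d,r,e)$ is spanned by indicator functions $[\Pi]$ of $*$matroid polytopes $\Pi$ of codimension $\geq e$, and those of codimension $>e$ vanish in $\overline P_{\rm *M}(d,r,e)$, so it suffices to reach the classes of the codimension-exactly-$e$ polytopes. For such a $\Pi$, write $\linhull(\Pi)=z+W$ with $z\in\Z^d$ and $W$ spanned by vectors $e_i-e_j$; since $\dim W=d-e$, the graph on $\underline d$ with an edge $\{i,j\}$ whenever $e_i-e_j\in W$ has exactly $e$ connected components, giving a partition $\underline X=(X_1,\dots,X_e)$ of $\underline d$ into $e$ nonempty blocks. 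Submodularity forces the rank function of $\Pi$ to split as $\rk_\Pi(A)=\sum_k\rk_\Pi(A\cap X_k)$, and consequently $\Pi=\Phi_{\underline X}(\Pi_1\times\cdots\times\Pi_e)$ with each $\Pi_k$ a full-dimensional $*$matroid polytope in $\R^{X_k}$. Hence $[\Pi]=\phi_{\underline X}([\Pi_1]\otimes\cdots\otimes[\Pi_e])$, so its class lies in $\im\overline\phi_{\underline X}$, and $\overline\phi$ is surjective.

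\emph{Reduction via the idempotents $\overline\gamma_{\underline X}$.} First, $\gamma_{\underline X}$ preserves the codimension filtration: $\gamma_{\underline X}([\Pi])$ is a $\Z$-combination of indicator functions of faces of $\Pi$, each of codimension $\geq$ that of $\Pi$; hence $\overline\gamma_{\underline X}$ is defined on each $\overline P_{\rm *M}(d,r,e)$. By Lemma~\ref{lem:gamma_phi}, $\overline\gamma_{\underline X}\circ\overline\phi_{\underline Y}=0$ whenever $\underline X\neq\underline Y$. On the other hand, if $\Pi$ is a codimension-$e$ $*$matroid polytope with associated partition $\underline X$, then any $\underline X$-balanced vector lies in the direction space $W$ of $\linhull(\Pi)$, so $\gamma_{\underline X}([\Pi])$ agrees with $[\Pi]$ on the relative interior of $\Pi$ and differs from it only on proper (lower-dimensional) faces; since $\overline P_{\rm *M}(\underline X)$ is spanned by tensors of full-dimensional polytope classes, this gives $\overline\gamma_{\underline X}\circ\overline\phi_{\underline X}=\overline\phi_{\underline X}$. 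Combining these with surjectivity: given $g\in\overline P_{\rm *M}(d,r,e)$, write $g=\sum_{\underline Y}\overline\phi_{\underline Y}(\eta_{\underline Y})$; then $\overline\gamma_{\underline X}(g)=\overline\phi_{\underline X}(\eta_{\underline X})$, whence the $\overline\gamma_{\underline X}$ are pairwise orthogonal idempotents with $\sum_{\underline X}\overline\gamma_{\underline X}=\id$, so $\overline P_{\rm *M}(d,r,e)=\bigoplus_{\underline X}\im\overline\gamma_{\underline X}$ and $\im\overline\gamma_{\underline X}=\im\overline\phi_{\underline X}$. Thus $\overline\phi$ is a surjection that respects this direct-sum decomposition, and the theorem reduces to the injectivity of each single $\overline\phi_{\underline X}\colon\overline P_{\rm *M}(\underline X)\to\overline P_{\rm *M}(d,r,e)$.

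\emph{Injectivity of $\overline\phi_{\underline X}$.} The image of the summand $T_{\rm *M}(|X_1|,r_1)\otimes\cdots\otimes T_{\rm *M}(|X_e|,r_e)$ consists of functions supported on the affine subspace $L_{\underline r}=\{y:\sum_{i\in X_k}y_i=r_k\ \forall k\}$, and these subspaces are pairwise disjoint for distinct $\underline r$ with $\sum_k r_k=r$; so it is enough to treat a single summand. Identifying $L_{\underline r}$ with $\prod_k\{\text{sum}=r_k\}$ through $\Phi_{\underline X}$, the map $\phi_{\underline X}$ on the underlying module $P_{\rm *M}(|X_1|,r_1)\otimes\cdots\otimes P_{\rm *M}(|X_e|,r_e)$ is the exterior tensor product $[\Pi_1]\otimes\cdots\otimes[\Pi_e]\mapsto[\Pi_1\times\cdots\times\Pi_e]$, which is injective because each $P_{\rm *M}(|X_k|,r_k)$ is free with the explicit basis of Theorem~\ref{free generation}. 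It then remains to check that $\phi_{\underline X}$ carries the kernel of $\bigotimes_k P_{\rm *M}(|X_k|,r_k)\to\bigotimes_k T_{\rm *M}(|X_k|,r_k)$ — the submodule in which some tensor factor has been replaced by a lower-codimensional class — exactly onto $\phi_{\underline X}\bigl(\bigotimes_k P_{\rm *M}(|X_k|,r_k)\bigr)\cap P_{\rm *M}(d,r,e+1)$. This follows from the fact that every $*$matroid polytope contained in $L_{\underline r}$ is a product $\Pi_1\times\cdots\times\Pi_e$ (again by the splitting of its rank function), together with the $s$-function triangularity argument from the proof of Theorem~\ref{free generation} to decide linear independence of the relevant indicator functions. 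I expect this last compatibility of $\phi_{\underline X}$ with the two codimension filtrations to be the main obstacle; the rest is bookkeeping with the idempotents $\overline\gamma_{\underline X}$.
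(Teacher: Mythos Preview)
Your surjectivity argument and your organization of the target via orthogonal idempotents $\overline\gamma_{\underline X}$ are correct. The deduction that $\sum_{\underline X}\overline\gamma_{\underline X}=\id$ on $\overline P_{\rm *M}(d,r,e)$ with $\im\overline\gamma_{\underline X}=\im\overline\phi_{\underline X}$ is a clean way to reduce to the injectivity of each single $\overline\phi_{\underline X}$. The paper does not set this up explicitly; it applies $\overline\gamma_{\underline X}$ directly to a hypothetical relation $\overline\phi(u)=0$ to kill the $\underline Y\neq\underline X$ terms. Your formulation is equivalent and arguably more transparent.

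The gap is in your last paragraph, and the tool you reach for does not close it. You correctly identify what remains: showing that $\phi_{\underline X}^{-1}\bigl(P_{\rm *M}(d,r,e{+}1)\bigr)$ equals the kernel of $\bigotimes_k P_{\rm *M}(|X_k|,r_k)\to\bigotimes_k T_{\rm *M}(|X_k|,r_k)$. But the $s$-function triangularity of Theorem~\ref{free generation} does not speak to this, because products $R^{(1)}\times\cdots\times R^{(e)}$ of cone polytopes across the blocks $X_k$ are \emph{not} themselves of the form $R_{\rm *M}(\underline Y,\underline s)$ (the sets involved do not form a single chain in $\underline d$), so there is no basis comparison to make. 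What actually works is this: if $w=\phi_{\underline X}(\widetilde u)\in P_{\rm *M}(d,r,e{+}1)$, write $w=\sum_j b_j[\Lambda_j]$ with each $\Lambda_j$ of codimension $>e$; since $w$ is supported on $L_{\underline r}$ you may replace each $\Lambda_j$ by $\Lambda_j\cap L_{\underline r}$, which is again a $*$matroid polytope, still of codimension $>e$, and now contained in $L_{\underline r}$, hence (by your product-splitting fact) a product with some factor of codimension $\geq 2$. Injectivity of $\phi_{\underline X}$ at the $P$-level then forces $\widetilde u$ itself to be a sum of such tensors, so it dies in $\bigotimes_k T$.

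The paper closes the same gap by a different device: it transports $\gamma_{\underline X}$ to the source via an operator $\gamma'_{\underline X}$ on $P_{\rm *M}(\underline X)$ with $\phi_{\underline X}\circ\gamma'_{\underline X}=\gamma_{\underline X}\circ\phi_{\underline X}$, shows (using Lemma~\ref{lem:gamma_phi} one codimension higher, together with the idempotence of $\gamma_{\underline X}$) that any lift $\widetilde u_{\underline X}$ satisfies $\gamma'_{\underline X}(\widetilde u_{\underline X})=0$, and then observes that $(\id-\gamma'_{\underline X})$ applied to a full-dimensional product yields only proper-face terms, so $\im(\id-\gamma'_{\underline X})$ vanishes in $\overline P_{\rm *M}(\underline X)$. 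This avoids analysing the filtration intersection directly. Once you supply the intersection-with-$L_{\underline r}$ step above, your geometric route is equally valid and perhaps more natural given the idempotent framework you have already built.
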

\begin{proof}
We know that a $*$matroid base polytope of codimension $e$ is a product of
$e$ $*$matroid base polytopes of codimension $1$. This shows that $\overline{\phi}$ is surjective.
It remains to show that $\overline{\phi}$ is injective.

Suppose that $\overline{\phi}(u)=0$
where $u=\sum_{\underline{X}}u_{\underline{X}}$, and  $u_{\underline{X}}\in \overline{P}_{\rm *M}(\underline{X})$ for all $\underline{X}$.
We have $\gamma_{\underline{X}}\circ \phi_{\underline Y}=0$ if $\underline{X}\neq \underline{Y}$ by Lemma~\ref{lem:gamma_phi}.
It follows that $\overline{\gamma}_{\underline{X}}(\overline{\phi}_{\underline{X}}(u_{\underline{X}}))=\overline{\gamma}_{\underline{X}}(\overline{\phi}(u))=0$.
We can lift $u_{\underline{X}}$ to an element $\widetilde{u}_{\underline{X}}\in P_{\star\rm M}(\underline{X})$.
Then we have that
$$
\gamma_{\underline{X}}(\phi_{\underline{X}}(\widetilde{u}_{\underline{X}}))=\sum_i a_i [\Lambda_i]
$$
where the $\Lambda_i$ are $*$matroid polytopes of codimension $>e$. 
We have that $[\Lambda_i]\in\im\phi_{\underline Y'}$ for some partition $\underline Y'$
with more than $e$ parts.  Therefore $[\Lambda_i]\in\im\phi_{\underline Y}$ as well
for any coarsening $\underline Y$ of~$\underline Y'$ with $e$ parts, and 
we may choose $\underline Y$ so that $\underline Y\neq\underline X$,
so by Lemma~\ref{lem:gamma_phi}, $\gamma_{\underline{X}}([\Lambda_i])=0$ for all $i$.
Therefore, we have
$$
\gamma_{\underline{X}}(\phi_{\underline{X}}(\widetilde{u}_{\underline{X}}))=
\gamma_{\underline{X}}(\gamma_{\underline{X}}(\phi_{\underline{X}}(\widetilde{u}_{\underline{X}})))=
\sum_i a_i \gamma_{\underline{X}}([\Lambda_i])=0.
$$
Note that $\gamma_{\underline{X}}$ induces a map
$\gamma_{\underline{X}}':P_{\rm *M}(\underline{X})\to P_{\rm *M}(\underline{X})$
such that $\phi_{\underline{X}}\circ \gamma_{\underline{X}}'=\gamma_{\underline{X}}\circ \phi_{\underline{X}}$.
We have that
$$
\phi_{\underline{X}}(\widetilde{u}_{\underline{X}})=
(\id-\gamma_{\underline{X}})(\phi_{\underline{X}}(\widetilde{u}_{\underline{X}}))=
\phi_{\underline{X}}((\id-\gamma_{\underline{X}}')(\widetilde{u}_{\underline{X}})).
$$
Since $\phi_{\underline{X}}$ is injective, we have
$$
\widetilde{u}_{\underline{X}}=(\id-\gamma_{\underline{X}}')(\widetilde{u}_{\underline{X}}).
$$
So $\widetilde{u}_{\underline{X}}$ lies in the image of $\id-\gamma_{\underline{X}}$.

For $*$matroid polytopes $\Pi_1,\dots,\Pi_e$ of codimension 1 in $\R^{|X_1|},\dots,\R^{|X_e|}$ respectively,
we have
$$
\gamma_{\underline{X}}'([\Pi_1\times \cdots \times \Pi_e])(x)=1.
$$
for any relative interior point $x$ of $\Pi_1\times \cdots \times \Pi_e$.
It follows that
$$
(\id-\gamma_{\underline{X}}')([\Pi_1\times \cdots \times \Pi_e])=\sum_F a_F[F]
$$
where $F$ runs over the proper faces of $\Pi_1\times \cdots \times \Pi_r$ and $a_F\in \Z$ for all $F$.
Therefore,  the composition
$$
\xymatrix{
P_{\rm *M}(\underline{X})\ar[r]^{\id-\gamma_{\underline{X}}'} & P_{\rm *M}(\underline{X})\ar[r] & \overline{P}_{\rm *M}(\underline{X},e)}
$$
is equal to 0. Since $u_{\underline{X}}$ is the image of $\widetilde{u}_{\underline{X}}=(\id-\gamma_{\underline{X}}')(\widetilde{u}_{\underline{X}})$, we have that $u_{\underline{X}}=0$.
\end{proof}

\noindent Let $p_{\rm (P)M}(d,r,e)$ be the rank of $\overline{P}_{\rm (P)M}(d,r,e)$\label{not:pdre},
and $t_{\rm (P)M}(d,r):=p_{\rm (P)M}(d,r,1)$\label{not:tstarM} be the rank of $T_{\rm (P)M}(d,r)$.

\begin{proof15d}
From Theorem~\ref{theo:iso} follows that
\begin{multline*}
\exp\left(\sum_{d,r\geq 0} \frac{t_{\rm PM}(d,r)x^dy^r u}{d!}\right)=
\sum_{e\geq 0}\frac{1}{e!}\big(\sum_{d,r\geq 0} \frac{t_{\rm PM}(d,r)x^dy^r u}{d!}\big)^e=\\
=
\sum_{e,d,r\geq 0} \frac{p_{\rm PM}(d,r,e)}{d!}x^dy^r u^e
\end{multline*}
If we substitute $u=1$, we get
$$
\exp\left(\sum_{d,r\geq 0}\frac{t_{\rm PM}(d,r)x^dy^r}{d!}\right)=\sum_{e,d,r\geq 0} \frac{p_{\rm PM}(d,r,e)}{d!}x^dy^r=
\frac{e^x(1-y)}{1-ye^x}.
$$
It follows that
\begin{multline*}
\sum_{d,r\geq 0}\frac{t_{\rm PM}(d,r)x^dy^r}{d!}=\log\big(\frac{e^x(1-y)}{1-ye^x}\big)=\\
=x+\log(1-y)-\log(1-ye^x)=
x+\sum_{r\geq 1}\frac{(e^{rx}-1)y^r}{r}.
\end{multline*}
Comparing the coefficients of $x^dy^r$ gives
$$
t_{\rm PM}(d,r)=\left\{
\begin{array}{rl}
r^{d-1} &\mbox{ if $d\geq 1$};\\
0 & \mbox{otherwise.}
\end{array}
\right.
$$
(Recall that $0^0=1$.)  
\end{proof15d}
We also have
$$
\sum_{d,r\geq 0} \frac{p_{\rm PM}(d,r,e)t^ds^r u^e}{d!}=
\exp\left(\log\big(\frac{e^t(1-s)}{1-se^t}\big)u\right)=\big(\frac{e^t(1-s)}{1-se^t}\big)^u.
$$
\begin{proof15c}
The proof is similar to the proof of part (d). We have
\begin{equation}\label{eq:PMgenfunc}
\sum_{d,r\geq 0}\frac{t_{\rm M}(d,r)x^{d-r}y^r}{d!}=
\log\left(\sum_{d,r,e}\frac{p_{\rm M}(d,r,e)x^{d-r}y^r}{d!}\right)=
\log\left(\frac{x-y}{xe^{-x}-ye^{-y}}\right),
\end{equation}
and
$$
\sum_{d,r,z\geq 0}\frac{p_{\rm M}(d,r,e) x^{d-r}y^rz^e}{d!}=\big(\frac{x-y}{xe^{-x}-ye^{-y}}\big)^z.
$$
\end{proof15c}
A table for the values $t_{\rm (P)M}(d,r)$ can be found in~\ref{apB}.

If $d\geq 1$, let $\mathfrak t_{\rm PM}(d,r)$\label{not:mf t}
be the set of all pairs $(\underline X,\underline r)\in\mathfrak p_{\rm PM}(d,r)$ 
such that $r_1>0$, 
and $d\not\in X_{k-1}$, where $k$ is the length of $\underline X$.
Similarly, if $d\geq 2$, let $\mathfrak t_{\rm M}(d,r)$
be the set of all pairs $(\underline X,\underline r)\in\mathfrak t_{\rm M}(d,r)$ 
such that $r_1>0$,  $|X_{k-1}|-r_{k-1}<d-r$,
and $d\not\in X_{k-1}$.

\begin{lemma}\label{lem p1 card}
We have
$|\mathfrak t_{\rm (P)M}(d,r)| = t_{\rm (P)M}(d,r)$
whenever the former is defined.  
\end{lemma}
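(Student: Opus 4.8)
\emph{Overview.} The plan is to treat the polymatroid and matroid cases separately, in both cases reusing the explicit parametrizations of $\mathfrak p_{\rm (P)M}(d,r)$ already set up in the proofs of Theorem~\ref{theo1.4}(d) and~(c), together with the fact (Theorem~\ref{free generation}) that $|\mathfrak p_{\rm (P)M}(d,r)|$ equals the rank $p_{\rm (P)M}(d,r)$.

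\emph{The polymatroid case.} I would restrict the bijection $f\colon\mathfrak p_{\rm PM}(d,r)\to\mathfrak a(d,r)$ from the proof of Theorem~\ref{theo1.4}(d), which sends $(\underline X,\underline r)$ to the sequence $(a_1,\dots,a_d)$ with $a_i=r_j$ for the least $j$ such that $i\in X_j$. Since $r_1=\min_i a_i$, the condition $r_1>0$ becomes ``$a_i\ge 1$ for all $i$'', and since $d$ first enters the chain at the top step exactly when $a_d=r_k=r$, the condition $d\notin X_{k-1}$ becomes ``$a_d=r$''. Hence $f$ carries $\mathfrak t_{\rm PM}(d,r)$ bijectively onto $\{(a_1,\dots,a_d)\mid 1\le a_i\le r,\ a_d=r\}$, a set of cardinality $r^{d-1}$, which equals $t_{\rm PM}(d,r)$ by Theorem~\ref{theo1.5}(d).

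\emph{The matroid case.} The proof of Theorem~\ref{theo1.4}(c) encodes $(\underline X,\underline r)\in\mathfrak p_{\rm M}(d,r)$ of length $k$ by nonnegative integers $u_1,\dots,u_k,v_1,\dots,v_k$ together with an ordered partition $\underline d=Y_1\sqcup\cdots\sqcup Y_k$ with $|Y_1|=u_1+v_1+1$, $|Y_i|=u_i+v_i+2$ for $2\le i\le k-1$, $|Y_k|=u_k+v_k+1$ (and $|Y_1|=d=u_1+v_1$ when $k=1$). The three conditions defining $\mathfrak t_{\rm M}(d,r)$ translate into $u_1\ge1$, $v_k\ge1$, and $d\in Y_k$; imposing $d\in Y_k$ replaces the multinomial $\binom{d}{|Y_1|,\dots,|Y_k|}$ by $\binom{d-1}{|Y_1|,\dots,|Y_{k-1}|,|Y_k|-1}$. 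I would therefore form $\sum_{d\ge2,r}\frac{|\mathfrak t_{\rm M}(d,r)|}{d!}x^{d-r}y^r$, absorbing the extra factor $1/d$ via $\tfrac1d x^{d-r}y^r=\int_0^1(tx)^{d-r}(ty)^r\tfrac{dt}{t}$, so that the sum equals $\int_0^1\widetilde F(tx,ty)\tfrac{dt}{t}$ where $\widetilde F$ is the same series weighted by $1/(d-1)!$. Summing over $k$ and over the $u_i,v_i$, the series $\widetilde F(X,Y)$ factors as a geometric series: the common ``first/middle block'' term is $\frac{X(e^Y-1)-Y(e^X-1)}{Y-X}$ and the residual ``top block'' term is $\frac{XY(e^Y-e^X)}{Y-X}$, both read off from \eqref{equv2} and \eqref{equv3}; summing the series (the length-one contribution being precisely its $k=1$ term) collapses $\widetilde F(X,Y)$ to $\frac{XY(e^Y-e^X)}{Ye^X-Xe^Y}$.

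\emph{Conclusion of the matroid case.} Recognizing the integrand $\frac{xy(e^{ty}-e^{tx})}{ye^{tx}-xe^{ty}}$ as $-\tfrac{d}{dt}\log\!\big(ye^{tx}-xe^{ty}\big)$, the integral from $0$ to $1$ evaluates to $\log\frac{x-y}{xe^{y}-ye^{x}}$; since $xe^{y}-ye^{x}=e^{x+y}(xe^{-x}-ye^{-y})$ this is $\log\frac{x-y}{xe^{-x}-ye^{-y}}-(x+y)$. By Theorem~\ref{theo1.5}(c) the first summand is $\sum_{d,r}\frac{t_{\rm M}(d,r)}{d!}x^{d-r}y^r$, whose part with $d\le1$ is exactly $x+y$ (because $t_{\rm M}(0,r)=0$ and $t_{\rm M}(1,0)=t_{\rm M}(1,1)=1$), so subtracting it leaves $\sum_{d\ge2,r}\frac{t_{\rm M}(d,r)}{d!}x^{d-r}y^r$. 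Comparing coefficients of $x^{d-r}y^r$ yields $|\mathfrak t_{\rm M}(d,r)|=t_{\rm M}(d,r)$ for all $d\ge2$. The step I expect to be the main obstacle is the factorization of $\widetilde F$: one must keep careful track of the asymmetry between the bottom, middle, and top blocks (the ``$+1$'' versus ``$+2$'' in the block sizes, and the exceptional $k=1$ block) and check that the length-one contribution really coincides with the $k=1$ term of the geometric series, so that no spurious constant is introduced into the closed form.
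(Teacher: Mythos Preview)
Your polymatroid argument is identical to the paper's. For matroids your approach is correct but genuinely different from the paper's. The paper does not compute the generating function for $|\mathfrak t_{\rm M}(d,r)|$ directly; instead it invokes the exponential formula and exhibits a bijection between the resulting composite structures (partitions of $\underline d$ into blocks, each block carrying an element of some $\mathfrak t_{\rm M}$) and $\mathfrak p_{\rm M}(d,r)$, so that $\exp\bigl(\sum \tfrac{|\mathfrak t_{\rm M}(d,r)|}{d!}x^{d-r}y^r\bigr)=\sum\tfrac{p_{\rm M}(d,r)}{d!}x^{d-r}y^r$, and then appeals to~\eqref{eq:PMgenfunc}. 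Your argument bypasses this bijection: you translate the defining conditions of $\mathfrak t_{\rm M}$ into the $(u,v)$ coordinates from the proof of Theorem~\ref{theo1.4}(c), absorb the awkward $1/d$ arising from the constraint $d\in Y_k$ via the integral $\tfrac1d=\int_0^1 t^{d-1}\,dt$, and sum the resulting geometric series in closed form. The key identities $A=B$ (the first block and the middle blocks contribute the same factor after the shift $u_1\mapsto u_1-1$) and ``$k=1$ term $=C$'' (after $u_1\mapsto u_1-1$, $v_1\mapsto v_1-1$) are exactly what makes the series collapse to $C/(1-B)$, and the recognition of the integrand as a total derivative is clean. The paper's route is more combinatorial and explains structurally why the exponential of the $\mathfrak t_{\rm M}$ series equals the $\mathfrak p_{\rm M}$ series; your route is more analytic and self-contained, avoiding the need to describe and verify an explicit bijection.
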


\begin{proof}
\noindent {\em For polymatroids.}
We revisit the bijection 
$f:\mathfrak p_{\rm PM}(d,r)\to\mathfrak a(d,r)$ defined in the proof of
Theorem~\ref{theo1.4}(d).  It is easy to see that
$\underline a\in f(\mathfrak t_{\rm PM}(d,r))$ 
if and only if $a_d=r_k=r$ and no $a_i$ equals~0.  Accordingly
such an $\underline a$ has the form $(a_1,\ldots,a_{d-1},r)$
with $a_i$ freely chosen from $\{1,\ldots,r\}$ for each $i=1,\ldots,d-1$, 
so $|f(\mathfrak t_{\rm PM}(d,r))| = r^{d-1}$.  

\noindent {\em For matroids.}
We proceed by means of generating functions.  
We begin by invoking the
exponential formula: the  coefficient of $x^{d-r}y^r$ of the generating function
$$\exp\left(\sum_{d=0}^\infty\sum_{r=0}^d
  \frac{|\mathfrak t_{\rm M}(d,r)|}{d!}x^{d-r}y^r\right)$$
enumerates the ways to choose a partition 
$\underline d=Z_1\cup\cdots\cup Z_l$ and a composition $r=s_1+\cdots+s_l$
and an element $(\underline X_i,\underline r_i)$ 
of $\mathfrak t_{\rm M}(|Z_i|,s_i)$ for each 
$i=1,\ldots,l$.  Let us denote by $\mathfrak q(d,r)$
the set of tuples 
$(\underline d,r,(\underline X^{(1)},\underline r^{(1)}),\ldots,(\underline X^{(l)},\underline r^{(l)}))$.

We describe a bijection between $\mathfrak q(d,r)$ and
$\mathfrak p^{\rm sym}_{\rm M}(d,r)$.  
Roughly, given $(\underline X,\underline r)\in\mathfrak p^{\rm sym}_{\rm M}(d,r)$,
we break it into pieces, breaking after $X_i$ whenever 
$X_i\setminus X_{i-1}$ contains the largest remaining element 
of~$\underline d\setminus X_{i-1}$.
More formally, given $(\underline X,\underline r)\in\mathfrak p^{\rm sym}_{\rm M}(d,r)$, 
for each $j\geq 1$ 
let $Z_j=X_{i_j}\setminus X_{i_j-1}$ (taking $i_0=0$)
where $i_j$ is minimal such that $X_{i_j}$ contains 
the maximum element of~$\underline d\setminus X_{i_{j-1}}$, 
and let $s_j=r_{i_j}-r_{i_{j-1}}$.
This definition eventually fails, in that we cannot find a
maximum element when $X_{i_{j-1}}=X_k=\underline d$,
so we stop there and let $l$ be such that $i_l=k$.  
For $j=1,\ldots,l$, 
let $f_j:Z_j\to\underline{|Z_j|}$ be the unique order-preserving map, 
and define the chain and list of integers
$(\underline X^{(j)},\underline r^{(j)})$ by
\begin{align*}
X^{(j)}_i &= f_j(X_{i_{j-1}+i}\setminus X_{i_{j-1}}),&(i=1,\ldots,i_j-i_{j-1}) \\
r^{(j)}_i &= r_{i_{j-1}+i}-r_{i_{j-1}}.&(i=1,\ldots,i_j-i_{i-1})
\end{align*}
We have that 
$(\underline X^{(j)},\underline r^{(j)})\in\mathfrak t_{\rm M}(|Z_j|,s_j)$:
the crucial property that $d\not\in X_{k-1}$ obtains by choice 
of~$i_j$ and monotonicity of~$f_j$.  
This finishes defining the bijection.  Its inverse is easily constructed.  

From this bijection and (\ref{eq:PMgenfunc}) it follows that
\begin{multline*}
\exp\left(\sum_{d=1}^\infty\sum_{r=0}^d
  \frac{|\mathfrak t_{\rm M}(d,r)|}{d!}x^{d-r}y^r\right) 
= 1+\sum_{d\geq1}\sum_{r}\frac{|\mathfrak q(d,r)|x^{d-r}y^{r}}{d!}=\\
=\sum_{d,r}\frac{p_{\rm M}(d,r)x^{d-r}y^r}{d!}=
\exp\left(\sum_{d=1}^\infty\sum_{r=0}^d
\frac{t_{\rm M}(d,r)}{d!}x^{d-r}y^r\right).
\end{multline*}
\end{proof}

\begin{lemma}\label{lem p1 indep}
The classes of~$[R_{\rm (P)M}(\underline X,\underline r)]$
for $(\underline X,\underline r)\in\mathfrak t_{\rm (P)M}(d,r)$  
are linearly independent in $T_{\rm (P)M}(d,r)$.  
\end{lemma}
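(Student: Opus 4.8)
Since $d\ge1$ (for $d=0$ the statement is vacuous), we have $T_{\rm (P)M}(d,r)=P_{\rm (P)M}(d,r)/P_{\rm (P)M}(d,r,2)$, and by Theorem~\ref{theo:iso} the subgroup $P_{\rm (P)M}(d,r,2)$ is spanned by the images of the maps $\phi_{\underline Y}$ for $\underline Y$ running over partitions of $\underline d$ into at least two nonempty blocks. The plan is to produce, for each $(\underline X_0,\underline r_0)\in\mathfrak t_{\rm (P)M}(d,r)$, an \emph{additive} functional $\lambda_{\underline X_0,\underline r_0}$ on $P_{\rm (P)M}(d,r)$ --- i.e.\ a valuative functional vanishing on $P_{\rm (P)M}(d,r,2)$, hence a well-defined functional on $T_{\rm (P)M}(d,r)$ --- such that the square matrix $\big(\lambda_{\underline X_0,\underline r_0}([R_{\rm (P)M}(\underline X,\underline r)])\big)$ indexed by $\mathfrak t_{\rm (P)M}(d,r)\times\mathfrak t_{\rm (P)M}(d,r)$ is unitriangular for a suitable total order on $\mathfrak t_{\rm (P)M}(d,r)$ refining inclusion of the polytopes $R_{\rm (P)M}(\underline X,\underline r)$. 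Such a matrix is invertible over $\Z$, which gives the asserted linear independence.

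The additive functionals will come from generic evaluation. By the equivalence of the weak and strong valuative properties, valuative functionals on $Z_{\rm (P)M}(d,r)$ are exactly the $\Z$-linear functionals on $P_{\rm (P)M}(d,r)$; in particular, for any $p\in\R^d$, evaluation $\mathrm{ev}_p\colon[\Pi]\mapsto[\Pi](p)$ is valuative. Moreover, by the description of the linear hull of a megamatroid polyhedron, any (poly)matroid polytope of dimension $<d-1$ is contained in a hyperplane $\{\sum_{i\in A}y_i=c\}$ with $\emptyset\ne A\subsetneq\underline d$ and $c\in\Z$; since boundedness confines $c$ to a finite range and there are finitely many such $A$, the union of all these polytopes is nowhere dense in $\{\sum y_i=r\}$. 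Consequently, for any $q\in\R^d$ and any generic tangent vector $w$ (meaning $\sum w_i=0$ and $\sum_{i\in A}w_i\ne0$ for each of the finitely many relevant $A$), the functional $\lambda:=\lim_{\varepsilon\downarrow0}\mathrm{ev}_{q+\varepsilon w}$ is a pointwise, eventually constant limit of valuative functionals, hence valuative; it kills $[\Pi]$ whenever $\dim\Pi<d-1$; and for a full-dimensional (poly)matroid polytope $\Pi'$ it equals $[q\in\Pi']$, because a generic ray based at a point of a full-dimensional polytope enters that polytope immediately. Thus each such $\lambda$ is additive.

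For the precise construction, given $(\underline X_0,\underline r_0)\in\mathfrak t_{\rm (P)M}(d,r)$ of chain length $k$, let $F_0$ be the face of $R_0:=R_{\rm (P)M}(\underline X_0,\underline r_0)$ cut out by turning each inequality $\sum_{i\in(X_0)_j}y_i\le(r_0)_j$ with $1\le j\le k-1$ into an equality, take a generic point $q_0$ in the relative interior of $F_0$ (or, for tighter control, a lexicographic infinitesimal displacement of a chosen vertex of $F_0$, in the spirit of the proof of Proposition~\ref{s is valuative}), and put $\lambda_{\underline X_0,\underline r_0}:=\lim_{\varepsilon\downarrow0}\mathrm{ev}_{q_0+\varepsilon w}$ for generic $w$. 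The three conditions in the definition of $\mathfrak t_{\rm (P)M}(d,r)$ --- $(r_0)_1>0$; $d\notin(X_0)_{k-1}$; and, for matroids, $|(X_0)_{k-1}|-(r_0)_{k-1}<d-r$ --- are exactly what ensure that $F_0$ is nonempty, that $R_0$ is genuinely $(d-1)$-dimensional, and that the containment data below behaves well. By the preceding paragraph $\lambda_{\underline X_0,\underline r_0}$ is additive and $\lambda_{\underline X_0,\underline r_0}([R_{\rm (P)M}(\underline X,\underline r)])=[q_0\in R_{\rm (P)M}(\underline X,\underline r)]$ for all $(\underline X,\underline r)\in\mathfrak t_{\rm (P)M}(d,r)$, these polytopes all being full-dimensional; and $q_0\in R_0$ gives $1$ on the diagonal. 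It then remains to check that $q_0\in R_{\rm (P)M}(\underline X,\underline r)$ forces $(\underline X,\underline r)$ to lie above $(\underline X_0,\underline r_0)$ in the chosen order.

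That last check is the crux of the argument and the step I expect to be hardest. Deciding whether the (generic point of the) face $F_0$ satisfies all the inequalities $\sum_{i\in X_j}y_i\le r_j$ defining $R_{\rm (P)M}(\underline X,\underline r)$ amounts to comparing the explicit rank function of $R_{\rm (P)M}(\underline X,\underline r)$ --- which has the form $\min_j\{r_j+\cdots\}$, as in the proof of Theorem~\ref{free generation} --- with the equations defining $F_0$, and showing the resulting containment pattern is triangular; this is precisely where the conditions ``$d\notin X_{k-1}$'' and (for matroids) ``$|X_{k-1}|-r_{k-1}<d-r$'' get used, and the lexicographic refinement of $q_0$ is what I would employ to keep the combinatorics tractable. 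Two remarks: first, the matroid case can instead be deduced from the polymatroid case, matroid polytopes being polymatroid polytopes and the dimension filtrations being compatible; second, one could bypass much of the bookkeeping by using the operators $\gamma_{\underline X}$ of the proof of Theorem~\ref{theo:iso}: since $\gamma_{\underline X}\circ\phi_{\underline Y}=0$ for $\underline X\ne\underline Y$, a suitable alternating combination of the $\mathrm{id}-\gamma_{\underline X}$ over partitions with at least two blocks retracts $P_{\rm (P)M}(d,r)$ onto a complement of $P_{\rm (P)M}(d,r,2)$, whereupon the unitriangularity of the functionals $s^\sleq_{\underline X,\underline r}$ of Corollary~\ref{cor s univ} against the full $R$-basis finishes the proof.
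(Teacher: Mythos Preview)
Your strategy has a genuine gap at exactly the point you flag as ``the crux,'' and the gap is not merely a missing verification: the asserted unitriangularity is false for the face--evaluation construction you describe. Take polymatroids with $d=3$, $r=2$. The four elements of $\mathfrak t_{\rm PM}(3,2)$ give the polytopes $\Delta_{\rm PM}(3,2)$, $\{y_1\le 1\}$, $\{y_2\le 1\}$, $\{y_1+y_2\le 1\}$ (each intersected with $\Delta_{\rm PM}$). Your face $F_0$ for the second element is the edge $\{y_1=1\}$, and every point on it has $y_2\le 1$, so lies in the third polytope; symmetrically every point on $\{y_2=1\}$ lies in the second. Hence the rows of your matrix for these two elements are identical (both equal to $(1,1,1,0)$ in the listed order), and the matrix is singular. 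No lexicographic refinement of $q_0$ within $F_0$ repairs this, because the containments $F_0^{(2)}\subset R^{(3)}$ and $F_0^{(3)}\subset R^{(2)}$ hold on the nose, not just on a boundary. Your additive functionals are correctly constructed---they do vanish on $P_{\rm (P)M}(d,r,2)$---but they simply do not separate the classes you need them to.

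The paper's proof takes a different route that uses the condition $d\notin X_{k-1}$ in an essential way. It first shows, via the single balanced vector $y=(-1,\dots,-1,d-1)$, that any element of $P_{\rm (P)M}(d,r,2)$ vanishes on the set of points of $\Delta_{\rm (P)M}(d,r)$ from which one can move in the direction $y$; then it restricts the putative relation $S$ to a hyperplane $\{y_d=c\}$, where each $R_{\rm (P)M}(\underline X,\underline r)$ with $(\underline X,\underline r)\in\mathfrak t$ restricts to an $R_{\rm (P)M}(\underline X',\underline r')$ in one fewer variable with $r'_1>0$, while the vanishing on the ``interior'' set forces $S|_{\{y_d=c\}}$ to be supported on $\bigcup_i\{y_i=0\}$ and hence to be a combination of $R$'s with $r'_1=0$. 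Theorem~\ref{free generation} in dimension $d-1$ then kills all coefficients. Your two side-remarks also do not close the gap: the matroid case does not reduce to the polymatroid case as stated, since $R_{\rm M}(\underline X,\underline r)\neq R_{\rm PM}(\underline X,\underline r)$ and one would additionally need $P_{\rm M}(d,r)\cap P_{\rm PM}(d,r,2)=P_{\rm M}(d,r,2)$; and a $\gamma$-based retraction onto a complement of $P_{\rm (P)M}(d,r,2)$, even if built, does not by itself explain why the particular classes indexed by $\mathfrak t_{\rm (P)M}$ land on an independent set.
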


\begin{proof}
Let $y=(-1,\ldots,-1,d-1)$.
Let $\Pi_{\rm (P)M}$ be the set of points $x\in\Delta_{\rm (P)M}(d,r)$
such that $x+\varepsilon y\in\Delta_{\rm (P)M}(d,r)$
for sufficiently small $\varepsilon>0$.
Choose some $(\underline X,\underline r)\in\mathfrak t_{\rm (P)M}(d,r)$.
If $x\in R_{\rm (P)M}(\underline X,\underline r)\cap \Pi_{\rm (P)M}$, and
$\varepsilon>0$ is sufficiently small, we have
$x+\varepsilon y\in R_{\rm (P)M}(\underline X,\underline r)$,
since the defining inequalities 
of~$R_{\rm (P)M}(\underline X,\underline r)$ involve only the variables
$x_1,\ldots,x_{d-1}$.  
It follows that for $x\in \Pi_{\rm (P)M}$ we have
$$[R_{\rm (P)M}(\underline X,\underline r)](x)
= \gamma_y([R_{\rm (P)M}(\underline X,\underline r)])(x).$$
We will write $\{\underline d\}$ for the partition $\underline d=\{1\}\cup \{2\} \cup \cdots\cup \{d\}$.
Observe that $y$ is $\{\underline d\}$-balanced, so that
for any point $x$, $x+\varepsilon y$ is $\{\underline d\}$-regular for
sufficiently small $\varepsilon>0$.  

Suppose the sum
$$S=\sum_{(\underline X,\underline r)\in\mathfrak t_{\rm (P)M}(d,r)}
a(\underline X,\underline r)[R_{\rm (P)M}(\underline X,\underline r)]$$
vanishes in $T_{\rm (P)M}(d,r)$, i.e.\ 
is contained in $P_{\rm (P)M}(d,r,2)$.  
Then the support of~$S$ contains no $\{\underline d\}$-regular points.
So for any $x\in \Pi_{\rm (P)M}$ we have 
$S(x) = \gamma_y(S)(x) = 0$.

We specialize now to the matroid case.  
If $r=d$, then $T_{\rm M}(d,r)=0$ and the result is trivial.  
Otherwise let $H$ be the hyperplane~$\{x_d=0\}$; we will examine the
situation on restriction to~$H$.  
Identifying $H$ with~$\R^{d-1}$ in the obvious fashion, we have 
$\Delta_{\rm M}(d,r)\cap H = \Delta_{\rm M}(d-1,r)$, 
$\Pi_{\rm M}\cap H = \{x\in\Delta_{\rm M}(d-1,r) : \mbox{$x_i\neq 0$ for all $i$}\}$.
For any $(\underline X,\underline r)\in\mathfrak t_{\rm M}(d,r)$, 
$R_{\rm M}(\underline X,\underline r)\cap H = R_{\rm M}(\underline X',\underline r')$
where, supposing $\underline X$ has length~$k$,
$$\underline X'' : \emptyset\subset X_1\subset\cdots\subset 
X_{k-1}\subset X_k\setminus\{d\} = \underline{d-1}$$
and $(\underline X',\underline r')$ is obtained from~$(\underline X'',\underline r)$
by dropping redundant entries as in the proof of Theorem~\ref{free generation}.

Suppose $T\in P_{\rm M}(d-1,r)$ is supported on~$\{x_i=0\}$.
By Theorem~\ref{free generation} 
we have a unique expression 
$$T = \sum_{(\underline X,\underline r)\in\mathfrak p_{\rm M}(d-1,r)}
b(\underline X,\underline r)[R_{\rm M}(\underline X,\underline r)].$$
But we also have
$$T = T|_{\{x_i=0\}} = \sum_{(\underline X,\underline r)\in\mathfrak p_{\rm M}(d-1,r)}
b(\underline X,\underline r)[R_{\rm M}(\underline X,\underline r)\cap\{x_i=0\}]$$
in which each $[R_{\rm M}(\underline X,\underline r)\cap\{x_i=0\}]$ is
either zero or another $[R_{\rm M}(\underline X',\underline r')]$,
so that by uniqueness $b(\underline X,\underline r)=0$ when
$R_{\rm M}(\underline X,\underline r)\not\subseteq\{x_i=0\}$.  

The restriction $S|_H$ is supported on
$$
\Delta_{\rm M}(d-1,r)\cap\left(\bigcup_{i=1}^{d-1}\{x_i=0\}\right),$$
so it is a linear combination of those $[R_{\rm M}(\underline X,\underline r)]$
supported on some~$\{x_i=0\}$, i.e.\ those for which 
$r_1=0$.  On the other hand, 
$$S|_H=\sum_{(\underline X,\underline r)\in\mathfrak t_{\rm M}(d,r)}
a(\underline X,\underline r)[R_{\rm M}(\underline X,\underline r)\cap H]$$
in which each $R_{\rm M}(\underline X,\underline r)\cap H$ is
another matroid polytope $R_{\rm M}(\underline X',\underline r)$
with $r_1>0$ (and $X'$ only differing from $X$ by dropping the $d$
in the $k$\/th place).  Note that $(\underline{X},\underline{r})\in {\mathfrak t}_{\rm M}(d,r)$
is completely determined by $R_{\rm M}(\underline{X},\underline{r})\cap H$.
Therefore, by Theorem~\ref{free generation},
$a(\underline X,\underline r)=0$ for all $(\underline{X},\underline{r})\in {\mathfrak t}_{\rm M}(d,r)$. 

The polymatroid case is similar, but in place of the hyperplane~$H$
we use all the hyperplanes $H_i = \{x_d=i\}$ for $i=0,\ldots,r-1$.

Note that $\Delta_{\rm PM}(d,r)\cap H_i=\Delta_{\rm PM}(d,r-i)$.  
For $(\underline X,\underline r)\in\mathfrak t_{\rm PM}(d,r)$, 
supposing $\underline X$ has length~$k$, 
$$R_{\rm PM}(\underline X,\underline r)\cap H_i = 
\left\{\begin{array}{ll}
R_{\rm M}(\underline X',\underline r') & r_{k-1}\geq r-i\\
\emptyset & \mbox{otherwise}
\end{array}\right.$$
where again
$$\underline X'' : \emptyset\subset X_1\subset\cdots\subset 
X_{k-1}\subset X_k\setminus\{d\} = \underline{d-1}$$
and
$$
r''=(r_1,r_2,\dots,r_{k-1},r_k-i).
$$
and $(\underline X',\underline r')$ is obtained from~$(\underline X'',\underline r'')$
by dropping redundant entries as in the proof of Theorem~\ref{free generation}.
Although $(\underline{X},\underline{r})\in {\mathfrak t}_{\rm PM}(d,r)$ is
not completely determined by $S\mid_{H_0}$, the arguments in the matroid case still
show that $S\mid_{H_0}=0$, and $a(\underline{X},\underline{r})=0$ for
all $(\underline{X},\underline{r})$ for which $X_{k-1}\neq \underline{d-1}$.
Restricting to $H_{r-1}$ shows that $a(\underline{X},\underline{r})=0$
for all $(\underline{X},\underline{r})$ for which $X_{k-1}=\underline{d-1}$
and $r_k=1$. Proceeding by induction on $i$, we restrict $S$ to $H_{r-i}$ and  see that $a(\underline{X},\underline{r})=0$
for all $(\underline{X},\underline{r})$ for which $X_{k-1}=\underline{d-1}$ and $r_{k-1}=i$.
\end{proof}

The following is an immediate consequence of 
Lemmas \ref{lem p1 card} and~\ref{lem p1 indep}.

\begin{theorem}\label{theo:TPMgens}
The group $T_{\rm (P)M}(d,r)$ is freely generated by all~$[R_{\rm (P)M}(\underline X,\underline r)]$ with
$(\underline X,\underline r)\in\mathfrak t_{\rm (P)M}(d,r)$.  
\end{theorem}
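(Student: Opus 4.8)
The plan is to deduce the statement by combining Lemmas~\ref{lem p1 card} and~\ref{lem p1 indep} with a spanning statement. Lemma~\ref{lem p1 indep} already gives that the classes $[R_{\rm (P)M}(\underline X,\underline r)]$, $(\underline X,\underline r)\in\mathfrak t_{\rm (P)M}(d,r)$, are $\Z$-linearly independent in $T_{\rm (P)M}(d,r)$, and Lemma~\ref{lem p1 card} gives that there are exactly $t_{\rm (P)M}(d,r)$ of them, which is by definition the rank of $T_{\rm (P)M}(d,r)$. So it remains only to check that these classes \emph{generate} $T_{\rm (P)M}(d,r)$ as an abelian group; together with Lemma~\ref{lem p1 indep} this makes them a $\Z$-basis. (In fact generation plus the count of Lemma~\ref{lem p1 card} already suffices, since a finitely generated abelian group of rank $n$ generated by $n$ elements is automatically free on them — the kernel of the corresponding surjection $\Z^n\twoheadrightarrow T_{\rm (P)M}(d,r)$ has rank $0$ and is torsion-free, hence trivial.)

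For the spanning step I would start from Theorem~\ref{free generation}: $P_{\rm (P)M}(d,r)=P_{\rm (P)M}(d,r,1)$ is freely generated by the $[R_{\rm (P)M}(\underline X,\underline r)]$ with $(\underline X,\underline r)\in\mathfrak p_{\rm (P)M}(d,r)$, so $T_{\rm (P)M}(d,r)=P_{\rm (P)M}(d,r)/P_{\rm (P)M}(d,r,2)$ is spanned by the images of these. It then suffices to express the image of each pair in $\mathfrak p_{\rm (P)M}(d,r)\setminus\mathfrak t_{\rm (P)M}(d,r)$ in terms of the images of pairs in $\mathfrak t_{\rm (P)M}(d,r)$. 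If $r_1=0$, then on $R_{\rm (P)M}(\underline X,\underline r)$ the inequality $\sum_{i\in X_1}y_i\le 0$ together with $y_i\ge 0$ forces $y_i=0$ on the nonempty set $X_1$, so $\dim R_{\rm (P)M}(\underline X,\underline r)\le d-2$ and its image in $T_{\rm (P)M}(d,r)$ is $0$; in the matroid case the symmetric condition $|X_{k-1}|-r_{k-1}=d-r$ likewise forces $y_i=1$ on $X_k\setminus X_{k-1}$, again giving $\dim\le d-2$. The remaining offending pairs are precisely those with $d\in X_{k-1}$ (and $r_1>0$, and, for matroids, $|X_{k-1}|-r_{k-1}<d-r$). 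For these I would induct on $\#\{\,j<k : d\in X_j\,\}$: let $j_0$ be minimal with $d\in X_{j_0}$, slice $R_{\rm (P)M}(\underline X,\underline r)$ by a hyperplane parallel to $\{\sum_{i\in X_{j_0}\setminus\{d\}}y_i=c\}$ chosen so as to trade the constraint $\sum_{i\in X_{j_0}}y_i\le r_{j_0}$ (the first one involving $y_d$) for constraints on index sets not containing $d$, and check that the two closed pieces are again (poly)matroid polytopes of the form $R_{\rm (P)M}(\underline X',\underline r')$ whose chains contain $d$ in strictly fewer sets (or have dimension $\le d-2$). Since both pieces have dimension $d-1$, their common facet has dimension $d-2$, so inclusion–exclusion yields the desired $\Z$-relation modulo $P_{\rm (P)M}(d,r,2)$.

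The main obstacle is this last inductive step: exhibiting, for a pair with $d\in X_{k-1}$, an honest (poly)matroid polytope subdivision $R_{\rm (P)M}(\underline X,\underline r)=\Pi^+\cup\Pi^-$ whose pieces are again chain–rank polytopes (or lower-dimensional) and for which the induction parameter strictly decreases. One has to verify that the cut along the chosen hyperplane preserves the defining chain–rank shape — in the matroid case, that $\Pi^\pm\subseteq\Delta_{\rm M}(d,r)$ and the new facet is a genuine matroid polytope facet — and that iterating eventually produces only pairs in $\mathfrak t_{\rm (P)M}(d,r)$ or polytopes of dimension $<d-1$. Everything else (the dimension counts for the $r_1=0$ and $|X_{k-1}|-r_{k-1}=d-r$ cases, the inclusion–exclusion, and assembling the basis statement from the two lemmas) is routine.
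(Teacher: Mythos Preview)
The paper's proof is one sentence: it says Theorem~\ref{theo:TPMgens} ``is an immediate consequence of Lemmas~\ref{lem p1 card} and~\ref{lem p1 indep}.'' So as far as strategy goes, your plan---combine the cardinality count with the independence lemma---is exactly what the paper does; the paper supplies no separate spanning argument at all.

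You are right, however, that the two lemmas alone do not literally force free generation: $n$ linearly independent elements in an abelian group of rank~$n$ need not generate it (think of $2\in\Z$). So your instinct to supply a spanning argument is sound and arguably fills a gap the paper glosses over. Your treatment of the easy cases is correct: if $r_1=0$ then $y_i=0$ for $i\in X_1$ and the polytope has dimension $\le d-2$; in the matroid case, $|X_{k-1}|-r_{k-1}=d-r$ forces $y_i=1$ on $X_k\setminus X_{k-1}$ with the same effect. These classes die in~$T_{\rm (P)M}(d,r)$.

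The genuine gap is the remaining case $d\in X_{k-1}$. Your proposed slice by $\{\sum_{i\in X_{j_0}\setminus\{d\}}y_i=c\}$ does not work as stated. Already for $d=3$, $r=2$, $(\underline X,\underline r)=(\{1,3\}\subset\{1,2,3\},(1,2))$ one has $X_{j_0}\setminus\{d\}=\{1\}$, and any slice $y_1=c$ either misses the polytope or leaves it whole. What does work in that example is the \emph{complementary} slice $\sum_{i\in\underline d\setminus X_{j_0}}y_i=r-r_{j_0}$, giving $[R]=[R(\underline d,(r))]-[R(\underline d\setminus X_{j_0}\subset\underline d,(r-r_{j_0},r))]$ in~$T$. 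But for longer chains the set $\underline d\setminus X_{j_0}$ need not be comparable to the other $X_j$, so the resulting pieces are not of the form $R_{\rm (P)M}(\underline X',\underline r')$, and your induction does not close. You would need a more elaborate decomposition (or a different invariant to induct on) to carry this through; as written, this step is the real obstacle you correctly flagged, and it is not yet resolved.
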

\begin{example}
Consider again Example~\ref{expolymatroid}.
The set $t_{\rm PM}(3,2)$ consists of the following elements:
$$
\begin{array}{|rl|rl|rl|} \hline
\underline{X}:& \{1,2,3\} & \underline{X}: & \{1,2\}\subset \{1,2,3\} \\
\underline{r}=& (2) & \underline{r}= & (1,2)  \\ \hline
\underline{X}:& \{1\} \subset \{1,2,3\} & \underline{X}: & \{2\} \subset \{1,2,3\} \\
\underline{r}=& (1,2) & \underline{r}= & (1,2)  \\ \hline
\end{array}
$$
The polytopes $R_{\rm PM}(\underline{X},\underline{r})$, $(\underline{X},\underline{r})\in {\mathfrak p}_{\rm PM}(3,2)$ are

\leaveout{
\centerline{ 
\includegraphics[width=2in,height=2in]{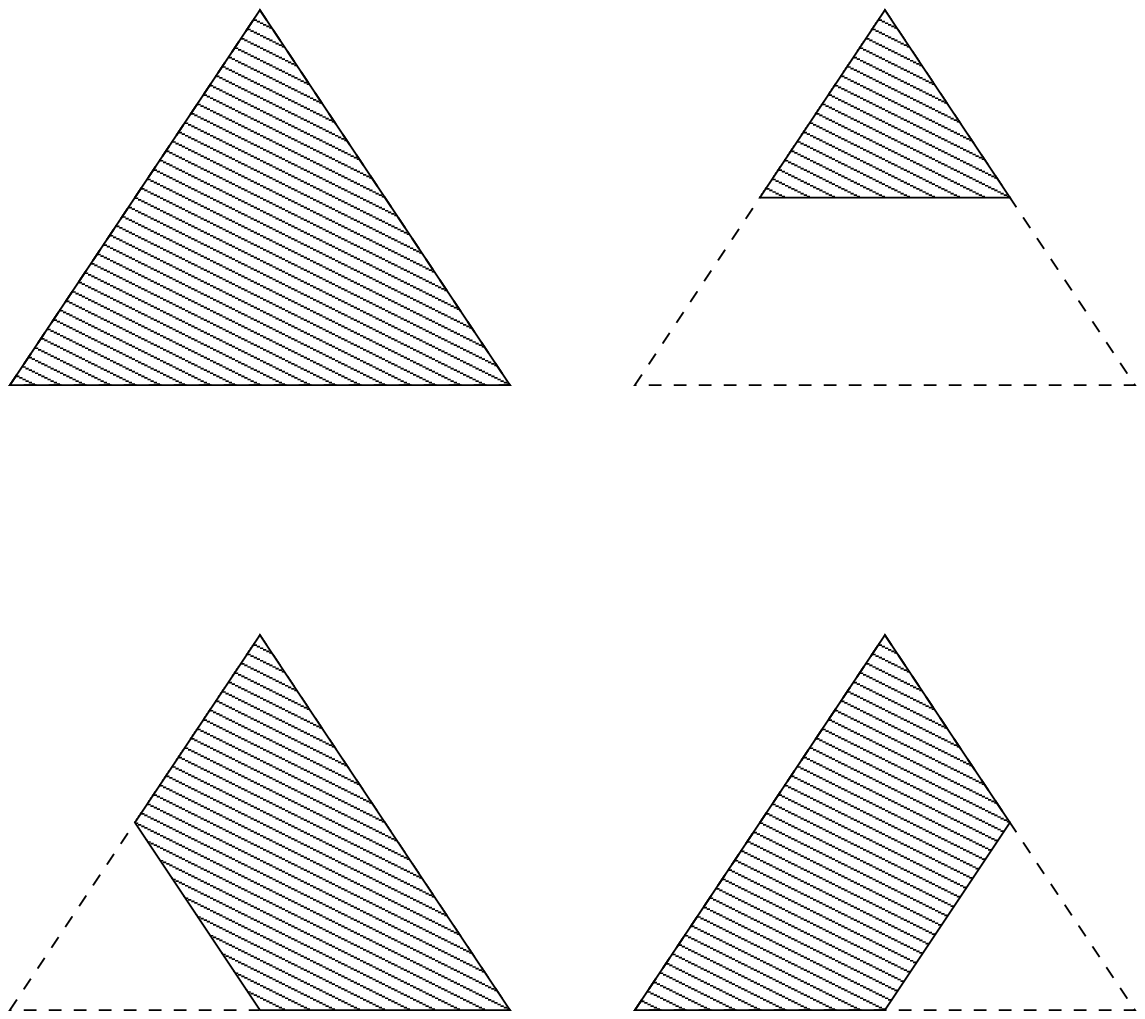}
}}

\end{example}
\begin{example}
Consider again Example~\ref{exmatroid}. The set $t_{\rm M}(4,2)$ consists of the following elements:
$$
\begin{array}{|rl|rl|rl|} \hline
\underline{X}:& \{1,2,3,4\} & \underline{X}: & \{1,2\}\subset \{1,2,3,4\} \\
\underline{r}=& (2) & \underline{r}= & (1,2)  \\ \hline
\underline{X}:& \{1,3\} \subset \{1,2,3,4\} & \underline{X}: & \{2,3\} \subset \{1,2,3,4\} \\
\underline{r}=& (1,2) & \underline{r}= & (1,2)  \\ \hline
\end{array}
$$
The polytopes $R_{\rm M}(\underline{X},\underline{r})$, $(\underline{X},\underline{r})\in {\mathfrak p}_{\rm M}(4,2)$ are

\leaveout{
\centerline{ 
\includegraphics[width=2in]{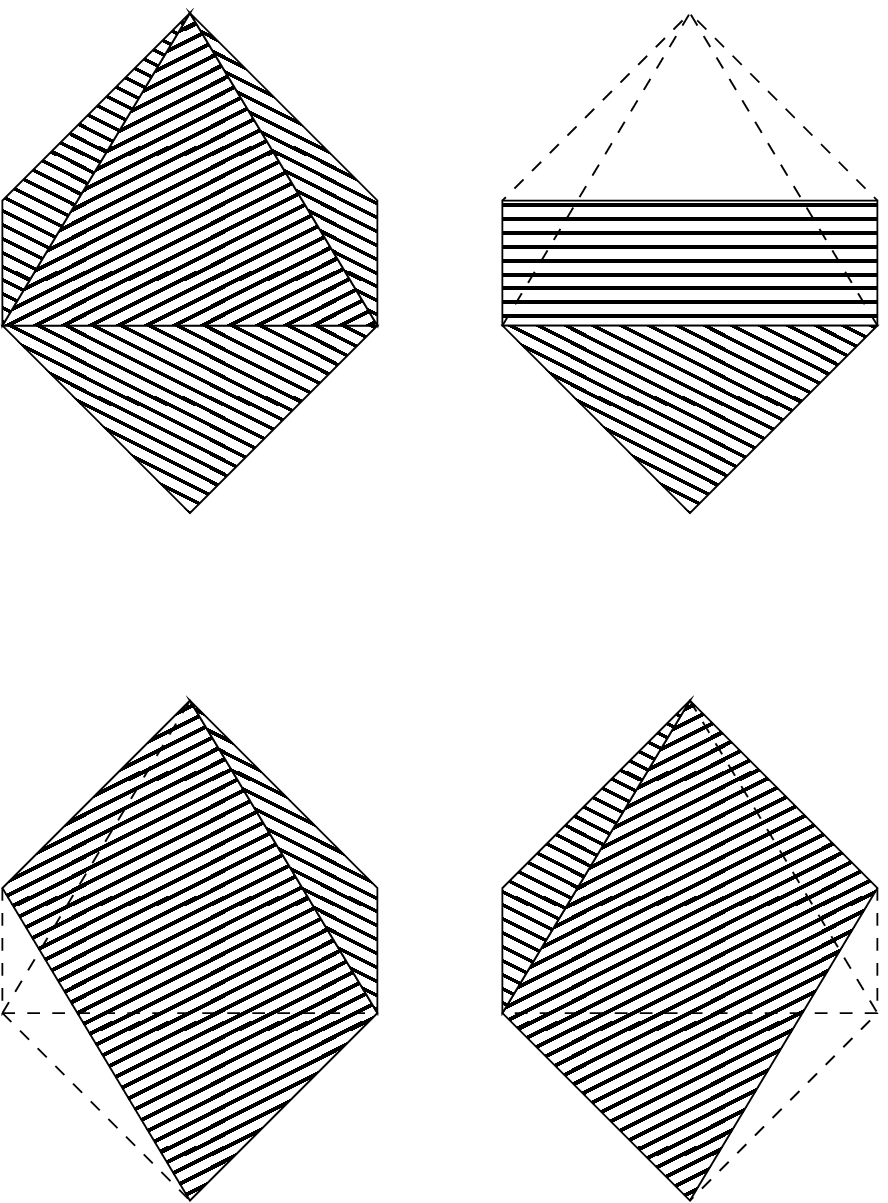}
}}

\end{example}
\section{Additive invariants: the groups $T_{\rm M}^{\rm sym}$, $T_{\rm PM}^{\rm sym}$, $T_{\rm MM}^{\rm sym}$}\label{sec:Tsym}

The algebra $P^{\rm sym}_{\star M}$ also has a natural filtration:
$$
\cdots \subseteq P^{{\rm sym}}_{\star \rm M}(d,r,2)\subseteq P^{{\rm sym}}_{\star \rm M}(d,r,1)\subseteq P^{{\rm sym}}_{\star M}(d,r,0)=P^{{\rm sym}}_{\star \rm M}(d,r).
$$
Here $P^{{\rm sym}}_{\rm\star M}(d,r,e)$\label{not:Pdresym} is spanned by the indicator functions of all $\ast$matroid base polytopes 
of rank $r$ and dimension $d-e$. Define $\overline{P}^{\rm sym}_{\rm \star M}(d,r,e)=P^{\rm sym}_{\rm *M}(d,r,e)/\linebreak P^{\rm sym}_{\rm *M}(d,r,e+1)$\label{not:Pbarsym}. 
Let $\overline{P}^{\rm sym}_{\star \rm M}=\bigoplus_{d,r,e} \overline{P}^{\rm sym}_{\star \rm M}(d,r,e)$
be the associated graded algebra.

Define $T_{\star \rm M}^{\rm sym}=\bigoplus_{d,r}\overline P^{{\rm sym}}_{\star \rm M}(d,r,1)$\label{not:TstarMsym}. 
The following Corollary follows from Theorem~\ref{theo:iso}.
\begin{theorem}\label{theo:symalg}
The algebra $\overline{P}^{\rm sym}_{\star\rm M}$ is the free symmetric algebra 
$S(T_{\star \rm M}^{\rm sym})$ on $T_{\star \rm M}^{\rm sym}$,
and there exists an isomorphism 
\begin{equation}\label{eq:Se}
S^e(T_{\star \rm M}^{\rm sym})\cong\bigoplus_{d,r}\overline{P}^{{\rm sym}}_{\star \rm M}(d,r,e).
\end{equation}
\end{theorem}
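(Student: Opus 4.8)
The plan is to deduce Theorem~\ref{theo:symalg} from the non-symmetric isomorphism of Theorem~\ref{theo:iso} by applying the functor of $\Sigma_d$-coinvariants, where $\Sigma_d$ acts on every object in sight by relabeling the ground set $\underline d$; recall that $P^{\rm sym}_{\star\rm M}(d,r)=P_{\star\rm M}(d,r)_{\Sigma_d}$ by definition, and that the associated graded multiplication is additive in the codimension grading, with $\overline P^{\rm sym}_{\star\rm M}(\cdot,\cdot,1)=T^{\rm sym}_{\star\rm M}$.

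First I would record three preliminaries. (a) The multiplication on $\overline P^{\rm sym}_{\star\rm M}$ is commutative: $\nabla([\Pi_1]\otimes[\Pi_2])=[\Pi_1\times\Pi_2]$, and $\Pi_1\times\Pi_2$, $\Pi_2\times\Pi_1$ are base polytopes of the isomorphic $\star$matroids $\rk_1\oplus\rk_2$ and $\rk_2\oplus\rk_1$, hence coincide in $P^{\rm sym}_{\star\rm M}$. By the universal property of the symmetric algebra there is therefore a graded algebra map $S(T^{\rm sym}_{\star\rm M})\to\overline P^{\rm sym}_{\star\rm M}$, and it suffices to prove that its codimension-$e$ part $S^e(T^{\rm sym}_{\star\rm M})\to\bigoplus_{d,r}\overline P^{\rm sym}_{\star\rm M}(d,r,e)$ is an isomorphism for each $e$. (b) The isomorphism $\overline\phi$ of \eqref{eq:iso} is $\Sigma_d$-equivariant for the relabeling action on its target and the compatible action on its source: $\Sigma_d$ permutes the summands $\overline P_{\star\rm M}(\underline X)$ according to its action on set partitions $\underline d=X_1\sqcup\cdots\sqcup X_e$ into $e$ nonempty blocks, and a $\sigma$ stabilizing $\underline X$ acts on $\overline P_{\star\rm M}(\underline X)=\bigoplus_{(r_1,\dots,r_e)}T_{\star\rm M}(|X_1|,r_1)\otimes\cdots\otimes T_{\star\rm M}(|X_e|,r_e)$ by relabeling inside the blocks and permuting equal-size blocks together with their $r_i$. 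Moreover $\rho_{\star\rm M}$ (which preserves the dimension filtration since relabeling does) induces an isomorphism $\overline P_{\star\rm M}(d,r,e)_{\Sigma_d}\to\overline P^{\rm sym}_{\star\rm M}(d,r,e)$; in the (poly)matroid case this is transparent from the permutation basis furnished by Theorems~\ref{theo:iso} and~\ref{theo:TPMgens}. (c) Coinvariants are right exact and commute with direct sums and tensor products; in particular $T_{\star\rm M}(s,r)_{\Sigma_s}=T^{\rm sym}_{\star\rm M}(s,r)$ and $(M\otimes N)_{G\times H}=M_G\otimes N_H$.

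Then I would take $\Sigma_d$-coinvariants of \eqref{eq:iso}. On the target, (b) yields $\bigoplus_r\overline P^{\rm sym}_{\star\rm M}(d,r,e)$. On the source, the $\Sigma_d$-orbits of set partitions correspond to integer partitions $\lambda\vdash d$ of length $e$, recorded by the multiplicities $m_s=\#\{i:|X_i|=s\}$; taking coinvariants one block at a time using (c), the orbit summand of $\lambda$ becomes $\bigotimes_s S^{m_s}\bigl(\bigoplus_r T^{\rm sym}_{\star\rm M}(s,r)\bigr)$. Summing over all $d$ and over all $\lambda$ of length $e$ and invoking $S^e\bigl(\bigoplus_s V_s\bigr)=\bigoplus_{\sum_s m_s=e}\bigotimes_s S^{m_s}(V_s)$, the source coinvariants assemble to exactly $S^e(T^{\rm sym}_{\star\rm M})$, where $T^{\rm sym}_{\star\rm M}=\bigoplus_{s,r}T^{\rm sym}_{\star\rm M}(s,r)$. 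Finally I would check that this module isomorphism is the codimension-$e$ part of the algebra map of (a): this is immediate from the definition of $\phi_{\underline X}$, since $\rho_{\star\rm M}$ sends $\overline\phi_{\underline X}([\Pi_1]\otimes\cdots\otimes[\Pi_e])$ to the product $[\Pi_1]\cdots[\Pi_e]$ in $\overline P^{\rm sym}_{\star\rm M}$, independently of $\underline X$. This establishes both $\overline P^{\rm sym}_{\star\rm M}\cong S(T^{\rm sym}_{\star\rm M})$ and the identification \eqref{eq:Se}.

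The step I expect to be the crux is the bookkeeping in the third paragraph: one must verify carefully that the relabeling $\Sigma_d$-action on the summands of \eqref{eq:iso} really is ``permute the blocks, relabel within each block,'' so that it is precisely the permutation of equal-size blocks — and nothing else — that collapses the unsymmetrized direct sum over $e$-tuples $(r_1,\dots,r_e)$ into the symmetric power. A secondary delicate point is the second half of (b), that $\rho_{\star\rm M}$ genuinely identifies $\overline P_{\star\rm M}(d,r,e)_{\Sigma_d}$ with $\overline P^{\rm sym}_{\star\rm M}(d,r,e)$ rather than creating extra relations in the associated graded; the permutation-module structure of $\overline P_{\star\rm M}(d,r,e)$ (from Theorems~\ref{theo:iso} and~\ref{theo:TPMgens}) is what makes this work. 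Everything else is formal manipulation of coinvariants of permutation modules.
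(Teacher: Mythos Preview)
Your proposal is correct and follows essentially the same route as the paper: sum the isomorphism of Theorem~\ref{theo:iso} over all $d$ and then pass to $\Sigma_d$-coinvariants (``divide out the symmetries on both sides,'' in the paper's words). The paper's proof is two sentences and leaves implicit precisely the points you flag in (a)--(c) and in your final paragraph; your write-up is a faithful unpacking of that sketch rather than a different argument.
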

\begin{proof}
If we sum the isomorphism (\ref{eq:iso}) in Theorem~\ref{theo:iso} over all $d$, we get an isomorphism
$$
\bigoplus_{d,\underline{X}} \overline{P}(\underline{X})\to \bigoplus_{d,r}\overline{P}_{\star M}(d,r,e)
$$
where the sum on the left-hand side is over all $d$ and all partitions $\underline{X}$
of $\underline{d}$ into $e$ nonempty subsets. If we divide out the symmetries on both sides,
we get the isomorphism (\ref{eq:Se}).
\end{proof}
\begin{corollary}\label{cor:Ppoly}
The algebra $P_{\star \rm M}^{\rm sym}$ is a polynomial ring over $\Z$.
\end{corollary}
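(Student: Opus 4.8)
The plan is to exploit the codimension filtration $P^{\rm sym}_{\star\rm M}(d,r)=P^{\rm sym}_{\star\rm M}(d,r,0)\supseteq P^{\rm sym}_{\star\rm M}(d,r,1)\supseteq\cdots$ together with Theorem~\ref{theo:symalg}, which identifies the associated graded ring $\overline P^{\rm sym}_{\star\rm M}=\bigoplus_{d,r,e}\overline P^{\rm sym}_{\star\rm M}(d,r,e)$ with the free symmetric algebra $S(T^{\rm sym}_{\star\rm M})$, the generating module $T^{\rm sym}_{\star\rm M}=\bigoplus_{d,r}\overline P^{\rm sym}_{\star\rm M}(d,r,1)$ sitting in filtration degree~$1$. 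I would lift a homogeneous basis of $T^{\rm sym}_{\star\rm M}$ to elements of $P^{\rm sym}_{\star\rm M}$ and show that the induced algebra homomorphism from the corresponding polynomial ring is an isomorphism, by a filtered-to-graded argument carried out bidegree by bidegree.

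First I would assemble the ingredients. (i) $P^{\rm sym}_{\star\rm M}$ is a commutative graded $\Z$-algebra: the multiplication is induced by $[\Pi_1]\otimes[\Pi_2]\mapsto[\Pi_1\times\Pi_2]$, and after passing to isomorphism classes the reordering isomorphism $\R^{d}\times\R^{e}\cong\R^{e}\times\R^{d}$ makes this commutative. (ii) $T^{\rm sym}_{\star\rm M}(d,r)$ is a free $\Z$-module. For this, recall from Theorem~\ref{theo:Psymgens} that $P^{\rm sym}_{\star\rm M}(d,r)$ is free on the indicator functions of Schubert matroids; I would check that the codimension filtration is adapted to this basis, i.e.\ that $P^{\rm sym}_{\star\rm M}(d,r,e)$ is the span of those basis vectors of codimension $\geq e$ (which one extracts from the $\gamma_{\underline X}$-operators and the product description of codimension-$e$ polytopes in the proof of Theorem~\ref{theo:iso}, together with Theorems~\ref{theo1} and~\ref{theo:TPMgens}), so that the filtration splits and every quotient $\overline P^{\rm sym}_{\star\rm M}(d,r,e)$, in particular $T^{\rm sym}_{\star\rm M}(d,r)$, is free. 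Consequently $S(T^{\rm sym}_{\star\rm M})$ is a genuine polynomial ring $\Z[x_i:i\in I]$ on a homogeneous $\Z$-basis $(x_i)_{i\in I}$ of $T^{\rm sym}_{\star\rm M}$, each $x_i$ of some bidegree $(d_i,r_i)$ with $d_i\geq1$. (iii) Both filtrations in play are finite in each bidegree: $P^{\rm sym}_{\star\rm M}(d,r,e)$ vanishes once $e>d$, since it is spanned by indicator functions of $\star$matroid polytopes of dimension $d-e<0$; and the bidegree-$(d,r)$ part of $\Z[x_i]$ involves only monomials with $\sum d_{i_j}=d$, hence with at most $d$ factors.

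Then, choosing for each $i$ a lift $g_i\in P^{\rm sym}_{\star\rm M}(d_i,r_i,1)=P^{\rm sym}_{\star\rm M}(d_i,r_i)$ of $x_i$, I would let $\Phi\colon\Z[x_i:i\in I]\to P^{\rm sym}_{\star\rm M}$ be the $\Z$-algebra homomorphism with $\Phi(x_i)=g_i$. Filter the source by number of factors, a monomial with $m$ factors lying in filtration level $m$; because the codimension filtration on $P^{\rm sym}_{\star\rm M}$ is multiplicative and each $g_i$ lies in codimension $\geq1$, the map $\Phi$ preserves filtrations, and on associated gradeds it induces an algebra homomorphism $\overline\Phi\colon\Z[x_i]=\operatorname{gr}\Z[x_i]\to\operatorname{gr}P^{\rm sym}_{\star\rm M}=S(T^{\rm sym}_{\star\rm M})=\Z[x_i]$ sending each $x_i$ to its own class, hence $\overline\Phi$ is the identity. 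Fixing a bidegree $(d,r)$, by (iii) both filtrations there are finite, $\overline\Phi$ is an isomorphism of associated gradeds, and the standard fact that a filtration-preserving map of finitely filtered modules inducing an isomorphism on associated gradeds is itself an isomorphism (proved by inducting up the filtration via the five lemma) shows $\Phi$ is an isomorphism in bidegree $(d,r)$. Letting $(d,r)$ vary gives $P^{\rm sym}_{\star\rm M}\cong\Z[x_i:i\in I]$, a polynomial ring over $\Z$.

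The step I expect to be the main obstacle is ingredient (ii): proving that $T^{\rm sym}_{\star\rm M}$ is free over $\Z$, equivalently that the codimension filtration on $P^{\rm sym}_{\star\rm M}(d,r)$ is spanned by a subfamily of the Schubert-matroid basis of Theorem~\ref{theo:Psymgens} (so that its graded pieces are torsion-free). This is where the combinatorial content of Theorems~\ref{theo1}, \ref{theo:TPMgens} and~\ref{theo:iso} is really needed; note that without it the assertion is false, since $S(M)$ is a polynomial ring only when $M$ is a free module. Everything else — commutativity, finiteness of the filtrations, and the filtered-to-graded isomorphism run bidegree by bidegree — is routine.
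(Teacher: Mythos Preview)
Your approach is essentially the paper's: lift module generators of $T^{\rm sym}_{\star\rm M}$ to $P^{\rm sym}_{\star\rm M}$ and show, via the identification $\overline{P}^{\rm sym}_{\star\rm M}\cong S(T^{\rm sym}_{\star\rm M})$ of Theorem~\ref{theo:symalg}, that these lifts are algebra generators and algebraically independent. The paper states this in two lines (``$\widetilde G$ generates'' and ``since $G$ is algebraically independent, so is $\widetilde G$''), while you unpack it as a filtered map $\Phi$ checked on associated gradeds; you are also right to flag freeness of $T^{\rm sym}_{\star\rm M}$ as the point requiring work, which the paper's proof uses without comment.
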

\begin{proof}
Consider the surjective map
$$
\bigoplus_{d,r}P^{\rm sym}_{\star \rm M}(d,r,1)\to \bigoplus_{d,r}\overline{P}^{\rm sym}_{\star \rm M}(d,r,1)=T^{\rm sym}_{\star\rm M}.
$$
Suppose that $G$ is a set of $\Z$-module generators of $T^{\rm sym}_{\rm \star M}$.
Each element of $G$ can be lifted 
 to $\bigoplus_{d,r}P^{{\rm sym}}_{\star M}(d,r,e)$. Let $\widetilde{G}$ be the set of all lifts.
  Since $G$ generates $\overline{P}^{\rm sym}_{\star M}$ by Theorem~\ref{theo:symalg}, 
  $\widetilde{G}$ generates $P^{\rm sym}_{\star M}$ over $\Z$.
  Since $G$ is an algebraically independent set, so is $\widetilde{G}$. 
  So $P_{\star \rm M}^{\rm sym}$ is a polynomial ring over $\Z$, generated by $\widetilde{G}$.
  \end{proof}

\begin{proof15ab}
We prove the stated formulas after taking the reciprocal of both sides.
Let $p_{\star\rm M}^{\rm sym}(d,r,e)$ be the rank of $\overline{P}^{\rm sym}_{\star\rm M}(d,r,e)$\label{not:pdresym}. 
Define $t_{\star \rm M}^{\rm sym}(d,r):=p_{\star \rm M}(d,r,1)$\label{not:tstarMsym} as the rank of $T_{\star M}^{\rm sym}(d,r)$.
From the matroid case of Theorem~\ref{theo:symalg} follows that
$$
\prod(1-x^ry^{d-r})^{-t_{\rm M}^{\rm sym}(d,r)}=\frac1{1-x-y}
$$
and
$$
\prod(1-ux^ry^{d-r})^{-t_{\rm M}^{\rm sym}(d,r)}=\sum_{d,r} p_{\rm M}^{\rm sym}(d,r,e)u^ex^{r}y^{d-r}
$$
From the polymatroid case follows that
$$
\prod(1-x^dy^{r})^{-t_{\rm PM}^{\rm sym}(d,r)}=\frac{1-y}{1-x-y},
$$
and
$$
\prod(1-zx^dy^{r})^{-t_{\rm PM}^{\rm sym}(d,r)}=\sum_{d,r} p_{\rm PM}^{\rm sym}(d,r,e)z^ex^{d}y^{r}.
$$
\end{proof15ab}

\section{Invariants as elements in free algebras}\label{sec:free algebras}
Let
$$
(P_{\rm *M}^{\rm sym})^\#:=\bigoplus_{d,r}P_{\rm *M}^{\rm sym}(d,r)^\vee
$$
be the {\em graded} dual of $P_{\rm *M}^{\rm sym}$\label{not:Vgr}.
\begin{prooffreeab}
A basis of $(P_{\rm PM}^{\rm sym})^\#\otimes_\Z \Q$ is given by all $u_{\alpha}$
where $\alpha$ runs over all sequences of nonnegative integers,
and a basis of $(P_{\rm M}^{\rm sym})^\#\otimes_\Z\Q$ is given by all $u_{\alpha}$
where $\alpha$ is a sequence of $0$'s and $1$'s (see Corollaries~\ref{cor:PPMsymdual}
and \ref{cor:PMsymdual}).  
The multiplication in $(P_{\rm *M}^{\rm sym})^\#$ is given by
$$
u_\alpha\cdot u_\beta=\binom{d+e}d u_{\alpha\beta},
$$
where $\alpha$ has length~$d$ and $\beta$ has length~$e$.
It follows that $(P_{\rm PM}^{\rm sym})^\#\otimes_\Z\Q$ is the free associative algebra $\Q\langle u_0,u_1,u_2,\dots\rangle$ generated by $u_0,u_1,u_2,\dots$
and $(P_{\rm M}^{\rm sym})^\#\otimes_\Z\Q$ is the free associative algebra $\Q\langle u_0,u_1\rangle$
(the binomial coefficients make no difference).
The ordinary dual, $(P_{\rm *M}^{\rm sym})^\vee$ is a completion of the graded dual $(P_{\rm *M}^{\rm sym})^\#$.
We get that $(P_{\rm PM}^{\rm sym})^\vee\otimes_\Z\Q$ is equal to $\Q\langle\langle u_0,u_1,u_2,\dots\rangle\rangle$
and $(P_{\rm M}^{\rm sym})^\vee\otimes_\Z\Q$ is equal to $\Q\langle\langle u_0,u_1\rangle\rangle$.
\end{prooffreeab}
Let ${\mathfrak m}_{\rm \star M}=\bigoplus_{d,r}P^{\rm sym}_{\rm *M}(d,r,1)$\label{not:mstarM}.
Then we have ${\mathfrak m}_{\rm \star M}^2=\bigoplus_{d,r}P^{\rm sym}_{\rm *M}(d,r,2)$
and $T^{\rm sym}_{\rm *M}={\mathfrak m}_{\rm *M}/{\mathfrak m}_{\rm *M}^2$.

The graded dual ${\mathfrak m}_{\rm \star M}^\#$ can be identified with 
$$(P_{\rm \star M}^{\rm sym})^\#/P_{\rm *M}^{\rm sym}(0,0)\cong \bigoplus_{d=1}^\infty\bigoplus_r P_{\rm \star M}^{\rm sym}(d,r)^\vee.$$
So ${\mathfrak m}_{\rm PM}^\#\otimes_\Z\Q$ will be identified with the ideal $(u_0,u_1,\dots)$ of $\Q\langle u_0,u_1,\dots\rangle$
and ${\mathfrak m}_{\rm M}^\#\otimes_\Z\Q$ will be identified with the ideal $(u_0,u_1)$ of $\Q\langle u_0,u_1\rangle$.
The graded dual $(T_{\rm PM}^{\rm sym})^\#\otimes_\Z\Q$ is a subalgebra (without 1) of the ideal
$(u_0,u_1,\dots)$, and $(T_{\rm PM}^{\rm sym})^\#\otimes_\Z\Q$ is a subalgebra of $(u_0,u_1)$.
\begin{lemma}\ \label{lem:Lie}

\begin{enumerate}
\letters
\item $u_0,u_1\in (T_{\rm M}^{\rm sym})^\vee\otimes_\Z\Q$, and $u_i\in (T_{\rm PM}^{\rm sym})^\vee\otimes_\Z\Q$ for all $i$;
\item If $f,g\in (T_{\rm (P)M}^{\rm sym})^\vee\otimes_\Z\Q$, then $[f,g]=fg-gf\in (T_{\rm (P)M}^{\rm sym})^\vee\otimes_\Z\Q$.
\end{enumerate}
\end{lemma}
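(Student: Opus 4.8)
The plan is to work one bidegree at a time inside $(P_{\rm (P)M}^{\rm sym})^\vee\otimes_\Z\Q$, which by the preceding discussion is the (completed) free associative algebra $\Q\langle\langle u_0,u_1,\ldots\rangle\rangle$ for polymatroids and $\Q\langle\langle u_0,u_1\rangle\rangle$ for matroids. Since $T_{\rm (P)M}^{\rm sym}={\mathfrak m}_{\rm (P)M}/{\mathfrak m}_{\rm (P)M}^2$ with ${\mathfrak m}_{\rm (P)M}$ the augmentation ideal of $P_{\rm (P)M}^{\rm sym}$ and ${\mathfrak m}_{\rm (P)M}^2=\bigoplus_{d,r}P_{\rm (P)M}^{\rm sym}(d,r,2)$ (both recorded just above the lemma), the subspace $(T_{\rm (P)M}^{\rm sym})^\vee\otimes_\Z\Q$ is precisely the set of (completed sums of bi-homogeneous) functionals on $P_{\rm (P)M}^{\rm sym}$ with no bidegree-$(0,0)$ component that annihilate ${\mathfrak m}_{\rm (P)M}^2$. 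I will check membership against this description.

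For part (a): $u_i$ is bi-homogeneous of bidegree $(1,i)$, so the only part of ${\mathfrak m}_{\rm (P)M}^2$ it could fail to annihilate is $P_{\rm (P)M}^{\rm sym}(1,i,2)$; but this is spanned by indicator functions of base polytopes of dimension $\le-1$, hence is $0$. Equivalently $T_{\rm (P)M}^{\rm sym}(1,i)=P_{\rm (P)M}^{\rm sym}(1,i)$ and $u_i$ is the corresponding element of the dual basis of Corollary~\ref{cor:PPMsymdual} (resp.\ Corollary~\ref{cor:PMsymdual}). Thus $u_i\in(T_{\rm PM}^{\rm sym})^\vee\otimes_\Z\Q$ for all $i$, and $u_0,u_1\in(T_{\rm M}^{\rm sym})^\vee\otimes_\Z\Q$.

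For part (b): I would reduce to $f,g$ bi-homogeneous of positive bidegree annihilating ${\mathfrak m}_{\rm (P)M}^2$ and show that $[f,g]=fg-gf$ annihilates ${\mathfrak m}_{\rm (P)M}^2$; since $(P_{\rm (P)M}^{\rm sym})^\vee\otimes_\Z\Q$ is a ring and $fg,gf$ have first bidegree-coordinate $\ge2$, this puts $[f,g]$ in $(T_{\rm (P)M}^{\rm sym})^\vee\otimes_\Z\Q$. Let $a,b\in{\mathfrak m}_{\rm (P)M}$, so that a typical element of ${\mathfrak m}_{\rm (P)M}^2$ is a sum of such products $ab$. Because $P_{\rm (P)M}^{\rm sym}$ is a connected bigraded bialgebra, $\Delta a=a\otimes1+1\otimes a+\bar\Delta a$ with $\bar\Delta a\in{\mathfrak m}_{\rm (P)M}\otimes{\mathfrak m}_{\rm (P)M}$ --- from~\eqref{eq:Delta} the only summands with a tensor factor of bidegree $(0,0)$ come from $A=\emptyset$ and $A=\underline d$ --- and similarly for $b$, while $\Delta(ab)=\Delta(a)\Delta(b)$. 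Expanding $(fg)(ab)=\langle f\otimes g,\Delta(a)\Delta(b)\rangle$, a term survives only if neither tensor factor has bidegree $(0,0)$ (since $f(1)=g(1)=0$) and neither factor lies in ${\mathfrak m}_{\rm (P)M}^2$ (since $f,g$ annihilate ${\mathfrak m}_{\rm (P)M}^2$); the only contributions are from $(a\otimes1)(1\otimes b)$ and $(1\otimes a)(b\otimes1)$, giving $(fg)(ab)=f(a)g(b)+f(b)g(a)$. This expression is symmetric in $f$ and $g$, so $(fg)(ab)=(gf)(ab)$ and hence $[f,g](ab)=0$, as required.

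The only genuinely delicate point I anticipate is the bookkeeping in that last expansion, together with the two structural facts it relies on: that $\bigoplus_{d,r}P_{\rm (P)M}^{\rm sym}(d,r,2)$ is precisely the ideal-theoretic square of the augmentation ideal (so that additivity of $f$ is the same as $f$ annihilating ${\mathfrak m}_{\rm (P)M}^2$) and that the bigrading is connected (so the reduced coproduct maps ${\mathfrak m}_{\rm (P)M}$ into ${\mathfrak m}_{\rm (P)M}\otimes{\mathfrak m}_{\rm (P)M}$); both are already in hand. Conceptually the lemma merely asserts that the additive invariants are the primitive elements of the Hopf algebra $(P_{\rm (P)M}^{\rm sym})^\#\otimes_\Z\Q$ and that primitives of any Hopf algebra are closed under the commutator bracket, and I may phrase the final write-up in that language instead.
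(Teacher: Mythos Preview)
Your proposal is correct and follows essentially the same route as the paper's proof: part~(a) is handled by the degree observation that $P_{\rm (P)M}^{\rm sym}(1,i,2)=0$, and part~(b) uses exactly the same decomposition $\Delta(a)=a\otimes1+1\otimes a+a'$ with $a'\in{\mathfrak m}_{\rm (P)M}\otimes{\mathfrak m}_{\rm (P)M}$ followed by expanding $(fg)(ab)=(f\otimes g)\Delta(ab)$ and noting that only the terms $a\otimes b$ and $b\otimes a$ survive, yielding the symmetric expression $f(a)g(b)+f(b)g(a)$. Your added justifications (why $a'\in{\mathfrak m}\otimes{\mathfrak m}$ via~\eqref{eq:Delta}, and the interpretation via primitive elements) are a bit more explicit than the paper but do not change the argument.
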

\begin{proof}
Part (a) is clear. Suppose that $f,g\in (T_{\rm (P)M}^{\rm sym})^\vee$. Suppose that $a,b\in {\mathfrak m}_{\rm PM}$.
We can write $\Delta(a)=a\otimes 1+1\otimes a+a'$ 
and $\Delta(b)=b\otimes 1+1\otimes b+b'$
where
$a',b'\in {\mathfrak m}_{\rm PM}\otimes {\mathfrak m}_{\rm PM}$.
Note that $a'(b\otimes 1), a'(1\otimes b),a'b',b'(a\otimes 1),b'(1\otimes a)$
lie in ${\mathfrak m}_{\rm PM}^2\otimes {\mathfrak m}_{\rm PM}$
or ${\mathfrak m}_{\rm PM}\otimes {\mathfrak m}_{\rm PM}^2$.
It follows that
\begin{multline*}
fg(ab)=(f\otimes g)((a\otimes 1+1\otimes a)(b\otimes 1+1\otimes b))=
\\=
f\otimes g(ab\otimes 1+a\otimes b+b\otimes a+1\otimes ab)=
f(a)g(b)+f(b)g(a).
\end{multline*}
Similarly $gf(ab)=f(a)g(b)+f(b)g(a)$. We conclude that $[f,g](ab)=0$.
\end{proof}
\begin{prooffreecd}
From Lemma~\ref{lem:Lie} follows that $(T_{\rm PM}^{\rm sym})^\#\otimes_\Z\Q$
contains the free Lie algebra $Q\{u_0,u_1,u_2,\dots\}$ generated
by $u_0,u_1,\dots$, and $(T_{\rm M}^{\rm sym})^\#\otimes_\Z\Q$
contains $\Q\{u_0,u_1\}$.
By the Poincar\'e-Birkhoff-Witt theorem, the graded Hilbert series
of $(P_{\rm PM}^{\rm sym})^\#\otimes_\Z\Q\cong \Q\langle u_0,u_1,\dots\rangle$ is equal to the graded Hilbert series
of the symmetric algebra on $\Q\{u_0,u_1,\dots\}$.
On the other hand, the Hilbert series of $P_{\rm PM}^{\rm sym}\otimes \Z\Q$ is
equal to the Hilbert series on the symmetric algebra on $T_{\rm PM}^{\rm sym}\otimes_\Z\Q$.
So $(T_{\rm PM}^{\rm sym})^\#\otimes_Z\Q$ and $\Q\{u_0,u_1,\dots\}$
have the same graded Hilbert series, and must therefore be equal.
If we take the completion, we get $(T_{\rm PM}^{\rm sym})^\vee\otimes_\Z\Q=\Q\{\{u_0,u_1,\dots\}\}$.
The proof for matroids is similar and $T_{\rm M}^{\rm sym})^\vee\otimes_\Z\Q=\Q\{\{u_0,u_1\}\}$.
\end{prooffreecd}
One can choose a basis in the free Lie algebra. We will use the Lyndon basis.
A word (in some alphabet $A$ with a total ordering) is a {\em Lyndon word} if it 
is strictly smaller than any cyclic permutation of $w$ with respect to the lexicographic ordering. 
In particular, Lyndon words are aperiodic. If $\alpha\in \N$, we define $b(\alpha):=u_{\alpha}$.
If $\alpha=\alpha_1\alpha_2\cdots\alpha_d$ is a Lyndon word of length $d>1$, we define
$b(\alpha)=[b(u_\beta),b(u_{\gamma})]$ where $\gamma$ is a Lyndon word of maximal length
for which $\alpha=\beta\gamma$ and $\beta$ is a nontrivial word. 
The Lyndon basis of $\Q\{u_0,u_1\}$  (respectively
$\Q\{u_0,u_1,\dots\}$) is the set of all $b(\alpha)$ where $\alpha$ is a word in $\{0,1\}$ (respectively $\N$).
For details, see~\cite{Reu}.
Define ${\mathfrak t}^{\rm sym}_{\rm M}(d,r)$\label{not:mf tsym} (respectively ${\mathfrak t}^{\rm sym}_{\rm PM}(d,r)$) as the set of all 
Lyndon words $\alpha$ in the alphabet $\{0,1\}$ (respectively $\N$) of length $d$ with $|\alpha|=d$.
The following theorem follows.
\begin{theorem}\label{theo:Tdual}
The space $(T_{\rm (P)M}^{\rm sym})^\vee(d,r)\otimes_\Z\Q$ of $\Q$-valuative additive invariants
for (poly)matroids on $\underline{d}$ of rank $r$ has the basis given by all $b(\alpha)$
with $\alpha\in {\mathfrak p}_{\rm (P)M}^{\rm sym}(d,r)$.
\end{theorem}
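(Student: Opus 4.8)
The plan is to obtain this as an essentially immediate consequence of Theorem~\ref{theo:free}(c),(d) together with the classical theory of the Lyndon basis of a free Lie algebra; the only genuine work is to check that the combinatorial grading by Lyndon words agrees with the bigrading on $T^{\rm sym}_{\rm (P)M}$.

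First I would invoke Theorem~\ref{theo:free}(c),(d), which identifies $(T^{\rm sym}_{\rm M})^\vee\otimes_\Z\Q$ with $\Q\{\{u_0,u_1\}\}$ and $(T^{\rm sym}_{\rm PM})^\vee\otimes_\Z\Q$ with $\Q\{\{u_0,u_1,u_2,\dots\}\}$, the completions of the corresponding free Lie algebras. By the standard structure theory of free Lie algebras (see~\cite{Reu}), the elements $b(\alpha)$, as $\alpha$ ranges over all Lyndon words in the alphabet $\{0,1\}$ (respectively $\N$), form a $\Q$-basis of the uncompleted free Lie algebra $\Q\{u_0,u_1\}$ (respectively $\Q\{u_0,u_1,u_2,\dots\}$).

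Next I would track the bigrading. Each $u_i$ is homogeneous of bidegree $(1,i)$, and by~(\ref{eq:Delta}) the multiplication on $(P^{\rm sym}_{\rm (P)M})^\#$ satisfies $u_\alpha\cdot u_\beta=\binom{d+e}{d}u_{\alpha\beta}$, so it is bigraded; hence so is the bracket $[f,g]=fg-gf$. Consequently, for a Lyndon word $\alpha=\alpha_1\alpha_2\cdots\alpha_d$ the element $b(\alpha)$, being built from the $u_{\alpha_i}$ using only this bracket, is homogeneous of bidegree $(d,|\alpha|)$ with $|\alpha|=\alpha_1+\cdots+\alpha_d$. Thus the free Lie algebra decomposes as $\bigoplus_{d,r}L(d,r)$, where $L(d,r)$ has $\Q$-basis $\{b(\alpha):\alpha$ a Lyndon word of length $d$ with $|\alpha|=r\}$; since the alphabet is $\{0,1\}$ in the matroid case and $\N$ in the polymatroid case, this indexing set is precisely ${\mathfrak t}^{\rm sym}_{\rm (P)M}(d,r)$.

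Finally, because $L(d,r)$ is finite-dimensional, it is unaffected by passing to the completion, so the bidegree-$(d,r)$ part of $(T^{\rm sym}_{\rm (P)M})^\vee\otimes_\Z\Q$ — which is exactly $(T^{\rm sym}_{\rm (P)M}(d,r))^\vee\otimes_\Z\Q$, the space of $\Q$-valued additive invariants on $\underline d$ of rank $r$ — equals $L(d,r)$ and therefore has the asserted basis. I expect the only point requiring any care to be the bookkeeping of the preceding paragraph: verifying that the recursively defined $b(\alpha)$ really are homogeneous for the bigrading inherited from $P^{\rm sym}_{\rm (P)M}$, which however follows at once from homogeneity of the generators $u_i$ and of the Lie bracket.
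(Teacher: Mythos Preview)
Your proposal is correct and is exactly the argument the paper gives: after establishing Theorem~\ref{theo:free}(c),(d) and recalling the Lyndon basis of the free Lie algebra, the paper simply states ``The following theorem follows,'' and your writeup just makes the bigrading bookkeeping explicit. Note that the indexing set in the theorem statement should read ${\mathfrak t}^{\rm sym}_{\rm (P)M}(d,r)$ rather than ${\mathfrak p}^{\rm sym}_{\rm (P)M}(d,r)$, as you implicitly recognize.
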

\begin{example}
For $d=6$, $r=3$ we have 
$$
{\mathfrak t}_{\rm M}^{\rm sym}(6,3)=\{000111,001011,001101\}
$$f
and 
\begin{multline*}
{\mathfrak t}_{\rm PM}^{\rm sym}(6,3)=
\{000003,000012,
000021,000102,000111,\\
000201,001002,001011,001101\}. 
\end{multline*}
\end{example}
\begin{proposition}
The Hopf algebra $P_{\rm PM}^{\rm sym}\otimes_\Z\Q$ is isomorphic to the ring $\QSym$ of quasi-symmetric functions over $\Q$.
\end{proposition}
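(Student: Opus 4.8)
The plan is to pass to graded duals. Grade both $\QSym$ and $P_{\rm PM}^{\rm sym}\otimes_\Z\Q$ by total degree, placing the monomial quasi-symmetric function $M_\alpha$ in degree $|\alpha|$ and $P_{\rm PM}^{\rm sym}(d,r)$ in degree $d+r$; this is a Hopf-algebra grading on $P_{\rm PM}^{\rm sym}$ because both $\nabla$ and $\Delta$ add $d$ and $r$ separately, and by Theorem~\ref{theo1.4}(b) all graded pieces are finite-dimensional, so $P_{\rm PM}^{\rm sym}\otimes_\Z\Q$ is the graded dual of $(P_{\rm PM}^{\rm sym})^\#\otimes_\Z\Q$. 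It therefore suffices to produce an isomorphism of graded Hopf algebras between $(P_{\rm PM}^{\rm sym})^\#\otimes_\Z\Q$ and $\NSym$, the ring of noncommutative symmetric functions, since $\QSym$ is classically the graded dual of $\NSym$.

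First I would recall from the proof of Theorem~\ref{theo:free}(b) that $(P_{\rm PM}^{\rm sym})^\#\otimes_\Z\Q$ is, as an algebra, the free associative algebra $\Q\langle u_0,u_1,u_2,\dots\rangle$; since $u_i$ has bidegree $(1,i)$ and hence total degree $i+1$, this is a free associative algebra with exactly one free generator in each positive degree, as is $\NSym$. Next I would identify the comultiplication on these generators. Since $u_i$ is supported in bidegree $(1,i)$, and the only ways to write $(1,i)=(d_1,r_1)+(d_2,r_2)$ with $P_{\rm PM}^{\rm sym}$ nonzero in both bidegrees $(d_1,r_1)$ and $(d_2,r_2)$ are $(0,0)+(1,i)$ and $(1,i)+(0,0)$, while $P_{\rm PM}^{\rm sym}(0,0)$ has rank $1$, each $u_i$ must be primitive: $\Delta(u_i)=u_i\otimes 1+1\otimes u_i$. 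As the comultiplication is an algebra homomorphism and the $u_i$ generate, this determines the Hopf-algebra structure of $(P_{\rm PM}^{\rm sym})^\#\otimes_\Z\Q$ completely.

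Then I would invoke the classical structure theory of $\NSym$: it is free associative over $\Q$ on one generator in each positive degree, and it admits such a system of free generators that are primitive (for instance the noncommutative power-sum functions $\Psi_n$). The degree-preserving bijection $u_i\mapsto\Psi_{i+1}$ extends uniquely to an isomorphism $\phi$ of graded algebras, and $\phi$ is also a coalgebra map, since $\Delta\circ\phi$ and $(\phi\otimes\phi)\circ\Delta$ are both algebra homomorphisms agreeing on the primitive generators. Hence $\phi$ is an isomorphism of graded Hopf algebras, and dualizing gives $P_{\rm PM}^{\rm sym}\otimes_\Z\Q\cong\QSym$.

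There is no serious obstacle; the only points needing care are the primitivity of $u_i$ and the verification that the graded-dual formalism applies, both of which are routine given Theorem~\ref{theo1.4}(b). As a more invariant-theoretic alternative, one can instead use the map $\overline{\mathcal G}\colon P_{\rm PM}^{\rm sym}\otimes_\Z\Q\to\QSym$ induced by $\mathcal G$ (well defined because $\mathcal G$ is valuative): by the proof of Theorem~\ref{theo:1.1} the $u_\alpha$ are the coefficients of $\mathcal G$ and form a basis of $(P_{\rm PM}^{\rm sym})^\vee\otimes_\Z\Q$ dual to $\{U_\alpha\}$, so the transpose of $\overline{\mathcal G}$ sends $U_\alpha^\vee$ to $u_\alpha$ and is therefore surjective in each degree; with the dimension count of Theorem~\ref{theo1.4}(b) this makes $\overline{\mathcal G}$ a linear isomorphism, and it is a Hopf morphism because $\mathcal G$ is multiplicative and comultiplicative, hence a Hopf algebra isomorphism.
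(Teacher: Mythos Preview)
Your proof is correct and takes essentially the same approach as the paper: identify $(P_{\rm PM}^{\rm sym})^\#\otimes_\Z\Q$ with $\NSym$ by sending $u_i$ to a primitive free generator of degree $i+1$, verify that the $u_i$ are primitive, and then dualize to obtain $\QSym$. Your bidegree argument for primitivity is simply a spelled-out version of what the paper leaves to the reader, and your $\Psi_{i+1}$ versus the paper's $p_{i+1}$ is only a notational difference.
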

\begin{proof}
If we set $u_i=p_{i+1}$ then the associative algebra $P_{\rm PM}^{\rm sym}\otimes_\Z\Q$ 
is isomorphic to $\NSym=\Q\langle p_1,p_2,\dots\rangle$.
The ring $\NSym$ has a Hopf algebra structure with $\Delta(p_i)=p_i\otimes 1+1\otimes p_i$ (see~\cite[\S7.2]{Derksen}).
The reader may verify that 
$$
\Delta(u_i)=\sum u_i\otimes 1+1\otimes u_i.
$$
This shows that the isomorphism is a Hopf-algebra isomorphism. 
It follows that $P_{\rm PM}^{\rm sym}\otimes_\Z\Q$ is isomorphic to $\QSym$, the Hopf-dual of $\NSym$. 
\end{proof}
If we identify $P_{\rm PM}^{\rm sym}\otimes_\Z\Q$ with $\QSym$, then ${\mathcal G}$ is equal 
to $\psi^{\rm sym}_{\rm PM}$.

If a multiplicative invariant is also valuative, then there exists a group homomorphism $\widehat{f}:P^{\rm sym}_{\rm *M}\to A$
such that $f=\widehat{f}\circ \psi^{\rm sym}_{\rm *M}$. Since $\psi^{\rm sym}_{\rm *M}$ is onto, 
$\widehat{f}$ is a ring homomorphism as well. So there is a bijection
between valuative, multiplicative invariants with values in $A$, and ring homomorphisms $\widehat{f}:P^{\rm sym}_{\rm\star M}\to A$.
By Corollary~\ref{cor:Ppoly},
the ring $P^{\rm sym}_{\rm\star M}$ is a polynomial ring, so ring homomorphisms 
$P^{\rm sym}_{\rm\star M}\to A$ are in bijection with set maps to~$A$
from a set of generators $\widetilde{G}$ of~$P^{\rm sym}_{\rm\star M}$.
One such set is a lift of a basis of ${\mathfrak m}_{\rm \star M}/{\mathfrak m}_{\rm \star M}^2$.  
The next corollary follows.
\begin{corollary}
The set of valuative, multiplicative invariants 
on the set of $*$ma\-troids with values in $A$ is isomorphic to 
$\Hom_\Z({\mathfrak m}_{*M}/{\mathfrak m}_{*M}^2,A).$
\end{corollary}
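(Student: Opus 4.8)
The plan is to assemble the chain of identifications already indicated in the paragraph preceding the statement. First I would observe that a multiplicative invariant $f\colon Z^{\rm sym}_{\rm *M}\to A$ that is also valuative factors \emph{uniquely} through $\Psi^{\rm sym}_{\rm *M}$: the valuative hypothesis supplies some group homomorphism $\widehat f\colon P^{\rm sym}_{\rm *M}\to A$ with $f=\widehat f\circ\Psi^{\rm sym}_{\rm *M}$, and $\widehat f$ is unique because $\Psi^{\rm sym}_{\rm *M}$ is surjective. Since $\Psi^{\rm sym}_{\rm *M}$ is a surjective \emph{ring} homomorphism (it is a Hopf algebra homomorphism, as recorded in Section~\ref{sec:Hopf}; concretely it sends $[\rk_1\oplus\rk_2]$ to $[Q(\rk_1)\times Q(\rk_2)]$ by~\eqref{eq:nabla}) and $f$ is a ring homomorphism, $\widehat f$ is automatically a ring homomorphism. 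Conversely, every ring homomorphism $\widehat f\colon P^{\rm sym}_{\rm *M}\to A$ pulls back along $\Psi^{\rm sym}_{\rm *M}$ to a valuative multiplicative invariant, and the two constructions are mutually inverse. Hence the valuative multiplicative invariants with values in $A$ are in bijection with $\Hom_{\rm ring}(P^{\rm sym}_{\rm *M},A)$.

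Next I would invoke Corollary~\ref{cor:Ppoly}: $P^{\rm sym}_{\rm *M}$ is a polynomial ring $\Z[\widetilde G]$ over $\Z$, where (from the proof of that corollary) the generating set $\widetilde G$ may be taken to be a lift of a $\Z$-basis of $T^{\rm sym}_{\rm *M}={\mathfrak m}_{\rm *M}/{\mathfrak m}_{\rm *M}^2$; here ${\mathfrak m}_{\rm *M}$ is the ideal $\bigoplus_{d\geq1,r}P^{\rm sym}_{\rm *M}(d,r)$, which is exactly the ideal generated by $\widetilde G$. A ring homomorphism out of a polynomial ring over $\Z$ is determined by, and may freely prescribe, the images of the polynomial generators, so $\Hom_{\rm ring}(P^{\rm sym}_{\rm *M},A)$ is in bijection with the set of functions $\widetilde G\to A$. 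Finally, in the polynomial ring $\Z[\widetilde G]$ the quotient ${\mathfrak m}_{\rm *M}/{\mathfrak m}_{\rm *M}^2$ is the free $\Z$-module on the images of the elements of $\widetilde G$; therefore functions $\widetilde G\to A$ correspond to $\Z$-module homomorphisms ${\mathfrak m}_{\rm *M}/{\mathfrak m}_{\rm *M}^2\to A$. Composing the three bijections yields the asserted isomorphism.

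There is no serious obstacle: the entire content sits in Corollary~\ref{cor:Ppoly}, which supplies the polynomial-ring structure, together with the fact that $\Psi^{\rm sym}_{\rm *M}$ respects multiplication. The one point deserving a sentence of care is that ${\mathfrak m}_{\rm *M}/{\mathfrak m}_{\rm *M}^2$ is a free $\Z$-module, so that "basis" is meaningful and the last bijection is legitimate; as noted this is automatic from the polynomial-ring description. I would also remark that the resulting identification is a bijection of sets (respectively of abelian groups when $A$ carries one, using the group structure on $\Hom_\Z$), not of rings, since a product of multiplicative invariants need not be multiplicative, and that the bijection depends on the choice of $\widetilde G$ although its existence does not.
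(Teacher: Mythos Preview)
Your argument is correct and follows exactly the route sketched in the paragraph preceding the corollary in the paper: factor a valuative multiplicative invariant uniquely through $\Psi^{\rm sym}_{\rm *M}$ to get a ring homomorphism out of $P^{\rm sym}_{\rm *M}$, then use Corollary~\ref{cor:Ppoly} to identify such ring homomorphisms with set maps from a lift $\widetilde G$ of a basis of ${\mathfrak m}_{\rm *M}/{\mathfrak m}_{\rm *M}^2$, hence with $\Hom_\Z({\mathfrak m}_{\rm *M}/{\mathfrak m}_{\rm *M}^2,A)$. Your additional remarks (freeness of ${\mathfrak m}/{\mathfrak m}^2$, dependence on the choice of $\widetilde G$, and that the bijection is only of sets) are accurate clarifications that the paper leaves implicit.
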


\section*{Acknowledgement}

The second author is grateful to David Speyer for collaboration
which led to a proof of Theorem~\ref{theo:1.1} before he got in
touch with the first author.  

\newpage
\appendix
\section{Equivalence of the weak and strong valuative property}\label{apA}
In this section we will prove that the weak valuative property and the
strong valuative property are equivalent.

For a megamatroid polyhedron $\Pi$, let $\vertices(\Pi)$ be the vertex
set  of the polyhedron.\label{not:vertices}
\label{not:W}
Let $W_{\rm MM}(d,r)$ be the subgroup of $Z_{\rm MM}(d,r)$
generated by all $m_{\rm val}(\Pi;\Pi_1,\dots,\Pi_k)$ where $\Pi=\Pi_1\cup \cdots \cup \Pi_k$
is a megamatroid polyhedron decomposition.
\label{not:W(V)}
Define $W_{\rm MM}(d,r,V)$ as the subgroup generated by all the $m_{\rm val}(\Pi;\Pi_1,\dots,\Pi_k)$
where $\vertices(\Pi)\subseteq V$. 

A megamatroid $\rk:2^{\underline{d}}\to \Z\cup\infty$ is called {\em bounded from above} if $\rk(\underline{i})<\infty$ for $i=1,2,\dots,d$.
\label{not:W+}
The group $W_{\rm MM}^+(d,r)$
is the subgroup of $Z_{\rm MM}(d,r)$ generated by all $m_{\rm val}(\Pi;\Pi_1,\dots,\Pi_k)$
where $\Pi$ is bounded from above, and $W_{\rm MM}^+(d,r,V)$
is the subgroup of $Z_{\rm MM}(d,r)$ generated by all $m_{\rm val}(\Pi;\Pi_1,\dots,\Pi_k)$ where
$\Pi$ is bounded from above and $\vertices(\Pi)\subseteq V$.

\begin{lemma}\label{lemcones}
If $\rk$ is a megamatroid bounded from above,
then there exist megamatroids $\rk_1,\dots,\rk_k$ which are bounded from above 
and integers $a_1,\dots,a_k$
such that
$$
\langle \rk\rangle-\sum_{i=1}^k a_i\langle\rk_i\rangle\in W_{\rm MM}^+(d,r,\vertices(\Pi))
$$
and $\vertices(Q(\rk_i))$ consists of a single vertex of $\Pi:=Q(\rk)$ for all $i$.
\end{lemma}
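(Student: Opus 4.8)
The plan is to decompose $\Pi := Q(\rk)$ into the tangent cones at its vertices, using the Brianchon--Gram theorem, and to observe that this decomposition is essentially already available from Theorem~\ref{theo1}. Recall that $\rk$ being bounded from above means $\rk(\underline i) < \infty$ for all $i$, and that $Q(\rk)$ is therefore a polyhedron with finitely many vertices; in fact, since $\Pi$ lives in the hyperplane $\sum y_i = r$ and is cut out by the inequalities $\sum_{i \in A} y_i \le \rk(A)$, its recession cone is a subcone of $\{y : \sum y_i = 0\}$, so $\Pi$ does have at least one vertex and only finitely many. For a vertex $v$ of $\Pi$, let $\Cone_v$ be the tangent cone at $v$; it is a megamatroid polyhedron whose unique vertex is $v$, and the associated megamatroid is bounded from above (its rank function is the one whose base polyhedron is $\Cone_v$). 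By the Brianchon--Gram theorem (Theorem~\ref{theoBG}), applied to $\Pi$ regarded as a polyhedron in the hyperplane $\sum y_i = r$,
$$
[Q(\rk)] = \sum_{F} (-1)^{\dim F} [\Cone_F],
$$
where $F$ runs over bounded faces of $\Pi$. The key point is to reorganize the right-hand side of this identity into a statement at the level of $Z_{\rm MM}(d,r)$ modulo $W^+_{\rm MM}(d,r,\vertices(\Pi))$.

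First I would make precise that each tangent cone $\Cone_F$ decomposes (as a polyhedron) into pieces built from the tangent cones at the vertices of $\Pi$. Concretely, for any bounded face $F$ and any vertex $v \in \vertices(F)$, there is a standard ``local Brianchon--Gram'' relation expressing $[\Cone_F]$ in terms of the $[\Cone_v]$ and their intersections, and all the polyhedra appearing are faces or intersections of faces of translates of the $\Cone_v$; crucially they are all bounded from above and have vertex set contained in $\vertices(\Pi)$. More efficiently, I would invoke Theorem~\ref{theo1} directly: it already expresses $[Q(\rk)]$ as an integral combination $\sum_{\underline X} (-1)^{d - \ell(\underline X)}[R_{\rm MM}(\underline X, \rk)]$, and each $R_{\rm MM}(\underline X, \rk)$ with $\underline X$ a maximal chain (length $d$) is precisely the tangent cone at a vertex of $\Pi$; for shorter chains, $R_{\rm MM}(\underline X, \rk)$ is a larger polyhedron which itself has a vertex among $\vertices(\Pi)$ (or is empty), and in turn decomposes via its own tangent cones. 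So an induction on codimension, peeling off one tangent-cone relation at a time, reduces everything to the vertex cones.

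The mechanism that converts all of this into membership in $W^+_{\rm MM}(d,r,\vertices(\Pi))$ is the following: whenever $\Pi' = \Pi'_1 \cup \cdots \cup \Pi'_m$ is a megamatroid polyhedron decomposition in which $\Pi'$ is bounded from above and $\vertices(\Pi') \subseteq \vertices(\Pi)$, the element $m_{\rm val}(\Pi'; \Pi'_1, \dots, \Pi'_m)$ lies in $W^+_{\rm MM}(d,r,\vertices(\Pi))$ by definition, and this lets us replace $\langle \rk_{\Pi'}\rangle$ by the alternating sum of $\langle \rk_{\Pi'_I}\rangle$ over $I$ at no cost modulo $W^+$. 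The Brianchon--Gram decomposition of $\Pi$ into tangent cones is such a decomposition (after intersecting with a large box to make it a genuine polytope decomposition, then passing to the limit as in the proof of Theorem~\ref{theo1}), as is each subsequent tangent-cone decomposition of a face. Iterating, and collecting terms, we obtain integers $a_1, \dots, a_k$ and megamatroids $\rk_1, \dots, \rk_k$, each bounded from above and each with $Q(\rk_i)$ having a single vertex lying in $\vertices(\Pi)$, such that $\langle \rk \rangle - \sum_i a_i \langle \rk_i \rangle \in W^+_{\rm MM}(d,r,\vertices(\Pi))$.

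The main obstacle I anticipate is bookkeeping: making sure that every intermediate polyhedron appearing (faces of tangent cones, intersections of translated tangent cones) is genuinely a megamatroid polyhedron, is bounded from above, and has vertex set inside $\vertices(\Pi)$ — the last of these because $W^+_{\rm MM}(d,r,V)$ is defined with a fixed vertex-set constraint and we must not accidentally introduce new vertices. The boundedness-from-above condition requires a little care since tangent cones are unbounded; one wants to truncate with a box $\{y_i \le N\}$, run the finite polytope version, and take $N \to \infty$ exactly as in the proof of Theorem~\ref{theo1}, checking that indicator functions converge pointwise and that the vertex-set and ``bounded from above'' conditions are stable under this limit. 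Apart from that, the argument is a direct unwinding of Brianchon--Gram plus the definition of $W^+_{\rm MM}$.
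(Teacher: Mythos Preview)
Your proposal has a genuine gap: the Brianchon--Gram identity (and likewise Theorem~\ref{theo1}) is an identity of indicator functions in $P_{\rm MM}(d,r)$, not a megamatroid polyhedron decomposition in the sense of Definition~\ref{def:weak}. The tangent cones $\Cone_v$ at distinct vertices $v$ of $\Pi$ all contain $\Pi$ itself, so they overlap full-dimensionally; intersecting with a large box does not change this. Hence there is no $m_{\rm val}(\cdot;\cdot)$ element encoding the Brianchon--Gram relation, and you have no direct way to place $\langle\rk\rangle - \sum_i a_i\langle\rk_i\rangle$ in $W^+_{\rm MM}(d,r,\vertices(\Pi))$, which by definition is generated only by such $m_{\rm val}$ elements coming from genuine decompositions. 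You would need to know that every indicator-function relation lifts to $W_{\rm MM}$, but that is precisely Theorem~\ref{theoweakstrong}, and Lemma~\ref{lemcones} is being proved \emph{in order to} establish Theorem~\ref{theoweakstrong}; invoking it here is circular.

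The paper's argument avoids this by never leaving the world of genuine decompositions. It picks an edge direction $e_i-e_j$ with $i>j$, forms the Minkowski sum $\Pi+L$ with the half-line $L=\R_{\geq 0}(e_i-e_j)$, and observes that $\Pi+L = \Pi\cup(F_1+L)\cup\cdots\cup(F_s+L)$ is an honest megamatroid polyhedron decomposition (the shadow facets plus $\Pi$ tile $\Pi+L$ with face-intersections). The choice $i>j$ ensures everything stays bounded from above, and the vertex sets of $\Pi+L$ and the $F_\ell+L$ are proper subsets of $\vertices(\Pi)$, so induction on $|\vertices(\Pi)|$ finishes. This is the Lawrence--Varchenko mechanism: each step is a polyhedral subdivision, so each step contributes an element of $W^+_{\rm MM}(d,r,\vertices(\Pi))$ by definition, with no appeal to the strong-equals-weak theorem.
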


\noindent This lemma follows from the Lawrence-Varchenko polar decomposition of~$Q(\rk)$
\cite{Lawrence2,Varchenko}.  For explicitness we give a proof.

\begin{proof}
Let $T$ be the group generated by $W_{\rm MM}^+(d,r,\vertices(\Pi))$
and all megamatroid polyhedra $\Gamma$ which are bounded from above,
and whose vertex set consists of a single element of $\vertices(\Pi)$.
We prove the lemma by induction on $\left|\vertices(Q(\rk))\right|$.
If $\left|\vertex(\Pi)\right|=1$ then the result is clear.
Otherwise, we can find vertices $v$ and $w$ of $\Pi$
such that $v-w$ is parallel to $e_i-e_j$
for some $i,j$ with $i> j$. Consider the half-line $L=\R_{\geq 0}(e_i-e_j)$
where $\R_{\geq 0}$ is the set of nonnegative real numbers. Let $\Pi+L$ be
the Minkowski sum. Let us call a facet $F$ of $\Pi$ a {\em shadow facet} if $(F+L)\cap \Pi=F$.
Suppose that $F_1,\dots,F_j$ are the shadow facets of $\Pi$.

We have a megamatroid polyhedron decomposition
$$
\Pi+L=\Pi\cup (F_1+L)\cup \cdots\cup (F_j+L).
$$
Note that $\Pi+L,F_1+L,\dots,F_j+L$ are bounded from above.
The set $\vertices(\Pi+L)$ is a proper subset of $\vertices(\Pi)$ because it cannot contain both $v$ and $w$.
Also $\vertices(F_i+L)$ is contained in $\vertices(F_i)$ for all $i$, and is therefore a proper subset of $\vertices(\Pi)$
for all shadow facets $F$.
The element
$$
\langle \rk\rangle+m_{\rm val}(\Pi+L;\Pi,F_1+L,\dots,F_j+L)
$$
is an integral combination of terms $\langle\rk'\rangle$
where $Q(\rk')$ is a face of $\Pi+L$ or a face of $F_i+L$ for some $i$.
In particular, for each such term $\langle \rk'\rangle$, the
polyhedron $Q(\rk')$ is bounded from above, 
and $\vertices(Q(\rk'))$ is a proper subset of $\vertices(Q(\rk))$.
Hence by induction 
 $$
\langle \rk\rangle+m_{\rm val}(\Pi+L;\Pi,F_1+L,\dots,F_j+L)\in T.
$$
Now it follows that  $\langle\rk\rangle\in T$.
\end{proof}

\begin{proposition}\label{propweakstrong}
Suppose that $\rk_1,\dots,\rk_k$ are megamatroids\linebreak which are bounded
from above and $a_1,\dots,a_k$ are integers such that
$$
\sum_{i=1}^k a_i[Q(\rk_i)]=0.
$$
Then we have
$$
\sum_{i=1}^k a_i\langle \rk_i \rangle\in W_{\rm MM}^+(d,r,V)
$$
where $V=\bigcup_{i=1}^k \vertices(Q(\rk_i))$.
\end{proposition}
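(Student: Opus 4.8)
The plan is to reduce to the case where each $Q(\rk_i)$ is a cone with a single vertex, then to observe that ``bounded from above'' automatically makes such cones pointed in one fixed direction, then to peel off vertices one at a time by induction on $|V|$, leaving a base case about cones sharing a common apex which I would settle by slicing with a hyperplane and quoting the classical (Groemer--Volland) equivalence for polytopes.

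\emph{First reduction.} Recall from the inclusion--exclusion computation preceding Theorem~\ref{theoweakstrong} that $\Psi_{\rm MM}$ kills every $m_{\rm val}(\Pi;\Pi_1,\dots,\Pi_k)$, so $W_{\rm MM}^+(d,r,V')\subseteq\ker\Psi_{\rm MM}$ for all $V'$, and clearly $W_{\rm MM}^+(d,r,V')\subseteq W_{\rm MM}^+(d,r,V)$ when $V'\subseteq V$. Applying Lemma~\ref{lemcones} to each $\rk_i$, we may, modulo $W_{\rm MM}^+(d,r,\vertices(Q(\rk_i)))\subseteq W_{\rm MM}^+(d,r,V)$, replace $\langle\rk_i\rangle$ by an integral combination of $\langle\sigma\rangle$ with $Q(\sigma)$ a bounded-from-above cone having a single vertex in $\vertices(Q(\rk_i))\subseteq V$; applying $\Psi_{\rm MM}$ shows the hypothesis $\sum_i a_i[Q(\rk_i)]=0$ is preserved. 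Hence we may assume each $Q(\rk_i)$ has a single vertex $v_i\in V$.

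\emph{A uniform direction.} Being bounded from above forces the recession cone of $Q(\rk_i)$ into $K:=\mathrm{cone}(e_2-e_1,\dots,e_d-e_{d-1})$, since every recession direction $u$ satisfies $\sum_{l\le m}u_l\le 0$ for $m<d$ and $\sum u_l=0$. Put $\eta:=\sum_{m=1}^{d-1}\varepsilon^{m-1}\eta_m$ with $\eta_m(y)=\sum_{i\le m}y_i$ and $\varepsilon>0$ small; this is a positive combination of the $\eta_m$, so bounded above on each $Q(\rk_i)$, its restriction to $\{\sum y_i=r\}$ is $y\mapsto\sum_i c_iy_i$ with $c_1>\dots>c_{d-1}>c_d=0$, hence $\langle\eta,u\rangle<0$ for all $u\in K\setminus\{0\}$, and for generic small $\varepsilon$ the functional $\eta$ is injective on the finite set $V$. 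Since $Q(\rk_i)=v_i+(\text{recession cone})\subseteq v_i+K$, the functional $\langle\eta,\cdot\rangle$ attains its maximum over $Q(\rk_i)$ uniquely at $v_i$; in particular $Q(\rk_i)\subseteq\{\langle\eta,\cdot\rangle\le\langle\eta,v_i\rangle\}$.

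\emph{Induction on $|V|$.} The case $|V|=0$ is vacuous. For $|V|\ge 2$ let $v_0$ maximize $\langle\eta,\cdot\rangle$ over $V$ and put $c_1:=\max_{v\in V\setminus\{v_0\}}\langle\eta,v\rangle<c_0:=\langle\eta,v_0\rangle$. Write $g=\sum_{v_i=v_0}a_i[Q(\rk_i)]$ and $h=\sum_{v_i\ne v_0}a_i[Q(\rk_i)]$, so $g+h=0$. On $\{\langle\eta,\cdot\rangle>c_1\}$ the summands of $h$ vanish, so $g$ vanishes there; as each $Q(\rk_i)$ with $v_i=v_0$ is a cone with apex $v_0$, $g$ is invariant under $x\mapsto v_0+t(x-v_0)$ for $t>0$ away from $v_0$, and any $x\ne v_0$ can be scaled into $\{\langle\eta,\cdot\rangle>c_1\}$, so $g$ vanishes off $v_0$; moreover $h(v_0)=0$ because $v_0\notin\{\langle\eta,\cdot\rangle\le c_1\}\supseteq Q(\rk_i)$ for $v_i\ne v_0$, whence $g(v_0)=-h(v_0)=0$ and $g\equiv h\equiv 0$. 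Thus $\sum_{v_i=v_0}a_i[Q(\rk_i)]=0$ is a relation among cones with common apex $v_0$, settled by the base case to give $\sum_{v_i=v_0}a_i\langle\rk_i\rangle\in W_{\rm MM}^+(d,r,\{v_0\})\subseteq W_{\rm MM}^+(d,r,V)$, while $\sum_{v_i\ne v_0}a_i[Q(\rk_i)]=0$ has vertex set in $V\setminus\{v_0\}$ and is handled by induction; adding the two conclusions finishes the step.

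\emph{Base case and main obstacle.} When $|V|=1$, translating by the integral common apex (a bijection of $Z_{\rm MM}$ preserving ``bounded from above'' and the groups $W_{\rm MM}^+$) we may assume all $Q(\rk_i)$ are megamatroid cones with apex the origin. Each is $\eta$-pointed, so $P_i:=Q(\rk_i)\cap\{\langle\eta,\cdot\rangle=-1\}$ is a bounded polytope, the assignment $[Q(\rk_i)]\mapsto[P_i]$ is linear and injective modulo the line spanned by $[\{0\}]$, and $\sum_ia_i[Q(\rk_i)]=0$ becomes $\sum_ia_i=0$ together with $\sum_ia_i[P_i]=0$. Since bounded polytopes are closed under intersection, the Groemer--Volland equivalence \cite{Groemer,Volland} writes $\sum_ia_i\langle P_i\rangle$, in the free group on polytopes, as a $\Z$-combination of inclusion--exclusion relations $\langle P\cup P'\rangle+\langle P\cap P'\rangle-\langle P\rangle-\langle P'\rangle$ with $P\cup P'$ convex; coning each back over the origin and recombining recovers $\sum_ia_i\langle\rk_i\rangle$. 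The delicate point, and the main obstacle, is exactly this last transfer: megamatroid polyhedra are not closed under intersection, so one must pass to a common refinement along the fan generated by the edge directions $e_a-e_b$ of the cones involved and check that all unions, intersections and refined pieces remain megamatroid cones that are bounded from above with unique vertex the origin; granting that bookkeeping, each inclusion--exclusion relation among them is a genuine megamatroid polyhedron decomposition relation, i.e.\ an element of $W_{\rm MM}^+(d,r,\{0\})$.
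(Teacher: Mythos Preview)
Your first reduction via Lemma~\ref{lemcones} and your vertex-peeling induction on~$|V|$ match the paper's argument (the paper phrases the peeling as induction on~$k$ using the lexicographic order on vertices, but the content is the same, and your functional $\eta$ is a perfectly good substitute). The divergence is in the base case $|V|=1$, and there the step you yourself flag as ``the main obstacle'' is a genuine gap, not bookkeeping.

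You slice the common-apex cones by the generic hyperplane $\{\langle\eta,\cdot\rangle=-1\}$ and invoke Groemer--Volland for arbitrary convex polytopes, hoping to cone the resulting two-piece inclusion--exclusion relations back over the origin. But the auxiliary polytopes $P,P'$ produced by Groemer--Volland need not cone back to megamatroid polyhedra: their facet hyperplanes need not be of the form $\sum_{i\in A}y_i=c$, so the coned relations are not elements of $W_{\rm MM}^+(d,r,\{0\})$. Refining ``along the fan generated by the edge directions $e_a-e_b$'' does not fix this: the braid cones $\{y_{\sigma(1)}\le\cdots\le y_{\sigma(d)}\}$ are themselves not megamatroid polyhedra, and if instead you refine by the arrangement of hyperplanes $\{\sum_{i\in A}y_i=0\}$ (which does give megamatroid cones), you are left needing the proposition you are proving for the refined relation. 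Either way the ``transfer back'' is the substance of the proof, not an afterthought.

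The paper avoids leaving the megamatroid world altogether and argues the base case by induction on~$d$. Since bounded-from-above cones at the origin lie in $\{y_d\ge 0\}$, one slices with the \emph{megamatroid-compatible} hyperplane $\{y_d=1\}$ via $\rho(y_1,\dots,y_{d-1})=(y_1,\dots,y_{d-1},1)$; the slices $\rho^{-1}(Q(\rk_i))$ are bounded-from-above megamatroid polyhedra in $\R^{d-1}$, so the inductive hypothesis places the sliced relation in $W_{\rm MM}^+(d-1,-1,\{-e_1,\dots,-e_{d-1}\})$. Coning back sends megamatroid decompositions to megamatroid decompositions, hence lands in $W_{\rm MM}^+(d,0,\{0\})$, and the cones contained in $\{y_d=0\}$ are handled by a second use of the inductive hypothesis. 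If you replace your generic $\eta$-slice and the appeal to Groemer--Volland with this slice and induction on~$d$, you recover exactly the paper's argument.
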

\begin{proof}
First, assume that $Q(\rk_i)$ has only one vertex for all $i$. We prove the proposition by induction on $d$,
the case $d=1$ being clear. We will also use induction on $k$, the case $k=0$ being obvious.

For vectors $y=(y_1,\dots,y_d)$ and $z=(z_1,\dots,z_d)$, we say that $y>z$ in the lexicographic
ordering if there exists an $i$ such that $y_j=z_j$ for $j=1,2,\dots,i-1$ and $y_i>z_i$.
If $\rk$ is a megamatroid bounded from above, and $Q(\rk)$ has only one vertex $v$,
then $v$ is the largest element of $Q(\rk)$ with respect to the lexicographic ordering.

Assume $V=\{v_1,\dots,v_m\}$, where $v_1>v_2>\cdots>v_m$ in the lexicographical ordering.
Assume that $Q(\rk_1),\dots,Q(\rk_n)$ are the only megamatroids among $Q(\rk_1),\dots,Q(\rk_k)$ which
have $v_1$ as a vertex.
Because $v_1$ is largest in lexicographic ordering, $v_1$ does not lie
in any of the polyhedra $Q(\rk_{n+1}),\dots,Q(\rk_k)$. Because these polyhedra are closed,
there exists an open neighborhood $U$ of $v_1$ such that $U\cap Q(\rk_{j})=\emptyset$
for $j=n+1,\dots,k$.
If we restrict to $U$, we see that
$$
\sum_{i=1}^k a_i [Q(\rk_i)\cap U]=\sum_{i=1}^n a_i [Q(\rk_i)\cap U]=0
$$
Since $Q(\rk_1),\dots,Q(\rk_n)$ are cones with vertex $v_1$,
we have
$$
\sum_{i=1}^n a_i[Q(\rk_i)]=0.
$$
and
$$
\sum_{i=n+1}^k a_i[Q(\rk_i)]=0.
$$
If $n<k$, then by the induction on $k$, we know that
$$
\sum_{i=1}^n a_i\langle Q(\rk)_i\rangle\in W_{\rm MM}^+(d,r,V)
$$
and
$$
\sum_{i=n+1}^{k} a_i \langle Q(\rk_i)\rangle\in W_{\rm MM}^+(d,r,V),
$$
hence
$$
\sum_{i=1}^ka_i\langle Q(\rk_i)\rangle\in W_{\rm MM}^+(d,r,V).
$$

Assume that $n=k$, i.e., $Q(\rk_1),\dots,Q(\rk_k)$ all have vertex $v_1$.
After translation by $-v_1$, we may assume that $r=0$, and $v_1=0$.
Now $Q(\rk_1),\dots,Q(\rk_k)$ are all contained in the halfspace defined by $y_d\geq 0$
inside the hyperplane $y_1+\cdots+y_d=0$.

Define 
$$\rho:\{y\in \R^{d-1}\mid y_1+\cdots+y_{d-1}=-1\}\to \{y\in \R^d\mid y_1+\cdots+y_d=0\}$$
by $\rho(y_1,\dots,y_{d-1})=(y_1,\dots,y_{d-1},1)$.
Assume that $\rho^{-1}(Q(\rk_i))\neq \emptyset$ for $i=1,2,\dots,t$
and $\rho^{-1}(Q(\rk_i))=\emptyset$ for $i=t+1,\dots,k$.
For $i=1,2,\dots,t$, define  megamatroids $\rk_i':2^{\underline{d-1}}\to \Z\cup \{\infty\}$
such that $Q(\rk_i')=\rho^{-1}(Q(\rk_i))$.
We have 
$$\sum_{i=1}^t a_i[Q(\rk_i')]=
\sum_{i=1}^na_i[Q(\rk_i))]\circ \rho=0.
$$

Note that $Q(\rk_i')$ is bounded from above and
$\vertices(Q(\rk_i'))\subseteq \{-e_1,\dots,-e_{d-1}\}$ for $i=1,2,\dots,t$.
By induction on $d$ we have
\begin{equation}\label{eqxx}
\sum_{i=1}^t a_i\langle \rk_i'\rangle \in {W}_{\rm MM}^+(d-1,-1,\{-e_1,-e_2,\dots,-e_{d-1}\}).
\end{equation}
If $\Gamma$ is a megamatroid polyhedron inside $y_1+\cdots+y_{d-1}=-1$ which is bounded from above,
and $\vertex(\Gamma)\subseteq \{-e_1,\dots,-e_{d-1}\}$, then
define $\Cn(\Gamma)$
 as the closure of $\R_{\geq 0} \rho(\Gamma)$.
Note that $\Cn(\Gamma)$ is also a megamatroid polyhedron.
Define
$$\gamma:Z_{\rm MM}(d,-1,\{-e_2,\dots,-e_d\})\to Z_{\rm MM}(d,0,\{0\})
$$
by $\gamma(\langle\rk\rangle)=\langle\widehat{\rk}\rangle$, where $\widehat{\rk}$ is given by $Q(\widehat{\rk})=\Cn(Q(\rk))$.

If
$$
Q(\rk')=Q(\rk_1')\cup \cdots\cup Q(\rk_s')
$$
is a megamatroid decomposition inside $\{y\in \R^d\mid y_1+\cdots+y_{d-1}=-1\}$, then
$$
\Cn (Q(\rk'))=\Cn(Q(\rk_1'))\cup \cdots \cup \Cn(Q(\rk_s'))
$$
is also a megamatroid decomposition inside $y_1+\cdots+y_d=0$.

So  $\gamma$ maps $W^+_{\rm MM}(d,-1,\{-e_1,\dots,-e_{d-1}\})$ to
$W^+_{\rm MM}(d,0,\{0\})$.

Applying $\gamma$ to (\ref{eqxx}) we get
$$
\gamma\Big(\sum_{i=1}^t a_i\langle \rk_i'\rangle\Big)=
\sum_{i=1}^ta_i\langle\rk_i\rangle\in W^+_{\rm MM}(d,0,\{0\}).
$$
From this follows that
$\sum_{i=1}^ta_i[Q(\rk_i)]=0$. Since $\sum_{i=1}^k a_i[Q(\rk_i)]=0$, we have
that $\sum_{i=t+1}^ka_i[Q(\rk_i)]=0$.
Since $Q(\rk_i)$ is contained in the hyperplane defined by $y_d=0$ for 
$i=t+1,\ldots,k$, we can again use induction on $d$ to show that
$$
\sum_{i=t+1}^k a_i\langle \rk_i\rangle\in W^+_{\rm MM}(d,r,\{0\}).
$$
We conclude that
$$
\sum_{i=1}^k a_i\langle\rk_i\rangle=
\sum_{i=1}^{t} a_i\langle \rk_i\rangle+\sum_{i=t+1}^k a_i\langle \rk_i\rangle\in W^+_{\rm MM}(d,r,\{0\}).
$$

Assume now we are in the case where $\rk_1,\dots,\rk_k$ are arbitrary.
By Lemma \ref{lemcones}, we can find megamatroids $\rk_{i,j}$ bounded from above 
with only one vertex which is contained in the set $V$,
and integers $c_{i,j}$  such that
$$
\langle \rk_i\rangle-\sum_{j}c_{i,j}\langle\rk_{i,j}\rangle\in W_{\rm MM}^+(d,r,V)
$$
It follows that
$\sum_{i=1}^k a_ic_{i,j}[Q(\rk_{i,j})]=0$.
From the special case considered above, we obtain
$$
\sum_{i=1}^ka_i\langle\rk_i\rangle=\sum_{i=1}^k a_i\sum_jc_{i,j}\langle \rk_{i,j}\rangle\in W_{\rm MM}^+(d,r,V).
$$

\end{proof}

\begin{proofws}
It suffices to show that the kernel of $\Psi_{\rm MM}$ is contained
in $W_{\rm MM}(d,r)$.
Suppose that
$$
\Psi_{\rm MM}\Big(\sum_{i=1}^ka_i\langle \rk_i\rangle\Big)=\sum_{i=1}^k a_i[Q(\rk_i)]=0.
$$
Let $\sgn:\R\to\{-1,0,1\}$ be the signum function.
For a vector $\gamma=(\gamma_1,\dots,\gamma_d)\in \{-1,0,1\}^d$
and a megamatroid polyhedron $\Pi$, define
$$\Pi^\gamma=\{(y_1,\dots,y_d)\in\Pi\mid\forall i\,(\mbox{$\sgn y_i=\gamma_i$ or $ y_i=0$})\}.$$

For every $j$ we have a megamatroid polyhedron decomposition
\begin{equation}\label{eqdecompPij}
\Pi_j=\bigcup_{\gamma\in \{-1,1\}^d;\Pi_j^\gamma\neq \emptyset} \Pi_j^\gamma
\end{equation}
where $\gamma$ runs over $\{-1,1\}^d$. Intersections of the polyhedra $\Pi_i^\gamma$,
$\gamma\in \{-1,1\}$ are of the form $\Pi_i^\gamma$ where $\gamma\in \{-1,0,1\}^d$.
If $\Pi_i^\gamma\neq \emptyset$
define $\rk_i^\gamma$ such that $Q(\rk_i^\gamma)=\Pi_i^\gamma$.
From (\ref{eqdecompPij}) it follows that 
\begin{equation}\label{eqmval}
m_{\rm val}(\Pi_j;\{\Pi^{\gamma}_j\}_{\gamma\in \{-1,1\}^d})=
\langle\rk_j\rangle-
\sum_{\gamma\in \{-1,0,1\}^d;\Pi_i^\gamma\neq\emptyset}b^\gamma\langle \rk^\gamma_i\rangle
\in W_{\rm MM}(d,r)
\end{equation}
where the coefficients $b^\gamma\in \Z$ only depend on $\gamma$. (One
can show that $b_{\gamma}=(-1)^{z(\gamma)}$
where $z(\gamma)$ is the number of zeroes in $\gamma$, but we will not need this.)

For every $\gamma$ we have
$$
\sum_{i : \Pi^\gamma_i\neq \emptyset } a_i [\Pi_i^\gamma]=0
$$
For a given $\gamma$, we may assume after permuting the coordinates
that $\gamma_1\leq\gamma_2\leq \cdots \leq \gamma_d$.
It then follows that $\Pi_i^\gamma$ is bounded from above for all $i$.
By Proposition~\ref{propweakstrong}, we have
$$
\sum_{i} a_i\langle\rk^\gamma_i\rangle\in W_{\rm MM}(d,r)
$$
for all $\gamma$.
By (\ref{eqmval}) we get
$$
\sum_{i=1}^k a_i\langle\rk_i\rangle\in  W_{\rm MM}(d,r).
$$
\end{proofws}
\newpage

\section{Tables}\label{apB}
Below are the tables for the values of $p_{\rm PM}(d,r)$, $p_{\rm M}(d,r)$, $p_{\rm PM}^{\rm sym}(d,r)$, $p_{\rm M}^{\rm sym}(d,r)$,
$t_{\rm PM}(d,r)$, $t_{\rm M}(d,r)$, $t_{\rm PM}^{\rm sym}(d,r)$, $t_{\rm M}^{\rm sym}(d,r)$ for $d\leq 6$ and $r\leq 6$. Rows correspond to values of $d$ and columns correspond to values of $r$:
$$
\xymatrix{
\ar^r[r]\ar_d[d] & \\
 &}.
 $$
 \setlength\arraycolsep{2pt}
$$
\small
\begin{array}{ccccccccp{12pt}cccccccc}
 p_{\rm PM} & 0 & 1& 2 & 3 & 4 & 5 & 6 & &p_{\rm M} & 0 & 1 & 2 & 3 & 4 & 5 & 6  \\\cline{1-8}\cline{10-17}
 0& 1 &  & & & & & & & 0& 1 &  & & & & & \\\cline{2-8}\cline{11-17}
 1 & 1 & 1 &1 &1 &1 &1 & 1 & & 1 & 1 & 1 & & &  & & \\ \cline{2-8}\cline{11-17}
 2  & 1 & 3 & 5 &7 & 9 & 11 &13 & & 2  & 1 &3 &1 &  & & & \\ \cline{2-8}\cline{11-17}
 3 & 1 & 7 &19 & 37 & 61 & 91 & 127 & & 3 &1 &7 &7 &1 &  & & \\ \cline{2-8}\cline{11-17}
  4 & 1 & 15 & 65 & 175 &369 &671 & 1105 & &  4 & 1 & 15 &33 &15 & 1& & \\ \cline{2-8}\cline{11-17}
  5 & 1& 31& 211& 781& 2101 & 4651 & 9031 & & 5 & 1 & 31 & 131 & 131 & 31 &1  & \\\cline{2-8}\cline{11-17}
 6  &1  & 63 & 665 &3367 & 11529 & 31031  & 70993 & &6   & 1 & 63 &473 & 883  & 473 & 63 &1 \\ \\
 
 p_{\rm PM}^{\rm sym} & 0 & 1 & 2 & 3 & 4 & 5 & 6 & & p_{\rm M}^{\rm sym} & 0 & 1 & 2 & 3 & 4 & 5 & 6 \\ \cline{1-8}\cline{10-17}
 0& 1 &  & & & & & & & 0& 1  &  & & & & &\\ \cline{2-8}\cline{11-17}
 1 & 1& 1 &1 &1  &1 &1 & 1 & &  1 & 1 & 1  & &  & & &\\ \cline{2-8}\cline{11-17}
 2  &1 & 2 &3 & 4 &5 &6 &7 && 2  & 1&2  & 1 &  & & &\\ \cline{2-8}\cline{11-17}
 3 & 1 & 3& 6  & 10& 15& 21 & 28 &&3 & 1 & 3&3 & 1 & & &\\ \cline{2-8}\cline{11-17}
  4 &1 & 4 & 10  & 20 &35 & 56& 84 & & 4 &1 &4 & 6 &4 &1 & &\\ \cline{2-8}\cline{11-17}
  5 & 1 & 5 & 15 & 35 & 70 & 126 & 210 & & 5 & 1& 5 & 10 & 10 &5 &1 &\\ \cline{2-8}\cline{11-17}
 6   & 1 & 6 & 21 &56  &126 &252 & 462 & & 6  & 1& 6 &15 & 20 &15 & 6 &1 \\ \\
 t_{\rm PM} & 0 & 1 & 2 & 3 & 4 & 5 & 6  & &t_{\rm M} & 0 & 1 & 2 & 3 & 4 & 5 & 6 \\ \cline{1-8}\cline{10-17}
 0&  &  & & & & & & &0&  &  & & & & &\\ \cline{2-8}\cline{11-17}
 1 & 1 & 1 & 1 & 1 &1 &1 & 1 & &1 & 1 & 1 & &  & & &\\ \cline{2-8}\cline{11-17}
 2  &  & 1 & 2 & 3 &4 &5 & 6 & &2  & & 1 & &  & & &\\ \cline{2-8}\cline{11-17}
 3 & & 1& 4 & 9 & 16 &25 & 36 & &3 & &1 &1 &  & & &\\ \cline{2-8}\cline{11-17}
  4 &  & 1 & 8 & 27 & 64 & 125 & 216 & &4 & & 1& 4 &1 & & &\\ \cline{2-8}\cline{11-17}
  5 &  & 1 & 16 & 81 & 256 & 625 & 1296 & & 5 & &1 & 11 &11 &1 & &\\\cline{2-8}\cline{11-17}
 6   &  & 1 & 32 & 343 & 1024 & 3125 & 7776 & &6   & & 1 & 26 &66 &26 &1 & \\ \\
 t_{\rm PM}^{\rm sym} & 0 & 1 & 2 & 3 & 4 & 5 & 6 & &t_{\rm M}^{\rm sym} & 0 & 1 & 2 & 3 & 4 & 5 & 6 \\ \cline{1-8}\cline{10-17}
 0&  &  & & & & & & & 0&  &  & & & & &\\ \cline{2-8}\cline{11-17}
 1 & 1 & 1&1& 1 & 1&1 & 1 & &1 & 1 & 1 & &  & & &\\ \cline{2-8}\cline{11-17}
 2  & & 1 & 1& 2 &2 &3 & 3 & &2  & & 1 & &  & & &\\ \cline{2-8}\cline{11-17}
 3 & & 1& 2 & 3 &5 & 7& 9& &3 & & 1& 1 &  & & &\\ \cline{2-8}\cline{11-17}
  4 & & 1& 2 & 5 & 8 &14 & 20 & &4 & & 1& 1 & 1 & & &\\ \cline{2-8}\cline{11-17}
  5 & & 1 & 3 & 7 & 14 &25 &  42 & & 5 & & 1 & 2 & 2 & 1 & &\\ \cline{2-8}\cline{11-17}
 6  &  &1 & 3 & 9 & 20 & 42 & 75 & &6   & & 1  &2 &3 &2 &1 &
 \end{array}
 $$
 The tables
for $p_{\rm M},t_{\rm M},t_{\rm PM}^{\rm sym},t_{\rm M}^{\rm sym}$ can be computed recursively using the equations for
the generating functions in Theorems~\ref{theo1.4} and \ref{theo1.5}. The values for $p_{\rm PM},p_{\rm PM}^{\rm sym},p_{\rm M}^{\rm sym},t_{\rm PM}$
are trivial to compute, but are included here for comparision. The tables of $p_{\rm PM}^{\rm sym}$ and $p_{\rm M}^{\rm sym}$
are of course related to Pascal's triangle. 
The table for $p_{\rm M}^{\rm sym}$ appears in Sloane's 
{\em On-line Encyclopedia of Integer Sequences}~\cite{Sloane} 
as sequence {\bf A046802}. These numbers also appear in \cite{Singh}. We have $t_{\rm M}(d,r)=E(d-1,r-1)$ for $d,r\geq 1$, where the $E(d,r)$ are the Eulerian numbers. See the
{\em Handbook of Integer Sequences}~\cite{Sloane}, 
sequences {\bf A008292} and {\bf A123125}.
  The sequences $t_{\rm PM}^{\rm sym}$ and $t_{\rm M}^{\rm sym}$ are related to sequences {\bf A059966}, {\bf A001037}, and the sequence
  {\bf A051168} denoted by $T(h,k)$ in \cite{Sloane}.
  We have $t_{\rm PM}^{\rm sym}(d,r)=T(d-1,r)$ for $d\geq 1$ and $r\geq 0$,
  and $t_{\rm M}^{\rm sym}(d,r)=T(d-r-1,r)$ if $0\leq r<d$.

\section*{Index of selected notations}\label{notindex}
A subscript $_{\rm MM}$ or $_{\rm PM}$ or~$_{\rm M}$ on a notation refers
to the variant relating respectively to megamatroids or polymatroids or matroids.
The subscript $_{\rm *M}$ stands in for any of
$_{\rm MM}$ or $_{\rm PM}$ or~$_{\rm M}$, while
 $_{\rm (P)M}$ stands in for either of $_{\rm PM}$ or~$_{\rm M}$. 

Notations below with a dagger may have the parenthesis $(d,r)$ omitted,
in which case they refer to direct sums over all $d$ and~$r$.  
These are introduced on page~\pageref{not:no (d,r)}.

\begin{tabbing}
$W_{\rm MM}(d,r,V)$\quad \= \qquad \= \kill
$[\Pi]$ \> indicator function of a set $\Pi$, \pageref{not:[]}, \pageref{not:[Pi]}\\ 
$A_{\rm *M}(d,r)$ $\dagger$ \> the $\Z$-module generated by all
  $[R_{\rm *M}(\underline X,\underline r)]$ with
  $(\underline X,\underline r)\in\mathfrak a_{\rm *M}(d,r)$, \pageref{not:A}\\
$\mathfrak a_{\rm *M}(d,r)$ \> 
 index set, \pageref{not:mf a}\\
$B_{\rm *M}(d,r)$ \> the group generated by all $[Q(\rk)]-[Q(\rk\circ\sigma)]$, 
  \pageref{not:B}\\ 
$E$ \> the map $E(\langle \rk\rangle)=\sum_{F}\langle \rk_F\rangle$, $F$ ranging over
  faces of~$\rk$, \pageref{not:E}\\
  $\face(\Pi)$ \> the set of faces of a polyhedron $\Pi$, \pageref{not:face}\\
  $\mathcal F$ \> Billera-Jia-Reiner quasi-symmetric function, \pageref{not:F}\\
$\mathcal G$ \> polymatroid invariant, \pageref{not:G}\\
  $\ell(\underline{X})$ \> length of a chain $\underline{X}$, \pageref{not:ell}\\
  $\linhull(F)$ \>  linear hull of $F$, \pageref{not:linhull}\\
  ${\mathfrak m}_{\rm *M}$\>$\bigoplus_{d,r}P_{\rm *M}^{\rm sym}(d,r,1)$, \pageref{not:mstarM}\\
$P_{\rm *M}(d,r)$ $\dagger$ \> the $\Z$-module on indicator functions $[Q(\rk)]$, 
  \pageref{not:PPM}, \pageref{not:P}\\
  $P_{\rm *M}(d,r,e)$\>filtration of $P_{\rm *M}$, \pageref{not:Pdre}\\
  $\overline{P}_{\rm *M}(d,r,e)$ \> associated graded of $P_{\rm *M}$, \pageref{not:Pbar}\\
 $P_{\rm *M}^{\rm sym}(d,r)$ $\dagger$ \> $P/B$, the symmetrized version of~$P_{\rm *M}$, 
  \pageref{not:PPMsym}, \pageref{not:P^sym}\\
$P_{\rm *M}^{\rm sym}(d,r,e)$\>filtration of $P_{rm *M}^{\rm sym}$, \pageref{not:Pdresym}\\
$\overline{P}_{\rm *M}^{\rm sym}(d,r,e)$\>associated graded of $P_{\rm *M}^{\rm sym}$, \pageref{not:Pbarsym}\\
$p_{\rm (P)M}(d,r)$\> rank of $P_{\rm (P)M}(d,r)$, the number of independent valuative functions, \pageref{not:ppm}\\
$p_{\rm (P)M}^{\rm sym}(d,r)$\> rank of $P_{\rm (P)M}^{\rm sym}(d,r)$, the number of independent valuative invariants, \pageref{not:ppmsym}\\
$p_{\rm (P)M}(d,r,e)$\>rank of $P_{\rm (P)M}(d,r,e)$, \pageref{not:pdre}\\
$p_{\rm (P)M}^{\rm sym}(d,r,e)$\>rank of $P_{\rm (P)M}^{\rm sym}(d,r,e)$, \pageref{not:pdresym}\\  
$\mathfrak p_{\rm *M}(d,r)$ \> index set for a basis of $P_{\rm *M}(d,r)$,
\pageref{not:mf p}\\
$\mathfrak p_{\rm *M}^{\rm sym}(d,r)$\>index set for a basis of $P_{\rm *M}^{\rm sym}(d,r)$, \pageref{not:mf psym}\\
$Q(\rk)$ \> base polytope of a megamatroid, \pageref{not:Q}\\ 
$R_{\rm *M}(\underline{X},\underline{r})$ \> a (mega-, poly-)matroid whose polytope
  is a cone, \pageref{not:R}\\
  $\rk_{\Pi}$ \> rank function of a polytope $\Pi$, \pageref{not:rkPi}\\
$s_{\underline{X},\underline{r}}$ \> the indicator function for
  the chain $\underline X$ having ranks $\underline r$, 
  \pageref{not:s}\\
$s^{\rm sym}_{\underline{X},\underline{r}}$ \> the average
of~$s_{\underline{X},\underline{r}}$ under the symmetric group action, 
  \pageref{not:s^sym}\\
  $S$\> antipode ${\mathcal H}\to {\mathcal H}$ in a Hopf algebra, \pageref{not:antipode}\\
$S_{\rm *M}(d,r)$ \> set of (mega-, poly-)matroids, \pageref{not:SPM}, \pageref{not:S}\\
$S_{\rm (P)M}^{\rm sym}(d,r)$ \> isomorphism classes of (poly)matroids, \pageref{not:SPMsym}\\
${\mathcal T}$\>Tutte polynomial, \pageref{not:T}\\
$T_{\rm *M}(d,r)$ $\dagger$\> $\overline{P}_{\rm *M}(d,r,1)$, \pageref{not:TstarM}\\
$T_{\rm *M}^{\rm sym}(d,r)$ $\dagger$\> $\overline{P}_{\rm *M}^{\rm sym}(d,r,1)$, \pageref{not:TstarMsym}\\
$t_{\rm (P)M}(d,r)$\> rank of $T_{\rm (P)M}(d,r)$, number of independent additive functions,  \pageref{not:tstarM}\\
$t_{\rm (P)M}^{\rm sym}(d,r)$\> rank of $T_{\rm (P)M}^{\rm sym}(d,r)$, number of independent
additive invariants, \pageref{not:tstarMsym}\\
${\mathfrak t}_{\rm *M}(d,r)$\>index set for a basis of $T_{\rm *M}(d,r)$, \pageref{not:mf t}\\
${\mathfrak t}_{\rm (P)M}^{\rm sym}(d,r)$\>index set for a basis in $(T_{\rm (P)M}^{\rm sym}(d,r))^\vee\otimes_\Z\Q$, \pageref{not:mf tsym}\\
$\{U_{\alpha}\}$ \> basis of the ring of quasisymmetric functions, \pageref{not:Ualpha}\\
$\{u_{\alpha}\}$\>dual basis of $\{U_{\alpha}\}$, basis of $\Q$-valued invariants, \pageref{not:ualpha}\\
$V^\vee$\>dual space of $V$, \pageref{not:Vvee}\\
$V^\#$\>graded dual space of $V$, \pageref{not:Vgr}\\
$\vertices(\Pi)$ \> set of vertices of a polyhedron $\Pi$, \pageref{not:vertices}\\
$W_{\rm MM}(d,r)$ \> subgroup of $Z_{\rm MM}(d,r)$ generated by all 
  $m_{\rm val}(\Pi,\dots)$s, \pageref{not:W}\\ 
$W_{\rm MM}(d,r,V)$ \> ditto, $\Pi$ having all vertices in $V$, \pageref{not:W(V)}\\
$W_{\rm MM}(\dots)^+$ \> ditto, $\Pi$ bounded from above, \pageref{not:W+}\\
$Y_{\rm *M}(d,r)$ \>  the group generated by all 
  $\langle\rk\rangle-\langle\rk\circ\sigma\rangle$, \pageref{not:Y}\\ 
$Z_{\rm *M}(d,r)$ $\dagger$ \> the $\Z$-module on (mega-, poly-)matroids, \pageref{not:ZPM}, \pageref{not:Z}\\
$Z_{\rm *M}^{\rm sym}(d,r)$ $\dagger$ \> $Z/Y$, the symmetrized version of~$Z$, \pageref{not:ZPMsym},
  \pageref{not:Z^sym}\\
    $\Delta_{M}(d,r)$ \>hypersimplex defined by $y_1+\dots+y_d=r$, $0\leq y_i\leq 1$, \pageref{not:delta}\\
  $\Delta_{PM}(d,r)$ \> simplex defined by $y_1+\cdots+y_d=r$, $y_i\geq 0$, \pageref{not:delta}\\
  $\Delta$\> comultiplication ${\mathcal H}\to {\mathcal H}\to {\mathcal H}\otimes {\mathcal H}$ for
  a Hopf algebra ${\mathcal H}$, \pageref{not:mult}\\
  $\eta$ \> unit in a Hopf algebra, \pageref{not:unit}\\
  $\nabla$\>multiplication ${\mathcal H}\otimes {\mathcal H}\to {\mathcal H}$ in a Hopf algebra, \pageref{not:mult}\\
  $\epsilon$\> counit in a Hopf algebra, \pageref{not:counit}\\
  $\Pi^\circ$ \> relative interior of a polyhedron $\Pi$, \pageref{not:Picirc}\\
$\pi_{\rm MM}$ \> the quotient map $Z_{\rm MM}(d,r)\to Z_{\rm MM}^{\rm sym}(d,r)$,
  \pageref{not:pi}\\ 
$\rho_{\rm MM}$ \> the quotient map $P_{\rm MM}(d,r)\to P_{\rm MM}^{\rm sym}(d,r)$,
  \pageref{not:rho}\\ 
$\Psi_{\rm *M}$ \> the map $\Psi_{\rm MM}:Z_{\rm MM}(d,r)\to P_{\rm MM}(d,r)$,
  $\Psi_{\rm *M}(\langle \rk\rangle)=[Q(\rk)]$, \pageref{not:Psi_MM}\\
$\Psi_{\rm MM}^\circ$ \> the map $\Psi_{\rm MM}:Z_{\rm MM}(d,r)\to P_{\rm MM}(d,r)$,
  $\Psi_{\rm MM}(\langle \rk\rangle)=[Q(\rk)^\circ]$, \pageref{not:Psi_MM circ}\\
\end{tabbing}

\end{document}